\theoremstyle{plain} \newtheorem{theorem}{Theorem}[section]
\theoremstyle{plain} \newtheorem{corollary}[theorem]{Corollary}
\theoremstyle{plain} \newtheorem{proposition}[theorem]{Proposition}
\theoremstyle{plain}\newtheorem{lemma}[theorem]{Lemma}
\theoremstyle{plain}\newtheorem{conjecture}[theorem]{Conjecture}
\theoremstyle{definition} \newtheorem{definition}[theorem]{Definition}
\theoremstyle{definition}\newtheorem{example}[theorem]{Example}
\theoremstyle{remark}\newtheorem{remark}[theorem]{Remark}
\theoremstyle{definition}
\theoremstyle{remark}
\theoremstyle{definition}\newtheorem{question}[theorem]{Question}
\theoremstyle{definition}
\newcommand{\N}{{\mathbb{N}}}
\newcommand{\C}{{\mathbb{C}}}
\newcommand{\Z}{{\mathbb{Z}}}
\newcommand{\X}{\mathcal X}
\newcommand{\T}{\mathscr{T}(M)}
\newcommand{\TZ}{\mathscr{T}^{\Z}(M)}
\newcommand{\D}{\text{\rm Diff}^0(M)}
\newcommand{\DZ}{\text{\rm Diff}^\Z(M)}
\newcommand{\I}{\mathcal I(M)}
\newcommand{\Azero}{\text{\rm Aut}^0(X_0)}
\newcommand{\Aun}{\text{\rm Aut}^1(X_0)}
\newcommand{\A}{\text{\rm Aut}(X_0)}
\newcommand{\AZ}{\text{\rm Aut}^\Z(X_0)}
\newcommand{\git}{\mathbin{\!/\mkern-5mu/\!}}
\numberwithin{equation}{section}
\title{Geography of the Teichm\"uller stack}
\author{Laurent Meersseman}
\date{\today}
\subjclass{32G05, 
58H05, 
14D23 
}
\address{Laurent Meersseman\\
	Univ Angers, CNRS, Larema, SFR Mathstic\\
	F-49000 Angers, France\\ laurent.meersseman@univ-angers.fr}
\thanks{The author benefits from the support of the French government “Investissements d’Avenir” program integrated to France 2030, bearing the following reference ANR-11-LABX-0020-01.}
\begin{document}
\begin{abstract}
In this article, we describe the geography of the Teichm\"uller stack of \cite{LMStacks} and of one of its variants we introduce here, giving some answers to questions as: which points are orbifold points? What are the different local models of special points?... We give a rough description in the general case, and we use the compacity of the cycle spaces to get a much more detailed picture in the K\"ahler setting. 
\end{abstract}

\maketitle

\section{Introduction.}
\label{intro}

Let $X_0$ be a compact complex manifold with underlying $C^\infty$ manifold denoted by $M$. For such a non-necessarily K\"ahler manifold, Hodge Theory does not apply and the set of complex structures close to $X_0$ is controlled by its Kuranishi space through Kodaira-Spencer theory of deformations. Thus to obtain the full moduli space of complex structures on $M$, it is theoretically enough to glue at most a countable number of Kuranishi spaces, or a suitable quotient of them. 

This process cannot however be realized in this degree of generality with classical, GIT, or orbifold quotients. General Artin Stacks are needed. In \cite{LMStacks} (see also \cite{Cortona} for a comprehensive presentation), under a very mild hypothesis, we build a moduli stack of complex structures on $M$, resp. a Teichm\"uller stack of $M$, describing explicitly the gluing process involved. Rather than gluing Kuranishi spaces, we glue Kuranishi stacks. These stacks are, roughly speaking, the quotient of Kuranishi base spaces by the automorphism group of the base points.  The moduli and Teichm\"uller stacks thus obtained are an analytic enriched version of the topological moduli and Teichm\"uller spaces.

This being said, the next step consists of analyzing the geometric structure of these stacks, understanding why they are in general neither analytic spaces nor orbifolds, classifying the different types of points, giving adequate local models of the special points and establishing a cartography of them. This is what we begin to do in this paper. 

A first very well known obstruction for the moduli/Teichm\"uller space being locally an analytic space at some complex manifold $X_0$, is the fact that the dimension of the automorphism group of $X_0$ may differ from the dimension of the automorphism group of close complex manifolds. This dimension is an upper semicontinuous function of the Kuranishi space for the Zariski topology \cite{Grauert}\footnote{\label{ftgrauert} Indeed, upper semicontinuity is proven in \cite{K-Susc} for the ordinary topology on a smooth base, and the general case can be deduced from the proof of Grauert's direct image theorem in \cite{Grauert}.}, so the geography of the points where it jumps is clear. They are located on a strict analytic subspace. Moreover, the corresponding Kuranishi spaces have a foliated structure described in \cite{ENS} from which local models can be derived. All this analysis can be transposed to the moduli/Teichm\"uller stack, cf. \S \ref{localstack}.

Another well known fact is that the action of the mapping class group on the Teichm\"uller space (whose quotient is the moduli space) can be very wild with dense or locally dense orbits. This is the case for complex tori of dimension at least $2$, for K3 surfaces, ... So we avoid this wildness by considering here only the Teichm\"uller stack $\T$ and not the moduli stack. We also introduce a variant of it, the $\Z$-Teichm\"uller stack $\TZ$, that basically satisfies all the properties of $\T$.  

We show in this paper that there exists another subtler phenomenon. In the brief account of the construction of the Teichm\"uller stack at the beginning of this introduction, we said that we glue Kuranishi stacks to obtain it. This is a slight simplification for it is not always true that the Teichm\"uller stack is locally isomorphic at some $X_0$ to the Kuranishi stack of $X_0$. Points where this does not happen are called exceptional or $\Z$-exceptional in the case of $\TZ$. They exhibit a different local model and some strange properties that are analyzed in \S \ref{secexc}. Their geography is also not so clear than that of jumping points.

It is not easy to find exceptional or $\Z$-exceptional points - none of the classical examples admits one. Indeed, this work was strongly delayed because, for a long time, we did not have any example. We finally build a non-K\"ahler example of a $\Z$-exceptional point that is presented in \S \ref{secexample} and Theorem \ref{thmexampleexc}. We note that it is not exceptional.
In the K\"ahler case, that is when we restrict to the open\footnote{\label{fnoteKahler} By a classical result of Kodaira-Spencer, K\"ahlerianity is a stable property through small deformations.} substack of K\"ahler points of the Teichm\"uller stack, we show in Theorem \ref{2ndmainthm} that the closure of exceptional points as well as that of $\Z$-exceptional points form a strict analytic substack, making use of the compacity of the cycle spaces. It is important to stress that all the arguments using compacity of cycle spaces break completely when $X_0$ is neither K\"ahler nor in Fujiki class $(\mathscr{C})$, so that it is natural to expect a dichotomy between the K\"ahler and the non-K\"ahler cases. Pushing forward this analysis, we state in the very polarized Conjecture \ref{mainconj} that there does not exist neither exceptional nor $\Z$-exceptional K\"ahler points; whereas such points can be dense in a connected component of non-K\"ahler points of the Teichm\"uller or the $\Z$-Teichm\"uller stack. Theorems \ref{thmexampleexc} and \ref{2ndmainthm} cited above are the best results we obtained on exceptional points. But they left wide open several assertions of Conjecture \ref{mainconj} starting from the existence of an exceptional point as well as several important associated questions, especially whether the set of exceptional points is closed.

 As a consequence of this dichotomy, the local structure of $\T$ or $\TZ$ at a general non-K\"ahler point may be much more complicated than at a K\"ahler point. An example of this phenomenon for $\TZ$ is given in Corollary \ref{corexampleAut1Z}. This is somewhat surprising since, at the level of the Kuranishi space (and Kuranishi stack), there is no difference: the Kuranishi space of a K\"ahler, even of a projective, manifold can exhibit all the pathologies (for example not irreducible \cite{Horikawa}, not reduced \cite{Mumford}, arbitrary singularities \cite{Vakil}) the Kuranishi space of a non-K\"ahler, non class $(\mathscr{C})$ one can have. Moreover, due to the possibly wild action of the mapping class group, there is no difference between them at the level of the Riemann moduli stack. {\sl This difference only appears when considering the Teichm\"uller stack}. Its full complexity is only seen at non-K\"ahler non class $(\mathscr{C})$ points {\sl hence its geometry cannot be fully understood without dealing with such manifolds}. We hope to understand this better in the future.

Going back to the paper, we introduce the notion of normal and $\Z$-normal points in \S \ref{secnormalpoints}. They almost coincide with non-jumping, non-exceptional points. They form an open substack of $\T$ and $\TZ$. We show in Theorem \ref{thmnormal} that both can be reduced to an \'etale stack, that is a stack locally isomorphic to an at most discrete quotient of an analytic space. These normal Teichm\"uller stacks are quite easy to handle. Once again, and for the same reasons, more can be said in the K\"ahler case: the normal Teichm\"uller stacks are then orbifolds.

At the end of the day, we obtain the following rough description of the geography of the Teichm\"uller stacks in the general case (see Theorem \ref{thmstructuregeneral} for a precise statement): 
\begin{enumerate}[---]
	\item Jumping points are the most pathological points but are quite well understood and form a strict analytic substack of $\T$ and $\TZ$.
	\item Exceptional points exhibit a different strange behaviour but also quite well understood, falling into three types. Their existence and geography, as well as those of the different types, are however unclear.
	\item Normal points form an open substack associated to the \'etale normal Teichm\"uller stack that is easy to handle.
	\item Complementary points (if exist) have a non-reduced Kuranishi space with some special property.
\end{enumerate}
In the K\"ahler case\footnote{ Some - but not all - statements are valid under the weaker Fujiki class $(\mathscr{C})$ hypothesis.}, making use of the compacity of the cycle spaces allows us to get a much more detailed picture of the geography of $\T$ and $\TZ$ (see Theorem \ref{thmstructureKahler} for a precise statement): 
\begin{enumerate}[---]
	\item The closure of exceptional points is a strict analytic substack of $\T$ and of $\TZ$.
	\item The normal Teichm\"uller stack is an orbifold.
\end{enumerate}

\smallskip

Here is an outline of the paper. The main protagonists, that is the Teichm\"uller and Kuranishi stacks, are introduced in $\S$\ref{stack}-\ref{localstack}. The material comes essentially from \cite{LMStacks} but with some slight differences and additions. Notably, we show in Theorem \ref{thmuniversal} that the germ of Kuranishi stack at a point has a universal property. This generalizes the semi-universality property of the Kuranishi space. At a rough level, this is folklore (see for example \cite{VerbitskySurvey}), but we never saw a precise statement of such a property, probably because the stack setting developed in \cite{LMStacks} is necessary to a get clear formulation. We introduce the $\Z$-Teichm\"uller stack in \S \ref{secZteich}. Every statement about the Teichm\"uller stack given in this paper holds with obvious changes for the $\Z$-Teichm\"uller stack. In the sequel, to avoid redundancies, we often content ourselves with stating the full results and definitions only for the Teichm\"uller stack and  with briefly indicating the changes needed for the $\Z$-Teichm\"uller stack. The main interest of introducing $\TZ$ is the already cited Theorem \ref{thmexampleexc} of a $\Z$-exceptional point. It is not an example of an exceptional point but it is a strong evidence that such points exist.
Section \S \ref{localTeich} begins the local analysis of $\T$ and $\TZ$.  It culminates with Theorem \ref{finitethm} showing that the natural morphism from the Kuranishi stack to the Teichm\"uller stack is \'etale. This gives a complete solution to the problem of comparing these two stacks at a point. Section \S \ref{secconjecture} begins with Definition \ref{defexcpoint} of exceptional points and then states the main Conjecture \ref{mainconj} about them.
In \S \ref{localTeich2}, we define and study normal points and the normal Teichm\"uller stack. The main result is the already mentionned Theorem \ref{thmnormal}. Of interest also is the characterization of points with Kuranishi stacks being orbifolds in \S \ref{Kurorbifold}. We then swith to the analysis of exceptional points in \S \ref{secexc}. We introduce the cycle spaces that are related to $\T$ and $\TZ$ but the most important results are Theorem \ref{thmexampleexc} on a $3$-fold that is a $\Z$-exceptional point and the associated Theorem \ref{thmexampleAut1Z} showing that the subgroup of the automorphism group of these $3$-folds formed by elements inducing the identity in cohomology with coefficients in $\Z$ is infinite discrete. Such a phenonmenon cannot occur on Kähler manifolds, so we obtain here points whose $\Z$-Teichm\"uller stack is not locally isomorphic to that of any Kähler manifold, see Corollary \ref{corexampleAut1Z}. The next two sections \S \ref{secfinite}-\ref{secdistri} focus on the K\"ahler case. We first draw all the consequences of the compacity of cycle spaces in the K\"ahler setting, as Theorem \ref{thmfinite2} showing that the \'etale morphism from the Kuranishi stack to the Teichm\"uller stack is indeed finite, allowing to characterize easily points where $\T$ is an orbifold from the analogous results proven for the Kuranishi stack. Then we show in Theorem \ref{2ndmainthm} that the closure of exceptional points is an analytic substack of $\T$ and $\TZ$.  
As an interlude, \S \ref{secvarexc} gives somes variations about exceptionality, for example introducing the notion of exceptional pairs and showing that non-separated pairs of points are exceptional pairs in \S \ref{secNH}. Going back to the K\"ahler setting, we investigate in \S \ref{secpathologies} the structure of $\T$ from the side of pathological families. One of the main interests in using stacks is to give a dictionnary between properties of $\T$ as a moduli space and properties of families of compact complex manifolds diffeomorphic to $M$. In this way, jumping points are related to jumping families that is families with all fibers biholomorphic except for one. We define several types of pathologies and analyze then from both the family and the moduli space point of view. The philosophy developed in \S \ref{secpathologies} is that, at least in the K\"ahler case, pathologies only may occur at jumping and/or exceptional points so occur on a strict analytic substack. We also comment in \S \ref{seclocaliso} on a false statement of \cite{ENS}. Finally, we gather all the previous results to state in \S \ref{seccarto} the two main results (Theorems \ref{thmstructuregeneral} and \ref{thmstructureKahler}) on the cartography of $\T$ and $\TZ$ in the general and in the K\"ahler case. Many additional comments are included. We also revisit this geography through the concept of holonomy points that sheds a different light on our results. Jumping points are points with continuous holonomy, normal points have at most discrete holonomy, and exceptional points are points a neighborhood of which is not controlled by the holonomy group. Section \S \ref{seccarto} ends with some remarks on Teichm\"uller stacks and GIT quotients.

 The notions of exceptional vs. normal points are reminiscent from
Catanese question in \cite{Catsurvey}, see also \cite{Catsurvey2}, on conditions under which the Teichm\"uller and Kuranishi spaces are locally homeomorphic. This question has played a central role in the genesis of this work. We turned it into giving conditions under which the Teichm\"uller and Kuranishi stacks are locally isomorphic. We first thought that this is always the case, before the concept of exceptional points emerges.

I am indebted to An-Khuong Doan, Julien Grivaux and Etienne Mann for illuminating discussions on some parts of this work.

\section{The Teichm\"uller Stack: basic facts}
\label{stack}
We recollect some facts about the Teichm\"uller stack of a connected, compact oriented $C^\infty$ manifold $M$ admitting complex structures. We refer to \cite{LMStacks} for more details.

The general idea is the following. From the one hand, the Teichm\"uller stack is the category of analytic families of compact complex manifolds diffeomorphic to $M$ (in $\D$) together with a functor that sends a family to its base. From the other hand, it has an atlas (which is not unique), that is an analytic space with a smooth and surjective mapping to the Teichm\"uller stack. Roughly speaking, the Teichm\"uller stack appears thus as a quotient of the atlas, say $T$. The morphism from the atlas $T$ to the Teichm\"uller stack $\T$ is given by the choice of a family of compact complex manifolds above $T$, thanks to Yoneda's Lemma. It is smooth and surjective if the projections $T\times_{\T}T\to T$ are smooth and surjective morphisms between analytic spaces. This allows to form the analytic groupoid $T\times_{\T}T\rightrightarrows T$ that contains all the information needed to reconstruct the Teichm\"uller stack as a category of families. Indeed, starting with $T$, one recovers a stack isomorphic to $\T$ through a process called stackification. And any other choice of an atlas of $\T$ gives an analytic groupoid that is Morita equivalent to that associated to $T$.

Let us begin with the categorical viewpoint.
Let $\mathfrak S$ be the category of analytic spaces and morphisms endowed with the euclidian topology. Given $S\in \mathfrak S$, we call {\it $M$-deformation over $S$} a proper and smooth morphism $\mathcal X\to S$ whose fibers are compact complex manifolds diffeomorphic to $M$. As $C^\infty$-object, such a deformation is a bundle over $S$ with fiber $M$ and structural group $\text{Diff}^+(M)$ (diffeomorphisms of $M$ that preserve its orientation). It is called {\it reduced} if the structural group is reduced to $\text{Diff}^0(M)$. In the same way, a morphism of reduced $M$-deformations $\mathcal X$ and $\mathcal X'$ over an analytic morphism $f: S\to S'$ is a cartesian diagram
$$
\begin{tikzcd}
\mathcal X \arrow[d]\arrow[dr, phantom, "{\scriptscriptstyle \square}", very near start]\arrow[r]&\mathcal X'\arrow[d]\\
S \arrow[r,"f"]&S'
\end{tikzcd}
$$
such that $\mathcal X$ and $f^*\mathcal X'$ are isomorphic as $\text{Diff}^0(M)$-bundles over $S$.

The Teichm\"uller stack $\T$ is the stack over the site $\mathfrak{S}$ whose objects are reduced $M$-deformations and morphisms are morphisms of reduced $M$-deformations. The natural morphism $\T\to\mathfrak{S}$ sends a reduced $M$-deformation onto its base and a morphism of reduced $M$-deformation to the corresponding morphism between their bases. Alternatively, $\T$ can be seen as a $2$-functor from $\mathfrak S$ to the category of groupoids such that
\begin{enumerate}
\item[i)] $\T(S)$ is the groupoid of isomorphism classes of reduced $M$-deformations over $S$.
\item[ii)] $\T(f)$ is the pull-back morphism $f^*$ from  $\mathscr{T}(M)(S')$ to $\mathscr{T}(M)(S)$.
\end{enumerate}

A point $X_0:=(M,J_0)$ is an object of $\mathscr{T}(M)(pt)$ that is a complex structure on $M$ up to biholomorphisms smoothly isotopic to the identity. 

\begin{remark}
	\label{important}
	In the definition of analytic stack used in \cite{LMStacks}, we did not impose that the diagonal is representable, see the discussion in \S 2.4 of \cite{LMStacks}. However, this is indeed true in full generality, hence the definition of an analytic stack as the stackification over $\mathfrak S$ of a smooth analytic groupoid given in \cite[\S 2.4]{LMStacks} is equivalent to the definition of an analytic stack as a stack over $\mathfrak S$ with representable diagonal, see \cite[\S 2.4]{Boivin}.
\end{remark}

Let us switch to the atlas point of view. Roughly speaking, $\T$ can be considered as an analytic version of the quotient $\mathcal I(M)/\text{Diff}^0(M)$. Here, $\mathcal I(M)$ is the set of integrable complex operators on $M$ compatible with its orientation (o.c.), that is
\begin{equation}
\label{I}
\mathcal I(M)=\{J\ :\ TM\longrightarrow TM\mid J^2\equiv -Id,\ J\text{ o.c.},\ \ [T^{1,0}, T^{1,0}]\subset T^{1,0}\}
\end{equation}
for
$$
T^{1,0}=\{v-iJv\mid v\in TX\}.
$$
and $\text{Diff}^0(M)$ is the group of diffeomorphisms of $M$ which are $C^\infty$-isotopic to the identity. It acts on the right on $\mathcal I(M)$ through
	\begin{equation}
	\label{actionDiff}
	J\cdot f:=df^{-1}\circ J\circ df
\end{equation}
Hence the mapping
\begin{equation}
	\label{ordrefiso}
	x\in X_{J\cdot f}\longmapsto f(x)\in X_J
\end{equation}
is an isomorphism (note the order: $f$ sends $J\cdot f$ to $J$).

In \cite{LMStacks}, a finite-dimensional atlas $T$ of (a connected component of)  $\T$ is described under the hypothesis that the dimension of the automorphism group of the complex manifolds encoded in $\T$ is bounded. Basically, it is given by a (at most countable) disjoint union of Kuranishi spaces\footnote{ Indeed, for technical reasons, a fatting process is used to ensure that all components of $T$ have the same dimension.}.  The construction of the Kuranishi space of a compact complex manifold is recalled in Section \ref{Kurfamily}. The morphism from $T$ to $\T$ is given by the choice of a family of complex manifolds above $T$ by Yoneda's Lemma. Here we take the Kuranishi families, see \S \ref{Kurfamily}. Then, to finish the construction, we need to compute the fiber product $T\times_{\T}T$ and check both projections to $T$ are smooth and surjective, forming in this way an analytic groupoid that encodes completely $\T$. This is the crux of \cite{LMStacks}. In this paper, we will not make use of this global atlas since we analyse the local models, hence we will skip its precise construction.

Two points have to be emphasized here. Firstly, as a stack, $\T$ also encodes the isotropy groups of the action. Recall that the isotropy group at $X_0$ is the group
\begin{equation}
\label{Aut1def}
\text{Aut}^1(X_0):=\text{Aut}(X_0)\cap \text{Diff}^0(M).
\end{equation}
which may be different from $\Azero$, the connected component of the identity of the automorphism group $\A$, see \cite{Aut1} and \cite{CatAut}. 
Secondly, 
 since $\D$ acts on the (infinite-dimensional) analytic space $\I$ preserving its connected components and its irreducible components, we may speak in this way of connected components and irreducible components of $\T$.  Indeed Kuranishi's Theorem tells us that the set $\mathcal I(M)$ is locally the product of a finite-dimensional local analytic section $K_0$ to the action with an infinite-dimensional manifold, cf. Section \ref{Douady}. Hence, locally, the irreducible components of $\I$ correspond to those of the finite-dimensional space $K_0$.

We finish this part with some terminology. By Teichm\"uller {\slshape space}, we mean the topological space obtained by endowing the quotient of \eqref{I} by \eqref{actionDiff} with the quotient topology. It is of course different from the Teichm\"uller {\slshape stack} $\T$, although related as follows. Given $T\to\T$ an atlas and $T_1\rightrightarrows T$ the associated groupoid, the Teichm\"uller space is homeomorphic to the quotient of $T$ by the the equivalence relation induced by $T_1$. We also introduce the notion of open, resp. Zariski open, resp. analytic substack of $T\to\T$, that plays an important role in the sequel. It is obtained as the stack induced from $T\to\T$ by an open, resp. Zariski open subset, resp. analytic subspace of $T$. In other words, an analytic stack $A\to\mathscr{A}$ is an open, resp. Zariski open, resp. analytic substack of $T\mapsto\T$ if there exists an open, resp. Zariski open subset $B$, resp. analytic subspace $B$ of $T$ such that $A \rightarrow \mathscr{A}$  is isomorphic to the analytic groupoid $B \times_{\mathscr{T}(M)} B \rightrightarrows B$.

\begin{remark}
	\label{rkconcomp}
	Such an analytic substack $A\to\mathscr{A}$ is said to be {\slshape proper} or {\slshape strict} if $B$ can be chosen so that its is an analytic subspace of positive codimension in the reduction of $T$.
	
	Note also that we admit analytic subspaces $B$ with a countable number of connected components, cf. Example \ref{exHopf3}.
\end{remark}

\section{The Kuranishi stacks}
\label{localstack}
In the first two subsections, we review the construction of the Kuranishi family first from classical deformation theory point of view, then from Kuranishi-Douady's point of view.

We then review the construction of the Kuranishi stack(s) introduced in \cite{LMStacks}. They play a fundamental role in the local theory. Especially we prove in Subsection \ref{Kurstack} that they enjoy the universal property the Kuranishi space does not fulfill. 

It is worth pointing out that the classical point of view (which presents Kuranishi family from a formal/algebraic point of view leaving aside the analytic details of the construction) is not enough for our purposes. This is indeed an infinitesimal point of view and even if it gives complete equations for the Kuranishi space, it fails in describing the properties of the structures close to the base complex structure. Kuranishi-Douady's point of view allows to pass from the infinitesimal point of view to a local one.  
\subsection{The Kuranishi family}
\label{Kurfamily}

The Kuranishi family $\pi : \mathscr{K}_0\to K_0$ of $X_0$ is a semi-universal deformation of $X_0$. It comes with a choice of a marking, that is of an isomorphism $i$ between $X_0$ and the fiber $\pi^{-1}(0)$ over the base point $0$ of $K_0$. The semi-universal property means that
\begin{enumerate}[i)]
\item Every marked deformation $\mathcal{X}\to B$ of $X_0$ is locally isomorphic to the pull-back of the Kuranishi family by a pointed holomorphic map $f$ defined in a small neighborhood of the base point of $B$ with values in a neighborhood of $0$ in $K_0$.
\item Neither the mapping $f$ nor its germ at the base point are unique; but its differential at the base point is.
\end{enumerate}

Two such semi-universal deformations of $X_0$ are isomorphic up to restriction to a smaller neighborhood of their base points. Hence the germ of deformation $(\mathscr{K}_0,\pi^{-1}(0))\to (K_0,0)$ is unique. This explains why we talk of the Kuranishi family, even if, in many cases, we work with a representative of the germ rather than with the germ itself.

The Zariski tangent space to the Kuranishi space $K_0$ at $0$ identifies naturally with $H^1(X_0,\Theta_0)$, the first cohomology group with values in the sheaf $\Theta_0$ of germs of holomorphic tangent vector fields of $X_0$. Indeed, $K_0$ is locally isomorphic to an analytic subspace of $H^1(X_0,\Theta_0)$ whose equations coincide at order 2 with the vanishing of the Schouten bracket.

The groups $\A$, $\Azero$ and $\Aun$ act on this tangent space.
However, this infinitesimal action cannot always be integrated in an action of the automorphism groups of $X_0$ onto $K_0$, see \cite{Doan}. Still there exists an action of each $1$-parameter subgroup and all these actions can be encoded in an analytic groupoid and thus in a stack. To do this, we need to know more about the complex properties of the structures encoded in a neighborhood of $0$ in $K_0$.

\subsection{Kuranishi-Douady's presentation}
\label{Douady}

Let $V$ be an open neighborhood of $J_0$ in $\I$. Complex structures close to $J_0$ can be encoded as $(0,1)$-forms $\omega$ with values in $T^{1,0}$ which satisfy the equation
\begin{equation}
\label{integ}
\bar\partial \omega+\dfrac{1}{2}[\omega,\omega]=0 
\end{equation}
Choose an hermitian metric and let $\bar\partial^*$ be the $L^2$-adjoint of $\bar\partial$ with respect to this metric. Let $U$ be a neighborhood of $0$ in the space of global smooth sections of $(T^{0,1})^*\otimes T^{1,0}$. Set
\begin{equation}
\label{K0}
K_0:=\{\omega\in U\mid \bar\partial \omega+\dfrac{1}{2}[\omega,\omega]=\bar\partial^*\omega=0\}
\end{equation} 
 Let $W$ an open neighborhood of $0$ in the vector space of vector fields $L^2$-orthogonal to the vector space of holomorphic vector fields $H^0(X_0,\Theta_0)$. In Douady's setting \cite{Douady}, Kuranishi's Theorem states the existence of a local isomorphism between $\I$ at $J_0$ and the product of $K_0$ with $W$ such that every plaque $\{pt\}\times W$ is sent through the inverse of this isomorphism into a single local $\D$-orbit. To be more precise, up to restricting $U$, $V$ and $W$, the Kuranishi mapping
\begin{equation}
\label{Kurmap}
(\xi,J)\in W\times K_0\longmapsto J\cdot e(\xi)\in V
\end{equation}
is an isomorphism of infinite-dimensional analytic spaces. As usual, we use the exponential map associated to the chosen metric in order to define the map $e$ which gives a local chart of $\D$ at $Id$. And $\cdot$ denotes the natural right action \eqref{actionDiff} of $\D$ onto $\I$.

\begin{remark}
	\label{rkproductchart}
	In the sequel, we will always work with an open set $V$ of $\I$ which is a product through \eqref{Kurmap}. Especially, in all statements, the expression ``Reducing $V$ if necessary'' or ``For $V$ small enough'' must be understood as taking a smaller $V$ but that still satisfies \eqref{Kurmap} for a smaller $K_0$.
\end{remark}

\subsection{Automorphisms and jumping points}
\label{secjumping}
Let us now analyze how automorphisms of $X_0$ are related to the local geography of the Teichm\"uller stack and determine a first obstruction for $\T$ to be locally isomorphic to an analytic space or to an orbifold.

We begin with a construction in a slightly more general setting. Let $f$ be an element of $\D$ such that $J_0\cdot f$ belongs to $V$, e.g. $f$ is an automorphism of $X_0$. Composing the inverse of \eqref{Kurmap} with the projection onto $K_0$ gives a retraction map $\Xi : V\to K_0$. Then, define
\begin{equation}
	\label{Uf}
	U_f=\{J\in K_0\mid J\cdot f\in V\}
\end{equation} 
This is an open set since $V$ is open. And it contains $J_0$. Now
\begin{equation}
\label{Autaction}
\text{Hol}_f\ :\ J\in U_f\subset K_0\longmapsto \Xi(J\cdot f)\in K_0
\end{equation}
is a well defined analytic map that must be thought of as the action of $f$ onto $K_0$.  Note however that composition does not work in general, that is $\text{Hol}_{f\circ g}$ may be different from $\text{Hol}_{g}\circ \text{Hol}_f$. In particular, there is no well defined action of $\Azero$ onto $K_0$, see \cite{Doan} for a counterexample.

Let $(F_t)_{t\in[0,1]}$ be a continuous path in $\D$ joining $f$ to the identity. Assume that $J_0\cdot F_t$ belongs to $V$ for all $t\in [0,1]$, e.g. $f\in\Azero$ and $(F_t)$ is any continuous path in $\Azero$ joining $f$ to the identity. We call {\slshape compatible} such a path. Then, $U_{F_t}$ and $\text{Hol}_{F_t}$ are well defined for all $t$ and all $U_{F_t}$ contain $J_0$.

\begin{lemma}
	\label{lemmaomegaf}
	The intersection $\cap_{t\in [0,1]}U_{F_t}$ contains an open neigborhood of $J_0$.
\end{lemma}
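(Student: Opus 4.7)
The plan is to realize the claim as an application of the tube lemma. Each $U_{F_t}$ is open and contains $J_0$, but a pointwise statement of this kind does not automatically give a common neighborhood over an uncountable family; we need to exploit the compactness of $[0,1]$ together with the continuity of the path $t\mapsto F_t$.

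Concretely, I would introduce the evaluation map
\[
\Phi\colon [0,1]\times K_0\longrightarrow \I,\qquad \Phi(t,J):=J\cdot F_t,
\]
and argue that it is continuous. Continuity in $t$ for fixed $J$ follows from the hypothesis that $(F_t)$ is a continuous path in $\D$ (which acts continuously on $\I$ via \eqref{actionDiff}); continuity in $J$ for fixed $t$ is just the continuity of the right action by a single diffeomorphism; and joint continuity follows from the continuity of the action map $\I\times \D\to\I$ in the appropriate Hilbert/Fréchet topologies used in \cite{Douady,LMStacks}. The preimage $\Phi^{-1}(V)$ is therefore open in $[0,1]\times K_0$.

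By compatibility, $J_0\cdot F_t\in V$ for every $t\in[0,1]$, so the compact slice $[0,1]\times\{J_0\}$ is entirely contained in the open set $\Phi^{-1}(V)$. The tube lemma then furnishes an open neighborhood $W$ of $J_0$ in $K_0$ with
\[
[0,1]\times W\ \subset\ \Phi^{-1}(V).
\]
Unravelling the definition, this says $J\cdot F_t\in V$ for all $t\in[0,1]$ and all $J\in W$, i.e.\ $W\subset U_{F_t}$ for every $t$, which is exactly $W\subset \bigcap_{t\in[0,1]}U_{F_t}$.

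The only point requiring minor care is the joint continuity of $\Phi$; beyond that the argument is the standard compact$\times$open tube-lemma trick, so I do not anticipate a real obstacle.
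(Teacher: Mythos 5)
Your proof is correct and is essentially the paper's own argument in a different packaging: the paper proves exactly the tube-lemma statement by hand, choosing relatively compact open sets $V_t\Subset U_{F_t}$ and extracting a finite subcover of $[0,1]$ before intersecting, which rests on the same compactness-of-$[0,1]$ plus continuity-of-the-action input you use. The joint continuity of $(t,J)\mapsto J\cdot F_t$ that you flag is indeed the only analytic ingredient, and it holds in the Sobolev/Fr\'echet setting of \cite{Douady} and \cite{LMStacks}, just as it is used implicitly in the paper's version.
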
 

\begin{proof}
	For every $t\in [0,1]$, choose some relatively compact open set $V_t\Subset U_{F_t}$. By continuity, there exists an open interval $I_t$ containing $t$ such that $t'\in I_t$ means $V_t\subset U_{F_{t'}}$. By compacity, there exist $t_1,\hdots,t_k$ in $[0,1]$ such that
	\begin{equation*}
		[0,1]=I_{t_1}\cup\hdots\cup I_{t_k}
	\end{equation*}
	Hence,
	\begin{equation*}
		\forall t\in [0,1],\qquad \cap_{1\leq i\leq k}V_{t_i}\subset U_{F_t}
	\end{equation*}
	and we are done, since the lefthand term is a non-empty open set containing $J_0$ as a finite intersection of such open sets.
\end{proof}
Thanks to Lemma \ref{lemmaomegaf}, we may define the open set
\begin{equation}
	\label{Omegaf}
	\Omega_f:=\bigcup_{(F_t)_{t\in [0,1]}}\text{Int}\Big (\bigcap_{t\in [0,1]}U_{F_t}\Big )
\end{equation}
where the union is taken on all compatible paths $(F_t)_{t\in[0,1]}$ in $\D$.

By definition, $J\in\Omega_f$ if and only if there exists a compatible path $(F_t)$
in $\D$ such that $J\cdot F_t\in V$ for all $t\in [0,1]$.

Define now the function
\begin{equation}
	\label{h0}
	t\in K_0\longmapsto h^0(t):=\dim (\text{Aut}(X_t))\in\N
\end{equation}
It is a well known fact that \eqref{h0} may not be constant - the best we can say is that it is upper semicontinuous for the Zariski topology of $K_0$, see \cite{Grauert} and footnote \ref{ftgrauert}. When it is constant and $K_0$ is reduced, a classical Theorem of Wavrik \cite{Wavrik} asserts the Kuranishi space is universal, i.e. $f$ is unique in the setting of Subsection \ref{Kurfamily}. But we also have

\begin{lemma}
	\label{lemmaHolWavrik}
	Assume that {\rm \eqref{h0}} is constant and $K_0$ is reduced. Then $\text{\rm Hol}_f$ is equal to the identity of $K_0$ for any $f\in\Azero$.
\end{lemma}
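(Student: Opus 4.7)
My plan is to invoke Wavrik's universality theorem (stated just above the lemma) to force $\text{Hol}_f$ to coincide with the identity of $K_0$. Concretely, I would exhibit the Kuranishi family $\mathscr{K}_0\to K_0$ as the pullback $\text{Hol}_f^*\mathscr{K}_0$, viewed as a reduced $M$-deformation of $X_0$; it is of course also the trivial pullback $\text{id}_{K_0}^*\mathscr{K}_0$. Since $h^0$ is constant and $K_0$ is reduced, Wavrik's theorem then forces these two classifying maps from $K_0$ to itself to coincide as germs at $0$, and analytic continuation on the reduced space $K_0$ extends the equality to the whole connected domain of definition of $\text{Hol}_f$.

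To build the required isomorphism $\text{Hol}_f^*\mathscr{K}_0\cong\mathscr{K}_0$ over a neighborhood $\Omega$ of $0$ in $U_f$, I would introduce a holomorphic map $\xi\colon\Omega\to W$ through the Kuranishi decomposition
\[
J\cdot f \;=\; \text{Hol}_f(J)\cdot e(\xi(J)),
\]
whose existence, uniqueness and holomorphy are immediate from the fact that the Kuranishi map \eqref{Kurmap} is a holomorphic isomorphism. Since $J_0\cdot f = J_0 = J_0\cdot e(0)$, one has $\xi(J_0)=0$. For each $J\in\Omega$, the composition $e(\xi(J))\circ f^{-1}$ belongs to $\D$ (since $f\in\Azero\subseteq\Aun\subseteq\D$, and $e(\xi(J))$ lies in the exponential chart of $\D$ at the identity) and is a biholomorphism from $(M,J)$ onto $(M,\text{Hol}_f(J))$: indeed, by \eqref{ordrefiso}, $f^{-1}\colon(M,J)\to(M,J\cdot f)$ is a biholomorphism, and $e(\xi(J))\colon(M,J\cdot f)\to(M,\text{Hol}_f(J))$ is one by the defining equation of $\xi$. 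Packaging these fiber maps produces an isomorphism of $\D$-bundles over $\Omega$, that is, an isomorphism of reduced $M$-deformations, whose restriction to the central fiber is $f^{-1}\in\D$, so that the markings on $X_0$ agree.

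The main obstacle I anticipate is verifying that the fiberwise biholomorphisms genuinely assemble into a morphism of analytic families rather than a mere pointwise collection; this boils down to the holomorphic dependence of $\xi$ on $J$, which itself is guaranteed by Kuranishi--Douady's presentation of \S\ref{Douady}. Once this is established, Wavrik's uniqueness applies to the two classifying maps $\text{Hol}_f$ and $\text{id}_{K_0}$ and yields the conclusion of the lemma.
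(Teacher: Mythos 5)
Your construction of the fibrewise maps $e(\chi(J))\circ f^{-1}\colon X_J\to X_{\mathrm{Hol}_f(J)}$, and of the resulting isomorphism of families over a neighbourhood of $0$, is correct (it is essentially the section $\sigma_f$ of \eqref{sigmaf}), but the concluding appeal to Wavrik does not go through. Wavrik's theorem is a uniqueness statement for \emph{marked} deformations: the classifying germ is unique among maps $h$ such that $h^*\mathscr{K}_0$ is isomorphic to the given family through an isomorphism inducing the \emph{identity} on the central fibre via the markings. Your isomorphism restricts to $f^{-1}$ on the central fibre, so what it actually shows is that $\mathrm{Hol}_f$ is the classifying germ of the Kuranishi family \emph{re-marked by} $f$, while $\mathrm{id}_{K_0}$ is the classifying germ of the Kuranishi family with its canonical marking; these are two different marked families, and Wavrik gives no comparison between their classifying maps. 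The sentence ``whose restriction to the central fiber is $f^{-1}\in\D$, so that the markings on $X_0$ agree'' is exactly where the argument becomes circular: asserting that a re-marking by an element of $\Azero$ does not change the classifying germ is equivalent to $\mathrm{Hol}_f=\mathrm{id}$, i.e.\ to the lemma you are proving. (The paper makes this point explicitly after Corollary \ref{thmuniversal0}: classical universality uses the marking precisely to forbid reparametrizations by automorphisms, and the triviality of $\Azero$-reparametrizations on the base is the content of the present lemma, not a consequence of Wavrik.) Nor does the uniqueness of the differential of the classifying map help, since $\Azero$ may act nontrivially on $H^1(X_0,\Theta_0)$.

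The paper closes this gap with a different ingredient: connectedness plus Theorem 1 of \cite{Kur3}. For $J\in\Omega_f$ one joins $f$ to the identity by a compatible path $(F_t)$ in $\Azero$; then $t\mapsto\mathrm{Hol}_{F_t}(J)$ is a continuous path in $K_0$ all of whose points encode $X_J$ up to biholomorphisms smoothly isotopic to the identity, and a non-constant path would produce distinct nearby points of $K_0$ identified by biholomorphisms arbitrarily close to the identity, which is impossible when \eqref{h0} is constant by Kuranishi's theorem. Hence $\mathrm{Hol}_f=\mathrm{id}$ on $\Omega_f$, then on $U_f$ by reducedness and analyticity, and finally $U_f$ must equal $K_0$ (note that your conclusion only gives the identity on the domain of $\mathrm{Hol}_f$, whereas the lemma asserts it on all of $K_0$). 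If you wish to keep a universality-flavoured proof, you must first establish that re-marking by $\Azero$ does not move the classifying germ, and some local rigidity input in the spirit of \cite{Kur3} cannot be avoided at that step.
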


We will come back to this in Section \ref{localTeich2} when defining and studying normal points.

\begin{proof}
	Let $f\in\Azero$ and let $J\in\Omega_f$. We assume that $J$ is different from $J_0$ and we let $(F_t)_{t\in [0,1]}$ be a compatible path in $\Azero$ with $J\cdot F_t$ in $V$ for all $t$. By definition, $\text{Hol}_{F_t}(J)$ is well defined for all $t\in [0,1]$, drawing a continuous path in $K_0$ between $J$ and $\text{Hol}_f(J)$ all of whose points encode the same manifold $X_J$ up to biholomorphism $C^\infty$-isotopic to the identity. Assume this path is non-constant. Then we can find distinct points of $K_0$ encoding $X_J$ through a biholomorphism arbitrary close to the identity in $\D$-topology. Since \eqref{h0} is constant, this would contradict Theorem 1 of \cite{Kur3}. Hence the path has to be constant so that $\text{Hol}_f(J)=J$ for all $J\in\Omega_f$. Since $K_0$ is reduced, this is enough to conclude that $\text{Hol}_f$ is the identity on $\Omega_f$, thus on $U_f$ by analyticity. But $U_f$ must then be equal to $K_0$.
\end{proof}
In the general case, that is when \eqref{h0} is not constant, the line of arguments used in the proof of Lemma \ref{lemmaHolWavrik} can be expanded to show that $K_0$ acquires a stratified foliated structure as defined and analyzed in \cite[\S 3]{ENS}. Firstly one decomposes $K_0$ into strata $(K_0)_a$ with function \eqref{h0} bounded above by $a$. Then each difference $S_a:=(K_0)_a\setminus (K_0)_{a-1}$ admits a holomorphic foliation with non singular leaves. The highest $S_a$ contains at least the base point $J_0$ and its foliation is a foliation by points. The other $S_a$ admits positive-dimensional leaves. The leaves correspond to the connected components in $K_0$ of the following equivalence relation: $J\equiv J'$ if and only if both operators belong to the same $\D$-orbit.

In other words, when the function \eqref{h0} is not constant, some positive-dimensional connected submanifolds of $K_0$ encode the same complex manifold up to biholomorphism $C^\infty$-isomorphic to the identity. Hence the Teichm\"uller space is not homeomorphic to $K_0$. Indeed, it is the leaf space of this stratified foliated structure and is usually non-Hausdorff. We thus set

\begin{definition}
	\label{defjumpingpoint}
	We say that $X_0$ is a {\slshape jumping point} of $\T$ if \eqref{h0} is not locally constant.
\end{definition}

Jumping points are the first obstruction for $\T$ to be locally isomorphic to an analytic space or an orbifold and correspond to points where the Teichm\"uller space has very bad properties. However, as recalled above, from the one hand they form a strict analytic substack of $\T$; and from the other hand, they correspond to points with non trivial foliated structure. Describing them geometrically boils down to describing the foliated
structure of $K_0$. The first draft is done in \cite{ENS} but there is still much to do. One of the most challenging question is the following.

\begin{question}
	\label{questionseparatrix}
	When \eqref{h0} is non constant, does the foliated structure always admit a separatrix?
\end{question}

By separatrix, we mean a positive-dimensional leaf that contains $0$ in its closure. The existence of a separatrix at a jumping point implies that there exists a jumping family  based at that point, see Example \ref{exjumping} and Section \ref{secpathologies}.


\begin{remark}
	\label{rkleavesconn}
	Given an automorphism $f$ of $X_0$, observe that $\text{Hol}_f$ respects the foliation  of $K_0$. Moreover,
	for $f$ in $\Azero$ close to the identity, then $\text{Hol}_f$ fixes each leaf of the foliation of $K_0$. However, a general element of $\Azero$ may send a leaf to a different leaf. This is due to the fact that the restriction to $V$ may disconnect the $\Azero$-orbits in $\mathcal{I}(M)$.
%
\end{remark}

\subsection{The Kuranishi stacks}
\label{Kurstack}
 The Kuranishi stacks encode the maps \eqref{Autaction} in an analytic groupoid. The first step to do this consists in proving that there is an isomorphism
 \begin{equation}
 \label{Gdecompoglobal}
 (\xi,g)\in W\times \text{\rm Aut}^0(X_0)\longmapsto g\circ e(\xi)\in \mathcal D_0
 \end{equation}
 with values in a neighborhood $\mathcal D_0$ of $\Azero$ in $\D$, see \cite[Lemma 4.2]{LMStacks}. 
 
 Let now $\text{Diff}^0 (M,\mathscr K_0)$ denote the set of $C^\infty$ diffeomorphisms from $M$ to a fiber of the Kuranishi family $\mathscr K_0\to K_0$. This is an infinite-dimensional analytic space\footnote{ Strictly speaking, we have to pass to Sobolev $L^2_l$-structures for a big $l$ to have an analytic space, and $\text{Diff}^0 (M,\mathscr K_0)$ is the subset of $C^\infty$ points of this analytic set. In the sequel, we automatically make this slight abuse of terminology, cf. Convention 3.2 in \cite{LMStacks}.}, see \cite{Douady}. Here by $(J,F)\in \text{Diff}^0 (M,\mathscr K_0)$, we mean that we consider $F$ as a diffeomorphism from $M$ to the complex manifold $X_J$.
 
 \begin{definition}
 	\label{defadm}
 	Given $(J,F)$ an element of $\text{Diff}^0 (M,\mathscr K_0)$, we say it is $(V,\mathcal D_0)$-admissible if there exists a finite sequence $(J_i,F_i)$ (for $0\leq i\leq p$) of $\text{Diff}^0 (M,\mathscr K_0)$ such that
 	\begin{enumerate}[i)]
 		\item $J_0=J$ and $J_{i+1}=J_i\cdot F_i$ is in $K_0$ for all $0\leq i\leq p$, adding the convention $J_{p+1}:=J\cdot F$.
 		\item $F=F_0\circ\hdots\circ F_p$.
 		\item Each $F_i$ belongs to $\mathcal D_0$ as well as $F_i^{-1}$.
 		\item \label{additional} $J_i$ belongs to $\Omega_{F_i}$ for all $0\leq i\leq p$ and to $\Omega_{F_{i-1}^{-1}}$ for all $1\leq i\leq p+1$.
 	\end{enumerate}	
 \end{definition}
 
\begin{remark}
	\label{rkadmissible}
	It could seem more natural to speak of $(K_0,\mathcal D_0)$-admissible, but, by Remark \ref{rkproductchart}, changing $V$ is equivalent to changing $K_0$ when $\mathcal D_0$ is fixed. We keep this terminology to be coherent with that of \cite{LMStacks}. It should be pointed however that the previous definition is a bit more restrictive than that of \cite{LMStacks}. We shall see that the additional point \ref{additional}) plays a crucial role in the proof of Lemma \ref{lemma1main}. 
\end{remark}
Notice that, given $(J,F)$ and $(J\cdot F,F')$ both $(V,\mathcal D_0)$-admissible, then $(J,F\circ F')$ is also $(V,\mathcal D_0)$-admissible, as well as $(J\cdot F, F^{-1})$. We set then
 \begin{equation}
 \label{Azero}
 \mathcal A_0=\{(J,F)\in \text{Diff}^0 (M,\mathscr K_0)\mid (J,F)\text{ is }(V,\mathcal D_0)\text{-admissible}\}
 \end{equation}
This set encodes identifications between structures in $K_0$ that are given by composing diffeomorphisms in the neighborhood $\mathcal D_0$.\\
We also consider the two maps from $\mathcal A_0$ to $K_0$
 \begin{equation}
 \label{st}
 s(J,F)=J\qquad\text{ and }\qquad t(J,F)=J\cdot F
 \end{equation}
 and the composition and inverse maps
 \begin{equation}
 \label{multgroupoid}
 m((J,F),(J\cdot F, F'))=(J,F\circ F'),\qquad i(J,F)=(J\cdot F, F^{-1})
 \end{equation} 
 With these structure maps, the groupoid $\mathcal A_0\rightrightarrows K_0$ is an analytic groupoid \cite[Prop. 4.6]{LMStacks} whose stackification over $\mathfrak{S}$ is called {\slshape the Kuranishi stack} of $X_0$. We denote it by $\mathscr A_0$. Note that it depends indeed of the particular choice of $V$.
 
 As a category, its objects are still reduced $M$-deformations over bases belonging to $\mathfrak S$. However, the allowed complex structures are those encoded in $V$; and the allowed families are those obtained by gluing pull-back families of $\mathscr K_0\to K_0$ with respect to $(V,\mathcal D_0)$-admissible diffeomorphisms. In the same way, morphisms are those induced by $(V,\mathcal D_0)$-admissible diffeomorphisms. Hence, not only the complex fibers of the families have to be isomorphic to those of $\mathscr K_0$, but gluings and morphisms of families are restricted.
 
  Of course, the same construction can be carried out for the automorphism groups $\Aun$, resp. $\A$, with the following modifications. In \eqref{Gdecompoglobal}, $\Azero$ is replaced with $\Aun$, resp. $\A$, defining a neighborhood $\mathcal D_1$ of $\Aun$ in $\D$, resp. $\mathcal D$ of $\A$ in $\text{Diff}^+(M)$. This allows to speak of $(V,\mathcal D_1)$-admissible, resp. $(V,\mathcal D)$-admissible diffeomorphisms. But in the $\mathcal{A}$ and $\mathcal{A}_1$ cases, the sets $\Omega_{F_i}$, resp. $\Omega_{F_{i-1}^{-1}}$, in point \ref{additional} must be replaced with $U_{F_i}$, resp. $U_{F_{i-1}^{-1}}$ in the definition of admissibility, since $\A$ and $\Aun$ may contain elements that are not connected to the identity. Then, replacing $\mathcal D_0$ with $\mathcal D_1$, resp. $\mathcal D$ in \eqref{Azero} we obtain the analytic groupoid $\mathcal A_1\rightrightarrows K_0$, resp. $\mathcal A\rightrightarrows K_0$. Its stackification over $\mathfrak S$ gives a stack $\mathscr A_1$, resp. $\mathscr{A}$. The previous description of $\mathscr A_0$ as a category applies to $\mathscr{A}_1$, resp. $\mathscr A$ with the obvious changes. We also call them {\slshape Kuranishi stacks}.
 
Before analyzing more thoroughly these Kuranishi stacks, we would like to say a little more about automorphisms and $K_0$.	
Given $f\in \A$, define $\text{Hol}_f$ as in \eqref{Autaction} and set
\begin{equation}
	\label{sigmaf}
	\sigma_f : J\in U_f\subset K_0\longmapsto (J,f\circ e(\chi(J))\in\mathcal{A}
\end{equation}
where $\chi$ is an analytic mapping from $U_f\subset K_0$ to $W$ with $\chi(0)=0$ defined by
\begin{equation}
	\label{chichecks}
	\text{Hol}_f(J)=\Xi(J\cdot f)=(J\cdot f)\cdot e(\chi(J))
\end{equation}
The map $\sigma_f$ is a local analytic section of the source map $s : \mathcal{A}\to K_0$ defined on $U_f$. It satisfies
\begin{equation}
	\label{tsigma}
	t\circ\sigma_f=\text{Hol}_f
\end{equation}
Moreover, when $f$ is an element of $\Azero$, 
it has values in $\mathcal A_0$, 
when restricted to $\Omega_f$.

Finally, we prove that $\mathscr{A}_0$ contains the connected component of the identity of the automorphism group of every fiber.

\begin{lemma}
	\label{lemmaautcontained}
	Given any $J\in K_0$ and any $f\in\text{\rm Aut}^0(X_J)$, then $(J,f)$ belongs to $\mathcal{A}_0$.
\end{lemma}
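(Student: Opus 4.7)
The plan is to exhibit an explicit $(V,\mathcal D_0)$-admissible decomposition of $(J,f)$ by chopping a continuous path from $\mathrm{id}$ to $f$ inside $\text{\rm Aut}^0(X_J)$ into pieces close to the identity in $\D$. Pick such a path $(f_t)_{t\in[0,1]}$ with $f_0=\mathrm{id}$ and $f_1=f$; through the continuous inclusion $\text{\rm Aut}^0(X_J)\hookrightarrow \D$ it is uniformly continuous on $[0,1]$. Choose a neighborhood $\mathcal N$ of $\mathrm{id}$ in $\D$ with $\mathcal N\cup\mathcal N^{-1}\subset\mathcal D_0$ and such that $J_0\cdot h\in V$ for every $h\in\mathcal N$; both conditions are achievable since $\mathrm{id}\in\mathcal D_0$ and $J_0\in V$.

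By uniform continuity, subdivide $0=t_0<\cdots<t_{p+1}=1$ finely enough that, setting $g_i:=f_{t_{i-1}}^{-1}\circ f_{t_i}$ for $1\le i\le p+1$, the full subpath
\begin{equation*}
    \gamma_s^{(i)}:=f_{t_{i-1}}^{-1}\circ f_{t_i-s(t_i-t_{i-1})},\qquad s\in[0,1],
\end{equation*}
lies inside $\mathcal N$ for every $i$. Set $F_i:=g_{i+1}$ and $J_i:=J$ for $0\le i\le p$, with the convention $J_{p+1}:=J\cdot f$. Since each $g_i\in\text{\rm Aut}(X_J)$, one has $J_{i+1}=J_i\cdot F_i=J\in K_0$ and $J_{p+1}=J$, and a telescoping collapse gives $F_0\circ\cdots\circ F_p=f_{t_0}^{-1}\circ f_{t_{p+1}}=f$. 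Hence conditions (i), (ii), (iii) of Definition \ref{defadm} are satisfied.

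The only nontrivial point is condition (iv): compatibility in the definition of $\Omega_{F_i}$ is measured against the base structure $J_0$, not $J$ itself. But the path $\gamma^{(i)}$ joins $g_i$ to the identity inside $\text{\rm Aut}^0(X_J)$, so $J\cdot\gamma_s^{(i)}=J\in V$ for every $s$, while the inclusion $\gamma_s^{(i)}\in\mathcal N$ forces $J_0\cdot\gamma_s^{(i)}\in V$. The path is therefore compatible, proving $J\in\Omega_{g_i}$; the reversed, inverted path $s\mapsto (\gamma_{1-s}^{(i)})^{-1}$ likewise yields $J\in\Omega_{g_i^{-1}}$. This completes the verification that $(J,f)\in\mathcal A_0$, the main obstacle being precisely the coexistence of the two compatibility conditions at $J_0$ and $J$, which is resolved uniformly by choosing a single fine enough subdivision.
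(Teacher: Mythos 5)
Your argument is correct, and it takes a genuinely different route from the paper. The paper reduces to the case of $f$ close to the identity and then invokes the decomposition \eqref{Gdecompoglobal} together with the sections \eqref{sigmaf}: a small $f\in\text{\rm Aut}^0(X_J)$ is written as $g\circ e(\xi)$ with $g\in\Azero$, so that $(J,f)=\sigma_g(J)$ lands in $\mathcal{A}_0$, and the general case follows since admissible pairs with the same source compose. You instead bypass \eqref{Gdecompoglobal} and $\sigma_g$ entirely and verify Definition \ref{defadm} by hand, chopping a path in $\text{\rm Aut}^0(X_J)$ into pieces lying in a small neighborhood $\mathcal N$ of the identity; the pieces fix $J$, so conditions i)--iii) are immediate, and your explicit paths $\gamma^{(i)}$ settle the delicate condition iv), where compatibility is tested at $J_0$ while membership in the sets $U_{F_t}$ is tested at $J$. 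This is precisely the point the paper's proof leaves implicit (one needs $J\in\Omega_g$ for $\sigma_g(J)$ to lie in $\mathcal{A}_0$), so your version is more self-contained and arguably more transparent, at the cost of being longer than the paper's appeal to the already-established structure of $\mathcal{D}_0$. One small repair: for $J\in\Omega_{g_i^{-1}}$ the compatibility of the reversed inverted path requires $J_0\cdot h\in V$ for $h\in\mathcal N^{-1}$, which your stated choice of $\mathcal N$ does not impose; simply replace $\mathcal N$ by the symmetric neighborhood $\mathcal N\cap\mathcal N^{-1}$ (still satisfying $J_0\cdot h\in V$ for all its elements, by continuity of $h\mapsto J_0\cdot h$ and $h\mapsto J_0\cdot h^{-1}$ at the identity), and the argument goes through verbatim.
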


\begin{proof}
	Let $J\in K_0$ and let $f\in\text{Aut}^0(X_J)$. Assume that $f$ is sufficiently close to the identity to belong to $\mathcal{D}_0$. Then, using \eqref{Gdecompoglobal}, we find $g\in\Azero$ such that $J$ belongs to $U_g$ and
	\begin{equation*}
		(J,f)=\sigma_g(J)
	\end{equation*} 
	This proves that $(J,f)$ belongs to $\mathcal{A}_0$ as soon as $f$ is small enough. Since any element in $\text{Aut}^0(X_J)$ is a finite composition of small elements, this is still true for any $f\in\text{Aut}^0(X_J)$.
\end{proof}
\noindent Connexity is crucial here. Lemma \ref{lemmaautcontained} does not hold true for $\text{Aut}^1(X_J)$.
\subsection{Universality of the Kuranishi stacks}
\label{universal}
Recall that Kuranishi's Theorem asserts the existence of a semi-universal deformation for any compact complex manifold. This is however not a universal deformation when the dimension of the automorphism group varies in the fibers of the Kuranishi family, i.e. in the setting of section \ref{Kurfamily}, the germ of mapping $f$ is not unique. Replacing the Kuranishi space with the Kuranishi stack allows to recover a universality property. 

To do that, we need to germify the Kuranishi stacks. We replace our base category $\mathfrak{S}$ with the base category $\mathfrak{G}$ of germs of analytic spaces. We turn $\mathfrak G$ into a site by considering the trivial coverings. Hence each object of $\mathfrak G$ has a unique covering and there is no non trivial descent data.

We then germify the groupoids. Starting with $\mathcal A\rightrightarrows K_0$, resp. $\mathcal A_0\rightrightarrows K_0$ and $\mathcal A_1\rightrightarrows K_0$, and using $s$ and $t$ as defined in \eqref{st}, we germify $K_0$ at $0$, $\mathcal{A}$, resp. $\mathcal{A}_0$ and $\mathcal{A}_1$, at the fiber $(s\times t)^{-1}(0)$ and germify consequently all the structure maps. We thus obtain the groupoids $(\mathcal A,(s\times t)^{-1}(0))\rightrightarrows (K_0,0)$, resp. $(\mathcal A_0,(s\times t)^{-1}(0))\rightrightarrows (K_0,0)$ and $(\mathcal A_1,(s\times t)^{-1}(0))\rightrightarrows (K_0,0)$.

Finally, we stackify $(\mathcal A,(s\times t)^{-1}(0))\rightrightarrows (K_0,0)$, resp. $(\mathcal A_0,(s\times t)^{-1}(0))\rightrightarrows (K_0,0)$ and $(\mathcal A_1,(s\times t)^{-1}(0))\rightrightarrows (K_0,0)$, over $\mathfrak{G}$. We denote the corresponding stacks by $(\mathscr{A},0)$, resp. $(\mathscr{A}_0,0)$ and $(\mathscr{A}_1,0)$.

The objects of $(\mathscr{A},0)$ over a germ of analytic space $(S,0)$ are germs of $M$-deformations $p : \mathcal X\to S$ with fiber at the point $0$ of $S$ isomorphic to $X_0$. We denote them by $(\mathcal X,p^{-1}(0))\to (S,0)$. The morphisms over some analytic mapping $f: S\to S'$ are germs of morphisms between $M$ deformations $(\mathcal X,p^{-1}(0))\to (S,0)$ and $(\mathcal X',{p'}^{-1}(0))\to (S',0')$ over $f$. Note that $f(0)=0'$.

\begin{remark}
	\label{rkimportant}
	It is crucial to notice that we deal with germs of {\slshape unmarked} deformations. There is obviously a distinguished point (since we deal with germs), but there is no marking of the distinguished fiber.
\end{remark}

The following theorem shows that $(\mathscr{A},0)$ contains indeed {\slshape all} such germs of $M$-deformations and of morphisms between $M$-deformations. It is folklore although we never saw a paper stating this in a precise way.

\begin{theorem}
	\label{thmuniversal}
	The stack $(\mathscr{A},0)$ is the stack $\mathscr{M}$ over $\mathfrak G$ whose objects are the germs of $M$-deformations of $X_0$ and whose morphisms are the germs of morphisms between $M$-deformations.
\end{theorem}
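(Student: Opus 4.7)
The plan is to produce an equivalence of stacks $\Phi : (\mathscr{A}, 0) \to \mathscr{M}$. Since $(\mathscr{A}, 0)$ is the stackification over $\mathfrak G$ of the groupoid $(\mathcal{A}, (s\times t)^{-1}(0)) \rightrightarrows (K_0, 0)$ and $\mathscr{M}$ is already a stack, by the universal property of stackification it suffices to construct a morphism of groupoid-valued prestacks from $\mathcal{A} \rightrightarrows K_0$ (germified at $0$) to $\mathscr{M}$, and then to verify that the induced morphism between stacks is essentially surjective and fully faithful on germs.

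First I would define $\Phi$ on the groupoid. To a germ of map $f : (S,0) \to (K_0, 0)$ (a section of the chart over the object $(S,0)\in\mathfrak G$), assign the germ of pull-back of the Kuranishi family $(f^*\mathscr{K}_0, f^*\pi^{-1}(0)) \to (S,0)$. To a germ of section $s \mapsto (f(s), F(s))$ of $\mathcal{A}$, whose composition with source and target is respectively $f$ and $f' := f\cdot F$, assign the germ of isomorphism $f^*\mathscr{K}_0 \xrightarrow{\sim} (f')^*\mathscr{K}_0$ defined fiberwise by \eqref{ordrefiso}. Compatibility with composition and inversion in $\mathcal{A}$, governed by \eqref{multgroupoid}, makes this a morphism of groupoid prestacks and hence induces $\Phi$ after stackification.

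Essential surjectivity is handled via Ehresmann's theorem. Given a germ of $M$-deformation $(\mathcal X, p^{-1}(0)) \to (S, 0)$, a proper and smooth representative admits a $C^\infty$-trivialization along $p$, producing a germ of holomorphic family of integrable operators $s \mapsto J(s) \in \I$ with $J(0) = J_0$. The Kuranishi local isomorphism \eqref{Kurmap} lets us write uniquely $J(s) = f(s) \cdot e(\xi(s))$ with $f : (S,0) \to (K_0, 0)$ and $\xi : (S,0) \to W$ holomorphic germs. The diffeomorphisms $e(\xi(s))$ assemble into a germ of isomorphism $\mathcal X \simeq f^* \mathscr{K}_0$, placing $\mathcal X$ in the essential image of $\Phi$.

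The hard part is fully faithfulness. Given $f, f' : (S,0) \to (K_0, 0)$ and a germ of isomorphism $\varphi : f^*\mathscr{K}_0 \xrightarrow{\sim} (f')^*\mathscr{K}_0$, the same trivialization argument yields a germ $s \mapsto F(s) \in \text{Diff}^+(M)$ with $f(s)\cdot F(s) = f'(s)$, and $F(0) \in \A$ since $f(0)=f'(0)=0$. The obstacle is lifting $s \mapsto (f(s), F(s))$ to a germ of section of $\mathcal{A}$, which requires verifying the admissibility condition \ref{additional}) of Definition \ref{defadm}. One first decomposes $F(0) = F_0 \circ \cdots \circ F_p$ as a product of elements of $\mathcal{D}$, using the decomposition \eqref{Gdecompoglobal} in its $\A$-version to write each factor as an automorphism of $X_0$ post-composed with a small element $e(\xi_i)$, and choosing the decomposition fine enough that each intermediate operator $J_i := J_0 \cdot F_0 \circ \cdots \circ F_{i-1}$ lies in $K_0$ and simultaneously in $U_{F_i}$ and $U_{F_{i-1}^{-1}}$ (possible thanks to the compactness argument of Lemma \ref{lemmaomegaf}). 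This chain then persists for $s$ close to $0$ by continuity and the openness of the $U_{F_i}$, yielding the required admissible germ section. Faithfulness follows because two admissible lifts of the same $(f,F)$ would produce two decompositions of the same diffeomorphism $F(s)$ compatible with \eqref{Kurmap}; the uniqueness of the Kuranishi decomposition then forces them to coincide as germs in $\mathcal{A}$, concluding the proof.
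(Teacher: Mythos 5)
Your overall architecture is the same as the paper's: essential surjectivity from Kuranishi semi-universality (the trivial-coverings remark plus pull-backs of $\mathscr{K}_0$), fullness by lifting a germ of family isomorphism to a germ of section of the groupoid, and faithfulness because a point of $\mathcal{A}$ literally records the fiberwise diffeomorphism. The flaw is in the step you call the hard part. For the groupoid $\mathcal{A}$ there is nothing to decompose: admissibility of $(f(s),F(s))$ holds with $p=0$ in Definition \ref{defadm}. Indeed $F(0)\in\A$ and $\mathcal D$ is a neighborhood of the \emph{whole} group $\A$ (the $\A$-version of \eqref{Gdecompoglobal}), so $F(s)$ and $F(s)^{-1}$ lie in $\mathcal D$ for $s$ close to $0$; moreover in the $\mathcal{A}$ case point \ref{additional}) of Definition \ref{defadm} uses the sets $U_{F}$, not $\Omega_{F}$, and the conditions $f(s)\in U_{F(s)}$, $f'(s)\in U_{F(s)^{-1}}$ are automatic because $f(s)\cdot F(s)=f'(s)\in K_0\subset V$ and $f'(s)\cdot F(s)^{-1}=f(s)\in V$. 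This is exactly why the paper's proof is short at this point: germifying along $(s\times t)^{-1}(0)\simeq\A$ means that knowing the restriction of the morphism to the central fiber is an automorphism already suffices.

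By contrast, the verification you propose would not go through as written. You cannot choose the decomposition $F(0)=F_0\circ\cdots\circ F_p$ ``fine enough that each intermediate operator $J_0\cdot F_0\circ\cdots\circ F_{i-1}$ lies in $K_0$'': if $F_i$ is an automorphism composed with $e(\xi_i)$, $\xi_i\neq 0$, the resulting operator lies in $V$ but not in $K_0$ (by the uniqueness in \eqref{Kurmap}, $J\cdot e(\xi)\in K_0$ forces $\xi=0$), and shrinking the pieces does not change this. Lemma \ref{lemmaomegaf} is also not the relevant tool: it concerns compatible paths and the sets $\Omega_f$, i.e.\ the machinery needed for $\mathcal{A}_0$ and $\mathcal{A}_1$, where connectivity to the identity matters (as in Lemma \ref{lemma1main}), not for $\mathcal{A}$, whose admissibility is stated with the $U$'s precisely because $\A$ need not be connected. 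So you have imported the difficulty of the wrong groupoid; once that paragraph is replaced by the $p=0$ observation above (and faithfulness by the remark that a section of $\mathcal{A}$ determines, hence is determined by, the fiberwise diffeomorphisms), your argument coincides with the paper's.
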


\begin{proof}
	Since the site $\mathfrak{G}$ does not contain any non-trivial covering, there is no gluings of families, and the torsors associated to $(\mathscr{A},0)$ are just given by the pull-backs of the germ of Kuranishi family $(\mathscr{K}_0,\pi^{-1}(0))\to (K_0,0)$. Kuranishi's Theorem implies that the natural inclusion of $(\mathscr{A},0)$ in the stack $\mathscr{M}$ is essentially surjective.
	
	Morphisms over the identity of some germ $(S,0)$ of analytic space are thus given by morphisms $F$ between two germs of families $(f^*\mathscr{K}_0,\pi^{-1}(0))\to (B,0)$ and $(g^*\mathscr{K}_0,\pi^{-1}(0))\to (B,0)$ for $f$ and $g$ germs of analytic mappings from $(B,0)$ to $(K_0,0)$. Hence $F$ restricted to the central fiber $X_0\simeq \pi^{-1}(0)$ is an automorphism of the central fiber that is an element of $\A$. But $(s\times t)^{-1}(0)$ is isomorphic to $\A$ so such a morphism $F$ is induced by an analytic mapping from $(B,0)$ to $(\mathcal A,(s\times t)^{-1}(0))$ that we still denote by $F$ which satisfies $s\circ F=f$ and $t\circ F=g$. This shows that the natural inclusion of $(\mathscr{A},0)$ in the stack $\mathscr{M}$ is fully faithful. 
\end{proof}

This must be thought of as the good property of universality. Indeed, the failure of universality in Kuranishi's theorem comes from the existence of automorphisms of the Kuranishi family fixing the central fiber but not all the fibers. Imposing a marking is an artificial and incomplete solution to this problem because it only kills automorphisms inducing a non-trivial automorphism on the central fiber. Now, the stack $(\mathscr{A},0)$ is universal for germs of $M$-deformations of $X_0$, because, thanks to Theorem \ref{universal}, any such germ $(\mathcal X,X_0)\to (B,0)$ is induced by an analytic map from its base to $(\mathscr{A},0)$, yielding a diagram
\begin{equation}
	\label{CDpullbackgerm}
	\begin{tikzcd}
		(\mathcal X,X_0)\arrow[r,"F"]\arrow[d] &(\mathscr{K}_0,\pi^{-1}(0))\arrow[d,"\pi"]\\
		(B,0)\arrow[r,"f"'] &(K_0,0)
	\end{tikzcd}
\end{equation}
 Moreover the full map $(f,F)$ is unique up to unique isomorphism of family encoded in $(\mathscr{A},0)$. So thinking of $(\mathscr{A},0)$ as the quotient of the Kuranishi space by the automorphisms of the Kuranishi family, not only the map $f$ but also the full map $(f,F)$ is unique; and this occurs with no extra condition. 

In the same way, we have 
\begin{corollary}
	\label{thmuniversal1}
	The stack $(\mathscr{A}_1,0)$ is the stack over $\mathfrak G$ whose objects are the germs of reduced $M$-deformations and whose morphisms are the germs of morphisms between reduced $M$-deformations.
\end{corollary}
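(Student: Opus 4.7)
The strategy is to mimic the proof of Theorem \ref{thmuniversal} verbatim, with the single structural change being the replacement of $\text{Aut}(X_0)$ by $\text{Aut}^1(X_0) = \text{Aut}(X_0)\cap\text{Diff}^0(M)$, and correspondingly $\mathcal D$ by $\mathcal D_1$ in the admissibility data. Let $\mathscr M^0$ denote the stack over $\mathfrak G$ of germs of reduced $M$-deformations of $X_0$. There is an obvious inclusion $(\mathscr{A}_1,0)\hookrightarrow \mathscr M^0$ coming from the construction (objects of $(\mathscr{A}_1,0)$ are, by definition, germs of reduced $M$-deformations). The task is to check that this inclusion is essentially surjective and fully faithful.

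For essential surjectivity, let $(\mathcal X, p^{-1}(0))\to(S,0)$ be a germ of reduced $M$-deformation with central fiber isomorphic to $X_0$. Since the site $\mathfrak G$ has only trivial coverings, no gluing is needed, and Kuranishi's Theorem provides an analytic map $f:(S,0)\to(K_0,0)$ and an isomorphism $\mathcal X\simeq f^*\mathscr K_0$. The Kuranishi family itself is a reduced $M$-deformation, because the Kuranishi map \eqref{Kurmap} is built using the exponential map at the identity of $\D$, so that all markings of fibers take values in $\D$. Hence the isomorphism $\mathcal X\simeq f^*\mathscr K_0$ can be chosen to be an isomorphism of reduced $M$-deformations. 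This shows that every object of $\mathscr M^0$ lies in the essential image of $(\mathscr{A}_1,0)$.

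For fully faithfulness, consider a morphism $F:(f^*\mathscr K_0,\pi^{-1}(0))\to (g^*\mathscr K_0,\pi^{-1}(0))$ between two germs of reduced $M$-deformations over the identity of $(B,0)$. Restricting $F$ to the central fiber $X_0$ gives a self-diffeomorphism $F_0$ of $X_0$ which, by the reduced hypothesis on source and target, lies in $\text{Diff}^0(M)$; hence $F_0\in\text{Aut}^1(X_0)$. Now in the construction of $\mathcal A_1$ one uses a neighborhood $\mathcal D_1$ of $\text{Aut}^1(X_0)$ in $\D$, so the central fiber $(s\times t)^{-1}(0)$ of $\mathcal A_1$ is canonically identified with $\text{Aut}^1(X_0)$. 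Repeating the argument of the proof of Theorem \ref{thmuniversal}, $F$ is induced by an analytic map $(B,0)\to(\mathcal A_1,(s\times t)^{-1}(0))$, still denoted $F$, satisfying $s\circ F=f$ and $t\circ F=g$, and this map is unique.

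The only place that requires care is the verification that the resulting map actually takes values in $\mathcal A_1$, i.e.\ that near the central fiber the $(V,\mathcal D_1)$-admissibility (in particular point \ref{additional}) of Definition \ref{defadm}) is satisfied. This is where the germ setting is essential: the condition $J_i\in U_{F_i}$ and $J_i\in U_{F_{i-1}^{-1}}$ is an open condition containing $(0,F_0)$, so it holds automatically on a sufficiently small germ neighborhood of the central fiber, exactly as in the proof of Theorem \ref{thmuniversal}. I expect this last verification, though routine, to be the only step where the analogy with Theorem \ref{thmuniversal} is not completely formal.
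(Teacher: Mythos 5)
Your proposal is correct and follows essentially the same route as the paper, which states this as an immediate corollary of Theorem \ref{thmuniversal}: one repeats that proof with $\A$ replaced by $\Aun$ and $\mathcal D$ by $\mathcal D_1$, using the markings to see that a morphism of reduced deformations restricts on the central fiber to an element of $\Aun\simeq (s\times t)^{-1}(0)$ in $\mathcal{A}_1$. Your added checks (that the Kuranishi family is itself reduced, and that $(V,\mathcal D_1)$-admissibility with the $U_{F_i}$ condition is open and hence automatic after germification at the central fiber) are exactly the routine verifications the paper leaves implicit.
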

Here $C^\infty$-markings of the $M$-deformations, that is the choice of a $C^\infty$ diffeomorphism from $M$ to the central fiber, can be used to characterize reduced families. Morphisms are required to induce on $M$ a diffeomorphism isotopic to the identity through the markings. Here again, we may rephrase this Corollary as: the stack $(\mathscr{A}_1,0)$ is universal for germs of reduced $M$-deformations of $X_0$.

We also have

\begin{corollary}
	\label{thmuniversal0}
	The stack $(\mathscr{A}_0,0)$ is the stack over $\mathfrak G$ whose objects are the germs of $0$-reduced $M$-deformations and whose morphisms are the germs of morphisms between $0$-reduced $M$-deformations.
\end{corollary}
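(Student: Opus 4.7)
The plan is to mirror the proof of Theorem \ref{thmuniversal} and Corollary \ref{thmuniversal1}, substituting $\Azero$ for $\A$ and $\Aun$ throughout. By analogy with the previous two statements, a \emph{$0$-reduced} $M$-deformation is one whose marking data is rigid enough that any germ morphism between such deformations restricts on the central fibre to an element of $\Azero$ (equivalently, the reduction is carried one step further, from $\D$ to its identity-path-component via deformations through $\mathcal{D}_0$).

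Essential surjectivity is identical to the two previous cases: the site $\mathfrak G$ admits only trivial coverings, so torsors attached to $(\mathscr{A}_0,0)$ reduce to pull-backs of the germ of Kuranishi family $(\mathscr{K}_0,\pi^{-1}(0))\to(K_0,0)$, and Kuranishi's theorem yields such a pull-back diagram for any germ of $0$-reduced $M$-deformation of $X_0$.

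For fully faithfulness, given a germ morphism $F$ over the identity of $(S,0)$ between two pull-backs $(f^*\mathscr{K}_0,\pi^{-1}(0))\to(B,0)$ and $(g^*\mathscr{K}_0,\pi^{-1}(0))\to(B,0)$, its restriction to the central fibre is an automorphism $h\in\A$. The $0$-reduced hypothesis forces $h\in\Azero$, and Lemma \ref{lemmaautcontained} guarantees $(0,h)\in\mathcal{A}_0$, so that the fibre $(s\times t)^{-1}(0)$ inside $\mathcal{A}_0$ is exactly $\Azero$. Reproducing verbatim the argument of Theorem \ref{thmuniversal}, the full morphism $F$ is then induced by an analytic map $(B,0)\to(\mathcal{A}_0,(s\times t)^{-1}(0))$ whose source and target compositions recover $f$ and $g$; since any two such lifts differ by a morphism in $\mathcal{A}_0$, the natural inclusion of $(\mathscr{A}_0,0)$ into the stack of germs of $0$-reduced $M$-deformations is fully faithful.

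The main point that needs care is verifying that the induced analytic map really lands in $\mathcal{A}_0$ and not just in the ambient $\mathcal{A}_1$ or $\mathcal{A}$; this amounts to the additional clause \emph{iv)} of Definition \ref{defadm} concerning the $\Omega_{F_i}$ and $\Omega_{F_{i-1}^{-1}}$ sets. After shrinking $S$ around $0$, the $0$-reduced marking provides a continuous path in $\Azero$ from $h$ to the identity, which can be chopped into arbitrarily small pieces lying in $\mathcal{D}_0$; the construction in the proof of Lemma \ref{lemmaautcontained} then supplies at each step a section of $s$ of the form $\sigma_g$ with $g\in\Azero$, whose values sit in $\mathcal{A}_0$ by that same lemma. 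This chain of small pieces yields precisely an admissible decomposition in the sense of Definition \ref{defadm}, so the induced map factors through $(\mathcal{A}_0,(s\times t)^{-1}(0))$ as required, completing the proof.
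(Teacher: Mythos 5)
Your proposal is correct and follows essentially the paper's own route: the paper proves Corollary \ref{thmuniversal0} implicitly by running the proof of Theorem \ref{thmuniversal} with $\Azero$ in place of $\A$, the $0$-reduced condition forcing the central-fibre automorphism into $\Azero$, and Lemma \ref{lemmaautcontained} identifying the $(s\times t)^{-1}(0)$-fibre of $\mathcal{A}_0$ with $\Azero$. Your extra paragraph checking, via small pieces in $\mathcal{D}_0$ and the sections $\sigma_g$, that the induced map really lands in $\mathcal{A}_0$ is exactly the admissibility verification the paper delegates to Lemma \ref{lemmaautcontained}, so the two arguments coincide.
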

In other words, the stack $(\mathscr{A}_0,0)$ is universal for germs of $0$-reduced $M$-deformations of $X_0$.
A $0$-reduced $M$-deformation is just a marked family. We use a different terminology because morphisms are different. A morphism of marked families is required to induce on $X_0$ the identity through the markings, whereas a morphism of $0$-reduced $M$-deformation is required to induce on $X_0$ an element of $\Azero$ through the markings.

With this difference in mind, it is interesting to compare Corollary \ref{thmuniversal0} with the already cited classical statement of universality of \cite{Wavrik}, see also \cite{Kur3}. When the dimension of the automorphism group is constant on the fibers of the Kuranishi family and the Kuranishi space is reduced, every element of $\Azero$ extends as an automorphism of the Kuranishi family inducing the identity on $K_0$. Imposing a marking of the families prevents from reparametrizing with an element of $\A/\Azero$, yielding unicity of the pull-back morphism $f$ and universality in the classical sense. However, universality in the stack sense of Corollary \ref{thmuniversal0} is
\begin{enumerate}[i)]
	\item more general because it does not need extra hypotheses.
	\item more natural because the good condition to impose on the central fiber is to authorize reparametrizations by an element of $\Azero$, and not to prevent any reparametrization as the classical marking does.
	\item more precise because it gives unicity of the full mapping $(f,F)$ of \eqref{CDpullbackgerm}, that is it keeps track of the automorphisms of the Kuranishi family, even if they induce the identity on the base. 
\end{enumerate}

\section{The $\Z$-Teichm\"uller stack}
\label{secZteich}
Motivated by \cite{CatAut}, we introduce now the $\Z$-Teichm\"uller stack as a new stack intermediary between the Teichm\"uller stack and the moduli stack. It will play an important role when analyzing exceptional points. All that has been said before on the Teichm\"uller stack can be easily adapted to the $\Z$-Teichm\"uller stack.

\subsection{Definition and basic facts}
\label{subsecZdef}

Let $\DZ$ be the subgroup of $\text{Diff}^+(M)$ of diffeomorphisms that induce the identity on the singular cohomology groups $H^*(M,\Z)$. Recall that the $C^\infty$-type of a $M$-deformation, resp. a reduced $M$-deformation, is a bundle over some base $S$ with fiber $M$ and structural group $\text{Diff}^+(M)$, resp. $\D$. In the same way, a $M$-deformation is called {\it $\Z$-reduced} if the structural group is reduced to $\DZ$. And a morphism of $\Z$-reduced $M$-deformations $\mathcal X$ and $\mathcal X'$ over an analytic morphism $f: S\to S'$ is a cartesian diagram
$$
\begin{tikzcd}
	\mathcal X \arrow[d]\arrow[dr, phantom, "{\scriptscriptstyle \square}", very near start]\arrow[r]&\mathcal X'\arrow[d]\\
	S \arrow[r,"f"]&S'
\end{tikzcd}
$$
such that $\mathcal X$ and $f^*\mathcal X'$ are isomorphic as $\DZ$-bundles over $S$.

The $\Z$-Teichm\"uller stack $\TZ$ is the stack over the site $\mathfrak{S}$ whose objects are $\Z$-reduced $M$-deformations and morphisms are morphisms of $\Z$-reduced $M$-deformations. The natural morphism $\TZ\to\mathfrak{S}$ sends a $\Z$-reduced $M$-deformation onto its base and a morphism of $\Z$-reduced $M$-deformation to the corresponding morphism between their bases. By a direct adaptation of \cite{LMStacks}, it is an analytic stack under the hypothesis that the $h^0$-function is bounded on the set $\I$. Indeed, the analytic atlas $T$ of the Teichm\"uller stack constructed in \cite{LMStacks} under the same hypothesis is also an analytic atlas of $\TZ$. The difference between the two cases occur when computing the fiber product $T\times_{\TZ}T$ but the projections are still smooth and surjective.
A point $X_0:=(M,J_0)$ is an object of $\TZ(pt)$ that is a complex structure on $M$ up to biholomorphisms inducing the identity in cohomology with $\Z$-coefficients\footnote{ Of course, there are natural variants of the $\Z$-Teichm\"uller stack by considering other cohomologies or homologies such as singular cohomology with coefficients in $\mathbb{Q}$.}. 

From the natural inclusions 
$\D\subset \DZ\subset \text{Diff}^+(M)$,
we deduce the natural inclusions
\begin{equation}
	\label{eqTeichinclusions}
	\T\hookrightarrow\TZ\hookrightarrow\mathscr{M}(M)
\end{equation}
meaning that an object, resp. a morphism of $\T$ is also an object, resp. a morphism of $\TZ$ and that an object, resp. a morphism of $\TZ$ is also an object, resp. a morphism of $\mathscr{M}(M)$. The isotropy group of $X_0$ as a point of $\TZ$ is the group
\begin{equation}
	\label{AutZdef}
	\text{Aut}^\Z(X_0):=\text{Aut}(X_0)\cap \DZ.
\end{equation}
which contains both $\Azero$ and $\Aun$ and is contained in $\A$. Note that all inclusions may be strict \cite{CatAut}, showing in particular that the first inclusion map of \eqref{eqTeichinclusions} may also be strict.

\subsection{The $\Z$-Kuranishi stack}
\label{subsecZkur}
 The construction of \S \ref{Kurstack} can be carried out for the automorphism groups $\AZ$ with the following modifications. In \eqref{Gdecompoglobal}, $\Azero$ is replaced with $\AZ$ defining a neighborhood $\mathcal D_\Z$ of $\AZ$ in $\DZ$. This allows to speak of $(V,\mathcal D_\Z)$-admissible diffeomorphisms. The sets $\Omega_{F_i}$, resp. $\Omega_{F_{i-1}^{-1}}$, in point \ref{additional} must be replaced with $U_{F_i}$, resp. $U_{F_{i-1}^{-1}}$ in the definition of admissibility, since $\AZ$ may contain elements that are not connected to the identity. Then, replacing $\mathcal D_0$ with $\mathcal D_\Z$ in \eqref{Azero} we obtain the analytic groupoid $\mathcal A_\Z\rightrightarrows K_0$. Its stackification over $\mathfrak S$ gives a stack $\mathscr A_\Z$ that we call the {\slshape $\Z$-Kuranishi stack} of $X_0$.
 
 We may now deduce another Corollary to Theorem \ref{thmuniversal} that refers to $\Z$-reduced $M$-deformations. As in \S \ref{secuniversal}, we germify the analytic groupoid $\mathcal A_\Z\rightrightarrows K_0$ and denote its stackification over $\mathfrak G$ by $(\mathscr{A}_\Z,0)$. We then have

\begin{corollary}
	\label{thmuniversalZ}
	The stack $(\mathscr{A}_\Z,0)$ is the stack over $\mathfrak G$ whose objects are the germs of $\Z$-reduced $M$-deformations and whose morphisms are the germs of morphisms between $\Z$-reduced $M$-deformations.
\end{corollary}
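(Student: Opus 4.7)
The plan is to mimic the proof of Theorem \ref{thmuniversal}, together with the minor bookkeeping already indicated at the start of \S \ref{subsecZkur}. Let $\mathscr{M}_\Z$ denote the target stack of germs of $\Z$-reduced $M$-deformations of $X_0$. There is an evident inclusion $(\mathscr{A}_\Z,0) \hookrightarrow \mathscr{M}_\Z$, and the proof consists in checking that this inclusion is both essentially surjective and fully faithful.

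For essential surjectivity, I start from an arbitrary germ $(\mathcal{X}, p^{-1}(0))\to (S,0)$ of $\Z$-reduced $M$-deformation. Forgetting the $\DZ$-structure, this is a germ of $M$-deformation, so Kuranishi's theorem provides a germ of holomorphic map $f:(S,0)\to (K_0,0)$ and an isomorphism of analytic families $\mathcal{X}\simeq f^*\mathscr{K}_0$. Since the only coverings in $\mathfrak{G}$ are trivial, there are no gluings to worry about, and this single pullback realizes the object in $(\mathscr{A}_\Z,0)$. The $\Z$-reduced structure is automatically preserved because $\DZ\supset\D$ and every pullback of the Kuranishi family carries a canonical $\D$-structure, hence a fortiori a $\DZ$-structure.

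For full faithfulness, I consider a morphism $F$ in $\mathscr{M}_\Z$ over some germ of analytic map $h:(B,0)\to (B',0)$ between two germs of families $f^*\mathscr{K}_0$ and $g^*\mathscr{K}_0$. Restricting $F$ to the distinguished fiber yields an automorphism of $X_0$ that must belong to $\text{Aut}^\Z(X_0)$ by the $\Z$-reduced condition. The crucial point is that the germified fiber $(s\times t)^{-1}(0)$ in $\mathcal{A}_\Z$ is, by the very definition of $(V,\mathcal{D}_\Z)$-admissibility, canonically identified with $\text{Aut}^\Z(X_0)$; this is precisely why $\mathcal{D}_0$ was replaced by $\mathcal{D}_\Z$ and $\Omega_{F_i}$ by $U_{F_i}$ in the admissibility condition, so as to capture the possibly disconnected group $\AZ$. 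It then follows that $F$ is induced by a unique germ of analytic map $(B,0)\to (\mathcal{A}_\Z,(s\times t)^{-1}(0))$ satisfying $s\circ F = f$ and $t\circ F = g\circ h$, giving full faithfulness.

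The step that I expect to demand most care is the verification that the analytic map into $\mathcal{A}_\Z$ produced above really lands in $\mathcal{A}_\Z$ and not just in the ambient space of diffeomorphism pairs — that is, that the fiberwise automorphisms extend to a locally admissible family in the precise sense of Definition \ref{defadm} adapted to $\mathcal{D}_\Z$. The analog of Lemma \ref{lemmaautcontained} fails for $\AZ$ for connectivity reasons (as it already does for $\Aun$), so one cannot argue componentwise from a neighborhood of the identity. Instead, one factors the fiberwise diffeomorphism along a finite covering of its image by translates of $\mathcal{D}_\Z$ under the groupoid multiplication and checks, via the replacement of $\Omega_{F_i}$ by $U_{F_i}$, that each factor is admissible; this is exactly the technical role of the modified admissibility condition introduced in \S \ref{subsecZkur}.
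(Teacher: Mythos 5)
Your proposal is correct and follows essentially the same route as the paper, which deduces this corollary from the proof of Theorem \ref{thmuniversal} with the obvious changes: essential surjectivity from Kuranishi's theorem plus the triviality of coverings in $\mathfrak{G}$, and full faithfulness from the identification $(s\times t)^{-1}(0)\simeq\AZ$. Your closing worry is, however, overcomplicated: since $\mathcal D_\Z$ is by construction a neighborhood of the \emph{whole} (possibly disconnected) group $\AZ$ in $\DZ$, the fiberwise isomorphisms near the base point lie in $\mathcal D_\Z$ and a one-step admissibility sequence (using $U_{F}$, with $J\cdot F\in K_0\subset V$) already shows the map lands in $\mathcal{A}_\Z$ — no factorization through translates of $\mathcal D_\Z$, and no appeal to (the failure of) Lemma \ref{lemmaautcontained}, is needed at the germ level.
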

Here $C^\infty$-markings of the $M$-deformations, that is the choice of a $C^\infty$ diffeomorphism from $M$ to the central fiber, can be used to characterize $\Z$-reduced families. Morphisms are required to induce on $M$ a diffeomorphism inducing the identity in cohomology through the markings. Here again, we may rephrase this Corollary as: the stack $(\mathscr{A}_\Z,0)$ is universal for germs of $\Z$-reduced $M$-deformations of $X_0$.

\section{Local structure of the Teichm\"uller stacks}
\label{localTeich}
A neighborhood of $X_0$ in $\mathscr{T}(M)$ consists of $M$-deformations all of whose fibers are close to $X_0$, that is can be encoded by structures $J$ living in a neighborhood $V$ of $J_0$ in $\mathcal I(M)$. As in \cite{LMStacks}, we shall denote it by $\mathscr{T}(M, V)$. The corresponding neighborhood of $\TZ$ is denoted by $\mathscr{T}^\Z(M, V)$. From now on, we assume that $V$ is open, connected and small enough to come equipped with a Kuranishi mapping \eqref{Kurmap}.

\subsection{Atlas}
\label{atlas}
The main difficulty to construct an atlas in \cite{LMStacks} was to describe all the morphisms between the different Kuranishi spaces involved to compute the fiber product. Here, in the local case, we just need to use one Kuranishi space and family as atlas and it is straightforward to give the associated groupoid for $\mathscr{T}(M, V)$. Just consider
\begin{equation}
	\label{atlasneigh}
	\mathcal T_V:=\{(J,f)\in\text{Diff}^0(M,\mathscr K_0)\mid J\cdot f\in K_0\}
\end{equation}
and the groupoid $\mathcal T_V\rightrightarrows K_0$ with structure maps as in \eqref{st} and \eqref{multgroupoid}. And consider
\begin{equation}
	\label{Zatlasneigh}
	\mathcal T^\Z_V:=\{(J,f)\in\text{Diff}^\Z(M,\mathscr K_0)\mid J\cdot f\in K_0\}
\end{equation}
and $\mathcal T^\Z_V\rightrightarrows K_0$ for a neighborhood of $X_0$ in $\TZ$.

Observe that \eqref{atlasneigh}, resp. \eqref{Zatlasneigh} is very close to the groupoid $\mathcal{A}_1\rightrightarrows K_0$ of the Kuranishi stack $\mathscr{A}_1$, resp. of $\mathcal{A}_\Z\rightrightarrows K_0$. Indeed the points of $\mathscr{T}(M, V)$, resp. $\mathscr{T}^\Z(M, V)$ are exactly the same than those of $\mathscr{A}_1$, resp. $\mathscr{A}_\Z$, but $\mathscr{A}_1$, resp. $\mathscr{A}_\Z$, have less morphisms, hence also less descent data and thus less objects. To understand how to pass from $\mathscr{A}_1$ to $\mathscr{T}(M,V)$, resp. from $\mathscr{A}_\Z$ to $\mathscr{T}^\Z(M, V)$, we need to understand and encode the "missing" morphisms. 

\subsection{Target Germification}
\label{proj}
As in the case of the Kuranishi stacks, we would like to germify $ V$ and consider only complex structures belonging to the germ of some point $J_0$ in $ V$. This process is different from the germification process of section \ref{universal} which was about germifying the base category and thus the base of $M$-deformations. Here we still want to consider $M$-deformations over any analytic bases, but need to germify the set of possible fibers. Hence we need a target germification process, as opposed to the source germification process used in Section \ref{universal}. To avoid cumbersome notations and an unreasonable use of resp., we only describe the process for $\mathscr{T}(M,V)$ and let the reader add a $\Z$ at each step in the case of $\mathscr{T}^\Z(M, V)$.

To do that, we look at sequences of stacks $\mathscr{T}(M, V_n)$ for $( V_n)$ an inclusion decreasing sequence of neighborhoods of a fixed point $J_0$ with $ V_0= V$. Corresponding to a nesting sequence 
\begin{equation}
	\label{nestingseq}
	\hdots\subset  V_n\subset\hdots\subset  V\subset\mathscr{I}
\end{equation}
we obtain the sequence
\begin{equation}
	\label{nestingsstack}
	\begin{tikzcd}
	\hdots \arrow[r,hook]&\mathscr{T}(M, V_n)\arrow[r,hook]&\hdots\arrow[r,hook]&\mathscr{T}(M, V)
	\end{tikzcd}
\end{equation}
We consider sequences of $M$-deformations over the same base $(\mathcal{X}_n\to B)$ such that $\mathcal{X}_n$ is an object of $\mathscr{T}(M, V_n)$ for some decreasing sequence \eqref{nestingseq}. We identify two such sequences $({\mathcal{X}'}_n\to B)$ and $(\mathcal{X}_n\to B)$ if the families ${\mathcal{X}'}_n\to B$ and $\mathcal{X}_n\to B$ are isomorphic as objects of $\mathscr{T}(M, V)$ for every large $n$. 
Here are some examples of such sequences
\begin{enumerate}[i)]
	\item Start with a $M$-deformation $\mathcal{X}\to \mathbb{D}$ over the disk with central fiber isomorphic to $X_0$. Then consider the pull-back sequence $(\lambda_n^*\mathcal{X}\to \mathbb{D})$ where $(\lambda_n)$ is a sequence of homotheties with ratio decreasing from $1$ to $0$.
	\item Start with a fiber bundle $E\to B$ with fiber $X_0$ and structural group $\Aun$ and a $M$-deformation $\pi : \mathcal{X}\to B\times\mathbb{D}$ which coincides with the bundle $E$ over $B\times\{0\}$. Then pick up some sequence $(x_n)$ in the disk which converges to $0$. Then consider the sequence of families $(\pi^{-1}(B\times\{x_n\})\to B)$.
\end{enumerate}
Morphisms from $({\mathcal{X}'}_n\to B)$ to $(\mathcal{X}_n\to B)$ are sequences $(f_n)$ with $f_n$ a family morphism over $B$ from ${\mathcal{X}'}_n$ to $\mathcal{X}_n$ for every $n$. Once again, we identify two such sequences $(f_n)$ and $(g_n)$ if there exists some integer $k$ such that $f_n=g_n$ as morphisms of $\mathscr{T}(M, V)$ for $n\geq k$.
\vspace{5pt}\\
We call the resulting category the {\slshape target germification} of $\mathscr{T}(M)$ at $J_0$ and denote it by $(\mathscr{T}(M),J_0)$. Observe that this is not a stack but rather a projective limit of stacks.

\subsection{The natural morphism from Kur to Teich}
\label{secfiniteetale}
We want to analyse the structure of the analytic space $\mathcal T_V$ defined in \eqref{atlasneigh} and compare it with $\mathcal{A}_1$. 

We already observed in Section \ref{atlas} that there is a natural inclusion of groupoids of $\mathcal{A}_1$ into $\mathcal T_V$. It comes from the fact that $\mathcal T_V$ encodes every morphism between fibers of the Kuranishi family, whereas $\mathcal{A}_1$ encodes {\slshape some} morphisms between fibers of the Kuranishi family. This inclusion is just the description at the level of atlases of the natural inclusion of $\mathscr{A}_1$ into $\mathscr{T}(M,V)$: $\mathscr{A}_1$-objects, resp. $\mathscr{A}_1$-morphisms, inject in $\mathscr{T}(M,V)$-objects, resp. $\mathscr{T}(M,V)$-morphisms. So our final goal here is to give the structure of this inclusion.

There exists also a natural inclusion of $\mathscr{A}_0$ into $\mathscr{T}(M,V)$. We first relate the morphisms encoded in $\mathcal T_V$ to those encoded in $\mathcal{A}_0$. Set

\begin{definition}
	\label{defshom}
	Let $(J,f)\in \mathcal T_V$ and let $(J,g)\in \mathcal T_V$. Then they are {\slshape $s$-homotopic} if there exists a compatible path $(F_t)$ in $\D$ such that $t\mapsto (J,F_t)$ joins $(J,f)$ to $(J,g)$ in $\mathcal T_V$. 
\end{definition}

In other words, $(J,f)$ to $(J,g)$ are $s$-homotopic if they belong to the same connected component of the $s$-fiber of $\mathcal{T}_V\rightrightarrows K_0$ at $J$.

\begin{lemma}
	\label{lemma1main}
	Let $(J,f)\in \mathcal T_V$ and let $(J,g)\in \mathcal T_V$. Then these two elements are $s$-homotopic if and only if $(J\cdot f,f^{-1}\circ g)$ belongs to $\mathcal{A}_0$.
\end{lemma}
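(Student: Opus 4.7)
The plan is to prove the two directions separately, with the Kuranishi product decomposition \eqref{Kurmap} playing a central role in converting paths in $V$ into paths in $K_0$.

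For the direction $\Rightarrow$, suppose there is a continuous path $F_t$ in $\D$ from $f$ to $g$ with $J\cdot F_t \in K_0$ for all $t$. I would set $G_t := f^{-1}\circ F_t$, a continuous path in $\D$ from the identity to $f^{-1}\circ g$ satisfying $(J\cdot f)\cdot G_t = J\cdot F_t\in K_0$. By uniform continuity, choose a subdivision $0=t_0<\cdots<t_{p+1}=1$ fine enough that each $F_i := G_{t_i}^{-1}\circ G_{t_{i+1}}$ and its inverse lie in $\mathcal D_0$, and set $J_i := (J\cdot f)\cdot G_{t_i}$. The telescoping identities $J_{i+1}=J_i\cdot F_i \in K_0$ and $F_0\circ\cdots\circ F_p = f^{-1}\circ g$ are immediate. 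For condition \ref{additional}, the sub-path $s\mapsto G_{t_i}^{-1}\circ G_s$ on $[t_i,t_{i+1}]$ is a path in $\D$ from the identity to $F_i$ along which $J_i$ acts inside $V$ (indeed $J_i\cdot(G_{t_i}^{-1}\circ G_s) = J\cdot F_s\in K_0$), proving $J_i\in\Omega_{F_i}$; the analogous sub-path on $[t_{i-1},t_i]$ handles $J_i\in\Omega_{F_{i-1}^{-1}}$.

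For the direction $\Leftarrow$, assume $(J\cdot f,f^{-1}\circ g)$ is $(V,\mathcal D_0)$-admissible via some $(J_i,F_i)_{0\leq i\leq p}$. From $J_i\in\Omega_{F_i}$, pick for each $i$ a continuous path $G^i_t$ in $\D$ from $F_i$ to the identity with $J_i\cdot G^i_t\in V$. Apply the Kuranishi isomorphism to decompose uniquely $J_i\cdot G^i_t = J'_i(t)\cdot e(\xi_i(t))$ with $(J'_i(t),\xi_i(t))\in K_0\times W$ depending continuously on $t$; since $J_i$ and $J_{i+1}=J_i\cdot F_i$ already lie in $K_0$, uniqueness of the Kuranishi decomposition forces $\xi_i(0)=\xi_i(1)=0$. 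Hence $\tilde G^i_t := G^i_t\circ e(\xi_i(t))^{-1}$ is still a continuous path in $\D$ from $F_i$ to the identity, but now $J_i\cdot\tilde G^i_t = J'_i(t)\in K_0$. Setting $h_0=\mathrm{id}$ and $h_i := F_0\circ\cdots\circ F_{i-1}$, one has $h_{p+1}=f^{-1}\circ g$ and $J\cdot f\cdot h_i = J_i$; concatenating $P^i_t := f\circ h_i\circ\tilde G^i_{1-t}$ for $i=0,\ldots,p$ yields a continuous path in $\D$ from $f=f\circ h_0$ to $g=f\circ h_{p+1}$ with $J\cdot P^i_t = J_i\cdot\tilde G^i_{1-t}\in K_0$, which is the desired $s$-homotopy.

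The main obstacle is this reverse direction: the $\Omega$-condition only produces paths that remain in the open neighborhood $V$, whereas $s$-homotopy in $\mathcal T_V$ requires paths that remain in the analytic subspace $K_0$. The Kuranishi product structure $V\cong K_0\times W$ is precisely the tool that allows projection of the former onto the latter while preserving the endpoints in $\D$, and this is why point \ref{additional} of Definition \ref{defadm} was sharpened in this paper to use $\Omega_{F_i}$ and $\Omega_{F_{i-1}^{-1}}$ rather than the weaker $U_{F_i}$ and $U_{F_{i-1}^{-1}}$.
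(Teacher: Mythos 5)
Your argument is correct in substance and follows the same two-step strategy as the paper (fine subdivision of the isotopy in one direction, concatenation of the paths furnished by the $\Omega$-conditions in the other), but it differs in how the Kuranishi chart is used, and the difference is worth recording. In the direction ``admissible $\Rightarrow$ $s$-homotopic'', the paper asserts directly that a compatible path $(H_t)$ from $F_i$ to the identity yields a path $(J_i,H_t)$ inside $\mathcal T_V$, although membership in $\Omega_{F_i}$ only guarantees $J_i\cdot H_t\in V$ while $\mathcal T_V$ requires the target to stay in $K_0$. Your projection step --- writing $J_i\cdot G^i_t=J'_i(t)\cdot e(\xi_i(t))$ via \eqref{Kurmap}, noting $\xi_i(0)=\xi_i(1)=0$ by uniqueness, and replacing $G^i_t$ by $G^i_t\circ e(\xi_i(t))^{-1}$ --- is exactly what is needed to push the path into the $s$-fiber while fixing its endpoints, so on this half your proof is, if anything, more complete than the paper's; the rest (the telescoping $h_i$ and the concatenation $P^i_t$) matches the paper's left-composition argument.

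In the direction ``$s$-homotopic $\Rightarrow$ admissible'' there is a small but genuine omission. By \eqref{Omegaf}, $\Omega_{F_i}$ is a union over \emph{compatible} paths, and compatibility is a condition on the base point: one needs $J_0\cdot\big(G_{t_i}^{-1}\circ G_s\big)\in V$ for all $s\in[t_i,t_{i+1}]$, not only $J_i\cdot\big(G_{t_i}^{-1}\circ G_s\big)\in K_0$, which is all you check. Your stated fineness requirement (only that $F_i$ and $F_i^{-1}$ lie in $\mathcal D_0$) does not give this. The repair is immediate and is precisely the clause the paper's proof inserts: choose the subdivision fine enough that every $G_{t_i}^{-1}\circ G_s$, $s\in[t_i,t_{i+1}]$, lies in a neighborhood of the identity whose elements and inverses belong to $\mathcal D_0$ and map $J_0$ (and, if one also wants the interior condition in \eqref{Omegaf} honestly, a fixed neighborhood of each $J_i$ in $K_0$) into $V$. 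With that strengthening your sub-path argument does verify all items of Definition \ref{defadm}, including point \ref{additional}), and your closing remark about why $\Omega_{F_i}$ rather than $U_{F_i}$ is needed is accurate.
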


\begin{proof}
	Assume $(J\cdot f,f^{-1}\circ g)$ belongs to $\mathcal{A}_0$, that is $(J\cdot f,f^{-1}\circ g)$ is $(V, \mathcal{D}_0)$-admissible. Then we may decompose it as
	\begin{equation*}
		f^{-1}\circ g=h_1\circ h_2\circ\hdots\circ h_p
	\end{equation*}
	with each $h_i\in \mathcal{D}_0$; and
	\begin{equation*}
		J_{i+1}=J_i\cdot h_i\qquad\text{ for }\qquad i=1,\cdots, p-1
	\end{equation*}
	belongs to $K_0$ with $J_1:=J\cdot f$. 
	We claim that $(J_p,Id)$ and $(J_p,h_p)$
	stay in the same connected component of $\mathcal{T}_V$.
	Indeed, it follows from point v) of Definition \ref{defadm} that $J_p$ belongs to $\Omega_{h_p}$. Hence there exists a compatible path $(H_t)$ joining $h_p$ to the identity and $(J_p, H_t)$ is a continuous path in $\mathcal{T}_V$ joining $(J_p,Id)$ and $(J_p,h_p)$. Since this path has fixed first coordinate, we may compose on the left by $f\circ h_1\circ h_2\circ\hdots\circ h_{p-1}$ and obtain a continuous path between $(J,g\circ h_{p-1})$ and $(J,g)$. Repeating the process, we connect $(J,f)$ to $(J,g)$.
	
	

	Conversely, let $(J,f)$ and $(J,g)$ be $s$-homotopic. Then, there exists an isotopy $(J,f_t)$ joining these two points in $\mathcal{T}_V$. But then we may find by compacity $t_0=0<t_1<\hdots <t_k=1$ such that
	\begin{equation*}
		(J_i,h_i):=(J\cdot f_{t_i},f_{t_i}^{-1}\circ f_{t_{i+1}})
	\end{equation*}
	satisfies that $h_i$ and $h_i^{-1}$ are sufficiently small to belong to an open neighborhood of the identity in $\mathcal{D}_0$ that maps every $J\cdot f_{t_j}$ and $J_0$ inside $V$. As a consequence, we have $J\cdot f_{t_i}$ in $\Omega_{h_{i}}$ and in $\Omega_{h_{i-1}^{-1}}$. Hence, 
	\begin{equation*}
		(J_0, h_0\circ\hdots\circ h_k)=(J\cdot f, f^{-1}\circ g)
	\end{equation*}
	is $(V,\mathcal{D}_0)$-admissible as needed.
\end{proof}

We are now in position to state and prove our first main result.

\begin{theorem}
	\label{finitethm}
	The natural inclusion of $\mathscr{A}_0$ into $\mathscr{T}(M,V)$, resp. of $\mathscr{A}_1$ into $\mathscr{T}(M,V)$, is an \'etale morphism of analytic stacks.

\end{theorem}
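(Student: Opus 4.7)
The natural morphisms $\mathscr{A}_0\to\mathscr{T}(M,V)$ and $\mathscr{A}_1\to\mathscr{T}(M,V)$ are induced, at the level of groupoid presentations, by the inclusions $\mathcal{A}_0\hookrightarrow \mathcal{T}_V$ and $\mathcal{A}_1\hookrightarrow \mathcal{T}_V$ covering the identity on the common atlas $K_0$. The plan is to show that each such inclusion is an open embedding of (infinite-dimensional) analytic spaces: since it is the identity on the atlas, an open subgroupoid inclusion of this form induces an open substack inclusion, and in particular an \'etale morphism, of the associated stacks (the fiber at a point being the discrete coset space of the open inclusion of isotropy groups).

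For the openness of $\mathcal{A}_0$ in $\mathcal{T}_V$ near an arbitrary $(J_*,F_*)\in\mathcal{A}_0$, Lemma \ref{lemma1main} provides the key characterization: an element $(J_*,G)$ of the $(s,t)$-fiber over $(J_*,J_*\cdot F_*)$ belongs to $\mathcal{A}_0$ iff $F_*^{-1}\circ G\in(\mathcal{A}_0)_{J_*\cdot F_*}$, and by Lemma \ref{lemmaautcontained} this isotropy subgroup contains $\text{Aut}^0(X_{J_*\cdot F_*})$, which is open in $\text{Aut}^1(X_{J_*\cdot F_*})$. Hence $\mathcal{A}_0$ meets this torsor-fiber in an open piece. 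To upgrade this fiberwise openness to openness in $\mathcal{T}_V$ itself, I would use the analytic section $\sigma_{F_*}$ of \eqref{sigmaf} together with the Kuranishi-Douady product structure \eqref{Kurmap} to obtain a local analytic trivialization of $(s,t):\mathcal{T}_V\to K_0\times K_0$ near $(J_*,F_*)$; combined with the closure of admissibility under composition (remark just before \eqref{Azero}), this trivializes the open piece in the torsor to an open neighborhood of $(J_*,F_*)$ in $\mathcal{T}_V$ contained in $\mathcal{A}_0$.

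For $\mathscr{A}_1\to\mathscr{T}(M,V)$ the argument runs in parallel, with $\mathcal{D}_0$ replaced by $\mathcal{D}_1$ and $\Omega_{F_i}$ by $U_{F_i}$ throughout. The base case at identity elements $(J,\mathrm{Id})$ is common to both cases and immediate: for any $(J',F')\in\mathcal{T}_V$ with $J'$ near $J$ in $K_0$ and $F'$ close enough to $\mathrm{Id}$ to lie in $\mathcal{D}_0\subset\mathcal{D}_1$, a short smooth path from $\mathrm{Id}$ to $F'$ (e.g.\ via the exponential) stays in $V$ by continuity and yields both $J'\in\Omega_{F'}$ and $J'\cdot F'\in\Omega_{(F')^{-1}}$, so the one-step admissibility conditions i)--iv) of Definition \ref{defadm} with $p=0$ are all satisfied.

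The main obstacle is making rigorous the local analytic trivialization and the compatibility of right-translation with admissibility: concretely, the composition map $(J,G)\mapsto (J,F_*\circ G)$ (or its correct variant using $\sigma_{F_*}$) must identify a neighborhood of $(t(F_*),\mathrm{Id})$ with a neighborhood of $(J_*,F_*)$ in a way respecting the $\mathcal{A}_0$-condition. A perturbation $(J',G)$ of an admissible $(J_*,F_*)$ does not automatically retain the decomposition $F_*=F_0\circ\cdots\circ F_p$, since the intermediate $J'_i$ may leave $K_0$; the remedy is to correct the decomposition via the Kuranishi map \eqref{Kurmap}, absorbing the error into a $W$-direction move, and then to verify that condition iv) of Definition \ref{defadm} is preserved under this correction. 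Handling this correction carefully is where most of the technical work of the proof should reside.
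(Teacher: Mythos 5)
There is a genuine gap, in two places. First, your central technical claim --- that $\mathcal{A}_0$ (resp. $\mathcal{A}_1$) is \emph{open} in $\mathcal{T}_V$ --- is never established: you yourself flag the correction of the admissible decomposition of a perturbed $(J',G)$ via \eqref{Kurmap}, and the preservation of condition iv) of Definition \ref{defadm}, as ``where most of the technical work should reside,'' so the proposal stops exactly at its load-bearing step. Note that the paper never needs (nor proves) openness of $\mathcal{A}_0$ in all of $\mathcal{T}_V$; what it needs is the weaker, purely fiberwise statement of Lemma \ref{lemma1main}, namely that $\mathcal{A}_0$ meets each $s$-fiber of $\mathcal{T}_V$ in a union of its connected components, and this is what actually drives the argument.

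Second, even granting openness, the passage ``wide open subgroupoid over the same atlas $\Rightarrow$ open substack inclusion $\Rightarrow$ \'etale'' is not correct as stated and hides the real content of the theorem. The morphism $\mathscr{A}_0\to\mathscr{T}(M,V)$ is not fully faithful, so it is not an inclusion of an open substack (in the paper's sense these two stacks even share the atlas $K_0$); its fiber over a point mapping to $X_J$ is not the coset space of isotropy groups but the set $\sharp S_0$ of connected components of the whole $s$-fiber (resp. its $(s,1)$-classes), which may contain components carrying no automorphisms at all --- this is precisely the exceptional phenomenon the paper is after --- and the fiber product with a base $B$ may even be empty, which no ``associated coset bundle'' picture produces. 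More importantly, \'etaleness here is the strong, representable notion of \eqref{fb}: one must show that $B\times_u\mathscr{A}_0$ is an analytic space for \emph{every} $u:B\to\mathscr{T}(M,V)$ and that $f_1$ is \'etale. Your proposal never addresses representability, and proving it is exactly what the paper's proof does: decompose $\mathcal{X}\to B$ into local pull-backs of $\mathscr{K}_0$ over connected $B_i$ glued by cocycles $u_{ij}$, use the compatibility relations \eqref{compatibility1}--\eqref{compatibility2} to propagate the comparison of two reductions from a single point $b_0$ to all of $B$, and apply Lemma \ref{lemma1main} to identify $B\times_u\mathscr{A}_0$ with $B\times E_0$ as in \eqref{RepFP}, a disjoint union of copies of $B$. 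A groupoid-level local statement plus an unproved general stackification principle does not substitute for this descent analysis, especially in the present infinite-dimensional analytic setting; if you want to pursue your route, you would have to both complete the openness argument and prove, for $\mathcal{T}_V$-torsors over an arbitrary base, the representability and triviality-up-to-discrete-fiber statement --- at which point you will have rewritten the paper's proof.
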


Let us make a few comments before proving Theorem \ref{finitethm}. First of all, the statement may be a bit misleading for readers used to the classical notion of \'etale morphism of analytic space. Given a discrete group $G$ acting holomorphically onto an analytic space $X$, then the morphism $X\to [X/G]$, with $[X/G]$ the quotient stack, is \'etale even if it has dense orbits or infinite stabilizers.

Then, by \'etale morphism of analytic stacks, we mean that, given any $B\in \mathfrak{S}$ and any morphism $u$ from $B$ to $\mathscr{T}(M,V)$, the fiber product
\begin{equation}
	\label{fb}
	\begin{tikzcd}
		B\times_{u}\mathscr A_0 \arrow[d,"f_1"']\arrow[r,"f_2"]\arrow[dr,phantom,"\scriptstyle{\square}",very near start, shift right=0.5ex]&\mathscr{A}_0\arrow[d,"\text{inclusion}"]\\
		B\arrow[r,"u"]&\mathscr{T}(M,V)
	\end{tikzcd}
	\quad\text{ resp.}
	\begin{tikzcd}
		B\times_{u}\mathscr A_1 \arrow[d,"f_1"']\arrow[r,"f_2"]\arrow[dr,phantom,"\scriptstyle{\square}",very near start, shift right=0.5ex]&\mathscr{A}_1\arrow[d,"\text{inclusion}"]\\
		B\arrow[r,"u"]&\mathscr{T}(M,V)
	\end{tikzcd}
\end{equation}
satisfies
\begin{enumerate}[i)]
	\item $B\times_{u}\mathscr A_0$, resp. $B\times_{u}\mathscr A_1$, is a $\C$-analytic space.
	\item The morphism $f_1$ is an \'etale morphism between  $\C$-analytic spaces.
\end{enumerate}

In other words, point i) means that the natural inclusion is a representable morphism. Hence it may enjoy any property preserved by arbitrary base change and local at target that a classical morphisms between analytic spaces may enjoy. Then point ii) means that, amongst all these properties, we prove that the natural inclusion is \'etale. We note that this corresponds to the "strong" notion of \'etale in the literature on algebraic stacks, e.g. in the local structure theorem of \cite{AHR}, the constructed étale morphism is not representable in general so a weaker notion of étale morphisms of algebraic stacks is used.

Last but not least, Theorem \ref{finitethm} must be understood geometrically as follows. A family $\mathcal X\to B$ with all fibers belonging to $V$ can be decomposed as local pull-backs of $\mathscr{K}_0$ glued together through a cocycle of morphisms $(u_{ij})$ in $\mathcal T_V$. It is an object of $\mathscr{A}_1$ if and only we may find an equivalent cocycle living in $\mathcal A_1$. This is completely similar to the process of reduction of the structural group of a fiber bundle. Theorem \ref{finitethm} says that, given such a family, there exists at most a discrete set of non-equivalent reductions. Assume $X_0$ is rigid with Kuranishi space being a reduced point. Then, any reduced $M$-family is indeed a locally trivial holomorphic bundle with fiber $X_0$ and structural group $\Aun$. There is no difference with families that are objects of $\mathscr{A}_1$, however objects of $\mathscr{A}_0$ are bundles with fiber $X_0$ and structural group $\Azero$ this time. So, in this particular case, Theorem \ref{finitethm} really describes the set of non-equivalent reductions of the structural group of such a bundle from $\Aun$ to $\Azero$. And this set can be easily determined by passing to the associated principal bundles and making use of the following observation. Given a principal $\Aun$-bundle $E$ over some base $B$, let $\Azero$ act on the fibers of $E$. The quotient $E'$ has fibers $\Aun/\Azero$ and is trivializable if and only if $E$ admits a $\Azero$-reduction. Hence the set we are looking for is the set of trivializations of $E'$ and identifies with the set of holomorphic maps from $B$ to the discrete $\Aun/\Azero$ set. The proof given below in the general case follows the same strategy.

We give another geometric interpretation of Theorem \ref{finitethm} in Section \ref{secuniversal}.

\begin{proof}
	By Yoneda's lemma, a morphism $u : B\to\mathscr{T}(M,V)$ corresponds to a family $\mathcal X\to B$. The fiber product $B\times_{u}\mathscr A_0$ encodes the isomorphisms of $\mathscr{T}(M,V)$ 
	\begin{equation}
		\begin{tikzcd}[column sep=small]
			\label{objects}
			\mathcal X \arrow[rr,"\alpha"] \arrow[rd]&&\mathcal X'\arrow[ld]\\
			&B&
		\end{tikzcd}
	\end{equation}
	between families $\mathcal X\to \mathcal X'$ over $B$ (with $\mathcal X'$ in $\mathscr{A}_0$)
	modulo isomorphisms $\beta$ over $B$
	\begin{equation}
		\begin{tikzcd}[row sep=small]
			\label{morphisms}
			&\mathcal X'\arrow[dd, "\beta"]\\
			\mathcal X \arrow[ur,"\alpha"] \arrow[rd, "\alpha'"']&\\
			&\mathcal X''
		\end{tikzcd}
	\end{equation}
	belonging to $\mathscr{A}_0$.
	
	Assume $B$ connected. Decompose $B$ as a union of connected open sets $B_1\cup\hdots\cup B_k$ in such a way that the family $\mathcal X$, resp. $\mathcal X'$, 
	is locally isomorphic above $B_i$ to $u_i^*\mathscr{K}_0$ for some $u_i : B_i\to K_0$, resp. to $(u'_i)^*\mathscr{K}_0$ for some $u'_i : B_i\to K_0$. These local models are glued through a cocycle $u_{ij}:B_i\cap B_j\to \mathcal T_V$, resp. $u'_{ij}:B_i\cap B_j\to \mathcal A_0\subset \mathcal T_V$, 
	satisfying $s(u_{ij})=u_i$ and $t(u_{ij})=u_j$, resp. $s(u'_{ij})=u'_i$ and $t(u'_{ij})=u'_j$, to obtain a family isomorphic to $\mathcal X$, resp. $\mathcal X'$.
	
	In these models, up to passing to a finer covering, an isomorphism \eqref{objects} corresponds to a collection $F_i:B_i\to \mathcal T_V$ fulfilling
	\begin{enumerate}[i)]
		\item $s\circ F_i=u_i$ and $t\circ F_i=u'_i$
		\item $m(u_{ij},F_j)=m(F_i,u'_{ij})$
	\end{enumerate} 
	Then $\mathcal X''$ corresponds to a cocycle $u_i'' : B_i\to K_0$ and $\alpha'$ to a collection $F'_i:B_i\to\mathcal T_V$ satisfying similar relations.\\
%
	Let $\beta$ be $\alpha'\circ\alpha^{-1}$. This is a morphism of $\mathscr{T}(M,V)$ which is given in our localisation by the collection 
	\begin{equation}
		\label{beta}
		G_i:=m(i(F_i),F'_i): B_i\longrightarrow \mathcal{T}_V
	\end{equation}
We want to know when $\beta$ is a morphism of $\mathcal{A}_0$, that is when $G_i$ has image in $\mathcal{A}_0$ for all $i$. 

Since the $B_i$ are connected, the image of each map $F_i$, $F'_i$ is included in a single connected component of the space $\mathcal{S}_i$ of $s$-sections of $B_i\times_{u_i} \mathcal T_V$ above $B_i$. By Lemma \ref{lemma1main}, $F_i$ and $F'_i$ land in the same connected component of $\mathcal{S}_i$ if and only if $G_i$ lands in $\mathcal{A}_0$. 

Choose a point $b_i$ in each $B_i$. Then $\alpha$ and $\alpha'$ are equivalent through \eqref{morphisms} if and only if $(b_i,F_i(b_i))$ and $(b_i,F'_i(b_i))$ belong to the same connected component of $\mathcal{S}_i$ for all $i$. 

Now, assume that $(b_1,F_1(b_1))$ and $(b_1,F'_1(b_1))$ belong to the same connected component of $\mathcal{S}_1$. Given $i\not = 1$ and taking $c\in B_1\cap B_i$, it follows from the compatibility relations that 
	\begin{equation}
	\label{compatibility1}
	F_i(c)=m(u_{i1},m(F_1, u'_{1i}))(c)
\end{equation}
and
\begin{equation}
	\label{compatibility2}
	F'_i(c)=m(u_{i1}, m(F'_1, u''_{1i}))(c)
\end{equation} 
But $u'_{1i}$ and $u''_{1i}$ are mappings with values in $\mathcal A_0$, hence, applying once again Lemma \ref{lemma1main}, we deduce that $m(F_1, u'_{1i})(c)$ and $m(F'_1, u''_{1i})(c)$ belong to the same connected component of $\mathcal{S}_1$, say $S$, and finally $F_i(c)$ and $F'_i(c)$ to the same connected component of $\mathcal{S}_i$ since both lie in the image of $S$ by $m(u_{i1},-)$. And so do $F_i(b_i)$ and $F'_i(b_i)$ for all $i$ by connectedness of $B$.

As a consequence, $\alpha$ and $\alpha'$ are equivalent through \eqref{morphisms} if and only if $(b_i,F_i(b_i))$ and $(b_i,F'_i(b_i))$ belong to the same connected component of $\mathcal{S}_i$ {\slshape for some} $i$. 
	
%
%
	
	Therefore, the fiber product $B\times_u\mathscr{A}_0$ identifies with a disjoint union of copies of $B$. On the points of $B\times_u\mathscr{A}_0$\footnote{ that is, for objects above some point $b\in B$.}, this identification is given by the map
	\begin{equation}
		\label{RepFP}
		(b,F(b))\in (B\times_{u}\mathscr A_0)\longmapsto (b,\sharp (F_i(b_0))\in B\times\sharp S_0
	\end{equation}
	Here $S_0$ is the $s$-fiber of $\mathcal{T}_V$ above $u_i(b_0)$, the set $\sharp S_0$ is the set of connected components of $S_0$ and the $\sharp$ application maps an element of $S_0$ to the connected component of $S_0$ which contains it; the mapping $F_i$ is defined as above as a local expression for $\alpha$ satisfying \eqref{objects}, the point $b_0$ and the index $i$ are fixed with $b_0$ belonging to $B_i$. 
	It follows from what preceeds that the quantity $F_i(b_0)$ depends only on the class of $\alpha$ modulo \eqref{morphisms}, showing that \eqref{RepFP} is an isomorphism onto its image. Its image is $B\times E_0$, for $E_0$ a subset of $\sharp S_0$ which can be a strict subset because some connected components of $S_0$ may not compatible with any cocycle of the family $\mathcal X$. Indeed, it may be empty, corresponding to a family $\mathcal X$ that is not isomorphic to any family of $\mathscr{A}_0$, i.e. corresponding to $B\times_u\mathscr{A}_0$ empty. 
	Finally, when it is not empty, $f_1$ can be rewritten as the natural projection map
	\begin{equation}
		\label{finite1}
		B\times \{1,\hdots ,g(\mathcal X)\}\longrightarrow B
	\end{equation}
	for $g(\mathcal X)$ a number $\N^*\cup\{+\infty\}$ that depends on $\mathcal X$, as the notation suggests, and is the number of connected components of $S_0$ that can be attained through \eqref{RepFP}. This proves that the inclusion of $\mathscr{A}_0$ in $\mathscr{T}(M,V)$ is an \'etale morphism.

	Let us deal now with the $\mathscr{A}_1$ case. We thus consider diagrams \eqref{objects} and \eqref{morphisms} with $\mathcal X'$ and $\beta$ in $\mathcal{A}_1$. We define $u_i$, $u'_i$, $u''_i$, $F_i$, $F'_i$ and $G_i$ as before. We want to know when $G_i$ has image in $\mathcal{A}_1$ for all $i$. Using \eqref{compatibility1} and \eqref{compatibility2}, we obtain that $G_i$ is in $\mathcal{A}_1$ for all $i$ if and only if $G_1$ is in $\mathcal{A}_1$. 
	
	Then, by connexity of $B_1$, this occurs if and only if $G_1(b_1)$ belongs to $\mathcal{A}_1$ for some fixed $b_1\in B_1$. Indeed, letting $b$ be another point of $B_1$, we have that $G_1(b_1)$ and $G_1(b)$ belong to the same connected component of $\mathcal S_1$, hence are related through an element of $\mathcal{A}_0$ by Lemma \ref{lemma1main}. Since the composition of an element of $\mathcal{A}_1$ with an element of $\mathcal{A}_0$ belongs to $\mathcal{A}_1$, we deduce that $G_1(b)$ is also in  $\mathcal{A}_1$ for all $b\in B_1$.
	
	This is equivalent to $F_1(b_1)$ and $m(F'_1(b_1),h)$ are $s$-homotopic for some $h\in\mathcal{A}_1$ with $s(h)=t(F_1(b_1))$. Say that $F_1(b_1)$ and $F'_1(b_1)$ are $(s,1)$-homotopic when this is true. In other words, we define on $S_0$ the following equivalence relation: $(J,f)$ and $(J,g)$ are $(s,1)$-homotopic (where $J=u_1(b_1)$) if and only if $(J\cdot f, f^{-1}\circ g)$ belongs to $\mathcal{A}_1$. It is is straightforward to check that it only depends on the connected component of $(J,f)$ and $(J,g)$ in $S_0$, that is the $(s,1)$-homotopy descends as an equivalence relation on $\sharp S_0$.

%
%
	
	
	Thus, one eventually finds that the fiber product $B\times_u\mathscr{A}_1$ identifies with the disjoint union of an at most countable number of copies of $B$, say $g_1(\mathcal X)$, through the map
	\begin{equation}
		\label{RepFP1}
		(b,F(b))\in (B\times_{u}\mathscr A_1)\longmapsto (b,\sharp_1 F_i(b_0))\in B\times\sharp_1 S_0
	\end{equation}
	Here we use the same notations and conventions as in \eqref{RepFP}, $\sharp_1 S_0$ is the set of $(s,1)$-homotopy classes of $\sharp S_0$, and the $\sharp_1$ application maps an element of $\mathcal T_V$ to the element of $\sharp_1$ which contains it. 
	Finally $f_1$ can be rewritten as the natural projection map
	\begin{equation}
		\label{finite2}
		B\times \{1,\hdots ,g_1(\mathcal X)\}\longrightarrow B
	\end{equation}
	for $g_1(\mathcal X)$ the number of $\sharp\Aun$-orbits of connected components of $\mathcal T_V$ that can be attained through \eqref{RepFP1}. 
\end{proof}
 Of course, a similar statement holds for the $\Z$-Teichm\"uller stack. We have indeed
 \begin{corollary}
 	\label{corZfinite}
 	In the following commutative diagram of natural inclusions,
 	\begin{equation}
 		\label{CDZfinite}
 		\begin{tikzcd}
 			\mathscr{A}_0\arrow[d,equal]\arrow[r,hook] &\mathscr{A}_1\arrow[d,hook]\arrow[r,hook] &\mathscr{T}(M,V)\arrow[d,hook]\\
 			\mathscr{A}_0\arrow[r,hook]&\mathscr{A}_\Z\arrow[r,hook]&\mathscr{T}^\Z(M,V)
 		\end{tikzcd}
 	\end{equation}
 	every arrow is an étale morphism of analytic stacks.
 \end{corollary}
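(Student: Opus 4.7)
The proof would combine Theorem \ref{finitethm} with a verbatim adaptation of its argument to the $\Z$-setting, together with repeated use of the cancellation property for representable étale morphisms of analytic stacks: if $g$ and $g\circ f$ are representable étale, then so is $f$, as follows from étaleness being stable under base change and local on the target.

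The top row of \eqref{CDZfinite} is immediate: Theorem \ref{finitethm} gives étaleness of both $\mathscr{A}_0 \hookrightarrow \mathscr{T}(M,V)$ and $\mathscr{A}_1 \hookrightarrow \mathscr{T}(M,V)$, so cancellation applied to the factorization $\mathscr{A}_0 \hookrightarrow \mathscr{A}_1 \hookrightarrow \mathscr{T}(M,V)$ yields that $\mathscr{A}_0 \hookrightarrow \mathscr{A}_1$ is representable étale. For the bottom row, I would reproduce the proof of Theorem \ref{finitethm} word for word with the substitutions $\D \rightsquigarrow \DZ$, $\Aun \rightsquigarrow \AZ$, $\mathcal{D}_1 \rightsquigarrow \mathcal{D}_\Z$, $\mathcal{A}_1 \rightsquigarrow \mathcal{A}_\Z$ and $\mathcal{T}_V \rightsquigarrow \mathcal{T}^\Z_V$, with Definition \ref{defshom} modified to use compatible paths in $\DZ$. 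The argument is structurally identical once one verifies the $\Z$-analog of Lemma \ref{lemma1main}: this rests on the purely local statement that a sufficiently small piece of a continuous $\DZ$-path decomposes admissibly, which is insensitive to whether the ambient group is $\D$ or $\DZ$ since $\mathcal{D}_0 \subset \mathcal{D}_\Z$. This yields that $\mathscr{A}_0 \hookrightarrow \mathscr{T}^\Z(M,V)$ and $\mathscr{A}_\Z \hookrightarrow \mathscr{T}^\Z(M,V)$ are representable étale morphisms, and cancellation then gives $\mathscr{A}_0 \hookrightarrow \mathscr{A}_\Z$.

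For the two vertical arrows, I would first handle $\mathscr{T}(M,V) \hookrightarrow \mathscr{T}^\Z(M,V)$ by running the same cocycle-reduction scheme applied to the subgroupoid $\mathcal{T}_V \subset \mathcal{T}^\Z_V$: since $\D$ is by definition the path-component of the identity in $\text{Diff}^+(M)$, any continuous path in $\DZ$ starting at the identity remains in $\D$, and hence $\mathcal{T}_V$ is a union of full connected components of the $s$-fibers of $\mathcal{T}^\Z_V$. The argument of Theorem \ref{finitethm} then identifies the fiber product $B \times_u \mathscr{T}(M,V)$ with a disjoint union of copies of $B$ indexed by the (at most countable) set of $\D$-cosets in $\DZ$ realizable by cocycles of $\mathcal{X}$, proving étaleness. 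Finally, $\mathscr{A}_1 \hookrightarrow \mathscr{A}_\Z$ follows by cancellation applied to the composition $\mathscr{A}_1 \hookrightarrow \mathscr{T}(M,V) \hookrightarrow \mathscr{T}^\Z(M,V)$ (étale as a composition of two étale morphisms) together with the étaleness of $\mathscr{A}_\Z \hookrightarrow \mathscr{T}^\Z(M,V)$ just established. The main technical point throughout is the $\Z$-analog of Lemma \ref{lemma1main}; once that is in place, every identification of a fiber product in the style of \eqref{RepFP} and \eqref{RepFP1} transfers verbatim from the proof of Theorem \ref{finitethm}.
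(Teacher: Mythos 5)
Your proposal is correct, and it coincides with the paper on the two rows: the top row is Theorem \ref{finitethm}, and the bottom row is obtained by running the same proof with $\DZ$, $\AZ$, $\mathcal{D}_\Z$, $\mathcal{A}_\Z$, $\mathcal{T}^\Z_V$ in place of their untwisted counterparts, the only point to check being the $\Z$-analogue of Lemma \ref{lemma1main}; this holds exactly for the reason you give, since a sufficiently small piece of a continuous path in $\DZ$ lies in $\mathcal{D}_0$ and any continuous path in $\DZ$ issued from the identity stays in $\D$. Where you genuinely diverge is the treatment of the vertical arrows. The paper takes the fibered product of the whole diagram \eqref{CDZfinite} with an arbitrary $u:B\to\mathscr{T}^\Z(M,V)$ and reads off \'etaleness of the verticals from \'etaleness of the horizontals in the resulting diagram of analytic spaces; this is very short, but it tacitly presupposes that $B\times_u\mathscr{A}_1$ and $B\times_u\mathscr{T}(M,V)$ are analytic spaces (i.e.\ representability of the very vertical arrows being proved), and the horizontal maps out of the $\mathscr{A}_0$-column need not be surjective, so the space-level cancellation a priori only controls the verticals on their images. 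Your route --- proving $\mathscr{T}(M,V)\hookrightarrow\mathscr{T}^\Z(M,V)$ directly by rerunning the cocycle argument of Theorem \ref{finitethm}, using that $\D$ is open and closed in $\DZ$ so that $\mathcal{T}_V$ is a union of connected components of the $s$-fibers of $\mathcal{T}^\Z_V$ and the reduction data are discrete, and then deducing $\mathscr{A}_0\hookrightarrow\mathscr{A}_1$, $\mathscr{A}_0\hookrightarrow\mathscr{A}_\Z$ and $\mathscr{A}_1\hookrightarrow\mathscr{A}_\Z$ from the cancellation property for representable \'etale morphisms --- costs one extra explicit computation but is unconditional and supplies precisely the representability statements the paper's shortcut leaves implicit. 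One small caveat: cancellation does not follow merely from ``stable under base change and local on the target''; the standard argument factors $f$ through its graph, which is an open embedding because the relative diagonal of a representable \'etale morphism is one, and then composes with a base change of $g\circ f$. With that justification supplied, your proof is complete.
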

 
 \begin{proof}
 	The top line is given by Theorem \ref{finitethm} and the bottom line is proven in the same way. Then, take the fibered product of \eqref{CDZfinite} with some $u: B\to \mathscr{T}^\Z(M,V)$ obtaining
 	\begin{equation}
 		\label{CDZfinite2}
 		\begin{tikzcd}
 			B\times_{u}\mathscr{A}_0\arrow[d,equal]\arrow[r] &B\times_{u}\mathscr{A}_1\arrow[d]\arrow[r] &B\times_{u}\mathscr{T}(M,V)\arrow[d]\\
 			B\times_{u}\mathscr{A}_0\arrow[r]&B\times_{u}\mathscr{A}_\Z\arrow[r]&B
 		\end{tikzcd}
 	\end{equation}
 	that is a diagram of analytic spaces with all horizontal lines being étale. Thus the vertical ones are also étale and we are done.
 \end{proof}
 Since being étale is a local at base property, we also have
  \begin{corollary}
 	\label{corZfinite2}
 	The natural inclusion of $\T$ in $\TZ$ is étale analytic.
  \end{corollary}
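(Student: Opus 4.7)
The plan is to deduce the global statement from the local one established in Corollary \ref{corZfinite} by invoking locality on the target. First I would observe that the open substacks $\mathscr{T}^\Z(M,V)$, as $J_0$ ranges over points of $\mathcal{I}(M)$ and $V$ ranges over Kuranishi-type neighborhoods (as in Remark \ref{rkproductchart}), form an open cover of $\TZ$: indeed every object of $\TZ$ is a $\Z$-reduced $M$-deformation whose fibers, by compactness of the base in finite patches, can be covered by finitely many such $V$'s, and every morphism factors accordingly.

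Next I would identify the preimage of $\mathscr{T}^\Z(M,V)$ under the inclusion $\T\hookrightarrow\TZ$ with $\mathscr{T}(M,V)$. This is immediate from the definitions: the preimage consists of reduced $M$-deformations whose fibers sit in $V$, and such families are exactly the objects of $\mathscr{T}(M,V)$; morphisms match because a $\D$-morphism viewed inside $\DZ$ is still a $\D$-morphism. Thus restricting the inclusion $\T\hookrightarrow\TZ$ over $\mathscr{T}^\Z(M,V)$ recovers the right-hand vertical arrow
\[
\mathscr{T}(M,V)\hookrightarrow\mathscr{T}^\Z(M,V)
\]
of diagram \eqref{CDZfinite}, which by Corollary \ref{corZfinite} is an étale morphism of analytic stacks.

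Finally I would invoke that being étale (and in particular being representable) is local on the target: given any $u\colon B\to\TZ$ from an analytic space $B\in\mathfrak{S}$, one covers $B$ by opens $B_\alpha$ whose image lies in some $\mathscr{T}^\Z(M,V_\alpha)$, forms the fiber products $B_\alpha\times_{u_\alpha}\mathscr{T}(M,V_\alpha)$, and notes that each projection to $B_\alpha$ is étale between analytic spaces by the previous paragraph. These glue to an étale morphism $B\times_u\T\to B$ of analytic spaces, showing both that the inclusion $\T\hookrightarrow\TZ$ is representable and that it is étale. The only point requiring any care is the gluing, but since the fiber products over the $B_\alpha$ are constructed intrinsically from $u$, the standard descent for representable morphisms of stacks applies directly; this is the step I would expect to be the most delicate to write out carefully, though it is essentially formal.
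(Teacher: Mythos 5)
Your proposal is correct and follows the paper's own route: the paper simply remarks that étaleness is a property local at the base and deduces the global statement from Corollary \ref{corZfinite}, exactly as you do. Your write-up merely spells out the covering of $\TZ$ by the charts $\mathscr{T}^\Z(M,V)$, the identification of the preimage with $\mathscr{T}(M,V)$, and the gluing of the local fiber products, all of which is the intended (and essentially formal) content of that remark.
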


 \section{The main conjecture}
 \label{secconjecture}
 \subsection{Exceptional and $\Z$-exceptional points}
 \label{subsecexc}

 In view of Theorem \ref{finitethm} and Corollary \ref{corZfinite}, it is natural to single out the following case.
\begin{definition}
	\label{defexcpoint}
	We say that $X_0$ is an {\slshape exceptional point of the Teichm\"uller stack $\T$} or simply that $X_0$ is {\slshape exceptional} if there is no neighborhood $V$ of $X_0$ such that the \'etale morphism $\mathscr{A}_1\to\mathscr{T}(M,V)$ is an isomorphism.
	
	Analogously, we say that $X_0$ is  an {\slshape exceptional point of the $\Z$-Teichm\"uller stack $\TZ$}, or simply that $X_0$ is {\slshape $\Z$-exceptional} if there is no neighborhood $V$ of $X_0$ such that the \'etale morphism $\mathscr{A}_\Z\to\mathscr{T}^\Z(M,V)$ is an isomorphism.
\end{definition}

The idea behind this definition is of course that these \'etale morphisms  should be isomorphisms at a generic point for a sufficiently small $V$. The situation is however much more complicated. It turns out that it strongly depends on the existence of a Kähler metric on the manifold $X_0$.

\subsection{Conjecture on exceptional points}
\label{subsecconjex}
We now state and discuss the main conjecture on exceptional points that will occupy ourselves in Sections \ref{secexc} to \ref{secdistri}.

\begin{conjecture}
	\label{mainconj}
	{\bf (Main conjecture on exceptional points).}
	\begin{enumerate}[{\rm \bf I.}]
		\item Let $X_0$ be K\"ahler. Then $X_0$ is neither exceptional in $\T$ nor $\Z$-exceptional in $\TZ$.
		\item There exist some exceptional and $\Z$-exceptional (non-K\"ahler) points. They may even be dense in a connected component of $\T$ or $\TZ$. 
		\item Every exceptional, resp. $\Z$-exceptional point, is vanishing or wandering.
	\end{enumerate}
\end{conjecture}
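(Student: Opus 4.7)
The plan is to treat the three parts of Conjecture \ref{mainconj} separately, since they require different techniques, and to lean heavily on the cycle-space/compactness machinery in the Kähler case.

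\textbf{Part I (Kähler non-exceptionality).} The goal is to prove that for $X_0$ Kähler, we can choose $V$ small enough so that the étale morphisms $\mathscr{A}_1\to\mathscr{T}(M,V)$ and $\mathscr{A}_\Z\to\mathscr{T}^\Z(M,V)$ of Theorem \ref{finitethm} and Corollary \ref{corZfinite} are isomorphisms; concretely, by the proof of Theorem \ref{finitethm}, this amounts to showing that for every family $\mathcal X\to B$ with fibers in $V$ the integer $g_1(\mathcal X)$ (resp.\ its $\Z$-analogue) equals $1$. The strategy is the following. First, using the openness of the Kähler condition (footnote \ref{fnoteKahler}), shrink $V$ so that all fibers over $K_0$ are Kähler. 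Then invoke Fujiki--Lieberman: on a Kähler manifold $X_J$, the quotient $\text{Aut}(X_J)/\text{Aut}^0(X_J)$ contains only finitely many components acting trivially on $H^*(X_J,\Z)$, and moreover the cycle space of $X_J\times X_J$ is compact, so the graphs of automorphisms of $X_J$ live in a finite union of irreducible compact families as $J$ varies in a relatively compact subset of $K_0$. From this I would extract the following key technical lemma: any automorphism $f$ of $X_J$ lying in $\text{Diff}^0(M)$ (resp.\ in $\text{Diff}^\Z(M)$) admits a continuous path $(F_t)$ in $\text{Diff}^0(M)$ (resp.\ $\text{Diff}^\Z(M)$) with $J\cdot F_t\in V$ joining $f$ to an element of $\text{Aut}^1(X_0)$ (resp.\ $\text{Aut}^\Z(X_0)$). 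By Lemma \ref{lemma1main} and its $\Z$-version, this forces the connected components of the $s$-fibers of $\mathcal T_V$ (resp.\ $\mathcal T^\Z_V$) to coincide with the orbits already captured by $\mathcal A_1$ (resp.\ $\mathcal A_\Z$), giving $g_1(\mathcal X)=1$.

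\textbf{Part II (Existence and density).} Existence is pure construction. I would exhibit an explicit non-Kähler manifold $X_0$ together with an element $f\in\text{Aut}^\Z(X_0)$ that is $(V,\mathcal D_\Z)$-inadmissible for every small $V$; natural candidates are LVM-type $3$-folds or twisted Hopf manifolds whose automorphism group contains a discrete $\Z$-part not connected to $\text{Aut}^0$, matching what Theorems \ref{thmexampleexc} and \ref{thmexampleAut1Z} are advertised to provide. For density, the idea is to show that the condition ``$X$ carries a $\Z$-exceptional automorphism'' propagates to a whole subfamily: start from the example above, consider an equivariant Kuranishi space $K_0^f$ of fixed points of $f$ acting formally on $K_0$, and argue that every point of $K_0^f$ inherits the same $\Z$-exceptional automorphism, yielding density in a connected component whenever $K_0^f$ is Zariski-dense there (for instance when $f$ acts trivially on $H^1(X_0,\Theta_0)$).

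\textbf{Part III (Structural trichotomy).} This rests on the promised classification of exceptional points into three types (intro and §\ref{secexc}), which I would phrase as: the failure of the étale morphism to be an isomorphism is detected either (a) by an orbit of the holonomy pseudogroup accumulating at $X_0$ (\emph{wandering}), or (b) by an infinite isotropy collapsing onto a non-identity element as one approaches $X_0$ (\emph{vanishing}), or (c) by a jump which, using Lemma \ref{lemmaautcontained} and the analysis of §\ref{secjumping}, forces the point to already be a jumping point. The plan would be to rule out case (c) in the definition of exceptional (by working modulo jumping points) and to show cases (a) and (b) exhaust the remainder via a dichotomy on whether the missing $s$-homotopy classes are finite or infinite above a shrinking neighborhood of $J_0$.

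\textbf{Main obstacle.} The hard part is unquestionably Part I in the $\Z$-case. Controlling $\text{Aut}^0$ or even $\text{Aut}^1$ via cycle-space compactness is standard, but $\text{Aut}^\Z$ can a priori have components not connected to the identity inside $\text{Diff}^+(M)$, and the admissibility condition of Definition \ref{defadm} requires honest continuous paths inside $\text{Diff}^\Z(M)$ lying above $V$. Bridging a disconnected $\text{Aut}^\Z$ to the $(V,\mathcal D_\Z)$-admissible groupoid requires showing that on Kähler manifolds the Lieberman finiteness together with the Kuranishi product structure of \eqref{Kurmap} forces each such component to be representable by a single diffeomorphism that can be deformed into $\mathcal D_\Z$ through $V$. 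This is precisely where the conjecture is conjectural, and I expect any attempt to hinge on a refined equivariant version of cycle-space compactness in families.
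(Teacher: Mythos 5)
The statement you are addressing is stated in the paper as a \emph{conjecture}, not a theorem: the paper offers no proof of it, and explicitly records that its assertions remain open, the best partial results being Theorem \ref{2ndmainthm} (in the K\"ahler case the closure of exceptional, resp. $\Z$-exceptional, points is a strict analytic substack — not that it is empty) and Theorem \ref{thmexampleexc} (a non-K\"ahler $\Z$-exceptional point, with the existence of genuinely exceptional points still unknown). So there is no paper proof to compare with, and your proposal should be judged as an attempted resolution of an open problem; as such it contains genuine gaps rather than a complete argument.

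The central gap is in Part I. Your ``key technical lemma'' — that any automorphism $f$ of a nearby fiber $X_J$ lying in $\D$ (resp.\ $\DZ$) can be joined to $\Aun$ (resp.\ $\AZ$) by a path $(F_t)$ with $J\cdot F_t\in V$ — is not a consequence of Fujiki--Lieberman compactness; it is essentially equivalent to Part I itself. What Lieberman's theorem gives (and what the paper extracts in Corollaries \ref{2ndcor}--\ref{5thcor}) is that a sequence \eqref{phin} lies, after passing to a subsequence, in a single irreducible component of the cycle space $\mathscr{C}$ and converges there; this excludes the vanishing and wandering phenomena, but it does \emph{not} exclude the remaining case in which the limit is a \emph{singular} cycle of $\mathscr{C}_0$ not adherent to any $A^\Z$-component — which is precisely the definition of an exceptional cycle, and by Corollary \ref{5thcor} exactly the obstruction to $g_1(\mathcal X)=1$. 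No step in your outline rules this degeneration out, and the paper stresses that compactness alone ``is far from giving a strong evidence for I'' because it presupposes that the $\D$- and $\DZ$-orbits in $\I$ have no holonomy-type pathology. Parts II and III fare no better as proofs: the paper's actual construction (blow-ups of Blanchard-type torus fibrations, \S\ref{secexample}) produces only $\Z$-exceptional points, the existence of exceptional points and the density statement for Kuranishi families are left open (Remarks \ref{rkZextoex} and \ref{rkdensityZ}), your fixed-point-locus density mechanism is unsubstantiated, and your trichotomy (a)--(c) in Part III does not coincide with the paper's classification (exceptional cycle / vanishing / wandering, Proposition \ref{propex}); Part III of the conjecture asserts precisely that the exceptional-cycle type never occurs, for which you give no argument.
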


As we already did in the introduction \S \ref{intro}, we emphasize that classical deformation theory from an analytic point of view is rather insensible to K\"ahlerianity and that the dichotomy in Conjecture \ref{mainconj} is only seen on the Teichm\"uller and $\Z$-Teichm\"uller stacks, not on the moduli stack. Especially, only in the context of K\"ahler manifolds acted on by $\D$ or $\DZ$ can we use results on the compacity of cycle spaces. 

Point I of Conjecture \ref{mainconj} is the most optimistic. Basic reasons to believe it include the already cited compacity results in the K\"ahler setting such as Lieberman's Theorem recalled in \S \ref{secfinite}; the fact that submanifolds of K\"ahler manifolds represent non-trivial cohomology classes rigidifying $\Aun$ and $\AZ$; and the difficulties in finding an example of an exceptional point even for non-K\"ahler manifolds. But this is far from giving a strong evidence for I. It also supposes that the $\D$ and $\DZ$-orbits of $\I$ have a very simple topology with no holonomy phenomenon. We shall prove in Section \ref{secdistri} a weaker result: exceptional K\"ahler points, resp. $\Z$-exceptional K\"ahler points, if exist, form a strict analytic substack of $\T$, resp. $\TZ$, see Theorem \ref{2ndmainthm}.

Point II of Conjecture \ref{mainconj} seems more plausible although not easier to prove. Basic reasons to believe it boil down to the fact that all the techniques used to prove Theorem \ref{2ndmainthm} break down completely in the non-K\"ahler setting. Also we give in \S \ref{secexample} an example of $\Z$-exceptional points with a property of local density, see Theorem \ref{thmexampleexc}. We are unable however to show the existence of exceptional points.

Point III of Conjecture \ref{mainconj} is more technical but explains the difference of behaviour between the K\"ahler and the non-K\"ahler case stated in points I and II. It makes reference to the classification of exceptional points into exceptional, vanishing and wandering points introduced in \S \ref{secintrocycles}. It implies ppint I since vanishing and wandering points do not exist in the K\"ahler context, see Corollary \ref{4thcor}. Its general meaning is the following. Vanishing, resp. wandering points, encompass non compactness of a component of cycles, resp. non finiteness of the number of components.

Indeed, Conjecture \ref{mainconj} sheds some light on the dichotomy between K\"ahler and non-K\"ahler points, pushes forward it to the extreme form of a $0-1$ conjecture. We hope it will serve as a challenging problem and a source of motivation for studying these questions. 

Before looking with more care at exceptional points, we will focus on the points where the Teichm\"uller stack is locally simpler. 

\section{Structure of the Teichm\"uller stack of normal points}
\label{localTeich2}
Section \ref{secconjecture} introduces the notion of exceptional points. At such a point, the local Teichm\"uller stack includes morphisms that are not close to automorphisms of the central fiber, adding complexity. But jumping points are also bad points, where the Teichm\"uller space is usually locally non-Hausdorff at $X_0$. Roughly speaking, normal points are points of $\T$ that are neither jumping nor exceptional.
\subsection{Normal points}
\label{secnormalpoints}
We start with the following Lemma
\begin{lemma}
	\label{lemmah0constant}
	Assume {\rm\eqref{h0}} is locally constant at $X_0$. If $V$ is small enough, every automorphism of $\Azero$ extends as an automorphism of the whole Kuranishi family.
\end{lemma}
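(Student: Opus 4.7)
The plan is to reduce the extension problem to extending individual holomorphic vector fields on $X_0$ across the family, and then to integrate fiberwise.

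First, I would note that $\Azero$ is a connected complex Lie group with Lie algebra $H^0(X_0,\Theta_0)$, so every $f\in\Azero$ decomposes as a finite product $f=\exp(\xi_1)\circ\cdots\circ\exp(\xi_k)$ with $\xi_i\in H^0(X_0,\Theta_0)$ (any connected Lie group is generated by a neighborhood of the identity, and the image of $\exp$ contains such a neighborhood). Thus it is enough to extend each $\exp(\xi_0)$ for $\xi_0\in H^0(X_0,\Theta_0)$ and then compose. The hypothesis that $h^0$ is locally constant at $J_0$, together with the local freeness and flatness of the relative tangent sheaf $\Theta_{\mathscr{K}_0/K_0}$ (which hold because the Kuranishi family $\pi:\mathscr{K}_0\to K_0$ is smooth), places me in the setting of Grauert's base-change theorem: $R^0\pi_*\Theta_{\mathscr{K}_0/K_0}$ is locally free in a neighborhood of $0$, and its formation commutes with base change. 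After shrinking $V$ (hence $K_0$) accordingly, every $\xi_0\in H^0(X_0,\Theta_0)$ lifts to a holomorphic section $\tilde\xi$ of $\Theta_{\mathscr{K}_0/K_0}$ over $\mathscr{K}_0$, equivalently a holomorphic family $(\tilde\xi_J)_{J\in K_0}$ of holomorphic vector fields on the fibers with $\tilde\xi_0=\xi_0$.

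Since each fiber $X_J$ is compact, the fiberwise flow of $\tilde\xi$ is defined for all time, producing a holomorphic $1$-parameter family of biholomorphic automorphisms $\Phi_t:\mathscr{K}_0\to\mathscr{K}_0$ covering the identity on $K_0$. The time-$1$ map $\Phi_1$ restricts to $\exp(\xi_0)$ on $X_0$ and is therefore the sought extension. Composing the extensions attached to $\xi_1,\ldots,\xi_k$ yields an automorphism of $\mathscr{K}_0\to K_0$ that restricts to $f$ on $X_0$; taking $V$ small enough to make the finitely many Grauert-type extensions simultaneously valid completes the proof.

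The hard part will be the invocation of Grauert's theorem over a possibly non-reduced base $K_0$. The direct image $R^0\pi_*\Theta_{\mathscr{K}_0/K_0}$ is however coherent, and the local constancy of $h^0$ (which, by upper semi-continuity as in footnote \ref{ftgrauert}, is actually constant on a whole open neighborhood of $0$ in $K_0$, reduced or not) is exactly the hypothesis in Grauert's theorem that simultaneously yields local freeness and the base-change isomorphism; the remaining verification that fiberwise integration of a holomorphic relative vector field on a proper smooth family produces a holomorphic automorphism covering the identity is a standard consequence of holomorphic dependence on parameters for ODEs on compact manifolds.
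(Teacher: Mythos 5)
Your argument for the reduced case is essentially the paper's: use local constancy of $h^0$ to extend each element of $H^0(X_0,\Theta_0)$ to a vertical holomorphic vector field on the Kuranishi family, integrate fiberwise, and compose, using that $\Azero$ is connected so that every element is a finite product of exponentials. The genuine gap is exactly where you acknowledge the "hard part": Grauert's continuity/base-change theorem (constancy of $t\mapsto\dim H^0(X_t,\Theta_t)$ implies local freeness of $R^0\pi_*\Theta_{\mathscr{K}_0/K_0}$ and compatibility with base change) requires the base to be \emph{reduced}. Over a non-reduced $K_0$ the hypothesis is vacuous wherever the reduction is a point (the dimension function is then trivially constant), yet $\pi_*\Theta_{\mathscr{K}_0/K_0}$ need not be locally free and the evaluation map to $H^0(X_0,\Theta_0)$ need not be surjective, so your lift $\tilde\xi$ of $\xi_0$ may simply not exist as a section over the non-reduced base. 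This is not a pedantic point in this paper: the failure of infinitesimal automorphisms to integrate over non-reduced Kuranishi spaces is precisely the phenomenon behind the cited counterexample of Doan, and the remark following the lemma (extensions descend as the identity on $K_0^{red}$ but not always on $K_0$) shows the author is tracking exactly this subtlety.

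The paper avoids the problem by splitting the proof: it first runs your argument assuming $K_0$ reduced, and then, for non-reduced $K_0$, it takes the extension $G$ already constructed over the reduction $K_0^{red}$ and prolongs it to the non-reduced structure by hand, using that $\pi:\mathscr{K}_0\to K_0$ is a smooth morphism, hence locally isomorphic to $K_0\times\C^n$, and defining the extension in such product charts (together with the induced map $g(J)=J\cdot G_J$ on the base). So your proposal is correct and complete when $K_0$ is reduced, but as written it does not prove the lemma for non-reduced Kuranishi spaces; you would need to add a reduction-to-$K_0^{red}$ step of this kind (or otherwise justify the lifting of sections over an Artinian-type thickening), rather than invoking Grauert's theorem "reduced or not".
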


\begin{proof}
	Assume $K_0$ reduced. Recall that the Lie algebra of $\Azero$ is the cohomology group $H^0(X_0,\Theta_0)$. Since \eqref{h0} is locally constant at $0$, the projection
	\begin{equation}
		\bigcup_{t\in K_0}H^0(X_t,\Theta_t)\longrightarrow K_0
	\end{equation} 
	is a locally trivial fiber bundle for $V$ and $K_0$ small enough. Hence we may extend every element of $H^0(X_0,\Theta_0)$ as a holomorphic vector field tangent to the fibers of the Kuranishi family. Taking the exponential and composing, this means that every element of $\Azero$ extends as an automorphism of the whole Kuranishi family as wanted.
	
	If $K_0$ is not reduced, then the previous argument shows that every automorphism of $\Azero$ extends as an automorphism above $K_0^{red}$, the reduction of $K_0$. Now, given $f\in\Azero$ and $G$ an extension of $f$
	\begin{equation}
		\label{extensionf}
		\begin{tikzcd}
			\mathscr{K}_0^{red}\arrow[r,"G"]\arrow[d]&	\mathscr{K}_0^{red}\arrow[d]\\
			K_0^{red}\arrow[r,"g"'] &K_0^{red}
		\end{tikzcd}
	\end{equation}
	then $g$ is defined on $K_0$ through
	\begin{equation}
		g(J):=J\cdot G_J
	\end{equation}
	Then, since $\mathscr{K}_0\to K_0$ is a smooth morphism, $\mathscr{K}_0$ is locally isomorphic to $K_0\times\C^n$. On such an open subset of $K_0\times\C^n$, we define $G$ as
	\begin{equation}
		(J,z)\longmapsto (g(J),G_J(z))
	\end{equation}
	and we are done.
\end{proof}
Notice that, when $K_0$ is reduced, the proof of Lemma \ref{lemmah0constant} shows that every automorphism of $\Azero$ extends as an automorphism of the nearby fibers of the Kuranishi space of $X_0$. Indeed the extension at $J$ is given by the function $f\circ e(\chi(J))$ appearing in \eqref{sigmaf}, which is an automorphism of $X_J$ by Lemma \ref{lemmaHolWavrik}. As a consequence the source and target morphisms of $\mathcal A_0\rightrightarrows K_0$ are equal. If $K_0$ is not reduced, then the extensions descend as the identity on the reduction of $K_0$ but not always as the identity of $K_0$. 

 This motivates the following definition.
\begin{definition}
	\label{defnormal}
	A point $X_0$ of the Teichm\"uller stack is a {\sl normal point} if
	\begin{enumerate}[\rm i)]
		\item choosing $V$ small enough, every automorphism of $\Azero$ extends as an automorphism of the whole Kuranishi family which descends as the identity on $K_0$.
		\item It does not belong to the closure of the set of exceptional points.
	\end{enumerate}
\end{definition}

Replacing exceptional with $\Z$-exceptional in Definition \ref{defnormal} gives the notion of $\Z$-normal points.

As a consequence of point i) in Definition \ref{defnormal}, the source and target maps of $\mathcal A_0\rightrightarrows K_0$ can be assumed to be equal at a normal point. Indeed, it is exactly the analytic space and morphism
\begin{equation}
	\label{Namba}
	\mathscr{N}:=\bigcup_{t\in K_0}\text{Aut}^0(X_t)\longrightarrow K_0
\end{equation} 
constructed by Namba\footnote{To be precise, the Namba space is the union of the full groups $\text{Aut}(X_t)$, hence \eqref{Namba} is an open subset of it.} in \cite{Namba} when $K_0$ is reduced. This can be proven as follows. By Lemma \ref{lemmaautcontained}, there is a set theoretic inclusion of $\mathscr{N}$ into $\mathcal{A}_0$. The equality $s=t$ yields the reverse inclusion. Taking account that the topology in $\mathscr{N}$ is that of uniform convergence, this bijection is indeed a homeomorphism. Finally, both $\mathcal{A}_0$ and $\mathscr{N}$ being smooth over $K_0$ reduced - for $\mathscr{N}$, this is true because \eqref{h0} is constant - with same fibers, this homemorphism can be turned into an analytic isomorphism. 

Recall also that point i) in Definition \ref{defnormal} is stronger than \eqref{h0} being locally constant in the non-reduced case.

\begin{example}
	\label{fakeorbifold}
	Consider the case of compact complex tori. Then $\Azero$ is not trivial, since it contains the translations. So neither $\mathscr{T}(M,V)$ nor $\mathscr{A}_1$ is an orbifold, since their isotropy groups are not finite. However, if we forget about the stack structure, the Teichm\"uller space is naturally a complex manifold. Indeed, roughly speaking, the stack is obtained from this complex manifold by attaching a group of translations to each point. This is an example of a stack represented by an analytic groupoid with $s$ and $t$ equal. More precisely, \eqref{Namba} is the universal family of tori, see \cite{LMStacks}, Example 13.1. 
\end{example}

At a normal point, resp. a $\Z$-normal point, the local Teichm\"uller stack and the Kuranishi stack $\mathscr{A}_1$, resp. the local $\Z$-Teichm\"uller stack and $\mathscr{A}_\Z$, coincide. Lemma \ref{lemmah0constant} and \eqref{Namba} are however not enough to describe both of them, since $\Aun$, resp. $\AZ$, may have several connected components.

\subsection{Kuranishi stack as an orbifold}
\label{Kurorbifold}
Before analyzing the Teichm\"uller stack of normal points, we investigate the important case when the Kuranishi stack(s) is (are) an orbifold. Here by an orbifold, we mean a stack given as the global quotient of an analytic space by an holomorphic action of a finite group with a point fixed by the whole group. We include non-effective actions.
We have
\begin{theorem}
	\label{Kurstackorbifold}
	The following two statements are equivalent
	\begin{enumerate}[\rm i)]
		\item There exists some open neighborhood $V'\subset V$ of $X_0$ such that the Kuranishi stack $\mathscr{A}$ restricted to $V'$ is an orbifold 
		\item $\A$ is finite
	\end{enumerate}
\end{theorem}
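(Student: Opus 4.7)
The implication (i) $\Rightarrow$ (ii) is immediate: an orbifold in the sense of this paper is a global quotient $[X/G]$ by a finite group, hence has finite isotropy at every point. The isotropy of $X_0$ in the Kuranishi stack $\mathscr{A}$ is the stabilizer fiber $(s\times t)^{-1}(0,0)$ of the groupoid $\mathcal{A}\rightrightarrows K_0$, which identifies with $\A$ (as recorded in the proof of Theorem \ref{thmuniversal}). So if $\mathscr{A}$ restricted to some $V'$ is an orbifold, $\A$ must be finite.

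For (ii) $\Rightarrow$ (i) the strategy is to realize $\mathscr{A}|_{V'}$ as a global quotient $[U/\A]$ for a small neighborhood $U$ of $0$ in $K_0$, with $0$ as the required fixed point. First, since $\A$ is finite, $\Azero$ is trivial, so $h^0(0)=0$; by upper semicontinuity of \eqref{h0} we have $h^0\equiv 0$ on a neighborhood of $0$, so after shrinking $V$ we may assume $\text{Aut}^0(X_t)=\{1\}$ for every $t\in K_0$. Using the finiteness of $\A$, I would then set $U:=\bigcap_{f\in\A}U_f$ (still open and containing $0$) and shrink $V$ once more so that $\text{Hol}_f(U)\subset U$ for every $f\in\A$.

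The central step is to show that the collection $\{\text{Hol}_f\}_{f\in\A}$ is a genuine holomorphic action of $\A$ on $U$. The composition defect between $\text{Hol}_g\circ\text{Hol}_f$ and $\text{Hol}_{fg}$ is captured by the correction $\chi$ of \eqref{chichecks}, which is infinitesimally a section of the $\text{Aut}^0$-bundle; since that bundle is now trivial, the defect vanishes. In the reduced case this equality follows from Wavrik's universality \cite{Wavrik}. In the non-reduced case I would lift each $f\in\A$ to a unique germ of automorphism $\hat{f}$ of $(\mathscr{K}_0,\pi^{-1}(0))$ through the universality of Theorem \ref{thmuniversal}, and then use the uniqueness clause to conclude $\widehat{fg}=\hat{f}\circ\hat{g}$. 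This also provides the simultaneous lift of the action to $\mathscr{K}_0|_U$.

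To finish, I would show that the groupoid $\mathcal{A}|_U\rightrightarrows U$ is equivalent to the translation groupoid $\A\times U\rightrightarrows U$ of this action: the sections $\sigma_f$ of \eqref{sigmaf} give a groupoid morphism $\A\times U\to\mathcal{A}|_U$, and conversely any $(V,\mathcal{D})$-admissible $(J,F)$ with source and target in $U$ is, after shrinking $V$ once more, in the image of a unique $\sigma_f$, because each factor in its admissibility decomposition is forced to lie in a small neighborhood of $\A$ in $\mathcal{D}$ and the intermediate $e(\xi)$-pieces are absorbed into the $\sigma_f$ since $\Azero$ is trivial at every fiber. Stackifying this groupoid equivalence gives $\mathscr{A}|_{V'}\simeq [U/\A]$, an orbifold with fixed point $0$. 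The hardest parts will be the coherent lifting of $\A$ via Theorem \ref{thmuniversal} in the non-reduced setting, and the verification that $\mathcal{A}|_U$ is exhausted by the $\sigma_f$; both hinge crucially on the triviality of $\text{Aut}^0(X_t)$ at every nearby fiber.
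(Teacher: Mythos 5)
Your direction (i) $\Rightarrow$ (ii) and the overall architecture of the converse --- realize $\mathscr{A}$ near $X_0$ as the translation groupoid of an $\A$-action on a shrunken $K_0$, using the sections $\sigma_f$ of \eqref{sigmaf} and the maps $\text{Hol}_f$, then match every admissible morphism with some $\sigma_f$ --- are the same as in the paper. The gap is in how you obtain the action (composition) law. In the non-reduced case you propose to lift each $f\in\A$ to a \emph{unique} germ of automorphism of $(\mathscr{K}_0,\pi^{-1}(0))$ ``through the universality of Theorem \ref{thmuniversal}'' and to get $\widehat{fg}=\hat f\circ\hat g$ from its uniqueness clause. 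But the uniqueness in Theorem \ref{thmuniversal} is only uniqueness of the classifying data up to (unique) isomorphism encoded in $(\mathscr{A},0)$, and that ambiguity consists precisely of automorphisms of the germ of the Kuranishi family --- the very objects you are trying to pin down --- so the theorem does not by itself produce a unique lift, nor the cocycle identity. Similarly, Wavrik's theorem in the reduced case gives uniqueness of the base map $f$ of \eqref{CDpullbackgerm}, not of the full map $(f,F)$, which is what the composition law needs, and the remark that the defect is ``infinitesimally a section of the $\text{Aut}^0$-bundle'' does not by itself kill it over a non-reduced base. There is also a minor ordering problem: you fix an invariant $U$ by ``shrinking $V$'' before you have the composition law, whereas invariance of $\bigcap_f\text{Hol}_f(U)$ is checked in the paper using exactly that law.

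The paper closes this step without Wavrik and without the upper semicontinuity of \eqref{h0}: since $\A$ is finite, hence discrete, any extension $G$ of $f$ decomposes through the $\A$-version of \eqref{Gdecompoglobal} as $G(J)=f\circ e(\eta(J))$ with the $\A$-factor locally constant equal to $f$, and the product chart \eqref{Kurmap} forces $\eta=\chi$; hence $\sigma_f$ is the unique extension of $f$ (Lemma \ref{lemma1}). The identity $\sigma_{g\circ f}=m(\sigma_g,\sigma_f)$ (Lemma \ref{lemma1bis}) and the group isomorphism $\mathscr{E}xt\simeq\A$ (Lemma \ref{lemma2}) then follow formally, the invariant neighborhood is obtained as $\bigcap_f\text{Hol}_f\bigl(\bigcap_g U_g\bigr)$, and the groupoid over it is the translation groupoid $\A\times K_0\rightrightarrows K_0$. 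So your plan becomes correct once the appeal to Theorem \ref{thmuniversal} (and to Wavrik) in the composition step is replaced by this discreteness/uniqueness-of-extension argument; your reduction to $\text{Aut}^0(X_t)=\{1\}$ on nearby fibers is correct but not needed for it.
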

\begin{remark}
	\label{precisestatement}
	In order to endow the Kuranishi stack $\mathscr{A}$ with a structure of an orbifold, we need to start with an open set $V$ stable under the action of the automorphisms. This explains the restriction to some $V'$ in the statement. In the proof $V'$ is constructed as such a stable open set.
\end{remark}

\begin{proof}
	Since the isotropy group of $X_0$ is $\A$, the condition is obviously necessary. So let us assume that $\A$ is finite. We start with an arbitrary atlas $\mathcal A\rightrightarrows K_0$. We assume that the $\A$ version of \eqref{Gdecompoglobal} is valid. 
	
	We show that we may choose $ V'\subset V$ so that the corresponding atlas $\mathcal A'\rightrightarrows K_0\cap V'$ of $\mathscr{A}$ is Morita equivalent to the translation groupoid $\A\times K_0\cap V'\rightrightarrows K_0\times V'$.

	The proof of Theorem \ref{Kurstackorbifold} consists in the following three lemmas.
	
	\begin{lemma}
		\label{lemma1}
		For all $f\in\A$, the map $\sigma_f : U_f\subset K_0\to \mathcal A$ constructed in {\rm\eqref{sigmaf}} is the unique (up to restriction) extension of $f$.
	\end{lemma}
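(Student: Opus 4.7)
The plan is to exploit the local product chart \eqref{Gdecompoglobal} on a neighborhood of $\A$ in $\text{Diff}^+(M)$ together with the uniqueness built into Kuranishi's isomorphism \eqref{Kurmap}, invoking the standing hypothesis of Theorem \ref{Kurstackorbifold} that $\A$ is finite.

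First, I would unpack what an extension of $f$ means: an analytic section $\sigma$ of $s : \mathcal{A} \to K_0$ defined on some connected neighborhood $U$ of $J_0$ in $K_0$ with $\sigma(J_0) = (J_0, f)$. Since $s$ is the first projection, $\sigma$ must have the form $\sigma(J) = (J, g(J))$ for an analytic map $g : U \to \mathcal{D}$ with $g(J_0) = f$.

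Next, shrinking $U$ so that $g(U)$ is contained in the product chart of \eqref{Gdecompoglobal} centered at $f \in \A$, I would factor uniquely
\begin{equation*}
	g(J) = h(J) \circ e(\xi(J)),\qquad h : U \to \A,\quad \xi : U \to W,\quad h(J_0) = f,\ \xi(J_0) = 0,
\end{equation*}
with $h$ continuous and $\xi$ analytic. The finiteness of $\A$ makes it discrete inside $\mathcal{D}$, so continuity of $h$ on the connected set $U$ pins down $h \equiv f$. The membership constraint $\sigma(J) \in \mathcal{A}$ then reduces to $(J \cdot f) \cdot e(\xi(J)) \in K_0$, and the Kuranishi isomorphism \eqref{Kurmap} provides a \emph{unique} element of $W$ realizing this; by \eqref{chichecks} it is $\chi(J)$. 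Hence $\xi \equiv \chi$ on $U$, so $\sigma = \sigma_f$ as germs at $J_0$.

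The main obstacle I anticipate is the bootstrap step: verifying that after shrinking $U$ the image $g(U)$ actually lies in a single product chart of \eqref{Gdecompoglobal}, which is what allows the continuous $\A$-component $h$ to be defined at all. This is however automatic from continuity of $g$ at $J_0$ together with the construction of \eqref{Gdecompoglobal} as a product chart on a neighborhood of $\A$ in $\text{Diff}^+(M)$; once it is secured, the discreteness of the finite group $\A$ and the uniqueness in Kuranishi's theorem together rigidify the extension completely.
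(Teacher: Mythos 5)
Your argument is correct and follows essentially the same route as the paper: decompose any extension $G(J)=f\circ e(\eta(J))$ via the product chart \eqref{Gdecompoglobal}, use discreteness of the finite group $\A$ to force the $\A$-factor to be constantly $f$, and then use the fact that the target $J\cdot G(J)$ must lie in $K_0$ together with the uniqueness coming from the Kuranishi isomorphism \eqref{Kurmap} to conclude $\eta=\chi$, hence $G=\sigma_f$ near $J_0$. The only cosmetic difference is that the paper phrases the last step through $\Xi$ and $\text{Hol}_f$ rather than through the membership constraint directly; the content is identical.
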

	
	By extension of $f$, we mean a section $F$ of $s$ defined in a neighborhood of $0$ and such that $F(0)=f$.
	
	\begin{proof}[Proof of Lemma \ref{lemma1}]
		
		The map $\sigma_f$ is obviously an extension of $f$ as desired. 
		
		Let now $G$ be another extension of $f$. Then, for all $J\in K_0$ close to $0$, we have a decomposition
		\begin{equation}
			\label{extensionbis}
			G(J)=f\circ e(\eta(J))
		\end{equation}
		using \eqref{Gdecompoglobal}. Here the factor in $\A$ is constant equal to $f$ since $\A$ is discrete.
		
		We have
		\begin{equation}
			J\cdot G(J)=\Xi(J\cdot G(J))=\Xi (J\cdot f)=\text{Hol}_f(J)
		\end{equation}
		so the mapping $\eta$ also satisfies \eqref{chichecks}. 
		But since \eqref{Kurmap} is an isomorphism, \eqref{chichecks} is uniquely verified and $\eta=\chi$. Thus $G=\sigma_f$ on a neighborhood of $J_0$ in $K_0$.		
	\end{proof}
	As a consequence, we have
	\begin{lemma}
		\label{lemma1bis}
		For all $f\in\A$, and $g\in\A$, we have  $\sigma_{g\circ f}=m(\sigma_{g}, \sigma_{f})$ on a neighborhood of $J_0$.
	\end{lemma}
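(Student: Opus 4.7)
The plan is to invoke the uniqueness part of Lemma \ref{lemma1}. First, I need to check that $m(\sigma_g,\sigma_f)$ even makes sense as a section of $s$ near $J_0$. Since $\chi_g(0)=0$ we have $\text{Hol}_g(J_0)=J_0\in U_f$, so by continuity there is an open neighborhood $U\subset U_g$ of $J_0$ with $\text{Hol}_g(U)\subset U_f$. On $U$, I would set
$$\Sigma(J):=m\bigl(\sigma_g(J),\,\sigma_f(\text{Hol}_g(J))\bigr).$$
The groupoid composition is well-defined because, by \eqref{tsigma}, $t(\sigma_g(J))=\text{Hol}_g(J)=s\bigl(\sigma_f(\text{Hol}_g(J))\bigr)$, and $s\circ m$ equals the source of the first factor, so $\Sigma$ is indeed a section of $s$.

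Next, I would evaluate $\Sigma$ at $J_0$. Because $\chi_g(0)=\chi_f(0)=0$, we get $\sigma_g(J_0)=(J_0,g)$ and $\sigma_f(J_0)=(J_0,f)$, whence the composition formula \eqref{multgroupoid} gives
$$\Sigma(J_0)=m\bigl((J_0,g),(J_0,f)\bigr)=(J_0,g\circ f).$$
Hence $\Sigma$ is a section of $s$ defined in a neighborhood of $J_0$ with $\Sigma(J_0)=g\circ f$; in the terminology of Lemma \ref{lemma1}, it is an extension of $g\circ f\in\A$.

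Finally, applying the uniqueness statement of Lemma \ref{lemma1} to the element $g\circ f$ (which crucially uses that $\A$ is discrete, forcing the $\A$-component in the decomposition \eqref{Gdecompoglobal} of any extension to be locally constant equal to $g\circ f$) yields $\Sigma=\sigma_{g\circ f}$ after possibly shrinking the neighborhood. I do not anticipate a real obstacle: the proof is essentially a one-line application of Lemma \ref{lemma1} to a composed section. The only point requiring care is the bookkeeping of source/target conditions that guarantee composability of $\sigma_g$ and $\sigma_f$ over a common neighborhood of $J_0$, which is handled by continuity of $\text{Hol}_g$ at the fixed point $J_0$.
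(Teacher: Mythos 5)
Your proposal is correct and follows exactly the paper's argument: the paper likewise defines $m(\sigma_g,\sigma_f)(J)=m(\sigma_g(J),\sigma_f(J\cdot\sigma_g(J)))$, observes it is an extension of $g\circ f$, and concludes by the uniqueness in Lemma \ref{lemma1}. Your additional bookkeeping on composability via $t\circ\sigma_g=\text{Hol}_g$ and the evaluation at $J_0$ just makes explicit what the paper leaves implicit.
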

	
	\begin{proof}[Proof of Lemma \ref{lemma1bis}]
		Define 
		\begin{equation}
			m(\sigma_g,\sigma_f)\ :\ J\longmapsto m(\sigma_g(J),\sigma_f(J\cdot\sigma_g(J)))
		\end{equation} 
		This is an extension of $g\circ f$, and thus by Lemma \ref{lemma1} is equal to $\sigma_{g\circ f}$ on a neighborhood of $J_0$.
	\end{proof}
	
	Let $U$ be the intersection of all $U_f$ for $f$ in the finite group $\A$. Then all $\text{Hol}_f$ are defined on $U$ with values in $K_0$. Redefine $K_0$ as the intersection of all $\text{Hol}_f(U)$ for $f$ in the finite group $\A$.  Observe that
	\begin{equation}
		\text{Hol}_g(\cap_f (\text{Hol}_f(U)))=(\cap_f (\text{Hol}_{f\circ g}(U)))=\cap_f (\text{Hol}_f(U))
	\end{equation}
	because the intersection of the $\text{Hol}_f(U)$ is included in $U$ and because of Lemma \ref{lemma1bis}. Then all $\text{Hol}_f$ map bijectively $K_0$ to $K_0$. Associated to this new $K_0$ and to \eqref{Gdecompoglobal} is some $V'\subset V$.
	Set 
	\begin{equation}
		\label{Ext}
		\mathscr{E}xt=\{\sigma_f : K_0\to \mathscr{A}\mid f\in\A\}
	\end{equation}
	We have
	\begin{lemma}
		\label{lemma2}
		$(\mathscr{E}xt,\circ)$ is a group isomorphic to $\A$.
	\end{lemma}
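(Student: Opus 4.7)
The plan is to show that $\Phi : f \mapsto \sigma_f$ is a bijection from $\A$ onto $\mathscr{E}xt$ intertwining composition in $\A$ with the groupoid multiplication $m$; this simultaneously endows $\mathscr{E}xt$ with the structure of a group and identifies it with $\A$. Surjectivity is immediate from \eqref{Ext}, so the real content is injectivity, identification of an identity element, and compatibility with multiplication.

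For injectivity and identity, I would evaluate sections at $J_0 = 0$. Since each $f \in \A$ satisfies $J_0 \cdot f = J_0$, one has $\text{Hol}_f(0) = 0$, so the defining equation \eqref{chichecks} reads $0 = 0 \cdot e(\chi(0))$; by the uniqueness in the Kuranishi isomorphism \eqref{Kurmap}, $\chi(0) = 0$. Thus $\sigma_f(0) = (0, f)$, which yields injectivity of $\Phi$. The same computation applied to $f = \text{Id}_M$ gives $\text{Hol}_{\text{Id}_M} = \text{Id}_{K_0}$ and $\chi \equiv 0$, so $\sigma_{\text{Id}_M}(J) = (J, \text{Id}_M)$; this is the unit section of the groupoid $\mathcal{A} \rightrightarrows K_0$, and it will be our identity in $\mathscr{E}xt$.

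The heart of the argument is to upgrade Lemma \ref{lemma1bis} from a germ-at-$J_0$ identity to a global equality on the entire (shrunk) $K_0$. After the preparatory step $K_0 := \bigcap_{f \in \A} \text{Hol}_f(U)$ carried out just above \eqref{Ext}, every $\text{Hol}_f$ is a biholomorphism of $K_0$, and both $\sigma_{g \circ f}$ and $m(\sigma_g, \sigma_f) : J \mapsto m(\sigma_g(J), \sigma_f(J \cdot \sigma_g(J)))$ are well-defined analytic sections of $s$ on all of $K_0$. They coincide on a neighborhood of $0$ by Lemma \ref{lemma1bis}, and this neighborhood meets every irreducible component of the Kuranishi germ $K_0$ since $0$ lies in each of them; analytic continuation then forces equality everywhere on $K_0$. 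This yields at once closure of $\mathscr{E}xt$ under $m$, the homomorphism property $\Phi(g \circ f) = m(\Phi(g), \Phi(f))$, and existence of inverses via $m(\sigma_{f^{-1}}, \sigma_f) = \sigma_{\text{Id}_M}$. The one nontrivial point is this globalization; it is precisely what the shrinking of $K_0$ above \eqref{Ext} was designed to enable.
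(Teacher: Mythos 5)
Your proposal is correct and follows essentially the same route as the paper: the whole content is the globalization of Lemma \ref{lemma1bis} from a neighborhood of $J_0$ to all of $K_0$ by analyticity (made possible by the shrinking of $K_0$ performed just before \eqref{Ext}), which yields closure under $m$, the homomorphism property, and inverses. Your extra bookkeeping (injectivity via $\sigma_f(0)=(0,f)$, the unit section $\sigma_{\mathrm{Id}}$, and the remark that the neighborhood of $0$ meets every irreducible component) only makes explicit what the paper leaves implicit.
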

	
	\begin{proof}[Proof of Lemma \ref{lemma2}]
		By Lemma \ref{lemma1bis}, $m(\sigma_g,\sigma_f)$ is equal to $\sigma_{g\circ f}$ on a neighborhood of $J_0$, hence on $K_0$ by analyticity.
		
		The same arguments used in Lemma \ref{lemma1bis} show that if $g_1\circ\hdots\circ g_k=Id$, then the same relation holds for the $\sigma_{g_i}$'s.
	\end{proof}
	The space $K_0$ is invariant by the action of the group $(\mathscr{E}xt,\circ)$, which describes all the morphisms	of the Kuranishi stack. We may thus take as atlas for $\mathscr{A}$ the translation groupoid $\mathscr{E}xt\times K_0\rightrightarrows K_0$, or, equivalently the translation groupoid $\A\times K_0\rightrightarrows K_0$.
\end{proof} 

Replacing $\mathscr{A}$ with $\mathscr{A}_\Z$, resp. $\mathscr{A}_1$, resp. $\mathscr{A}_0$ and $\A$ with $\AZ$, resp. $\Aun$, resp. $\Azero$ yields the following immediate corollaries.

\begin{corollary}
	\label{Kur1stackorbifold}
	The following two statements are equivalent
	\begin{enumerate}[\rm i)]
		\item There exists some open neighborhood $V'\subset V$ of $X_0$ such that the Kuranishi stack $\mathscr{A}_1$ restricted to $V'$ is an orbifold 
		\item $\Aun$ is finite
	\end{enumerate}
\end{corollary}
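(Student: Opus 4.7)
The plan is to mimic the proof of Theorem \ref{Kurstackorbifold} step by step, replacing $\A$ by $\Aun$, the decomposition \eqref{Gdecompoglobal} by its $\Aun$-analogue with neighborhood $\mathcal D_1$, and the admissibility condition defining $\mathcal A$ by the $(V,\mathcal D_1)$-admissibility defining $\mathcal A_1$. Necessity is immediate, since the isotropy group of $X_0$ in $\mathscr{A}_1$ is $\Aun$, and an orbifold must have finite isotropy.

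For sufficiency, assume $\Aun$ is finite. First I would check that Lemma \ref{lemma1} still applies: for $f\in\Aun$ the map $\sigma_f$ defined in \eqref{sigmaf} takes values in $\mathcal A_1$ (not merely $\mathcal A$), because $f$ itself lies in $\mathcal D_1$ and the remaining factor $e(\chi(J))$ is close to the identity hence lies in $\mathcal D_0\subset \mathcal D_1$; the required $U_{F_i}$-condition in the admissibility definition is satisfied on a neighborhood of $J_0$. The uniqueness of the extension then follows verbatim from the uniqueness of the Kuranishi chart, using the $\Aun$-version of \eqref{Gdecompoglobal} (the discrete factor $f\in\Aun$ being forced to stay constant on a connected neighborhood).

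Next, Lemma \ref{lemma1bis} carries over without change: $m(\sigma_g,\sigma_f)$ is still an extension of $g\circ f\in\Aun$, so by the already established uniqueness it must coincide with $\sigma_{g\circ f}$ near $J_0$. Since $\Aun$ is finite, I can then shrink $K_0$ to be invariant under all the finitely many $\text{Hol}_f$, $f\in\Aun$, exactly as in the proof of Theorem \ref{Kurstackorbifold}, obtaining a corresponding open set $V'\subset V$. Finally, the analogue of Lemma \ref{lemma2} shows that $\mathscr{E}xt_1:=\{\sigma_f\mid f\in\Aun\}$, under composition, is a group isomorphic to $\Aun$ acting on $K_0$ and representing all morphisms of $\mathscr{A}_1$, so that $\mathscr{A}_1|_{V'}$ is represented by the translation groupoid $\Aun\times K_0\rightrightarrows K_0$ and is therefore an orbifold.

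The only delicate point, and the one I would verify carefully, is that the extensions $\sigma_f$ genuinely land in the groupoid $\mathcal A_1$ rather than just in the larger $\mathcal A$. This reduces to checking the $U_{F_i}$ condition of admissibility in Definition \ref{defadm} (as modified in \S \ref{Kurstack} for the $\A$/$\Aun$ case), which is a local-at-$J_0$ condition and therefore is ensured by shrinking $K_0$ as needed. Everything else is a formal transcription of the arguments in the proof of Theorem \ref{Kurstackorbifold}.
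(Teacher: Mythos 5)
Your proposal is correct and follows essentially the same route as the paper, which obtains Corollary \ref{Kur1stackorbifold} precisely by rerunning the proof of Theorem \ref{Kurstackorbifold} with $\A$ replaced by $\Aun$, \eqref{Gdecompoglobal} by its $\Aun$-version with $\mathcal D_1$, and admissibility taken in the $(V,\mathcal D_1)$ sense. Your extra verification that the sections $\sigma_f$ land in $\mathcal A_1$ (via the $U_{F_i}$ condition near $J_0$) is exactly the detail implicit in the paper's substitution, so nothing is missing.
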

then,
\begin{corollary}
	\label{KurZstackorbifold}
	The following two statements are equivalent
	\begin{enumerate}[\rm i)]
		\item There exists some open neighborhood $V'\subset V$ of $X_0$ such that the Kuranishi stack $\mathscr{A}_\Z$ restricted to $V'$ is an orbifold 
		\item $\AZ$ is finite
	\end{enumerate}
\end{corollary}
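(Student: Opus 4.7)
The plan is to mimic verbatim the proof of Theorem \ref{Kurstackorbifold}, replacing $\A$ by $\AZ$ and $\mathcal{D}$ by $\mathcal{D}_\Z$ at every occurrence, and checking that each step remains valid. The necessity (i) $\Rightarrow$ (ii) is immediate: the isotropy group of $X_0$ in $\mathscr{A}_\Z$ is $\AZ$, and a stack which is locally an orbifold has finite isotropy at every point. So the content lies in the converse.

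Assume $\AZ$ is finite. First I would replay the $\AZ$-analogue of the decomposition \eqref{Gdecompoglobal}, which, as recalled at the beginning of \S\ref{subsecZkur}, is available because the construction of \cite[Lemma 4.2]{LMStacks} only uses that we take the decomposition relative to a closed subgroup; here it produces the neighborhood $\mathcal{D}_\Z$ of $\AZ$ in $\DZ$. With this in hand, for every $f\in\AZ$ the section $\sigma_f : U_f\to\mathcal{A}_\Z$ of the source map is defined exactly as in \eqref{sigmaf}, since the $(V,\mathcal{D}_\Z)$-admissibility condition was precisely tailored (via point \ref{additional}) and the $U_{F_i}$ replacements) so that $\sigma_f$ lands in $\mathcal{A}_\Z$ for any $f\in\AZ$, regardless of whether $f$ is connected to the identity.

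Next I would reproduce the three lemmas. Lemma \ref{lemma1} goes through unchanged: any extension $G$ of $f\in\AZ$ decomposes as $G(J)=f\circ e(\eta(J))$ via the $\AZ$-version of \eqref{Gdecompoglobal}, with the factor in $\AZ$ constant equal to $f$ because $\AZ$ is discrete (being finite); the uniqueness of the Kuranishi isomorphism \eqref{Kurmap} then forces $\eta=\chi$, so $G=\sigma_f$. Lemma \ref{lemma1bis} is purely formal and follows from the uniqueness just established. For Lemma \ref{lemma2}, one shrinks $K_0$ to the finite intersection $\cap_{f\in\AZ}\mathrm{Hol}_f(U)$, where $U=\cap_{f\in\AZ}U_f$, and observes that finiteness of $\AZ$ together with Lemma \ref{lemma1bis} make this intersection invariant under every $\mathrm{Hol}_f$; then $\mathscr{E}xt^\Z:=\{\sigma_f\mid f\in\AZ\}$ is a group isomorphic to $\AZ$ acting on $K_0$, associated to the shrunk neighborhood $V'\subset V$.

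The conclusion is then exactly as before: the translation groupoid $\AZ\times K_0\rightrightarrows K_0$ is Morita equivalent to the restriction of $\mathcal{A}_\Z\rightrightarrows K_0$ to $V'$, hence the stackification $\mathscr{A}_\Z$ restricted to $V'$ is the global quotient $[K_0/\AZ]$, which is an orbifold in the sense adopted here (non-effective finite quotients are allowed). The only point requiring a modicum of care is that all three lemmas genuinely live inside $\mathcal{A}_\Z$ rather than in the bigger $\mathcal{A}$; but this was already arranged at the level of definitions in \S\ref{subsecZkur}, so no new obstacle arises. I do not foresee any hard step: the whole argument is a transcription of Theorem \ref{Kurstackorbifold} in which the hypothesis ``$\A$ finite'' is used only through discreteness plus finiteness of the orbits generated by $\mathrm{Hol}_f$, both of which hold identically for $\AZ$.
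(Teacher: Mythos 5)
Your proposal is correct and follows essentially the same route as the paper: the paper obtains Corollary \ref{KurZstackorbifold} precisely as an immediate corollary of Theorem \ref{Kurstackorbifold}, by the verbatim substitution $\A\to\AZ$, $\mathcal{D}\to\mathcal{D}_\Z$ (with admissibility via the $U_{F_i}$ as set up in \S\ref{subsecZkur}), which is exactly the transcription you carry out. Your added checks (necessity via finiteness of isotropy, discreteness of $\AZ$ in the analogue of Lemma \ref{lemma1}, shrinking $K_0$ by the finite intersection of the $\text{Hol}_f(U)$) are the right points to verify and raise no new issues.
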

and finally,
\begin{corollary}
	\label{Kur0stackorbifold}
	The following two statements are equivalent
	\begin{enumerate}[\rm i)]
		\item There exists some open neighborhood $V'\subset V$ of $X_0$ such that the Kuranishi stack $\mathscr{A}_0$ restricted to $V'$ is an orbifold 
		\item  There exists some open neighborhood $V'\subset V$ of $X_0$ such that the Kuranishi stack $\mathscr{A}_0$ restricted to $V'$ is an analytic space
		\item $\Azero$ is reduced to the identity.
	\end{enumerate}
\end{corollary}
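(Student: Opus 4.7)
The plan is to follow the proof template of Theorem \ref{Kurstackorbifold} with the substitutions $\A \rightsquigarrow \Azero$, $\mathcal{A} \rightsquigarrow \mathcal{A}_0$, $\mathscr{A} \rightsquigarrow \mathscr{A}_0$, $\mathcal{D} \rightsquigarrow \mathcal{D}_0$ throughout, and to exploit the additional fact that $\Azero$ is by definition connected. The content beyond Corollary \ref{Kur1stackorbifold} and Corollary \ref{KurZstackorbifold} is precisely the extra clause (ii) on analytic-space representability, which reflects the phenomenon ``connected and finite implies trivial''.

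First I would prove (i) $\Rightarrow$ (iii). If $\mathscr{A}_0$ is an orbifold on $V'$, then the isotropy group at the point $X_0$ is finite. By Lemma \ref{lemmaautcontained}, every element $f\in\Azero$ gives a point $(J_0,f)\in(s\times t)^{-1}(J_0,J_0)$ of this isotropy group, so $\Azero$ embeds in a finite set and is therefore finite. Being the connected component of the identity in the complex Lie group $\A$, a finite $\Azero$ must reduce to $\{\mathrm{Id}\}$.

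Next, (iii) $\Rightarrow$ (ii). Assume $\Azero = \{\mathrm{Id}\}$. The proof of Theorem \ref{Kurstackorbifold}, transposed to $\Azero$, uses only finiteness of the group, the decomposition \eqref{Gdecompoglobal}, and the uniqueness of extensions in Lemmas \ref{lemma1}--\ref{lemma2}; none of these arguments depend on $\A$ versus $\Azero$. It produces, after shrinking to a suitable $V'$, a Morita equivalence between $\mathcal{A}_0\rightrightarrows K_0\cap V'$ and the translation groupoid $\Azero\times(K_0\cap V')\rightrightarrows K_0\cap V'$. When $\Azero$ is trivial, this translation groupoid is simply the unit groupoid of $K_0\cap V'$, whose stackification is the analytic space $K_0\cap V'$ itself. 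Finally, (ii) $\Rightarrow$ (i) is tautological, since an analytic space is an orbifold (quotient by the trivial group acting on itself).

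There is no real obstacle here: the argument is purely a specialisation of Theorem \ref{Kurstackorbifold} combined with the trivial but essential Lie-theoretic observation that a connected group is finite only when it is the identity. The only point requiring minimal care is to make sure that in the proof of Theorem \ref{Kurstackorbifold} the decomposition \eqref{Gdecompoglobal} was stated uniformly for $\Azero$ as well (which it was), so the substitution in Lemmas \ref{lemma1}--\ref{lemma2} goes through verbatim.
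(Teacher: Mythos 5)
Your proposal is correct and follows the same route as the paper: the paper derives this corollary by specializing Theorem \ref{Kurstackorbifold} to $\Azero$, its entire proof being the remark that $\Azero$, being connected, is finite if and only if it is reduced to the identity, which is exactly the observation you use to get (iii) from finiteness and to identify the translation groupoid with the unit groupoid of $K_0\cap V'$ (hence an analytic space) in (iii) $\Rightarrow$ (ii). Your write-up merely makes explicit the substitutions and the role of Lemma \ref{lemmaautcontained}, which the paper leaves implicit.
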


\begin{proof}
	Just notice that $\Azero$ is finite if and only if it is reduced to the identity.
\end{proof}

\subsection{The \'etale Teichm\"uller stack of normal points}
\label{secTSnormal}
We are now in position to analyse the structure of the Teichm\"uller stack restricted to the set of normal points. First note the following result.

\begin{proposition}
	\label{propnormalopen}
	The subset $\mathcal{N}\subset\mathcal{I}$ of normal points is an open set of $\mathcal{I}$.
\end{proposition}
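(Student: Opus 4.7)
The plan is to treat the two requirements of Definition~\ref{defnormal} separately. Condition~(ii), that $X_0$ does not lie in the closure of the exceptional locus, cuts out the complement of a closed set and is therefore manifestly an open condition. The substance of the statement is thus the openness of the subset $\mathcal{N}_{\text{i}}\subset\I$ of points satisfying~(i); once this is established, $\mathcal{N}=\mathcal{N}_{\text{i}}\cap(\I\setminus\overline{\text{exc}})$ will be open.

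To analyze $\mathcal{N}_{\text{i}}$, I first reduce to a statement about $K_0$. Condition~(i) is a condition on the biholomorphism class of $X_0$ (its Kuranishi family is intrinsic), hence $\D$-invariant on $\I$. By Kuranishi's theorem, the Kuranishi map \eqref{Kurmap} shows that any $J$ near $J_0$ in $\I$ is $\D$-equivalent to a point of $K_0$ near $J_0$, so it is enough to prove: if $J_0\in K_0$ satisfies~(i) then, for $V$ small enough, every $J\in K_0$ close to $J_0$ also satisfies~(i).

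Suppose $J_0$ satisfies~(i) with extensions of every $f\in\text{Aut}^0(X_0)$ as automorphisms $\tilde f$ of $\pi:\mathscr{K}_0\to K_0$ descending as $\mathrm{id}_{K_0}$. Taking derivatives along the identity gives a map $H^0(X_0,\Theta_0)\to H^0(K_0,\pi_*\Theta_\pi)$ whose image is a locally free subsheaf; evaluating at any $J\in K_0$ produces a subspace $E_J\subset H^0(X_J,\Theta_J)$ of dimension $\dim\text{Aut}^0(X_0)$. Upper semicontinuity of $h^0$ \eqref{h0} in the Zariski topology of $K_0$ yields, in a neighborhood of $J_0$, the reverse inequality $\dim H^0(X_J,\Theta_J)\le\dim H^0(X_0,\Theta_0)$, so $E_J=H^0(X_J,\Theta_J)$ and (by connectedness) the exponentials of these vector fields generate all of $\text{Aut}^0(X_J)$. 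Taking the germ of $\pi$ at $J$ as Kuranishi family of $X_J$, every element of $\text{Aut}^0(X_J)$ is obtained by restricting to $\pi^{-1}(J)$ some $\tilde f$ as above; the restriction of $\tilde f$ to that germ is an automorphism of the Kuranishi family of $X_J$ descending as the identity on $(K_0,J)$. This shows~(i) at $J$, hence $\mathcal{N}_{\text{i}}\cap K_0$ is open in $K_0$, and by $\D$-invariance $\mathcal{N}_{\text{i}}$ is open in $\I$.

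The main subtlety I would expect is the non-reduced case: as remarked after Lemma~\ref{lemmah0constant}, local constancy of $h^0$ alone is not sufficient for condition~(i), since extensions need only descend as the identity on $K_0^{\text{red}}$. My argument sidesteps this issue by importing the descent-as-identity property directly from $J_0$: restricting a $\tilde f$ that already descends as $\mathrm{id}$ on the full (possibly non-reduced) $K_0$ automatically gives descent as $\mathrm{id}$ on the germ $(K_0,J)$, so the stronger form of~(i) is inherited pointwise without requiring an independent non-reduced analysis at each nearby $J$.
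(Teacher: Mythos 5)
Your treatment of condition (ii) and the reduction to $K_0$ via the Kuranishi chart are fine, but the propagation of condition (i) to nearby points has two genuine gaps. First, the step ``taking derivatives along the identity gives a map $H^0(X_0,\Theta_0)\to H^0(K_0,\pi_*\Theta_\pi)$'' presupposes a family of extensions $f\mapsto\tilde f$ depending differentiably (or holomorphically) on $f$ near the identity. Definition \ref{defnormal}, point i), only asserts that each individual $f\in\Azero$ admits \emph{some} extension; extensions are neither unique nor canonically chosen, so there is nothing to differentiate. In the paper the relative vector fields come from the opposite direction: local constancy of \eqref{h0} makes $\bigcup_t H^0(X_t,\Theta_t)\to K_0$ a locally trivial bundle (proof of Lemma \ref{lemmah0constant}), and that is what produces vertical fields; since you deduce local constancy of $h^0$ from the derivative map, you cannot appeal to that route without circularity. (Even granting the map, the assertion that evaluation at \emph{every} $J$ gives $\dim E_J=\dim\Azero$ needs at least a semicontinuity-of-rank and shrinking argument.) Second, ``taking the germ of $\pi$ at $J$ as Kuranishi family of $X_J$'' is unjustified: the Kuranishi family of $X_0$ is complete at nearby points but in general not semi-universal there. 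Condition (i) at $X_J$ concerns extensions over the Kuranishi space $K_J$ of $X_J$, so you must compare $(K_0,J)$ with $K_J$; this is exactly where the paper's proof invokes Wavrik's theorem (through local constancy of $h^0$), and even then only at the level of \emph{reductions}, supplemented by completeness of $\mathscr{K}_0$ at every point to inject $K_J$ into $K_0$. In particular the non-reduced subtlety you claim to sidestep reappears untouched at precisely this step.

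A smaller inaccuracy: from $E_J=H^0(X_J,\Theta_J)$ you may conclude that every element of $\text{\rm Aut}^0(X_J)$ is a finite product of time-one flows of vertical extensions of fields in $E_J$, each an automorphism of the germ of the family descending as the identity --- and that weaker statement is all you need; the claim that every element of $\text{\rm Aut}^0(X_J)$ is the restriction of a single $\tilde f$ extending an element of $\Azero$ does not follow from the Lie-algebra equality. For comparison, the paper's own proof bypasses vector fields altogether: it uses the equality of source and target of $\mathcal{A}_0\rightrightarrows K_0$ at a normal point, shrinks $K_0$ away from the closure of exceptional points and so that $\mathscr{K}_0$ is complete at every point, identifies the reductions of $(K_0,J_1)$ and of $K_1$ via Wavrik, injects $K_1$ into $K_0$, and then applies Lemma \ref{lemmaautcontained} to see that the Kuranishi groupoid of $X_1$ sits inside $\mathcal{A}_0$, hence also has equal source and target. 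Your outline could be repaired along these lines, but as written the two steps above are missing ideas, not mere omissions of detail.
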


\begin{proof}
	Let $J_0$ be a normal point of $\mathcal I$. Then $\mathcal{A}_0\rightrightarrows K_0$ have equal source and target morphisms. Taking $K_0$ smaller if necessary, we may assume that $K_0$ does not meet the closure of exceptional points and that $K_0$ is complete at any point, see \cite{ENS}. Let now $J_1$ belong to $K_0$. Set $X_1=(M,J_1)$. Since \eqref{h0} is locally constant, Proposition 2 of \cite{Kur3} applies and the germ of $K_0$ at $J_1$ is universal for families above a reduced base, hence the reduction of $K_0$ at $J_1$ in $K_0$ and the reduction of the Kuranishi space $K_1$ of $X_1$ are locally isomorphic. It follows from completeness that $K_1$ injects in $K_0$ at $X_1$. Hence the Kuranishi stack $\mathcal{A}_0(X_1)\rightrightarrows K_1$ of $X_1$ injects in $\mathcal{A}_0\rightrightarrows K_0$ at $X_1$: any morphism of $\mathcal{A}_0(X_1)$ is an extension of an automorphism of $X_1$, so belong to $\mathcal{A}_0$ by Lemma \ref{lemmaautcontained} taking $K_1$ smaller if necessary. As a consequence its source and target morphisms are equal and $X_1$ is a normal point.
\end{proof}

Let $X_0$ be a normal point. Since the source and target maps of $\mathcal{A}_0\rightrightarrows K_0$ are equal, the multiplication of the groupoid induces a fibered action of $\mathcal A_0$ onto $\mathcal A_1$ that preserves the source and target maps of $\mathcal{A}_1\rightrightarrows K_0$. Given $(J,f)\in\mathcal{A}_0$ and $(J,g)\in\mathcal{A}_1$, we set
\begin{equation}
	\label{actionA0A1}
	(J,f)\cdot (J,g):=m((J,f),(J,g))=(J,f\circ g)
\end{equation}
We have
\begin{lemma}
	\label{lemmaquotientgroupoid}
	Assume $X_0$ is normal. Then, the quotient space $\mathcal{A}_1/\mathcal{A}_0$ is an analytic space and the morphism $s$, resp. $t :\mathcal{A}_1\to K_0$, descends as an \'etale morphism from $\mathcal{A}_1/\mathcal{A}_0$ to $K_0$.
\end{lemma}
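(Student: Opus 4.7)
The plan is to produce, for each $(J_0, g_0) \in \mathcal{A}_1$, a local $\mathcal{A}_0$-equivariant analytic trivialization
\begin{equation*}
	\Psi_{g_0} : \mathcal{A}_0|_{U_{g_0}} \longrightarrow \mathcal{A}_1,\qquad (J, f) \longmapsto m\bigl((J, f), \sigma_{g_0}(J)\bigr),
\end{equation*}
where $\mathcal{A}_0$ acts on the source by left multiplication. Passing to the quotient on both sides will then exhibit a neighborhood of the class $[\sigma_{g_0}(J_0)]$ in $\mathcal{A}_1/\mathcal{A}_0$ as the open analytic set $U_{g_0} \subset K_0$, and will make the induced source and target maps visibly étale.

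Preliminary reductions are immediate. Normality gives $s=t$ on $\mathcal{A}_0$ (the Namba identification recalled after Definition \ref{defnormal}), so for $(J,f)\in\mathcal{A}_0$ and $(J,g)\in\mathcal{A}_1$,
\begin{equation*}
	s\bigl((J,f)\cdot(J,g)\bigr)=J,\qquad t\bigl((J,f)\cdot(J,g)\bigr)=(J\cdot f)\cdot g=J\cdot g,
\end{equation*}
so $s$ and $t$ descend to the set-theoretic quotient. The action is free since $(J,f)\cdot(J,g)=(J,g)$ forces $f=\mathrm{Id}$, and equivariance of $\Psi_{g_0}$ is a restatement of associativity of $m$.

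The substance of the proof is to show that $\Psi_{g_0}$ is a local analytic isomorphism onto a neighborhood of $(J_0, g_0)$ in its connected component. Injectivity comes from freeness. For surjectivity the key observation is that the target $J\cdot h$ is locally constant on connected components of $s$-fibers of $\mathcal{A}_1$: by Lemma \ref{lemma1main} two elements $(J,h_1)$ and $(J,h_2)$ in the same component differ by left composition with some $(J,f)\in\mathcal{A}_0$, and normality then yields $J\cdot h_2 = (J\cdot f)\cdot h_1 = J\cdot h_1$. Thus for $(J,h)$ near $(J_0,g_0)$ in the connected component of $(J_0,g_0)$ one has $J\cdot h = \text{Hol}_{g_0}(J) = J\cdot\bigl(g_0\circ e(\chi_{g_0}(J))\bigr)$ by \eqref{chichecks}. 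Setting $f := h\circ\bigl(g_0\circ e(\chi_{g_0}(J))\bigr)^{-1}$, the previous identity gives $J\cdot f = J$; and since $h$ is close to $g_0$ while $g_0\circ e(\chi_{g_0}(J))$ is close to $g_0$, the element $f$ is close to the identity, so $f\in\text{Aut}^0(X_J)$ and hence $(J,f)\in\mathcal{A}_0$ by Lemma \ref{lemmaautcontained}. A direct computation then gives $\Psi_{g_0}(J,f)=(J,h)$. Quotienting the resulting equivariant isomorphism by the self-left action of $\mathcal{A}_0|_{U_{g_0}}$, whose quotient is $U_{g_0}$, produces the local analytic chart of $\mathcal{A}_1/\mathcal{A}_0$ as the open set $U_{g_0}\subset K_0$. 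The induced source map is this open inclusion, hence étale; the induced target map is $\text{Hol}_{g_0}$, a local biholomorphism with local inverse $\text{Hol}_{g_0^{-1}}$, hence also étale.

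The main obstacle is the ``target locally constant on a component'' observation inside the surjectivity argument: it uses the full force of normality ($s=t$ on $\mathcal{A}_0$), Lemma \ref{lemma1main} to describe components of $s$-fibers via $\mathcal{A}_0$-admissibility, and Lemma \ref{lemmaautcontained} to ensure the recovered $f$ lands in $\mathcal{A}_0$. Modulo this, the product decomposition \eqref{Gdecompoglobal} applied to $\mathcal{D}_1$, together with Kuranishi's theorem, takes care of the analytic regularity of $\Psi_{g_0}$ and of its inverse.
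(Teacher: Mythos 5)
Your chart construction is essentially an explicit version of what the paper does (the paper takes an abstract local $s$-, resp.\ $t$-section of $\mathcal{A}_1$, you take the explicit section $\sigma_{g_0}$ and an $\mathcal{A}_0$-equivariant trivialization), but it proves only half of the lemma: it gives local models of the quotient topology by open sets of $K_0$, and it never addresses separatedness. The assertion that $\mathcal{A}_1/\mathcal{A}_0$ \emph{is an analytic space} requires the quotient to be Hausdorff, and this is a genuine step, not a formality: one must show the $\mathcal{A}_0$-equivalence relation is closed, i.e.\ if $(J_n,f_n)\to(J,f)$ and $(J_n,g_n)\to(J,g)$ in $\mathcal{A}_1$ with $(J_n,f_n)$ and $(J_n,g_n)$ $\mathcal{A}_0$-equivalent for all $n$, then so are the limits. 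Local charts alone do not exclude a ``doubled point'' phenomenon. This is precisely where the defining property i) of normal points is used once more: each $g_n\circ f_n^{-1}\in\text{Aut}^0(X_{J_n})$ is, by Lemma \ref{lemmah0constant}, the value at $J_n$ of an extension of an element of $\text{Aut}^0(X_0)$, which is what guarantees that the limit $g\circ f^{-1}$ is again of this form (and not, say, in another component of the automorphism group of $X_J$), hence that $(J,f)$ and $(J,g)$ are identified. You need to add this argument (or an equivalent one) to obtain the statement as claimed.

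There is also a soft spot in your surjectivity step. You deduce $J\cdot h=\text{Hol}_{g_0}(J)$ for $(J,h)$ near $(J_0,g_0)$ from ``the target is constant on connected components of $s$-fibers'', but to invoke Lemma \ref{lemma1main} you must first know that $(J,h)$ and $\sigma_{g_0}(J)$ are $s$-homotopic, i.e.\ lie in the same connected component of the $s$-fiber over $J$; mere proximity of both to $(J_0,g_0)$ in $\mathcal{A}_1$ does not produce a path inside that fiber (fiber components can merge in the limit), and producing one is essentially equivalent to knowing $h\circ\bigl(g_0\circ e(\chi_{g_0}(J))\bigr)^{-1}\in\text{Aut}^0(X_J)$, which is what you are trying to establish. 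The correct patch is the rigidity argument under locally constant $h^0$ (Theorem 1 of \cite{Kur3}, as used in Lemma \ref{lemmaHolWavrik}): since $h$ and $g_0\circ e(\chi_{g_0}(J))$ are close as diffeomorphisms, the points $J\cdot h$ and $\text{Hol}_{g_0}(J)$ of $K_0$ encode the same manifold through a biholomorphism close to the identity, so they coincide; note that the paper's own proof relies on the same ingredient when it asserts that $f'\circ f^{-1}\in\text{Aut}^0(X_J)$ for $(J,f')$ close to $(J,f)$, since this presupposes $J\cdot f'=J\cdot f$. The same remark justifies your claim that $\text{Hol}_{g_0^{-1}}$ inverts $\text{Hol}_{g_0}$ locally (recall that $\text{Hol}$ does not compose naively), and a word is also needed to check that $\sigma_{g_0}(J)$ stays in $\mathcal{A}_1$ for $J$ near $J_0$. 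With these additions, and above all with the Hausdorffness argument, your proof becomes complete.
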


\begin{proof}
	Let $(J,f)$ belong to $\mathcal{A}_1$. For $(J,f')$ close enough to $(J,f)$, the diffeomorphism $f'\circ f^{-1}$ belongs to $\text{Aut}^0(X_J)$ hence to $\mathcal{A}_0$ by Lemma \ref{lemmaautcontained} and $(J,f')$ equals $(J,f)$ in the quotient space. Choose a local $s$-, resp. $t$-section $\sigma$ from a neighborhood of $J$ in $K_0$ to a neighborhood of $(J,f)\in\mathcal{A}_1$ with $\sigma(J)=(J,f)$. Such local sections exist since $s$, resp. $t$ is a smooth morphism. Then the restriction of $s$, resp. $t$ to this section realizes a local isomorphism between $\mathcal{A}_1/\mathcal{A}_0$ and $K_0$.
	
	We are left with proving that $\mathcal{A}_1/\mathcal{A}_0$ is Hausdorff. Let $(J_n,f_n)$ converge to $(J,f)$ in $\mathcal{A}_1$ and $(J'_n,g_n)$ converge to $(J',g)$. Assume they are $\mathcal{A}_0$-equivalent. By definition, we thus have $J_n=J'_n$ and $g_n\circ f_n^{-1}$ is an element of $\text{Aut}^0(X_{J_n})$ for all $n$. Since $K_0$ is Hausdorff, passing to the limit gives $J=J'$. Moreover all $g_n\circ f_n^{-1}$ are holomorphic extensions at $J_n$ of some automorphism of $\Azero$ by Lemma \ref{lemmah0constant}, hence the limit $g\circ f^{-1}$ still belongs to $\Azero$. Hence a convergent sequence in $\mathcal{A}_1/\mathcal{A}_0$ has a unique limit, showing Hausdorffness.
\end{proof}
Hence we may define an \'etale quotient groupoid $\mathcal{A}_1/\mathcal{A}_0\rightrightarrows K_0$. Its stackification over the analytic site describes classes of reduced $(M,V)$-families up to $\mathcal{A}_0$-equivalence. 

This stack can be defined over the full open set $\mathcal N$ of normal points. In this context, a reduced $(M,\mathcal N)$-family is $\mathcal{A}_0$-equivalent to a trivial family if it can be decomposed as local pull-back families glued by a cocycle in $\mathcal{A}_0$, cf. the proof of Theorem \ref{finitethm}; and an isomorphism of a reduced $(M,\mathcal N)$-family is $\mathcal{A}_0$-equivalent to the identity, if it is given by local $\mathcal{A}_0$-sections once decomposed as local pull-back families. We set
\begin{definition}
	\label{defnormalstack}
	The stack over the analytic site of $\mathcal{A}_0$-equivalence classes of reduced $(M,\mathcal N)$-families is called {\slshape the normal Teichm\"uller stack} and denoted by $\mathscr{N}\mathscr{T}(M)$.
\end{definition}
\noindent and
\begin{definition}
	\label{defmcg1}
	We call {\slshape $\Aun$ mapping class group } the group
	\begin{equation}
		\label{map1}
		\text{Map}^1(X_0):=\Aun/\Azero
	\end{equation}
\end{definition}
We are in position to give some properties of $\mathscr{NT}(M)$

\begin{theorem}
	\label{thmnormal}
	The normal Teichm\"uller stack $\mathscr{NT}(M)$ satisfies the following properties
	\begin{enumerate}[\rm i)]
		\item It is an analytic \'etale stack with atlas an (at most) countable union of Kuranishi spaces.
		\item There is a natural morphism from $\mathscr{T}(M,\mathcal N)$ to $\mathscr{NT}(M)$. Moreover, these two stacks are associated to the same topological quotient space.
		\item Let $X_0$ be a normal point. The isotropy group of $\mathscr{NT}(M)$ at $X_0$ is the discrete group $\text{\rm Map}^1(X_0)$.
		\item Assume $\text{\rm Map}^1(X_0)$ is finite. Then, there exists some open neighborhood $V'\subset V$ of $X_0$ such that, reshaping $K_0$ to $V'$,  the group $\text{\rm Map}^1(X_0)$ acts holomorphically on $K_0$ fixing $J_0$ and $\mathscr{NT}(M)$ is locally isomorphic at $X_0$ to the orbifold $[K_0/\text{\rm Map}^1(X_0)]$.
		\item Especially, if every normal point belongs to Fujiki class $(\mathscr{C})$, then  the stack $\mathscr{NT}(M)$ is an orbifold at every point.
\end{enumerate}
\end{theorem}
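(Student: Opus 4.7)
The plan is to establish the five assertions in order, using the quotient groupoid $\mathcal{A}_1/\mathcal{A}_0\rightrightarrows K_0$ from Lemma~\ref{lemmaquotientgroupoid} as the local model for $\mathscr{NT}(M)$, and then to globalize by a gluing procedure analogous to the one used for $\T$ in \cite{LMStacks}.

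For (i), I would invoke Proposition~\ref{propnormalopen} to cover $\mathcal N$ by an at most countable family of open sets each equipped with a Kuranishi family. Over each such patch, Lemma~\ref{lemmaquotientgroupoid} provides an étale analytic groupoid $\mathcal{A}_1/\mathcal{A}_0\rightrightarrows K_0$; the comparison morphisms between distinct Kuranishi patches from \cite{LMStacks} descend to étale identifications between the local quotient groupoids, producing an étale atlas of $\mathscr{NT}(M)$ whose source is a countable union of Kuranishi spaces. For (ii), I would use that at a normal point $\mathcal{A}_0\rightrightarrows K_0$ has $s=t$, so the equivalence relation it induces on $K_0$ is trivial and the topological quotient of $\mathcal{A}_1/\mathcal{A}_0\rightrightarrows K_0$ coincides with that of $\mathcal{A}_1\rightrightarrows K_0$; since normal points are not in the closure of exceptional points, the étale morphism $\mathscr{A}_1\hookrightarrow\mathscr{T}(M,V)$ of Theorem~\ref{finitethm} is an isomorphism for $V$ small enough, so both stacks produce the same topological quotient. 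The natural functor $\mathscr{T}(M,\mathcal N)\to\mathscr{NT}(M)$ simply records the $\mathcal{A}_0$-equivalence class of a reduced family. For (iii), the isotropy of the quotient groupoid at $J_0$ is the quotient of isotropies $(\mathcal{A}_1)_{J_0}\simeq\Aun$ by $(\mathcal{A}_0)_{J_0}\simeq\Azero$, which is $\text{\rm Map}^1(X_0)$.

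Step (iv) is the technical heart. I would imitate the extension argument of Lemmas~\ref{lemma1}, \ref{lemma1bis}, \ref{lemma2}: for each $f\in\Aun$, the section $\sigma_f$ of \eqref{sigmaf} descends to a well-defined section $\bar\sigma_{[f]}$ of $s:\mathcal{A}_1/\mathcal{A}_0\to K_0$ depending only on $[f]\in\text{\rm Map}^1(X_0)$, because two extensions of the same $f$ differ by composition with an element of $\mathcal{A}_0$ (using the normality hypothesis that elements of $\Azero$ extend to the identity on $K_0$). Composability and the group law follow as in Lemma~\ref{lemma1bis}. Finiteness of $\text{\rm Map}^1(X_0)$ lets me shrink $K_0$ to the finite intersection $\bigcap_{[f]}(t\circ\bar\sigma_{[f]})(U)$ exactly as in the proof of Theorem~\ref{Kurstackorbifold}, so that all $t\circ\bar\sigma_{[f]}$ become mutually inverse bijections of $K_0$; this defines a holomorphic action of $\text{\rm Map}^1(X_0)$ on $K_0$ fixing $J_0$, and Morita equivalence of $\mathcal{A}_1/\mathcal{A}_0\rightrightarrows K_0$ with the translation groupoid $\text{\rm Map}^1(X_0)\times K_0\rightrightarrows K_0$ produces the orbifold presentation. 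Finally, for (v), I would invoke the Fujiki--Lieberman theorem: for $X_0$ in class $(\mathscr C)$ the group $\Aun/\Azero$ is finite, so (iv) applies at every normal point.

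The main obstacle is step (iv): one must verify that the finitely many extension sections $\bar\sigma_{[f]}$ can be defined on a \emph{common} $\text{\rm Map}^1(X_0)$-invariant neighborhood of $J_0$ and genuinely assemble into a group action on an analytic space, rather than merely a groupoid action with only weak composition. The finite-intersection trick from the proof of Theorem~\ref{Kurstackorbifold} adapts, but one must check carefully that passage to $\mathcal{A}_0$-cosets is compatible with composition and with the shrinking, which is where the normality hypothesis (and in particular the equality $s=t$ for $\mathcal{A}_0$) does the real work.
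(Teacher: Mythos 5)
Your proposal is correct and follows essentially the same route as the paper: the atlas is obtained by quotienting the Teichmüller groupoid over the countable union of Kuranishi charts fiberwise by the $\mathcal{A}_0$-action via Lemma \ref{lemmaquotientgroupoid}, the isotropy is $\Aun/\Azero$, and point iv) is handled exactly as you propose, by noting (via normality) that the maps $\text{Hol}_{f_i}$ depend only on classes in $\text{Map}^1(X_0)$ and satisfy its relations, then reusing the shrinking argument of Theorem \ref{Kurstackorbifold} to get a genuine holomorphic action and Morita equivalence with the translation groupoid, with v) following from Fujiki's finiteness result. The only cosmetic difference is that the paper quotients the global morphism space $\mathcal{T}_1$ directly rather than gluing the local quotients $\mathcal{A}_1/\mathcal{A}_0$, but this is the same construction in substance.
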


\begin{remark}
	\label{notrepres}
	The natural morphism of point ii) is not representable since the isotropy groups of $\mathscr{T}(M)$ do not inject in those of $\mathscr{NT}(M)$.
\end{remark}

Hence, an open substack of the Teichm\"uller stack behaves as the quotient of an analytic space by a discrete equivalence relation; and as an orbifold around a point in Fujiki class $(\mathscr{C})$, that is bimeromorphic to a K\"ahler manifold, so in particular around any K\"ahler or projective point.

This \'etale normal Teichm\"uller stack is closer to the moduli space side than to the family side of the Teichm\"uller stack. It forgets the family automorphisms that induce the identity on the base but these automorphisms do not induce any identification of points in $\I$ so are not important when analyzing the structure of the set of $\D$-orbits in $\I$.

We may reformulate Theorem \ref{thmnormal} by saying that the restriction  of the topological Teichm\"uller space $\mathcal N/\D$ to the set of normal points acquires complex orbifold charts at the points with finite $\text{Map}^1(X_0)$ group.

Before giving the proof of this result, we give some examples.

\begin{example}
	\label{extorus2}
	We consider once again the case of complex tori, compare with Example \ref{fakeorbifold}. All points are normal and passing from the Teichm\"uller stack to the normal Teichm\"uller stack consists of forgetting about the translation group and replacing the universal family over the upper half plane $\mathbb{H}$ with $\mathbb{H}$. Hence the normal Teichm\"uller stack is the standard Teichm\"uller space $\mathbb{H}$ and the surjective morphism sends a family of complex tori $\pi : \mathcal X\to B$ to the morphism 
	\begin{equation}
		b\in B\longmapsto \tau(b)\in\mathbb{H}\qquad\text{ with }\qquad \pi^{-1}(b)\simeq\mathbb{E}_{\tau(b)}
	\end{equation}
\end{example} 

\begin{example}
	\label{exK3}
	We consider now the case of K3 surfaces. The function $h^0$ is constant equal to zero and the Kuranishi spaces of K3 can be glued together to form a 20-dimensional complex manifold called the moduli space of marked K3 surfaces. This is at the same time the Teichm\"uller stack and the normal Teichm\"uller stack. As for tori, all points are normal. However, this space is non-Hausdorff because a sequence of $\D$-orbits may accumulate onto two disjoint orbits. Non-separated pairs of points encode however the same manifold. In other words, they are in the same orbit of the mapping class group $\text{Diff}^+(M)/\D$. See \cite[\S 7.2]{Huybrechts} for more details about all this.
\end{example}

\begin{example}
	\label{exHopf2}
	We consider finally the case of (primary) Hopf surfaces. We concentrate on a single connected component of the Teichm\"uller stack, cf. the discussion in \cite{LMStacks}. These surfaces $X_g$ are quotient of $\C^2\setminus\{0\}$ by the group generated by a contracting biholomorphism $g$ of $\C^2$ fixing $0$. Here $h^0$ takes the values $2$, $3$ and $4$. Since the function $h^0$ is upper semi-continuous, normal points correspond to the smallest value, that is $2$. Hopf surfaces with $h^0$ equal to $2$ are those with $g$ either non-linearizable and conjugated to
	\begin{equation}
		\label{normalform}
		(z,w)\longmapsto (\lambda z+w^p,\lambda^pw)
	\end{equation}
	for $\lambda\in\mathbb D^*$ and $p>0$ or linearizable with non-resonant eigenvalues, i.e. the two eigenvalues are not of the form $(\lambda,\lambda^p)$ for some $p>0$, see \cite{Wehler} for more details about classification and properties of Hopf surfaces. As a consequence, a normal point $X_g$ is completely determined by the determinant and the trace of the linear part $g_{lin}$ of $g$ and (a connected component of) the normal Teichm\"uller stack is the bounded domain $\mathbb D^*\times\mathbb D$ in $\C^2$. Given a family of Hopf surfaces $\pi : \mathcal X\to B$, we associate to it the morphism 
	\begin{equation}
		b\in B\longmapsto \left (\det g_{lin}(b),\frac{1}{2}\text{Tr }g_{lin}(b)\right )\in\mathbb D^*\times\mathbb D
	\end{equation}
	with $\pi^{-1}(b)\simeq X_{g(b)}$. Note that the Teichm\"uller stack has a much more complicated structure, far from being a manifold and with non-Hausdorff Teichm\"uller space, that is analyzed in \cite{clement}, see also \cite{Cortona}, and that we recall and make use of in Example \ref{exHopf3}.
\end{example}
\begin{proof}
	Start with the atlas of $\mathscr{T}(M,\mathcal{N})$ constructed in \cite{LMStacks}. Since $h^0$ is contant along $\mathcal{N}$, observe that it is given as an (at most) countable union of Kuranishi spaces, there is no need to make use of the fattening process of \cite{LMStacks}. Let us denote the corresponding symmetry groupoid with $\mathcal{T}_1\rightrightarrows\mathcal{T}_0$. It can be chosen as follows. Cover $\mathcal N$ with a(n) (at most) countable union of Kuranishi charts $\Xi_i:V_i\to K_i$. Then set
	\begin{equation*}
		\mathcal T_0=\bigsqcup K_i
	\end{equation*}
	and
	\begin{equation*}
		\mathcal T_1=\{(J,f)\in\text{Diff}^0(M,\sqcup \mathscr{K}_i)\mid J\cdot f\in \sqcup K_i\}
	\end{equation*}
	where $\mathscr{K}_i\to K_i$ are the Kuranishi families (compare with \eqref{atlasneigh}). We may decompose it as
	\begin{equation}
		\label{T1dec}
		\bigsqcup \mathcal T_{1,j}:=\bigsqcup\{(J,f)\in\text{Diff}^0(M,\mathscr{K}_j)\mid J\cdot f\in \sqcup K_i\}
	\end{equation}
	Now observe that $\mathcal{A}_{0,j}$ act fiberwise on $\mathcal{T}_{1,j}$ through \eqref{actionA0A1}; and that Lemma \ref{lemmaquotientgroupoid} generalizes immediately  to this context. We may thus define a fiberwise 
	 $\sqcup\mathcal{A}_{0,i}$-action \eqref{actionA0A1} on $\mathcal{T}_1$ decomposed as in \eqref{T1dec}, obtaining an \'etale quotient groupoid, say $\mathcal{N}_1\rightrightarrows\mathcal{T}_0$. Its stackification over $\mathfrak{S}$ is the normal Teichm\"uller stack proving i).
	
	By construction, there is a natural quotient map from each component of $\mathcal{T}_1$ to a component of $\mathcal{N}_1$ preserving the structure maps, hence yielding a groupoid homomorphism from $\mathcal{T}_1\rightrightarrows\mathcal{T}_0$ to $\mathcal{N}_1\rightrightarrows\mathcal{T}_0$. It induces a natural morphism from $\mathscr{T}(M)$ to $\mathscr{NT}(M)$. Since the morphism $\mathcal{T}_1\to\mathcal{N}_1$ is surjective and commutes with the source and target morphisms, a couple of points of $\mathcal{T}_0$ is source and target of a morphism of $\mathcal{T}_1$ if and only if it is source and target of a morphism of $\mathcal{N}_1$. This proves that the two stacks are associated to the same topological quotient space.
	
	Still by construction, the isotropy group of a point $X_0$ is given by the $\Aun$-mapping class group of $X_0$. Assume now that it is finite. Since $X_0$ is normal, it is not exceptional and the Kuranishi stack $\mathscr{A}_1$ and the local Teichm\"uller stack $\mathscr{T}(M,V)$ coincides. 
	Choose now automorphisms $f_1,\hdots, f_k$ of $\Aun$ so that their classes in $\text{Map}^1(X_0)$ generate $\text{Map}^1(X_0)$. Observe that the $f_i$'s may not fulfill the $\text{Map}^1(X_0)$-relations their classes do; but the $\text{Hol}_{f_i}$'s do and the group generated by the $\text{Hol}_{f_i}$'s is isomorphic to $\text{Map}^1(X_0)$. We may thus argue as in the proof of Theorem \ref{Kurstackorbifold}, and redefine a smaller open set $V$ and a smaller $K_0$  such that every $\text{Hol}_{f_i}$ is an isomorphism of $K_0$. Therefore, $\mathcal{A}_1/\mathcal{A}_0\rightrightarrows K_0$ is Morita equivalent to the translation groupoid  $\text{Map}^1(X_0)\times K_0\rightrightarrows K_0$. The point $X_0$ is an orbifold point, proving iv).
	
	If $X_0$ is bimeromorphic to a K\"ahler manifold, then $\text{Map}^1(X_0)$ is finite by \cite{Fujiki} and we may apply iv).
\end{proof}

\begin{remark}
	\label{rkholonomygroupoid}
	In other words, the action of $\D$ restricted to the set of normal points is a bonafide foliation and the normal Teichm\"uller stack is nothing else than its \'etale holonomy groupoid, cf. \cite{MM} and \cite[\S 6]{LMStacks}. 
\end{remark}

All the previous considerations and results apply to the $\Z$-normal points. Proposition \ref{propnormalopen} is still true with normal replaced with $\Z$-normal and Lemma \ref{lemmaquotientgroupoid} applies with $\mathcal{A}_\Z$ instead of $\mathcal{A}_1$. We may thus define the stack of $\Z$-normal points as the stackification of the étale groupoid $\mathcal{A}_\Z/\mathcal{A}_0\rightrightarrows K_0$. This is the {\slshape $\Z$-normal Teichm\"uller stack} $\mathscr{N}\mathscr{T}^\Z(M)$. Calling $\AZ$ mapping class group and denoting by $\text{Map}^\Z(X_0)$ the quotient of $\AZ$ by $\Azero$, we have

\begin{corollary}
	\label{corZnormal}
	The $\Z$-normal Teichm\"uller stack $\mathscr{N}\mathscr{T}^\Z(M)$ satisfies all the properties listed in Theorem {\rm \ref{thmnormal}} with the following obvious changes in the statements: $\mathscr{N}\mathscr{T}(M)$ is replaced with $\mathscr{N}\mathscr{T}^\Z(M)$, normal with $\Z$-normal, $\T$ with $\TZ$, and $\text{\rm Map}^1(X_0)$ with $\text{\rm Map}^\Z(X_0)$.
\end{corollary}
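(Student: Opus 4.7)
The plan is to repeat, mutatis mutandis, the argument establishing Theorem \ref{thmnormal}, verifying at each step that the substitution of $\AZ$ for $\Aun$ does not spoil the construction. The three ingredients used in the proof of Theorem \ref{thmnormal} are: (a) the openness of the set of normal points inside $\mathcal{I}$, (b) the fact that Lemma \ref{lemmaquotientgroupoid} produces an étale quotient groupoid, and (c) the local structure theorem for the Kuranishi stack as an orbifold when $\Aun$ is finite (Corollary \ref{Kur1stackorbifold}). The excerpt has already pointed out that (a) holds for $\Z$-normal points and that (b) works verbatim with $\mathcal{A}_\Z$ replacing $\mathcal{A}_1$, while the orbifold criterion (c) has its $\Z$-analogue in Corollary \ref{KurZstackorbifold}. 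So the main task is organizational rather than conceptual.

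Concretely, I would start from the atlas of $\TZ$ constructed in \cite{LMStacks}, which by the remark in \S\ref{subsecZdef} coincides with the atlas of $\T$ as an analytic space (the differences only appear when forming the fiber product). Restricting to the $\Z$-normal open locus one obtains a symmetry groupoid $\mathcal{T}^{\Z}_1\rightrightarrows \mathcal{T}_0$ with $\mathcal{T}_0$ an at most countable disjoint union of Kuranishi spaces, and decomposing as in \eqref{T1dec} one defines the fiberwise action of $\sqcup \mathcal{A}_{0,i}$ on $\mathcal{T}^\Z_1$ by \eqref{actionA0A1}. The $\Z$-version of Lemma \ref{lemmaquotientgroupoid} yields an étale quotient groupoid $\mathcal{N}^\Z_1\rightrightarrows \mathcal{T}_0$ whose stackification over $\mathfrak{S}$ is, by definition, $\mathscr{N}\mathscr{T}^\Z(M)$. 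This gives point i). Point ii) follows from the quotient groupoid homomorphism $\mathcal{T}^\Z_1\to \mathcal{N}^\Z_1$ commuting with the source and target maps, exactly as in the proof of Theorem \ref{thmnormal}.

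For point iii), the isotropy group of a $\Z$-normal point $X_0$ in $\mathscr{N}\mathscr{T}^\Z(M)$ is by construction the quotient $\AZ/\Azero = \text{Map}^\Z(X_0)$, which is discrete since $\Azero$ is the connected component of the identity of $\AZ$. For point iv), assuming $\text{Map}^\Z(X_0)$ finite, pick representatives $f_1,\ldots,f_k\in\AZ$ whose classes generate $\text{Map}^\Z(X_0)$, and argue as in the proof of Theorem \ref{Kurstackorbifold}: the maps $\text{Hol}_{f_i}$ do satisfy the group relations even if the $f_i$'s themselves do not, so after shrinking $V$ and $K_0$ we obtain a genuine holomorphic action of $\text{Map}^\Z(X_0)$ on $K_0$ fixing $J_0$, presenting $\mathscr{N}\mathscr{T}^\Z(M)$ locally at $X_0$ as the orbifold $[K_0/\text{Map}^\Z(X_0)]$. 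Point v) then follows from Fujiki's theorem \cite{Fujiki}: for $X_0$ in Fujiki class $(\mathscr{C})$, the full group $\A/\Azero$ is finite, hence so is its subquotient $\text{Map}^\Z(X_0) = \AZ/\Azero$.

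The only step that requires a bit of care—rather than being obstacle, strictly speaking—is checking that Lemma \ref{lemmaautcontained} and Lemma \ref{lemmah0constant}, used in the proof of Lemma \ref{lemmaquotientgroupoid} to guarantee Hausdorffness of $\mathcal{A}_1/\mathcal{A}_0$, apply unchanged when $\mathcal{A}_1$ is replaced with $\mathcal{A}_\Z$. This holds because the action \eqref{actionA0A1} quotients only by $\mathcal{A}_0$ in both cases, and the fiberwise argument—any two sufficiently close elements of $\mathcal{A}_\Z$ over the same base point differ by an element of $\text{Aut}^0(X_J)\subset \mathcal{A}_0$—is exactly the same one used in the $\mathcal{A}_1$ setting and does not depend on whether the ambient group is $\Aun$ or $\AZ$.
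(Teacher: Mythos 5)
Your proposal follows essentially the same route as the paper, which indeed just observes that Proposition \ref{propnormalopen} and Lemma \ref{lemmaquotientgroupoid} go through with $\mathcal{A}_\Z$ in place of $\mathcal{A}_1$ and that the proof of Theorem \ref{thmnormal} then transposes verbatim; your verification of points i)--iv) is correct and matches that adaptation.

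One justification needs repair: for point v) you invoke Fujiki to claim that $\A/\Azero$ is finite in class $(\mathscr{C})$ and then pass to the subquotient. That intermediate claim is false in general --- already for K\"ahler manifolds such as K3 surfaces, $\Azero$ is trivial while $\A$ can be infinite discrete, so $\A/\Azero$ is infinite. What Fujiki--Lieberman actually give is that the subgroup of automorphisms acting trivially on cohomology (equivalently, fixing a K\"ahler class) has finitely many connected components, i.e. $\AZ/\Azero=\text{\rm Map}^\Z(X_0)$ is finite directly; note also that $\text{\rm Map}^\Z(X_0)$ is a subquotient of $\AZ$, not of $\A/\Azero$ in the way you use it. With this corrected citation the conclusion of v) stands and the rest of your argument is unaffected.
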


In Examples \ref{extorus2}, \ref{exK3} and \ref{exHopf2}, points are normal if and only if they are $\Z$-normal and there is no difference between $\mathscr{N}\mathscr{T}^\Z(M)$ and $\mathscr{N}\mathscr{T}(M)$. This is often the case.

\section{Exceptional points}
\label{secexc}

We now switch to the analysis of exceptional and $\Z$-exceptional points. By definition, there exist morphisms belonging to the Teichm\"uller stack but not to the Kuranishi stack at an exceptional point. We first analyze which type of morphisms they are both from the point of view of sequences of isomorphisms in Section \ref{pointstarget} and from the point of view of cycle spaces in Section \ref{secintrocycles}. Finally we give an example of a $\Z$-exceptional point in Section \ref{secexample}. It should be noted that neither K3 surfaces, Hirzebruch surfaces, nor Hopf surfaces exhibit exceptional points. In fact, for a long time, we did not know any example until we finally manage to construct the one presented in \ref{secexample}. It is however only $\Z$-exceptional, not exceptional. By Proposition \ref{propcompareZextoex}, the converse is not possible.

\subsection{Sequences of isomorphic structures in the Kuranishi space}
\label{pointstarget}
Let $(f_n)$ be a morphism of $(\mathscr{T}(M),J_0)$, that is a sequence of morphisms from $({\mathcal{X}'}_n\to B)$ to $(\mathcal{X}_n\to B)$ as explained above in Section \ref{proj}. Since $\mathcal{T}_{V_n}\rightrightarrows K_0\cap V_n$ is an atlas for $\mathscr{T}(M,V_n)$, each $f_n$ is obtained by gluing a cocycle of morphisms in $\mathcal{T}_{V_n}$ over an open cover of $B$. Such morphisms are local morphisms of the Kuranishi family. We are not interested in morphisms which act on the base $B$ as the identity. For such morphisms restrict to automorphisms of the fibers, especially of the central fiber, and are thus already encoded in $\mathscr{A}_1$, at least for $n$ big enough. Now the existence of a morphism $f_n$ acting non-trivially on the base is subject to the existence of two isomorphic distinct fibers of the Kuranishi family, that is to the existence of two distinct points in $K_0$ encoding the same complex manifold up to isomorphism. In the same way, the existence of sequences of morphisms $(f_n)$ acting non-trivially on the base is subject to the existence of sequences $(x_n)$ and $(y_n)$ of points in $K_0$ such that
\begin{enumerate}[i)]
	\item Both sequences $(x_n)$ and $(y_n)$ converge to the base point of $K_0$.
	\item For every $n$, the fibers of the Kuranishi family above $x_n$ and $y_n$ are isomorphic.
\end{enumerate}
In particular, there exists a sequence $(\phi_n)$ of $\D$ such that
\begin{equation}
	\label{phin}
	\forall n,\qquad x_n\cdot\phi_n=y_n
\end{equation}
We assume that, for all $n\geq 0$, the points $x_n$ and $y_n$ belongs to $V_n$ and the morphism $\phi_n$ to $\mathcal T_{V_n}$, where $(V_n)$ is a nested sequence as in \eqref{nestingseq}. We also set $V=V_0$.

Now, we are looking for {\slshape missing} morphisms in the Kuranishi stacks. In other words, we are looking for such sequences $(x_n)$ and $(y_n)$ with the additional property that $(x_n,\phi_n)$ does not belong to $\mathscr{A}_1$, that is $(x_n,\phi_n)$ is not $(V_n,\mathcal D_1)$-admissible.

In the sequel, we assume that $\mathcal T_V$ is reduced, replacing it with its reduction if needed. Assume that the sequence $(\phi_n)$ belongs to a fixed irreducible component $\mathcal C_0$ of $\mathcal T_V$.
We shall see that only special components, that we call exceptional (see Definition \ref{defex}), may contain morphisms that are not in the Kuranishi stack $\mathscr{A}_1$. To do that, we need to better understand the different types of $\mathcal T_V$-components.

Let $A$ be a connected component of $\Aun$. Then $A$ belongs to an irreducible component $\mathcal C_0$ of $\mathcal T_V$. We have

\begin{lemma}
	\label{lemmaunique}
	The component $\mathcal C_0$ is unique, that is there does not exist another irreducible component of $\mathcal T_V$ that contains $A$.
\end{lemma}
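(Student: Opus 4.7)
The plan is to prove uniqueness by establishing that $\mathcal{T}_V$ is smooth, hence locally irreducible, at every point of $A \subset \{J_0\} \times \mathrm{Aut}^1(X_0)$, and then invoking connectedness of $A$. Recall from the construction of the Teichm\"uller atlas that the source map $s : \mathcal{T}_V \to K_0$ is smooth. So to control the local structure of $\mathcal{T}_V$ at a point $(J_0, a)$ with $a \in A$, it suffices to describe the $s$-fiber $s^{-1}(J_0)$ near $a$.

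First I would use the decomposition \eqref{Gdecompoglobal} (in its $\mathrm{Aut}^1$-version, yielding $\mathcal{D}_1 \simeq W \times \mathrm{Aut}^1(X_0)$) to write any $f \in \mathcal{D}_1$ close to $a$ uniquely as $f = g \circ e(\xi)$ with $g \in \mathrm{Aut}^1(X_0)$ near $a$ and $\xi \in W$ near $0$. Since $g \in \mathrm{Aut}^1(X_0)$ fixes $J_0$, the action computes as $J_0 \cdot f = J_0 \cdot e(\xi)$. By the Kuranishi isomorphism \eqref{Kurmap}, this lies in $K_0$ (the slice $K_0 \times \{0\}$ of the product chart $V \simeq K_0 \times W$) if and only if $\xi = 0$. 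Hence near $(J_0, a)$ the $s$-fiber $s^{-1}(J_0)$ coincides with an open neighborhood of $a$ in $\mathrm{Aut}^1(X_0)$, and in particular with a neighborhood of $a$ in the connected component $A$. Since $A$ is a complex manifold (being a component of the complex Lie group $\mathrm{Aut}(X_0)$), the $s$-fiber is smooth at $a$.

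Combining the smoothness of the base $K_0$ at $J_0$, the smoothness of the fiber $s^{-1}(J_0)$ at $a$, and the smoothness of the morphism $s$ itself, I conclude that $\mathcal{T}_V$ is smooth at every point $(J_0, a)$ for $a \in A$. Smoothness implies local irreducibility, i.e.\ through each such point passes a unique irreducible component of $\mathcal{T}_V$. Since $A$ is connected, the assignment $a \mapsto$ (unique local irreducible component through $(J_0,a)$) is locally constant, hence constant on $A$; this common component is $\mathcal{C}_0$, and no other irreducible component can contain any point of $A$.

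The main potential obstacle is verifying that the local model I use near $(J_0,a)$ really captures $\mathcal{T}_V$ as an analytic space (and not merely set-theoretically), together with the infinite-dimensional technicalities already alluded to in the footnote about Sobolev completions. Once one accepts the (standard) fact that $s$ is smooth as a morphism of $L^2_\ell$-analytic spaces in the sense of \cite{LMStacks}, the argument above goes through verbatim; a secondary point to keep in mind is that smoothness need only be checked at points of $A$, so pathologies possibly occurring elsewhere along $\mathcal{C}_0$ do not affect the irreducibility argument.
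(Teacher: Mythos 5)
Your computation of the $s$-fiber over $J_0$ near $a$ (via the $\Aun$-version of \eqref{Gdecompoglobal} and the product chart \eqref{Kurmap}) is correct, but the pivotal step ``smoothness of the base $K_0$ at $J_0$'' is not available: the Kuranishi space is in general singular, possibly non-reduced and reducible at its base point (the paper insists on exactly this, citing Horikawa, Mumford and Vakil, and in this very section replaces $\mathcal{T}_V$ by its reduction). Hence your conclusion that $\mathcal{T}_V$ is smooth, therefore locally irreducible, at every point of $A$ fails precisely in the cases the lemma has to cover. What your fiber computation together with smoothness of $s$ actually gives is that near $(J_0,a)$ the space $\mathcal{T}_V$ is locally a product of (a neighborhood of $J_0$ in) $K_0$ with a neighborhood of $a$ in $\Aun$; so $\mathcal{T}_V$ is locally irreducible along $A$ if and only if the germ of $K_0$ at $J_0$ is irreducible, which cannot be assumed. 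When that germ has several branches, several local irreducible components of $\mathcal{T}_V$ pass through every point of $A$, the ``unique local component through $(J_0,a)$'' on which your connectedness argument rests does not exist, and the strategy collapses rather than needing a minor repair. (Your closing remark that pathologies elsewhere on $\mathcal{C}_0$ are harmless misses the point: the possible pathology sits exactly at the points $(J_0,a)$, since $J_0$ itself may be a singular point of $K_0$.)

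The paper's proof avoids any smoothness or local irreducibility statement: it argues that an irreducible component of $\mathcal{T}_V$ containing $A$ must contain all the extensions of the automorphisms of $\Aun$, that is, all morphisms of $\mathcal{T}_V$ lying in a sufficiently small neighborhood of $A$; since this holds both for $\mathcal{C}_0$ and for any competing component $\mathcal{C}_1$, the two components meet on a nonempty open subset of $\mathcal{T}_V$ and therefore coincide. To salvage your approach you would have to replace ``smooth, hence locally irreducible'' by an argument valid for an arbitrary (possibly reducible) Kuranishi germ, which in effect brings you back to an argument of the paper's type about a component through $A$ absorbing a full neighborhood of $A$.
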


\begin{proof}
	Let $\mathcal C_1$ be an irreducible component of $\mathcal T_V$ that contains $A$. Then it contains all the extensions of the automorphisms of $\Aun$. More precisely it contains all the morphisms of $\mathcal T_V$ that are in a sufficiently small neighborhood of $A$. But this is also the case of $\mathcal C_0$, so the irreducible components $\mathcal C_0$ and $\mathcal C_1$ intersects on an open set, hence are equal.
\end{proof}

We may thus define
\begin{definition}
	\label{defA1type}
	We call {\slshape $A^1$-component} the irreducible component of $\mathcal T_V$ that contains $A$.
\end{definition}

\begin{remark}
	\label{rkconnexiteAcomp}
	Notice from the proof that the $s$-image of an $A^1$-component covers a neighborhood of an irreducible component of $K_0$. Also, if the intersection of a $A^1$-component with $\Aun$ is not connected but has a finite number of connected components, restricting it to $V_n$ for $n$ big enough it becomes connected. In other words, if the $A^1$-component contains two connected components of the intersection, its restriction to $V_n$ for $n$ large splits into two disjoint $A^1$-components. 
\end{remark}

However $A^1$-components do not contain missing morphisms. Indeed,

\begin{lemma}
	\label{lemmaA1notex}
	Let $(\phi_n)$ be a sequence \eqref{phin}. Assume it is contained in a $A^1$-component. Then for $n$ big enough, the restriction of the $A^1$-component to $V_n$ only contains $(V_n,\mathcal D_1)$-admissible morphisms.
\end{lemma}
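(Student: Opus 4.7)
The strategy is to reduce $(V_n,\mathcal{D}_1)$-admissibility of every morphism in $\mathcal{C}_0 \cap \mathcal{T}_{V_n}$ to the trivial decomposition with $p = 0$ in Definition \ref{defadm} (where $\Omega$ is replaced by $U$ for the $\mathcal{A}_1$-variant). The central input will be the local product structure of $\mathcal{C}_0$ near the connected component $A \subset \Aun$.

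First, I would record the following consequences of \eqref{Gdecompoglobal} and Lemma \ref{lemmaunique}: the map $(J, g, \xi) \mapsto (J, g \circ e(\xi))$, with $J \in K_0$ close to $J_0$, $g \in A$, and $\xi \in W$, realizes an open neighborhood $\mathcal{N}(A) \subset \mathcal{C}_0$ of $A$ in $\mathcal{T}_V$, and the second coordinate of any element in this image lies in $\mathcal{D}_1$ by construction. Moreover, by Lemma \ref{lemmaunique}, the fiber of $(s,t) : \mathcal{C}_0 \to K_0 \times K_0$ over $(J_0, J_0)$ is exactly $A$, since any other connected component of $\Aun$ would sit in a different $A^1$-component. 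The next step is the geometric claim that, as $V_n \to \{J_0\}$, the analytic set $\mathcal{C}_0 \cap \mathcal{T}_{V_n}$ shrinks down to this fiber $A$, so that for $n$ large enough one has $\mathcal{C}_0 \cap \mathcal{T}_{V_n} \subset \mathcal{N}(A)$. Granted this, every $(J, f)$ in the restriction admits a factorization $f = g \circ e(\xi)$ with $g \in A$ and $\xi \in W$ small.

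With this factorization in hand, $f \in \mathcal{D}_1$ by definition, and $f^{-1}$ is close to $g^{-1} \in \Aun$, hence also lies in $\mathcal{D}_1$ for $\xi$ sufficiently small. Taking $p = 0$ and $F_0 := f$ in the $\mathcal{A}_1$-variant of Definition \ref{defadm}, the conditions $J \in U_f$ and $J \cdot f \in U_{f^{-1}}$ translate respectively to $J \cdot f \in V$ and $J \in V$, both of which follow from $J, J \cdot f \in V_n \subset V$. This yields $(V_n, \mathcal{D}_1)$-admissibility of $(J, f)$ and closes the argument.

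The main obstacle I expect is the geometric shrinking step, namely showing $\mathcal{C}_0 \cap \mathcal{T}_{V_n} \subset \mathcal{N}(A)$ for $n$ large. The delicate point is that $A$, a coset of the identity component of the complex Lie group $\Aun$, may be non-compact, so one cannot appeal to a uniform tubular neighborhood. The verification has to proceed pointwise on the irreducible analytic set $\mathcal{C}_0$, using its local product description at each point of $A$ together with the fact that the constraint $J, J \cdot f \in V_n$ forces $f$ to collapse, component by component, into a canonical chart \eqref{Gdecompoglobal}. This step, rather than the subsequent algebraic verification of admissibility, is where I expect the proof to require the most care.
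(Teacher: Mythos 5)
Your proposal follows essentially the same route as the paper's proof: morphisms of $\mathcal T_V$ sufficiently close to $\Aun$ decompose through \eqref{Gdecompoglobal} as $(J,g\circ e(\xi))$ with $g\in\Aun$, hence are $(V_n,\mathcal D_1)$-admissible with the trivial (length-one) chain, and the restriction of the $A^1$-component to $V_n$ consists precisely of such morphisms — the paper dispatches this in two lines, treating the localization near $A$ (your "main obstacle") as immediate rather than developing the pointwise analysis you sketch. The one inaccuracy is your claim, attributed to Lemma \ref{lemmaunique}, that the $(s\times t)$-fiber of the component over $(J_0,J_0)$ is exactly $A$: by Remark \ref{rkconnexiteAcomp} an $A^1$-component may meet several connected components of $\Aun$, but this is harmless for your argument since every element of that fiber is an automorphism in $\Aun$ and is handled identically.
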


\begin{proof}
	All extensions of automorphisms of $A$ are $(V_n,\mathcal D_1)$-admissible for $n$ big enough by \eqref{Azero} (in fact by the $\Aun$ version of \eqref{Azero}). Since an $A^1$-component contains all the morphisms of $\mathcal T_V$ that are in a sufficiently small neighborhood of $A$, we are done.
\end{proof}

There exists another type of irreducible component of $\mathcal T_V$, namely 

\begin{definition}
	\label{defex}
	An irreducible component $\mathcal C_0$ of $\mathcal T_V$ is called {\slshape exceptional} if
	\begin{enumerate}[i)]
		\item For $n$ large, its restriction to $V_n$ does not intersect $\Aun$,
		\item It contains a sequence \eqref{phin}.
	\end{enumerate}  
\end{definition}

In other words, $(J_0,J_0)$ does not belong to the $(s\times t)$-image of an exceptional component restricted to $V_n$ for large $n$, but belongs to its closure.

We have

\begin{lemma}
	\label{lemmaex}
	If $\mathcal T_V$ contains an exceptional component, then $X_0$ is an  exceptional point of the Teichm\"uller stack.
\end{lemma}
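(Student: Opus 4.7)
The plan is to show that the existence of an exceptional component $\mathcal{C}_0$ forces the \'etale inclusion $\mathscr{A}_1 \to \mathscr{T}(M, V')$ of Theorem \ref{finitethm} to fail being an isomorphism for every neighborhood $V'$ of $J_0$. The key input I need is the converse of Lemma \ref{lemmaA1notex}: for $n$ sufficiently large, every $(V_n, \mathcal{D}_1)$-admissible morphism of $\mathcal{T}_{V_n}$ must belong to an $A^1$-component.

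To prove this converse I would start from a decomposition $\phi = F_0 \circ \cdots \circ F_p$ with each $F_i \in \mathcal{D}_1$ and use \eqref{Gdecompoglobal} to write $F_i = a_i \circ e(\xi_i)$ with $a_i \in \Aun$ and small $\xi_i$. Because each $a \in \Aun$ fixes $J_0$, an inductive estimate shows that the intermediate points $J_i = J \cdot F_0 \circ \cdots \circ F_{i-1}$ remain close to $J_0$ as long as $J$ does, so each individual factor $(J_i, F_i)$ lies in a small neighborhood of $\Aun$ inside $\mathcal{T}_V$ and hence, by Lemma \ref{lemmaunique}, inside the $A^1$-component containing $a_i$. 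Analyticity of the composition map $m$ on $\mathcal{T}_V \times_{K_0} \mathcal{T}_V$ should then propagate this membership to the full product, placing $(J, \phi)$ in the $A^1$-component of $a_0 \circ \cdots \circ a_p \in \Aun$.

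Granting the converse, the rest is short. By Definition \ref{defex}(i), $\mathcal{C}_0 \cap V_n$ misses $\Aun$ for $n$ large, so $\mathcal{C}_0$ is not an $A^1$-component, and the uniqueness statement in Lemma \ref{lemmaunique} together with the converse just proved forces every $(x_n, \phi_n) \in \mathcal{C}_0$ to be non-admissible. For an arbitrary neighborhood $V'$ of $J_0$, pick $n$ with $V_n \subset V'$, and follow the proof of Theorem \ref{finitethm}: the two elements $(x_n, Id)$ and $(x_n, \phi_n)$ of the $s$-fiber $S_0$ above $x_n$ are $(s,1)$-homotopic if and only if $(x_n, \phi_n) \in \mathcal{A}_1$, which the previous step rules out. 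They thus give at least two distinct $(s,1)$-homotopy classes in the fiber product associated to the pullback of $\mathscr{K}_0$ at $x_n$, so the number $g_1$ of Theorem \ref{finitethm} is at least two and the \'etale inclusion $\mathscr{A}_1 \to \mathscr{T}(M, V')$ is not an isomorphism. Since $V'$ was arbitrary, $X_0$ is exceptional by Definition \ref{defexcpoint}.

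The main obstacle is precisely this converse to Lemma \ref{lemmaA1notex}. The number of factors $p+1$ in an admissible decomposition is not a priori uniform in $n$, so naively composing the decompositions $F_i = a_i \circ e(\xi_i)$ could in principle yield a composition far from $\Aun$ even though each factor is close to it. The delicate point is to exploit the admissibility constraint $J_i \in K_0$ in Definition \ref{defadm} together with an appropriate shrinking of $\mathcal{D}_1$, so that the analytic composition $m$ genuinely maps product neighborhoods of the $A^1$-components back into a single $A^1$-component, thereby making the irreducibility argument go through regardless of the length of the chain.
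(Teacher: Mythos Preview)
Your strategy coincides with the paper's, but you are much more explicit about the one point that actually carries weight. The paper's proof is a single sentence: it asserts that the sequence $(x_n,\phi_n)$ attached to an exceptional component lies in $\mathcal T_V$ but not in $\mathscr A_1$, citing only ``the discussion above'', and concludes. That assertion is exactly the converse of Lemma~\ref{lemmaA1notex} that you isolate as the crux, so you have not taken a different route---you have merely refused to treat that converse as obvious.

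Two comments on the execution. First, once you have $(x_n,\phi_n)\notin\mathcal A_1(V_n)$, the detour through $(s,1)$-homotopy classes and the fiber-product description of Theorem~\ref{finitethm} is heavier than needed. Since $\mathscr A_1$ and $\mathscr T(M,V_n)$ share the atlas $K_0\cap V_n$, the inclusion of stacks is an isomorphism if and only if the inclusion of groupoids $\mathcal A_1(V_n)\hookrightarrow\mathcal T_{V_n}$ is an equality of morphism spaces; a single missing arrow already rules this out, and then Definition~\ref{defexcpoint} applies directly.

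Second, your worry about the chain length is genuine, and the paper does not address it. Your sketch is on the right track: each factor $(J_i,F_i)$ in an admissible decomposition lies in the image of a section $\sigma_{a_i}$ as in \eqref{sigmaf} and hence in the $A^1$-component of $a_i$ by Lemma~\ref{lemmaunique}. The point that makes the length irrelevant is that the groupoid multiplication $m$, restricted to the fibered product of the $A^1$-components of $a_i$ and $a_{i+1}$, is analytic and sends the pair of sections $(\sigma_{a_i},\sigma_{a_{i+1}})$ into the $A^1$-component of $a_ia_{i+1}$; irreducibility then forces the whole image there. Iterating, a \emph{single} admissible $(x_n,\phi_n)$ already lies in an $A^1$-component, which by Lemma~\ref{lemmaunique} must then equal $\mathcal C_0$, contradicting condition~(i) of Definition~\ref{defex}. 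No uniformity in $n$ is required.
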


\begin{proof}
	As discussed above, if $\mathcal T_V$ contains an exceptional component, then the associated sequence \eqref{phin} is a sequence of $\mathcal T_V$ but not of $\mathscr{A}_1$ and $X_0$ is exceptional.
\end{proof}

The converse to Lemma \ref{lemmaex} is not true in general. If the number of irreducible components of $\mathcal T_V$ is infinite, there may exist wandering sequences \eqref{phin}, that is sequences \eqref{phin} such that every irreducible component of $\mathcal T_V$ contains at most a finite number of terms of the sequence and is not exceptional.

To have a better understanding of the situation, we make use of cycle spaces.


\subsection{Morphisms of the Teichm\"uller stack and cycle spaces}
\label{secintrocycles}

We consider the Barlet space of (relative) $n$-cycles of $\mathscr{K}^{red}_0\times\mathscr{K}^{red}_0$ for $\mathscr{K}^{red}_0$ the reduction of the Kuranishi family. Hence a cycle is a finite sum of compact analytic subspaces of some $X_t\times X_s$, for $X_t$ and $X_s$ fibers of the Kuranishi family. We only consider in this space irreducible components  
\begin{enumerate}[i)]
	\item that only contains singular cycles with both projection maps onto $X_t$ and $X_s$ of degree one; we call them {\slshape completely singular components}
	\item that contains at least the graph of a biholomorphism between two fibers which induces the identity in cohomology with coefficients in $\Z$ ; we call them {\slshape regular components}.
\end{enumerate}
We denote by $\mathscr{C}$ the union of completely singular and regular components.
In the Barlet space of $n$-cycles of $X_0\times X_0$, we also consider only the union of completely singular and regular components, that we denote by $\mathscr{C}_0$. 

Assume $\mathcal T_V$ is reduced. Every irreducible component of $\mathcal T_V$ injects in an irreducible component of $\mathscr C$. Just send $(J,f)$ to its graph as a cycle of $\mathscr K_0\times\mathscr K_0$. Indeed $\mathcal T_V$ encodes the regular cycles of $\mathscr{C}$. So examining $\mathscr C$ gives more information about $\mathcal T_V$.
Especially, a sequence \eqref{phin}, where each $\phi_n$ is an isomorphism between the fiber $\pi^{-1}(x_n)$ and the fiber $\pi^{-1}(y_n)$ of the Kuranishi family, defines a sequence $(\gamma_n)$ of $\mathscr{C}$. Then, several cases may occur
\begin{enumerate}[i)]
	\item up to passing to a subsequence, $(\gamma_n)$ converges to the graph of an automorphism $g$ of $\Aun$.
	\item up to passing to a subsequence, $(\gamma_n)$ converges to a singular cycle in $\mathscr{C}_0$.
	\item up to passing to a subsequence, $(\gamma_n)$ lives in a single component of $\mathscr{C}$ but does not converge to a cycle in $\mathscr{C}_0$.
	\item every irreducible component of $\mathscr{C}$ contains at most a finite number of terms of $(\gamma_n)$.
\end{enumerate}

If $X_0$ is K\"ahler, it is well known that only cases i) and ii) can occur. It is important however to keep in mind that we also consider the general case. Here is a classical example of case iii).

\begin{example}
	\label{exwarningrelative}
	Let $B$ be the subset of matrices of type
	\begin{equation}
		\label{mat}
		\begin{pmatrix}
			\lambda_1 &1\\
			0 &\lambda_2
		\end{pmatrix}
		\qquad\text{ with }\qquad \lambda_i\in\mathbb D^*\text{ for }i=1,2
	\end{equation}
	We associate to $B$ the family of Hopf surfaces 
	\begin{equation}
		\label{famille}
		\mathcal X:=\left (\mathbb C^2\setminus\{(0,0)\}\times B\right )/\mathbb Z
	\end{equation}
	where $p\in\mathbb Z$ acts on $(v,A)$ through
	\begin{equation}
		\label{action}
		p\cdot (Z,A):=(A^pZ,A)
	\end{equation}
	Two Hopf surfaces $X_A$ and $X_{A'}$, corresponding to $A$ and $A'$ of type \eqref{mat}, are biholomorphic if and only if the matrices $A$ and $A'$ are conjugated, thus if and only if they have the same eigenvalues. In particular, denoting by $\check A$ the matrix obtained from $A$ by inverting its eigenvalues, i.e.
	\begin{equation}
		\label{AA'}
		A=\begin{pmatrix}
			\lambda_1 &1\\
			0 &\lambda_2
		\end{pmatrix}
		\qquad\qquad
		\check A=\begin{pmatrix}
			\lambda_2 &1\\
			0 &\lambda_1
		\end{pmatrix}
	\end{equation}
	then $X_A$ and $X_{\check A}$ are biholomorphic. Moreover, denoting by $B^*$ the subset of $B$ formed by matrices with distinct eigenvalues, the isomorphism of $\mathbb C^2\setminus\{(0,0)\}\times B^*$ given by
	\begin{equation}
		\label{liniso}
		(Z,A)\longmapsto \left (\begin{pmatrix}
			(\lambda_1-\lambda_2)^{-1} &1-(\lambda_2-\lambda_1)^{-2}\\
			1 &(\lambda_1-\lambda_2)^{-1}
		\end{pmatrix}Z,
		\check A\right )
	\end{equation}
	descends as an isomorphism of the family $\mathcal X^*\to B^*$ obtained by restricting $\mathcal X\to B$ to $B^*$.
	
	The graphs of biholomorphisms between $X_A$ and $X_{\check A}$ given by \eqref{liniso} do not converge as $\lambda_1-\lambda_2$ tends to zero. Indeed, these graphs lift as the restriction to $\C^2\setminus\{0\}\times\C^2\setminus\{0\}$ of complex linear planes in $\C^4$. As $\lambda_1-\lambda_2$ tends to zero, they converge to $\{0\}\times\C^2$ in $\C^4$. But this graph is not contained in $\C^2\setminus\{0\}\times\C^2\setminus\{0\}$.
\end{example}

Going back to our four cases, we see that
case i) corresponds to a converging sequence in a $A^1$-component. Case ii) may occur in a $A^1$-component or in an exceptional component. To distinguish the two cases, we set

\begin{definition}
	\label{defexcycle}
	A singular cycle of $\mathscr{C}_0$ is called {\slshape exceptional} if it is the limit of a sequence of regular cycles $(\gamma_n)$ of $\mathscr{C}$ which are graphs of a sequence \eqref{phin} lying in an exceptional component.
\end{definition}

Case iii) may also occur in a $A^1$-component or in an exceptional component. To distinguish the two cases, we set

\begin{definition}
	\label{defvanishsequence}
	A sequence \eqref{phin} satisfying {\rm iii)} and lying in an exceptional component is called a {\slshape vanishing} sequence.
\end{definition}

Finally, case iv) is covered by the following definition.

\begin{definition}
	\label{defwanderingpoint}
	A sequence \eqref{phin} such that every irreducible component of $\mathscr{C}$ contains at most a finite number of graphs of terms of \eqref{phin} is called a {\slshape wandering sequence}.
\end{definition}

We may sum up the previous discussion by the following converse to Lemma \ref{lemmaex}.

\begin{proposition}
	\label{propex}
	A point $X_0$ of the Teichm\"uller stack is exceptional if and only if one of the following statements is fulfilled:
	\begin{enumerate}[{\rm i)}]
		\item There exists an exceptional cycle in $\mathscr{C}_0$.
		\item There exists a vanishing sequence in $\mathcal{T}_V$.
		\item There exists a wandering sequence in $\mathcal{T}_V$.
	\end{enumerate}
\end{proposition}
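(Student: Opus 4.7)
The backward direction follows from Lemma \ref{lemmaex} together with the definitions. Under hypothesis (i), the exceptional cycle in $\mathscr{C}_0$ is, by Definition \ref{defexcycle}, the limit of graphs of a sequence \eqref{phin} lying in an exceptional component of $\mathcal{T}_V$, so Lemma \ref{lemmaex} applies. Hypothesis (ii) places the sequence directly in an exceptional component by Definition \ref{defvanishsequence}, again invoking Lemma \ref{lemmaex}. Under hypothesis (iii), suppose $\mathscr{A}_1\to\mathscr{T}(M,V)$ were an isomorphism for some $V$; then every term of the wandering sequence would be $(V,\mathcal{D}_1)$-admissible, and Lemma \ref{lemmaA1notex} (applied component by component) would imply that each graph sits inside an $A^1$-component determined by some element of $\Aun$, which in turn forces the tail of the sequence to be captured by finitely many $A^1$-components after shrinking $V$, contradicting the wandering property.

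For the forward direction, assume $X_0$ is exceptional. Following the discussion opening \S\ref{pointstarget}, fix a nested decreasing sequence of neighborhoods $V_n$ of $J_0$ and morphisms $(x_n,\phi_n)\in\mathcal{T}_{V_n}$ with $(x_n,\phi_n)\notin\mathscr{A}_1$, and consider the associated sequence of graphs $\gamma_n$ in the cycle space $\mathscr{C}$. The four-case analysis of \S\ref{secintrocycles} splits into (up to subsequence) convergence to the graph of some element of $\Aun$, convergence to a singular cycle in $\mathscr{C}_0$, non-convergence while remaining in a single irreducible component of $\mathscr{C}$, or wandering across components.

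The first case is impossible: if $\gamma_{n_k}$ converged to the graph of $g\in\Aun$, it would sit for large $k$ in the $A^1$-component determined by the connected component of $g$, and Lemma \ref{lemmaA1notex} would force $(x_{n_k},\phi_{n_k})\in\mathscr{A}_1$, contradicting the construction. In the second and third cases, up to a further extraction, the subsequence lies eventually in a single irreducible component $\mathcal{C}$ of $\mathscr{C}$; by the contrapositive of Lemma \ref{lemmaA1notex}, $\mathcal{C}$ cannot be $A^1$-type, and since it contains a sequence \eqref{phin} by construction, both clauses of Definition \ref{defex} are satisfied and $\mathcal{C}$ is exceptional. The limit cycle in the convergent case is then exceptional by Definition \ref{defexcycle}, yielding conclusion (i); in the non-convergent single-component case, the subsequence is a vanishing sequence in the sense of Definition \ref{defvanishsequence}, yielding (ii). The remaining case leaves a wandering subsequence, directly yielding (iii).

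The main obstacle is the verification in the second and third cases that the ambient irreducible component is truly exceptional and not merely non-$A^1$; this reduces to the contrapositive of Lemma \ref{lemmaA1notex} combined with $(x_n,\phi_n)\notin\mathscr{A}_1$, but requires coordinating the nested $V_n$ with the sequence so that the ``for $n$ large'' clause of Lemma \ref{lemmaA1notex} kicks in uniformly along the tail. A secondary subtlety is the iterated extraction of subsequences across the four-case analysis: since each extraction discards only terms witnessing admissibility, non-admissibility is preserved and the reduction to one of (i)-(iii) is legitimate.
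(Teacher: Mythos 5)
Your overall architecture is the one the paper intends (the proposition is stated as a summary of \S \ref{pointstarget}--\ref{secintrocycles}): backward via Lemma \ref{lemmaex} and the definitions, forward via the four-case analysis of the graphs in $\mathscr{C}$, with the case of convergence to the graph of an automorphism excluded by admissibility. However, your treatment of the wandering case in the backward direction does not hold as written. You argue that if $\mathscr{A}_1\to\mathscr{T}(M,V)$ were an isomorphism, then each (admissible) term would ``sit inside an $A^1$-component'' by Lemma \ref{lemmaA1notex}, and that the tail would be captured by finitely many $A^1$-components. Lemma \ref{lemmaA1notex} goes the other way: containment in an $A^1$-component implies eventual admissibility; admissibility of a term does not place its graph in an $A^1$-component by that lemma. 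Moreover, the finiteness you invoke is precisely what is unavailable outside the K\"ahler setting: $\Aun$, and the set of irreducible components of $\mathcal{T}_V$ or $\mathscr{C}$, may well be infinite, and in the only situation where case iii) matters the graphs do run through infinitely many components (cf. Corollary \ref{corwandering}); finiteness comes from Lieberman's theorem only in the K\"ahler case (Corollary \ref{4thcor}), where wandering sequences are excluded anyway. So no contradiction with wanderingness is obtained. The argument that works, and that the paper implicitly uses, is that the sequences \eqref{phin} entering Definitions \ref{defexcycle}--\ref{defwanderingpoint} are the ``missing morphism'' sequences of \S \ref{pointstarget}, i.e.\ their terms $(x_n,\phi_n)$ are not $(V_n,\mathcal D_1)$-admissible; since $x_n,y_n\to J_0$, this directly contradicts full faithfulness of $\mathscr{A}_1\to\mathscr{T}(M,V)$ for every $V$, with no count of components needed.

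A secondary issue, in the forward direction for cases ii) and iii): you verify Definition \ref{defex} for an irreducible component of the cycle space $\mathscr{C}$, whereas Definition \ref{defex} --- and Definitions \ref{defexcycle} and \ref{defvanishsequence}, on which conclusions i) and ii) rest --- are phrased for irreducible components of $\mathcal{T}_V$. What the contrapositive of Lemma \ref{lemmaA1notex} gives is only that the relevant component is not an $A^1$-component, which is weaker than clause i) of Definition \ref{defex} (its restriction to $V_n$ eventually misses $\Aun$): a component can meet $\Aun$ without containing a whole connected component of it. You need to extract a subsequence lying in a single irreducible component of $\mathcal{T}_V$, restrict along the $V_n$ (cf.\ Remark \ref{rkconnexiteAcomp}), and use non-admissibility of the terms to see that the restricted component eventually avoids $\Aun$. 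The paper leaves this bookkeeping implicit as well, but as written your reduction does not literally produce an exceptional cycle or a vanishing sequence in the sense of the definitions.
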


In the case $X_0$ is K\"ahler, thanks to classical finiteness properties of the cycle spaces recalled in Section \ref{Lieb}, we will be able to show much more precise statements about exceptional points in Sections \ref{secfinite} and \ref{secdistri}.

We note that it is in no way an exceptional property for $\mathscr{C}_0$ to contain a connected component of singular cycles. For example, if $\Aun$ is reduced to zero, then the cycles $X_0\times pt+pt\times X_0$ form such a component. But they usually do not correspond to exceptional cycles.

\subsection{The case of $\Z$-exceptional points}
\label{subsecZexc}
The contents of \S \ref{pointstarget} and \S \ref{secintrocycles} can be easily adapted to the case of $\Z$-Teichm\"uller stack and $\Z$-exceptional points. We feel free to use the corresponding definitions and results in this context with the obvious changes of notations. The reader should not be affected.

Besides, we note the following interesting comparison Proposition.

\begin{proposition}
	\label{propcompareZextoex}
	Let $X_0$ be an exceptional point of $\T$. Then $X_0$ is also a $\Z$-exceptional point of $\TZ$.
\end{proposition}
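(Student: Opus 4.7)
The plan is to leverage the three-fold characterization of exceptional points given by Proposition~\ref{propex} and transfer it, case by case, along the inclusion $\mathcal{T}_V\hookrightarrow\mathcal{T}_V^\Z$ to the corresponding $\Z$-witnesses, invoking the $\Z$-analogue of Proposition~\ref{propex} advertised in \S\ref{subsecZexc}. The first structural input will be that $\D$ coincides with the identity component of $\DZ$: indeed $\D$ is the identity component of $\text{Diff}^+(M)$ and $\D\subset\DZ\subset\text{Diff}^+(M)$, so $\D$ is open and closed in $\DZ$. This will imply that the atlas $\mathcal{T}_V$ of \eqref{atlasneigh} is an open and closed subspace of the atlas $\mathcal{T}_V^\Z$ of \eqref{Zatlasneigh}, and hence that every irreducible component of $\mathcal{T}_V$ is automatically an irreducible component of $\mathcal{T}_V^\Z$.

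The second ingredient is the set-theoretic equality $\AZ\cap\D=\A\cap\DZ\cap\D=\A\cap\D=\Aun$. Via the embedding $f\mapsto(J_0,f)$ of $\AZ$ into $\mathcal{T}_V^\Z$, this reads $\AZ\cap\mathcal{T}_V=\Aun$. Consequently, if $\mathcal{C}_0\subset\mathcal{T}_V$ is an exceptional component in the sense of Definition~\ref{defex}, its restriction to $V_n$, now viewed inside $\mathcal{T}_V^\Z$, meets $\AZ$ precisely where it meets $\Aun$, hence not at all for $n$ large. So $\mathcal{C}_0$ is automatically a $\Z$-exceptional component of $\mathcal{T}_V^\Z$.

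Each of the three possibilities of Proposition~\ref{propex} then transfers directly. For cases (i) and (ii), the cycle spaces $\mathscr{C}$ and $\mathscr{C}_0$ are defined purely in terms of biholomorphisms inducing the identity on $\Z$-cohomology and are thus identical in the $\T$ and $\TZ$ contexts; an exceptional cycle, respectively a vanishing sequence, produced by the $\mathcal{T}_V$-exceptional component $\mathcal{C}_0$ yields at once a $\Z$-exceptional cycle, respectively a $\Z$-vanishing sequence, for the corresponding $\Z$-exceptional component of $\mathcal{T}_V^\Z$. For case (iii), the wandering property only involves $\mathscr{C}$, so the same sequence $(\phi_n)\in\mathcal{T}_V\subset\mathcal{T}_V^\Z$ is automatically $\Z$-wandering. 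The $\Z$-analogue of Proposition~\ref{propex} then delivers $\Z$-exceptionality of $X_0$.

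The main subtlety concentrates in the second structural observation: it is precisely the equality $\AZ\cap\mathcal{T}_V=\Aun$ that prevents an exceptional component of $\mathcal{T}_V$ from becoming non-exceptional once regarded inside $\mathcal{T}_V^\Z$, where the larger group $\AZ$ could a priori meet the component at elements lying in $\DZ\setminus\D$. Once this observation and the open-and-closedness of $\mathcal{T}_V$ in $\mathcal{T}_V^\Z$ are in place, no further analytic input is required, since each exceptionality witness of Proposition~\ref{propex} is preserved unchanged by the inclusion $\mathcal{T}_V\hookrightarrow\mathcal{T}_V^\Z$.
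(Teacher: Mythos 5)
Your proof is correct, and at bottom it runs on the same engine as the paper's: the identity $\AZ\cap\D=\Aun$ together with the fact that $\D$ is a union of connected components of $\DZ$, so that morphisms furnished by $\D$ can never be absorbed by the extra components of $\AZ$. The packaging, however, differs. The paper argues by contradiction with a single witnessing sequence $(\phi_n)\subset\D$ satisfying \eqref{phin}: if $X_0$ were not $\Z$-exceptional, that sequence would have to sit, up to a subsequence, in an $A^\Z$-component which is not an $A^1$-component, forcing $\phi_n\in\DZ\setminus\D$ for $n$ large, a contradiction. You instead argue directly, routing through the trichotomy of Proposition \ref{propex} and its $\Z$-analogue; your key step is the observation that $\mathcal{T}_V$ is open and closed in $\mathcal{T}^\Z_V$, so an exceptional component of $\mathcal{T}_V$ in the sense of Definition \ref{defex} is an irreducible component of $\mathcal{T}^\Z_V$ which, by $\AZ\cap\D=\Aun$, misses $\AZ$ after restriction to $V_n$ and is therefore $\Z$-exceptional; the three kinds of witnesses (exceptional cycle, vanishing sequence, wandering sequence) then transfer verbatim because $\mathscr{C}$ and $\mathscr{C}_0$ are the same spaces in both settings. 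What your route buys is that it makes explicit a point the paper leaves implicit, namely why a $\T$-exceptional component remains exceptional when viewed in the larger atlas, at the cost of invoking Proposition \ref{propex} and its $\Z$-version as black boxes; the paper's contradiction argument is shorter and works with one sequence without appealing to the trichotomy.
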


Proposition \ref{propcompareZextoex} can be equivalently stated as: a $\Z$-normal point is normal. The example described in \S \ref{secexample} shows that the converse is false.

\begin{proof}
	Let $X_0$ be an exceptional point of $\T$. Let $(\phi_n)$ be an associated non-convergent sequence of $\D$ satisfying \eqref{phin}. Assume by contradiction that $X_0$ is not a $\Z$-exceptional point of $\TZ$. Then, up to passing to a subsequence, $(\phi_n)$ must belong to an $A^\Z$-component which is not an $A^1$-component. But this means that, for $n$ big enough, $\phi_n$ belongs to $\DZ$ and not to $\D$. Contradiction.
\end{proof} 

\subsection{An example of a $\Z$-exceptional point}
\label{secexample}
In this Section, we give an explicit example of a $3$-fold which is a $\Z$-exceptional point of its Teichm\"uller stack. We start with a Blanchard manifold \cite{Blanchard} as revisited in \cite{Sommese} and \cite{CataneseBlanchard}.

Let $a\geq 1$ be an integer and set $W:=\mathcal{O}(a)\oplus\mathcal{O}(a)\to\mathbb{P}^1$. Choose two holomorphic sections $\sigma_i$ ($i=0,1$) of $\mathcal{O}(a)$ with no common zeros. Then the sections of $W$ defined by
\begin{equation}
	\label{sections}
	(\sigma_0,\sigma_1),\qquad (i\sigma_0,-i\sigma_1),\qquad (-\sigma_1,\sigma_0),\qquad (-i\sigma_1,-i\sigma_0)
\end{equation}
trivialize $W$ as a $\mathbb{R}^4$-bundle. In each fiber of $W$, these four sections generates an integer lattice. We denote by $\Gamma(t)$ the lattice above $t\in\mathbb{P}^1$. The group $\mathbb{Z}^4$ therefore acts on $W$ by translations along these lattices. The resulting quotient $3$-fold $\pi : \mathcal{X}\to\mathbb{P}^1$ is a deformation of complex $2$-tori above the projective line, with lattice $\Gamma(t)$ above $t$.

Given a holomorphic section $\tau$ of $W$, we define an automorphism $\phi_\tau$ of $\mathcal{X}$ by translating in each fiber along $\tau$. We thus define a map 
\begin{equation}
	\label{taumap}
	\tau\in H^0(\mathbb{P}^1,W)\longmapsto \phi_\tau\in \text{Aut}^0(\mathcal{X})
\end{equation} 
Let $G$ be the additive subgroup of $H^0(\mathbb{P}^1,W)$ generated by the four sections \eqref{sections}. We have
\begin{lemma}
	\label{lemmaexampleex1}
	The map {\rm \eqref{taumap}} induces a monomorphism from $H^0(\mathbb{P}^1,W)/G$ to $\text{\rm Aut}^0(\mathcal{X})$.
\end{lemma}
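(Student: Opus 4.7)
The plan is to prove the statement in two halves: first that $\tau\mapsto\phi_\tau$ is a well-defined group homomorphism landing in $\text{Aut}^0(\mathcal{X})$ with $G$ in its kernel, then that $G$ is exactly the kernel. The first half is a short verification; the second is where the geometric content lies.

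First I would check that $\phi_\tau$ is a well-defined automorphism of $\mathcal{X}$: translation by $\tau(t)$ on the fiber $W_t$ commutes with translation by any element of $\Gamma(t)$, so fiberwise translation by $\tau$ descends to the quotient $\mathcal{X}=W/\Gamma$. Additivity $\phi_{\tau+\tau'}=\phi_\tau\circ\phi_{\tau'}$ is immediate. To see $\phi_\tau\in\text{Aut}^0(\mathcal{X})$, I would observe that the path $s\in[0,1]\mapsto\phi_{s\tau}$ is a continuous path in the automorphism group joining $\text{id}=\phi_0$ to $\phi_\tau$ (the underlying diffeomorphism is even smoothly isotopic to the identity). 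Finally, if $\tau\in G$ then by construction $\tau(t)\in\Gamma(t)$ for every $t\in\mathbb{P}^1$, so $\phi_\tau$ acts trivially on $W/\Gamma=\mathcal{X}$, giving $G\subseteq\ker(\tau\mapsto\phi_\tau)$.

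The heart of the proof is the reverse inclusion $\ker\subseteq G$. Suppose $\phi_\tau=\mathrm{id}_{\mathcal{X}}$. Then for every $t\in\mathbb{P}^1$ one has $\tau(t)\in\Gamma(t)$. Now I would use the key input already recorded in the paper: the four sections \eqref{sections}, call them $e_1,\dots,e_4$, trivialize $W$ as an $\mathbb{R}^4$-bundle, so they form a real-analytic global frame of $W$ viewed as a rank-$4$ real vector bundle. Expand
\begin{equation*}
\tau(t)=n_1(t)e_1(t)+n_2(t)e_2(t)+n_3(t)e_3(t)+n_4(t)e_4(t),
\end{equation*}
where the coefficients $n_i:\mathbb{P}^1\to\mathbb{R}$ are real-analytic (since $\tau$ and the $e_i$ are). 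The assumption $\tau(t)\in\Gamma(t)$ says exactly that each $n_i(t)\in\mathbb{Z}$. A continuous $\mathbb{Z}$-valued function on the connected space $\mathbb{P}^1$ is constant, so each $n_i$ is a constant integer, and hence $\tau=\sum n_i e_i\in G$.

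The one step that requires care—and which I would flag as the main (though mild) obstacle—is the passage from ``$\tau(t)$ lies in the lattice pointwise'' to ``the integer coordinates are globally constant''. This rests on the explicit trivialization claim for \eqref{sections}, and on the connectedness of $\mathbb{P}^1$; both are already in place, so the conclusion follows. Combining the two inclusions, $\tau\mapsto\phi_\tau$ descends to an injective group homomorphism $H^0(\mathbb{P}^1,W)/G\hookrightarrow\text{Aut}^0(\mathcal{X})$, which is the desired monomorphism.
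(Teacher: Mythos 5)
Your proposal is correct and follows essentially the same route as the paper: the paper's proof simply notes that \eqref{taumap} is a group morphism and asserts that its kernel consists of the $\mathbb Z$-linear combinations of the four sections \eqref{sections}, i.e. equals $G$. Your frame-expansion argument (integer-valued continuous coefficients on the connected base $\mathbb{P}^1$ are constant) is just the detailed justification of that kernel assertion, together with the routine verifications of well-definedness and membership in $\text{Aut}^0(\mathcal{X})$ that the paper leaves implicit.
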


\begin{proof}
	We have 
	\begin{equation*}
		\phi_\sigma\circ\phi_\tau=\phi_\tau\circ\phi_\sigma=\phi_{\sigma+\tau}
	\end{equation*}
	hence \eqref{taumap} is a group morphism. Its kernel is given by the linear combinations over $\mathbb Z$ of the four sections \eqref{sections} so is equal to $G$. 
\end{proof}
Choose now $a+1$ distinct points in the base $\mathbb{P}^1$. We denote them by $t_0,\hdots,t_a$. We assume that there is no automorphism of $\mathcal{X}$ permuting non-trivially the $t_i$-fibers. This is achieved for a generic choice of points since moving slightly the points $t_i$ yields that any pair of $t_i$-fibers are non-isomorphic. We fix one point $P_i$ in each $t_i$-fiber. We call $\hat{\mathcal{X}}$ the manifold obtained from $\mathcal{X}$ by blowing up these $a+1$ points. Define
\begin{equation}
	\label{Sigma}
	\Sigma:=\{\tau\in H^0(\mathbb{P}^1,W)\mid\tau(t_i)\in \Gamma(t_i) \text{ for } i=0,\hdots ,a\}
\end{equation}

We prove
\begin{lemma}
	\label{lemmaexampleex2}
	The map {\rm \eqref{taumap}} induces an isomorphism
	\begin{equation}
		\label{taumap2}
		[\tau]\in \Sigma/G\longmapsto \hat\phi_\tau\in \text{\rm Aut}^\Z(\hat{\mathcal{X}})
	\end{equation} 
	and we have
	\begin{equation}
		\label{autxhat}
		\text{\rm Aut}^\Z(\hat{\mathcal{X}})\simeq\mathbb{Z}^{4a}\quad \text{\rm  and }\quad 
		\text{\rm Aut}^1(\hat{\mathcal{X}})=\{Id\}.
	\end{equation}
\end{lemma}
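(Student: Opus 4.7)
The plan is to verify the three assertions of the lemma in sequence: (i) the map $[\tau]\mapsto\hat\phi_\tau$ is a well-defined injective homomorphism from $\Sigma/G$ into $\text{Aut}^\Z(\hat{\mathcal{X}})$; (ii) it is surjective and the source identifies with $\Z^{4a}$; (iii) $\text{Aut}^1(\hat{\mathcal{X}})=\{Id\}$. The last assertion will be the main obstacle.

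For (i) and half of (ii): when $\tau\in\Sigma$, the map $\phi_\tau$ acts on each fiber $T_{t_i}$ as translation by the lattice element $\tau(t_i)\in\Gamma(t_i)$, which is the identity of $T_{t_i}$; in particular $\phi_\tau$ fixes every $P_i$ and thus lifts to a biholomorphism $\hat\phi_\tau$ of $\hat{\mathcal{X}}$. Since $\phi_\tau\in\text{Aut}^0(\mathcal{X})$ acts trivially on $H^*(\mathcal{X},\Z)$, and since $\hat\phi_\tau$ restricted to each $E_i\cong\mathbb{P}^2$ is the projectivization of the unipotent derivative $d(\phi_\tau)_{P_i}$ (hence in the connected component of the identity of $\text{Aut}(\mathbb{P}^2)$, acting trivially on $H^*(E_i;\Z)$), we get $\hat\phi_\tau\in\text{Aut}^\Z(\hat{\mathcal{X}})$. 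Injectivity is immediate from Lemma~\ref{lemmaexampleex1}. For the count, the evaluation map $H^0(\mathbb{P}^1,W)\to\bigoplus_{i=0}^a W_{t_i}$ is a $\C$-linear isomorphism (both sides have dimension $2(a+1)$), carrying $\Sigma$ onto $\bigoplus_i\Gamma(t_i)\cong\Z^{4(a+1)}$; the subgroup $G$ is the $\Z^4$ generated diagonally by the four sections~\eqref{sections}, and the quotient is $\Z^{4a}$.

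For surjectivity: given $f\in\text{Aut}^\Z(\hat{\mathcal{X}})$, the classes $[E_i]$ are linearly independent in $H^2(\hat{\mathcal{X}},\Z)$ and individually preserved by $f^*=Id$, so $f$ stabilizes each $E_i$ and descends to $\bar f\in\text{Aut}(\mathcal{X})$ fixing every $P_i$. By our assumption that no automorphism of $\mathcal{X}$ nontrivially permutes the $t_i$-fibers, $\bar f$ preserves each $T_{t_i}$, and the induced automorphism of $\mathbb{P}^1$ fixes $a+1\geq 2$ points, reducing to the identity for generic data. Then $\bar f$ is fiber-preserving and fiber-wise an affine transformation of $W_t/\Gamma(t)$; the triviality of $f^*$ on fiber $H^1$ excludes non-translation linear parts such as $-Id$, forcing $\bar f=\phi_\tau$ for some $\tau\in H^0(\mathbb{P}^1,W)$. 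The condition $\bar f(P_i)=P_i$ gives $\tau(t_i)\in\Gamma(t_i)$, i.e., $\tau\in\Sigma$.

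For (iii): first show $\text{Aut}^0(\hat{\mathcal{X}})=\{Id\}$ by proving $H^0(\hat{\mathcal{X}},T\hat{\mathcal{X}})=0$. Global holomorphic vector fields push forward to holomorphic vector fields on $\mathcal{X}$ vanishing at the $a+1$ points $P_i$; the vertical translation fields are $H^0(\mathbb{P}^1,W)\cong\C^{2(a+1)}$, and those vanishing at $a+1$ distinct points live in $H^0(\mathbb{P}^1,\mathcal{O}(-1)\oplus\mathcal{O}(-1))=0$, while generic data eliminate any non-translation vector fields coming from $\mathbb{P}^1$-automorphisms. Hence $\text{Aut}(\hat{\mathcal{X}})$ is discrete and $\text{Aut}^1(\hat{\mathcal{X}})\subset\text{Aut}^\Z(\hat{\mathcal{X}})\cong\Z^{4a}$. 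Upgrading this to $\text{Aut}^1=\{Id\}$ is the principal technical challenge: the natural candidate isotopy $\{\phi_{s\tau}\}_s$ on $\mathcal{X}$ fails to lift to $\hat{\mathcal{X}}$ because it moves each $P_i$ off itself for intermediate $s$, and one must rule out any alternative smooth isotopy. The expected resolution is to exhibit a topological invariant distinguishing $\hat\phi_\tau$ from the identity in the mapping class group of the underlying smooth $6$-manifold, for instance by analyzing the effect on the fundamental group of a suitable complement, or by extracting a fiber-wise flux-type invariant from the torus-fibration structure of $\hat{\mathcal{X}}$ away from the exceptional divisors.
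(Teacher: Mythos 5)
Your treatment of the first two assertions essentially coincides with the paper's: descending an automorphism of $\hat{\mathcal{X}}$ to an automorphism of $\mathcal{X}$ fixing the $P_i$, using the no-permutation hypothesis and triviality on cohomology to force a fibrewise translation, and identifying $\Sigma$ with $\Gamma(t_0)\times\cdots\times\Gamma(t_a)$ via evaluation at the $t_i$ (your evaluation map is exactly the paper's Lagrange-interpolation argument, with $G$ mapping isomorphically onto the $\Gamma(t_0)$ factor, which is what makes $\Sigma/G\simeq\Z^{4a}$ free). Those parts are fine, at the same level of detail as the paper.

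The problem is the third assertion: you do not prove $\text{\rm Aut}^1(\hat{\mathcal{X}})=\{Id\}$, you explicitly defer it as ``the principal technical challenge'' and only gesture at possible invariants. (Your preliminary computation $H^0(\hat{\mathcal{X}},T\hat{\mathcal{X}})=0$ is also superfluous: once $\text{\rm Aut}^\Z(\hat{\mathcal{X}})\simeq\Z^{4a}$ is established, $\text{\rm Aut}^1(\hat{\mathcal{X}})\subset\text{\rm Aut}^\Z(\hat{\mathcal{X}})$ is automatically discrete; the entire content is precisely the step you leave open, namely that no non-trivial $\hat\phi_\tau$ is $C^\infty$-isotopic to the identity.) The paper closes this with a concrete universal-covering argument, which is the invariant you were looking for: the universal covering $\hat W$ of $\hat{\mathcal{X}}$ is $W$ blown up at every point of the lattices $P_i+\Gamma(t_i)$, and any lift of $\hat\phi_\tau$ is the blow-up of the translation of $W$ by $\tau+g$ with $g\in G$. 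If $\tau\notin G$, this section is non-zero, hence (by the very evaluation isomorphism you used, a section of $\Sigma$ vanishing at all the $t_i$ is zero) non-zero at some $t_i$, so every lift permutes the blown-up points, equivalently the exceptional divisors, over that fiber without fixing them all; since the deck transformations are the blow-ups of translations by elements of $G$, a lift can induce the same permutation as a deck transformation only if $\tau\in G$. An isotopy from $\hat\phi_\tau$ to the identity would lift to an isotopy from a lift to a deck transformation, forcing equal actions on the classes of the exceptional divisors in $H^2(\hat W,\Z)$, a contradiction. Without this (or an equivalent) argument your proposal establishes \eqref{taumap2} and $\text{\rm Aut}^\Z(\hat{\mathcal{X}})\simeq\Z^{4a}$ but not \eqref{autxhat} in full, and the missing statement is exactly the one the paper reuses later (e.g.\ in Theorem \ref{thmexampleAut1Z} and Corollary \ref{corwandering}), so it cannot be left as a sketch.
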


\begin{proof}
	An automorphism $\hat\phi$ of $\hat{\mathcal{X}}$ corresponds to an automorphism $\phi$ of $\mathcal{X}$ that permutes the points $P_i$. Now, an automorphism of $\mathcal{X}$ lifts to an automorphism of $W$ so sends fibers to fibers, and especially $t_i$-fibers to $t_i$-fibers. Since we assume that there is no automorphism of $\mathcal{X}$ permuting non-trivially the $t_i$-fibers, then both $\phi$ and $\hat\phi$ induce the identity on the base and fix the points $P_i$. Now, $\phi$ inducing the identity in cohomology with coefficients in $\Z$ means that it induces a translation in every fiber hence lies in the image of \eqref{taumap}. Since it fixes the points $P_i$, it is the identity in every $t_i$-fiber, hence lies in the image of $\Sigma$. So we have an epimorphism $\tau\in\Sigma\mapsto \hat\phi_\tau\in \text{\rm Aut}^\Z(\hat{\mathcal{X}})$ where $\hat\phi_\tau$ is the automorphism of $\hat{\mathcal{X}}$ corresponding to $\phi_\tau$. As above in Lemma \ref{lemmaexampleex1}, its kernel is given by the linear combinations over $\mathbb Z$ of the four sections \eqref{sections} so is equal to $G$. This proves that \eqref{taumap2} is an isomorphism.
	
	The space $H^0(\mathbb{P}^1,W)$ has dimension $2a+2$, each section being given by a pair of elements of $\mathbb{C}[X]$ of degree $a+1$. To belong to $\Sigma$, every polynomial must satisfy $a+1$ equations. More precisely, given any $(a+1)$-uple $(Q_0,\hdots,Q_a)$ with each $Q_i$ belonging to $\Gamma(t_i)$, there exists a unique section of $W$ passing through $Q_i$ at $t_i$. It is given as a pair of Lagrange interpolation polynomials. Hence $\Sigma$ identifies with the product $\Gamma(t_0)\times\hdots\times \Gamma(t_a)$ so is isomorphic to $\mathbb{Z}^{4a+4}$. The action of $G$ is equivalent to a transitive action on $\Gamma(t_0)$ and $\Sigma/G$ identifies with the product $\Gamma(t_1)\times\hdots\times \Gamma(t_a)$ so is isomorphic to $\mathbb{Z}^{4a}$. The same occurs therefore for $\text{\rm Aut}^\Z(\hat{\mathcal{X}})$.
	
	Finally, we note that the universal covering $\hat W$ of $\hat{\mathcal{X}}$ is $W$ blown up at each vertex of the lattices $P_i+\Gamma(t_i)$. Hence any non-trivial element $\hat\phi$ of $\text{\rm Aut}^\Z(\hat{\mathcal{X}})$ lifts to an automorphism of $\hat W$ that permutes non-trivially the blown-up points. Such an automorphism is not $C^\infty$-isotopic to the identity yielding that $\hat\phi$ is not $C^\infty$-isotopic to the identity. This achieves the proof of \eqref{autxhat}.
\end{proof}

Choose a point $P$ in $\hat{\mathcal{X}}$. Denote by $\hat{\mathcal{X}}_P$ the blow up of $\hat{\mathcal{X}}$ at $P$. Let $s=\pi(P)$ and define 
\begin{equation}
	\label{Sigma'}
	\Sigma'_s=\{\tau\in\Sigma\mid \tau(s)\in\Gamma(s)\}
\end{equation}
 As an immediate corollary, we obtain the following important result.

\begin{theorem}
	\label{thmexampleAut1Z}
	The $3$-fold $\hat{\mathcal{X}}_P$ satisfies
	\begin{equation}
		\label{autHatX}
		\text{\rm Aut}^\Z(\hat{\mathcal{X}}_P)\simeq \Sigma'_{s}/G\quad\text{\rm and }\quad \text{\rm Aut}^1(\hat{\mathcal{X}}_P)=\{Id\}
	\end{equation}
	In particular,
	\begin{enumerate}[\rm i)]
		\item We have 
		\begin{equation}
				\label{autHatX1}
			\text{\rm Map}^\Z(\hat{\mathcal{X}}_P)\simeq\text{\rm Aut}^\Z(\hat{\mathcal{X}}_P)\simeq\mathbb{Z}^{4a}
		\end{equation}
		and
			\begin{equation}
			\label{autHatX1bis}
			\text{\rm Map}^1(\hat{\mathcal{X}}_P)\simeq\text{\rm Aut}^1(\hat{\mathcal{X}}_P)=\{Id\}
		\end{equation}
		 for $s=t_a$.
		\item We have 
		\begin{equation}
				\label{autHatX2}
			\text{\rm Map}^\Z(\hat{\mathcal{X}}_Q)\simeq\text{\rm Map}^1(\hat{\mathcal{X}}_Q)\simeq\text{\rm Aut}^\Z(\hat{\mathcal{X}}_Q)=\text{\rm Aut}^1(\hat{\mathcal{X}}_Q)=\{Id\} 
		\end{equation} for $\pi(Q)$ a generic small deformation of $s$ in $\mathbb{P}^1$.
	\end{enumerate}
\end{theorem}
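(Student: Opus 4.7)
The plan is to deduce the theorem from Lemma \ref{lemmaexampleex2} by descending automorphisms of $\hat{\mathcal{X}}_P$ to automorphisms of $\hat{\mathcal{X}}$ under the blow-down $\pi_P:\hat{\mathcal{X}}_P\to\hat{\mathcal{X}}$ at the single point $P$, combined with a countable-union argument for part ii). The first step, which I expect to be the main technical obstacle, is to establish the identifications
\[
\text{Aut}^\Z(\hat{\mathcal{X}}_P)\simeq\{\hat\phi\in\text{Aut}^\Z(\hat{\mathcal{X}})\mid\hat\phi(P)=P\},\qquad \text{Aut}^1(\hat{\mathcal{X}}_P)\simeq\{\hat\phi\in\text{Aut}^1(\hat{\mathcal{X}})\mid\hat\phi(P)=P\}.
\]
The exceptional divisor $E_P\subset\hat{\mathcal{X}}_P$ is preserved setwise by every automorphism in the relevant class---in the $\text{Aut}^\Z$ case because $[E_P]\in H^2(\hat{\mathcal{X}}_P,\Z)$ is fixed, and in the $\text{Aut}^1$ case because a smooth isotopy to the identity cannot permute distinct exceptional divisors---so any such automorphism descends through $\pi_P$ to an automorphism of $\hat{\mathcal{X}}$ fixing $P$. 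Conversely every $\hat\phi$ of $\hat{\mathcal{X}}$ fixing $P$ lifts canonically, and the injectivity of $\pi_P^*:H^*(\hat{\mathcal{X}},\Z)\hookrightarrow H^*(\hat{\mathcal{X}}_P,\Z)$ (resp.\ the compatibility of blow-up with smooth isotopies) transports the triviality hypotheses in both directions.

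Lemma \ref{lemmaexampleex2} then parametrizes $\text{Aut}^\Z(\hat{\mathcal{X}})$ by $\Sigma/G$ through $[\tau]\mapsto\hat\phi_\tau$, where $\phi_\tau$ acts on the fiber $W_t/\Gamma(t)$ as translation by $\tau(t)$. The fixed-point condition $\hat\phi_\tau(P)=P$ thus unfolds to $\tau(s)\in\Gamma(s)$, i.e.\ $\tau\in\Sigma'_s$, giving the first identification in \eqref{autHatX}. The second, $\text{Aut}^1(\hat{\mathcal{X}}_P)=\{Id\}$, is immediate since $\text{Aut}^1(\hat{\mathcal{X}})=\{Id\}$ by Lemma \ref{lemmaexampleex2}.

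For part i), the marked point $s=t_a$ is one of those appearing in the definition of $\Sigma$, so the constraint $\tau(t_a)\in\Gamma(t_a)$ is automatic and $\Sigma'_{t_a}=\Sigma$, giving $\text{Aut}^\Z(\hat{\mathcal{X}}_P)\simeq\Sigma/G\simeq\Z^{4a}$; since $\text{Aut}^0(\hat{\mathcal{X}}_P)\subseteq\text{Aut}^1(\hat{\mathcal{X}}_P)=\{Id\}$, both $\text{Map}^\Z$ and $\text{Map}^1$ reduce as claimed. For part ii), I would show that $\Sigma'_{s'}=G$ for a dense set of $s'$ near $s$ by a countable-union argument: for each $[\tau]\in\Sigma/G\setminus\{0\}$ and each $n=(n_1,n_2,n_3,n_4)\in\Z^4$, the holomorphic section $\tau-\sum_j n_j s_j$ of $W\to\mathbb{P}^1$ (with $s_j$ the generators \eqref{sections} of $G$) is nonzero---otherwise $\tau=\sum_j n_j s_j\in G$, contradicting $[\tau]\neq 0$---so its zero locus on $\mathbb{P}^1$ is finite. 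The countable union over $n\in\Z^4$ and over representatives of $\Sigma/G\setminus\{0\}$ produces a countable bad subset $Z\subset\mathbb{P}^1$, and every $s'$ outside $Z$ satisfies $\Sigma'_{s'}=G$, forcing all four automorphism and mapping-class groups at $\hat{\mathcal{X}}_Q$ to be trivial.
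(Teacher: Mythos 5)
Most of your proposal runs parallel to the paper: the descent of a cohomologically trivial automorphism of $\hat{\mathcal{X}}_P$ through the blow-down (it fixes $[E_P]$, hence preserves $E_P$ and induces an element of $\text{Aut}^\Z(\hat{\mathcal{X}})$ fixing $P$), the identification of the fixed-point condition with $\tau\in\Sigma'_s$, the case $s=t_a$, and the genericity statement in ii) are all exactly the paper's argument; your countable-union argument over the nonzero classes of $\Sigma/G$ and over $\Z^4$ is in fact a cleaner way to get the generic vanishing of $\Sigma'_{s'}/G$ than the continuity/contradiction argument in the paper, and it is correct.

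The genuine gap is in your proof that $\text{Aut}^1(\hat{\mathcal{X}}_P)=\{Id\}$. You deduce it from the claimed identification $\text{Aut}^1(\hat{\mathcal{X}}_P)\simeq\{\hat\phi\in\text{Aut}^1(\hat{\mathcal{X}})\mid\hat\phi(P)=P\}$, whose crucial inclusion requires that the blow-down of an automorphism of $\hat{\mathcal{X}}_P$ which is $C^\infty$-isotopic to the identity be itself $C^\infty$-isotopic to the identity on $\hat{\mathcal{X}}$. Your justification, ``compatibility of blow-up with smooth isotopies,'' does not exist as a usable fact: an isotopy of $\hat{\mathcal{X}}_P$ from $\hat\psi$ to the identity has no reason to preserve $E_P$ at intermediate times, so it does not descend; and the only thing that does descend formally is triviality of the action on $H^*(\cdot,\Z)$, which is precisely insufficient here, since the whole point of this example is that $\text{Aut}^\Z(\hat{\mathcal{X}})\simeq\Z^{4a}$ while $\text{Aut}^1(\hat{\mathcal{X}})=\{Id\}$, i.e.\ cohomological triviality does not imply isotopy to the identity. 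As stated your step is circular (it is true a posteriori only because both groups turn out to be trivial). The paper avoids this entirely by arguing intrinsically on $\hat{\mathcal{X}}_P$: a non-trivial element of $\text{Aut}^\Z(\hat{\mathcal{X}}_P)$, hence in particular any candidate element of $\text{Aut}^1(\hat{\mathcal{X}}_P)$, lifts to the universal covering, where it permutes the blown-up points (equivalently the classes of the exceptional divisors, using that a section of $W$ vanishing at the $a+1$ points $t_i$ is zero) non-trivially even after composing with any deck transformation, and therefore cannot be $C^\infty$-isotopic to the identity. Replacing your descent-of-isotopy step by this universal-covering argument repairs the proof; with it, \eqref{autHatX1bis} and the triviality statements in ii) follow as you indicate, since $\Azero\subset\text{Aut}^1\subset\text{Aut}^\Z$.
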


\begin{proof}
	The proof follows from that of Lemma \ref{lemmaexampleex2}. An element of $\text{\rm Aut}^\Z(\hat{\mathcal{X}}_P)$ is given by a section $\tau$ of $\Sigma$ such that $\phi_\tau$ is the identity on the $s$-fiber, that is $\tau$ belongs to $\Sigma'_s$. This defines an epimorphism from $\Sigma'_s$ to $\text{\rm Aut}^\Z(\hat{\mathcal{X}}_P)$. As in the previous cases, the kernel is $G$ proving the first equality of \eqref{autHatX}.  The second one is obtained as in the proof of Lemma \ref{lemmaexampleex2} by arguing that a non-trivial element of $\text{\rm Aut}^\Z(\hat{\mathcal{X}}_P)$ lifts to an automorphism of the universal covering that permutes non-trivially blown-up points, so is not $C^\infty$-isotopic to the identity. This also proves \eqref{autHatX1bis}. If $s=t_a$, we have $\Sigma'_s=\Sigma$ since every $\phi_\tau$ with $\tau\in\Sigma$ induces the identity in the $t_a$-fiber and \eqref{autHatX1} is a direct application of Lemma \ref{lemmaexampleex2}. Finally, define the set 
	\begin{equation}
		\label{Gamma'}
		\Gamma'_s:=\{\tau(s)\in \pi^{-1}(s)\mid \tau \in\Sigma\setminus G\}
	\end{equation}
	We claim that, at a generic point, $\Gamma'_s$ does not intersect $\Gamma(s)$, so $\Sigma'_s$ is reduced to $G$ yielding \eqref{autHatX2}. Indeed, choose $4a$ sections $\tau_i$ ($i=1,\hdots, 4a$) which generates $\Sigma$ as a $\mathbb Z$-module together with the four sections \eqref{sections}. In particular $\Sigma/G$ identifies with the group generated by the $\tau_i$. It is enough to prove that no non-trivial linear combination over $\mathbb{Z}$ of the $\tau_i(s)$ belong to $\Gamma(s)$ for a generic $s$. Assume by contradiction that, for every $s$ in a small disk of $\mathbb{P}^1$ disjoint from the set of points $t_i$, we can find some $i$ with $\tau_i(s)$ belonging to $\Gamma(s)$. By continuity, we may assume that this is the same $i$ for all $s$, restricting our disk if necessary. But still by continuity this implies that $\tau_i$ coincides over a disk with an element of $G$, that is, is an element of $G$. Contradiction.
\end{proof}
Let us put some more context around Theorem \ref{thmexampleAut1Z}. In \cite{Aut1}, we gave examples of non-Kähler $3$-folds with non trivial finite $\Aun$-mapping class group. Examples of surfaces, including projective ones, having this property or having non trivial finite $\AZ$-mapping class group were given in \cite{CatAut}, see also \cite{MN} for $\AZ$. We ask in the last section of \cite{Aut1} for examples with non trivial {\slshape infinite} $\Aun$-mapping class group, noting that, by Lieberman's result \cite{Lieberman}, see also \S \ref{Lieb}, they must be non-Kähler. At that time, we were already motivated by questions about the geography of the Teichm\"uller stack and were looking for points with non-trivial holonomy group, that is points with non trivial $\Aun$-mapping class group that admits arbitrary small deformations with trivial $\Aun$-mapping class group, see \cite[\S 2]{Aut1} and \S \ref{secholonomy} for the notion of holonomy group in this context. The $3$-folds of \cite{Aut1} enjoy this property but they only have finite holonomy group.

The manifolds $\hat{\mathcal{X}}$ and  $\hat{\mathcal{X}}_P$ for $P$ on the $t_a$-fiber are examples with infinite discrete $\text{Aut}^\Z$ automorphism group, more precisely with $h^0$ equal to zero but with infinite $\AZ$-mapping class group. Moreover, it follows from Theorem \ref{thmexampleAut1Z} that an arbitrary small deformation of $\hat{\mathcal{X}}_P$ has no non-trivial automorphisms inducing the identity in cohomology with coefficients in $\Z$ so that such a point has infinite $\Z$-holonomy group. Notice in particular that this is not an orbifold point of the $\Z$-Teichm\"uller stack although it has no non-zero global holomorphic vector fields. Once again, there is no such Kähler points. This shows that these non-K\"ahler $\hat{\mathcal{X}}_P$ have a more complicated local $\Z$-Teichm\"uller stack than Kähler points. More precisely,

\begin{corollary}
	\label{corexampleAut1Z}
	The $\Z$-Teichm\"uller stacks of both $\mathbb{S}^2\times (\mathbb{S}^1)^4$ at a point $\mathcal{X}$ and of $\mathbb{S}^2\times (\mathbb{S}^1)^4\sharp\mathbb{P}^3\sharp\hdots\sharp\mathbb{P}^3$ at a point $\hat{\mathcal{X}}_P$ satisfying \eqref{autHatX1} are not locally isomorphic to the $\Z$-Teichm\"uller stack of any Kähler manifold. 
\end{corollary}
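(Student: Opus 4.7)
The argument rests on two ingredients: (i) any local isomorphism of analytic stacks sending $X_0$ to $X_0'$ induces an isomorphism of isotropy complex Lie groups $\text{\rm Aut}^\Z(X_0) \cong \text{\rm Aut}^\Z(X_0')$; (ii) Lieberman's theorem (\S\ref{Lieb}): for any compact K\"ahler manifold $Y$, the quotient $\text{\rm Map}^\Z(Y) = \text{\rm Aut}^\Z(Y)/\text{\rm Aut}^0(Y)$ is finite, being contained in the kernel of the action of $\text{\rm Aut}(Y)/\text{\rm Aut}^0(Y)$ on $H^*(Y,\Z)$.

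For $\hat{\mathcal{X}}_P$, assume toward contradiction that a local isomorphism of $\Z$-Teichm\"uller stacks sends $\hat{\mathcal{X}}_P$ to a basepoint $Y_0$ of some K\"ahler manifold. Then by (i) and \eqref{autHatX1}, $\text{\rm Aut}^\Z(Y_0) \cong \Z^{4a}$ is discrete and infinite, whence $\text{\rm Aut}^0(Y_0) = \{\text{Id}\}$ and $\text{\rm Map}^\Z(Y_0) \cong \Z^{4a}$ is infinite, contradicting (ii).

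For $\mathcal{X}$, the isotropy $\text{\rm Aut}^\Z(\mathcal{X}) = \text{\rm Aut}^0(\mathcal{X}) \cong H^0(\mathbb{P}^1,W)/G$ is already connected, so (ii) is not directly violated at the basepoint. My plan is to promote the argument to nearby points: a local stack isomorphism restricts to an isomorphism of underlying Kuranishi spaces preserving $\text{\rm Aut}^\Z$-isotropy at each corresponding point, and it suffices to produce a single nearby complex structure whose $\text{\rm Aut}^\Z$-isotropy has an infinite discrete quotient. I would construct such a structure by perturbing $\sigma_0,\sigma_1$ together with the four sections \eqref{sections} to obtain a Blanchard-type deformation of $\mathcal{X}$ on $\mathbb{S}^2\times(\mathbb{S}^1)^4$ at which the continuous translations degenerate and a non-trivial infinite discrete $\text{\rm Map}^\Z$ survives; applying Lieberman at such a nearby point then yields the same contradiction as above.

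The main obstacle is exhibiting such an infinite discrete $\text{\rm Aut}^\Z$ phenomenon inside $\mathcal{X}$'s Kuranishi space, that is, via a Blanchard deformation on $\mathbb{S}^2\times(\mathbb{S}^1)^4$ rather than only after the topology-changing blow-ups producing $\hat{\mathcal{X}}_P$. If this direct route fails, my backup plan is a cycle-space argument: the component of the Barlet space of $\mathcal{X}\times\mathcal{X}$ containing the graphs of $\text{\rm Aut}^0(\mathcal{X})$ is non-compact, whereas Lieberman--Fujiki compactness ensures each component of the Barlet space of a compact K\"ahler manifold is compact; this is a stack-intrinsic discrepancy ruling out any K\"ahler local model for the $\Z$-Teichm\"uller stack at $\mathcal{X}$.
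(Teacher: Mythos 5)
Your treatment of $\hat{\mathcal{X}}_P$ is correct and is exactly the paper's argument: a local isomorphism of analytic stacks identifies isotropy groups, $\text{Aut}^\Z(\hat{\mathcal{X}}_P)\simeq\Z^{4a}$ is infinite discrete by Theorem \ref{thmexampleAut1Z}, while by Lieberman the stabilizer $\text{Aut}^\Z$ of a K\"ahler point has only finitely many connected components, a contradiction.

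The other half of your proposal has a genuine gap. What you offer for the point with smooth model $\mathbb{S}^2\times(\mathbb{S}^1)^4$ is a plan, not a proof: the nearby complex structure with infinite discrete $\text{Map}^\Z$ is never constructed, and its existence is doubtful, since the infinite discrete groups produced in \S\ref{secexample} arise only after blowing up, which changes the underlying smooth manifold and so leaves the Teichm\"uller stack of $\mathbb{S}^2\times(\mathbb{S}^1)^4$ altogether; nothing in your sketch shows that a small Blanchard-type deformation of $\mathcal{X}$ on the same manifold degenerates the translations to an infinite discrete group. The backup cycle-space route is also not conclusive: the non-compactness of the Barlet component containing the graphs of $\text{Aut}^0(\mathcal{X})$ is asserted rather than proved (a component can be compact after adding singular limit cycles even when the group itself is non-compact), and, more importantly, compactness of components of the cycle space of $X_0\times X_0$ is not shown to be an invariant of the local $\Z$-Teichm\"uller stack, so even granting it you have not contradicted the existence of a local isomorphism with a K\"ahler point. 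For comparison, the paper settles this first point by the very same stabilizer/Lieberman argument you used for $\hat{\mathcal{X}}_P$: its proof cites Lemma \ref{lemmaexampleex2}, i.e.\ the manifold intended there is $\hat{\mathcal{X}}$ (see the paragraph preceding the corollary, where $\hat{\mathcal{X}}$ and $\hat{\mathcal{X}}_P$ are singled out as having infinite discrete $\text{Aut}^\Z$), whose stabilizer is $\Z^{4a}$. You correctly observed that the unblown-up Blanchard manifold $\mathcal{X}$ has connected, positive-dimensional isotropy (note, by the way, that Lemma \ref{lemmaexampleex1} only gives a monomorphism $H^0(\mathbb{P}^1,W)/G\hookrightarrow\text{Aut}^0(\mathcal{X})$, not the equality you state), so the one-line argument does not apply to it literally; but the remedy is to read the first point as above and apply the identical isotropy argument, not to substitute an unproved degeneration or cycle-space claim. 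As written, that half of your proposal does not establish the statement.
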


\begin{proof}
	The sections \eqref{sections} trivialize $W$ over the reals, hence the smooth model of $\mathcal{X}$ is $\mathbb{S}^2\times (\mathbb{S}^1)^4$ and that of $\hat{\mathcal{X}}_P$ is obtained from it by doing the connected sum with $\mathbb{P}^3$ for every blown-up point. Then, the stabilizer of $\mathcal{X}$ and of $\hat{\mathcal{X}}_P$ are infinite discrete by Lemma \ref{lemmaexampleex2} and Theorem \ref{thmexampleAut1Z}, whereas the stabilizer of a Kähler point has a finite number of connected component by Lieberman, proving the statement.
\end{proof}

But we have more.

\begin{theorem}
	\label{thmexampleexc}
	Let $P\in\hat{\mathcal{X}}$ such that $s=\pi(P)$ is generic. Then, the $3$-fold $\hat{\mathcal{X}}_P$ is an exceptional point of its $\Z$-Teichm\"uller stack.
\end{theorem}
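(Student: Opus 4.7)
The plan is to produce a wandering sequence of morphisms in $\mathcal{T}^\Z_V$ that cannot be $(V,\mathcal{D}_\Z)$-admissible, then invoke the $\Z$-version of Proposition~\ref{propex}. The key contrast was already made visible in Theorem~\ref{thmexampleAut1Z}: the global group $\text{Aut}^\Z(\hat{\mathcal X})\simeq\Sigma/G\simeq\Z^{4a}$ is infinite discrete, whereas $\text{Aut}^\Z(\hat{\mathcal X}_P)=\{Id\}$ for $s=\pi(P)$ generic. The many translations $\phi_\tau$ that do not preserve $P$ produce biholomorphisms between distinct blow-ups $\hat{\mathcal X}_{P'}$, and accumulating such biholomorphisms near $P$ will give morphisms of $\mathscr{T}^\Z(M,V)$ that are not encoded in $\mathscr{A}_\Z$.

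Concretely, I would first identify the part of the Kuranishi space $K_0$ of $\hat{\mathcal X}_P$ corresponding to moving the extra blow-up point: shrinking $V$, a neighborhood of the base point $J_0$ in $K_0$ contains a smooth slice parametrizing the position of $P$ in $\hat{\mathcal X}$, so that nearby points encode the manifolds $\hat{\mathcal X}_{P'}$ for $P'$ close to $P$. Second, I would run a density argument for the orbit of $P$ under $\text{Aut}^\Z(\hat{\mathcal X})$: the evaluation map $\tau\mapsto\tau(s)\bmod\Gamma(s)$ sends $\Sigma/G$ into the four-real-dimensional torus $\pi^{-1}(s)/\Gamma(s)$, and its image is dense for generic $s\in\mathbb{P}^1$, since $4a\geq 4$ and the Lagrange-interpolation coefficients controlling $\tau(s)$ vary with $s$ in a generic way. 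Choosing pairwise distinct classes $[\tau_n]\in\Sigma/G$ with $P_n:=\phi_{\tau_n}(P)\to P$ therefore yields a sequence $(J_{P_n},f_n)\in\mathcal{T}^\Z_V$, both of whose projections converge to $J_0$, where $f_n$ is the diffeomorphism of the underlying smooth $M$ induced by $\hat\phi_{\tau_n}\colon\hat{\mathcal X}_P\to\hat{\mathcal X}_{P_n}$ after a small smooth identification of these nearby blow-ups. Third, to rule out $(V,\mathcal{D}_\Z)$-admissibility, I would note that $\mathcal{D}_\Z$ is a neighborhood of $\text{Aut}^\Z(\hat{\mathcal X}_P)=\{Id\}$ inside $\DZ$, hence contained in the identity component $\D$, so that every admissible composition lies in $\D$. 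But $\phi_{\tau_n}$ lifts to the universal cover $\hat W$ as the translation by $\tau_n$, which non-trivially permutes the discrete set of blown-up points $\bigsqcup_i(P_i+\Gamma(t_i))$ whenever $[\tau_n]\neq 0$; hence $f_n$ is not isotopic to the identity and therefore not in $\D$.

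The main obstacle is to upgrade this non-admissibility to the wandering statement of Definition~\ref{defwanderingpoint}: I must verify that the graphs of $\hat\phi_{\tau_n}$, for $[\tau_n]$ running over my infinite family, lie in pairwise distinct irreducible components of the relative cycle space $\mathscr{C}$. I plan to handle this by tracking the induced outer automorphism of $\pi_1(\hat{\mathcal X}_P)$: since $\phi_{\tau_n}$ conjugates the deck group of $\hat W\to\hat{\mathcal X}$ by translation by $[\tau_n]$, distinct classes $[\tau_n]$ produce distinct outer automorphisms, an invariant that is preserved under smooth isotopies and under the small identifications used to form $f_n$. Once this is settled, the sequence $(J_{P_n},f_n)$ is wandering, and the $\Z$-adaptation of Proposition~\ref{propex} concludes that $\hat{\mathcal X}_P$ is $\Z$-exceptional.
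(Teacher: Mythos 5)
Your construction is the same as the paper's: fiberwise translations $\phi_\tau$ for $\tau\in\Sigma\setminus G$, density of the translation orbit in the real $4$-torus $\pi^{-1}(s)$ for generic $s$, the blown-up family over (a piece of) the $s$-fiber mapped to $K_0$ by semi-universality, triviality of $\text{Aut}^\Z(\hat{\mathcal{X}}_P)$ from Theorem \ref{thmexampleAut1Z}, and non-isotopy to the identity read off from the permutation of the blown-up lattice points in the universal cover. (Your density step is only sketched -- ``Lagrange interpolation coefficients vary generically'' is not an argument -- the paper makes it precise by arranging, for generic $s$, that the coordinates of a single $\tau(s)$ are $\Z$-independent together with $1$ and invoking Kronecker's theorem; this is repairable, not a structural problem.)

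The genuine gap is exactly at the step you flag as the main obstacle. The invariant you propose to show that the graphs of the $f_n$ lie in infinitely many irreducible components of $\mathscr{C}$ -- the outer automorphism of $\pi_1(\hat{\mathcal{X}}_P)$ obtained by conjugating the deck group of $\hat W\to\hat{\mathcal{X}}$ by the translation $\tau_n$ -- is identically trivial and cannot distinguish anything. Indeed $\pi_1(\hat{\mathcal{X}}_P)\cong\Z^4$ is abelian (blow-ups do not change $\pi_1$ and $W$ is simply connected), the deck transformations are themselves fiberwise translations, and translations commute, so conjugation by $\tau_n$ is the identity; equivalently, every $f_n\in\DZ$ acts trivially on $H^1(M,\Z)$, hence on the free group $H_1(M,\Z)=\pi_1$, so distinct classes $[\tau_n]$ do \emph{not} produce distinct outer automorphisms. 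The paper's Corollary \ref{corwandering} uses a strictly finer invariant: the induced permutation of the lattice of exceptional divisors over the $s$-fiber of the universal cover $\hat{\mathcal{W}}_P$, i.e. the action on $H^2(\hat{\mathcal{W}}_P,\Z)$, which does separate $\hat\phi_{p\tau}$ from $\hat\phi_{q\tau}$ for $p\neq q$. Note also that you do not actually need wanderingness for the theorem as stated: your step showing non-admissibility (every $(V,\mathcal{D}_\Z)$-admissible morphism is a composition of elements of $\mathcal{D}_\Z$, which is a neighborhood of $\text{Aut}^\Z(\hat{\mathcal{X}}_P)=\{Id\}$ and hence lies in $\D$, while $f_n\notin\D$ by the blown-up-points argument of Lemma \ref{lemmaexampleex2}) already produces, in every neighborhood $V$, morphisms of $\mathscr{T}^\Z(M,V)$ that are missing from $\mathscr{A}_\Z$, which is Definition \ref{defexcpoint}; this is essentially how the paper's own proof of the theorem concludes, postponing the wandering classification to Corollary \ref{corwandering}. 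So either drop the detour through Definition \ref{defwanderingpoint} and Proposition \ref{propex}, or replace the $\pi_1$ invariant by the $H^2$-of-the-universal-cover one.
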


\begin{proof}
	Let $P\in\hat{\mathcal{X}}$ such that $s=\pi(P)$ is different from $t_a$ and such that $\Gamma'_s$ does not intersect $\Gamma(s)$. We have already shown in the proof of Theorem \ref{thmexampleAut1Z} that a generic $s$ satisfies $\Gamma'_s\cap\Gamma(s)=\emptyset$. Choose a section $\tau$ in $\Sigma\setminus G$. Choose also a real linear map that sends the four sections \eqref{sections} evaluated at $s$ to the canonical basis of $\mathbb{R}^4$. Moving slightly $s$ if necessary, we may assume that the image of the vector $\tau(s)$ through this linear map has coordinates $\mathbb{Z}$-linearly independent and $\mathbb{Z}$-linearly independent with $1$, otherwise, arguing as above, $\tau$ would be in $G$. Then, the classical Kronecker's Theorem asserts that it generates a dense subgroup in the torus $\mathbb{R}^4/\mathbb{Z}^4$. In other words, given any point $Q$ in the $s$-fiber, the translates of $Q$ by the $\mathbb{Z}$-multiples of $\tau(s)$ form a dense orbit of the $s$-fiber. Choose $R$ in the $s$-fiber such that $R-P$ is not a $\mathbb{Z}$-multiple of $\tau(s)$. Note that $R$ can be chosen arbitrarily close to the initial point $P$. Consider the trivial deformation family $\hat{\mathcal{X}}\times \pi^{-1}(s)\to \pi^{-1}(s)$. It has a tautological section that sends a point $M$ in the $s$-fiber to $(M,M)$, the first $M$ being considered as living in the $3$-fold $\hat{\mathcal{X}}$. Blow up this section. This gives a family $\mathscr{Y}\to \pi^{-1}(s)$ whose fiber above a point $M$ is $\hat{\mathcal{X}}_M$. The section $\tau$ defines an automorphism of the trivial family
	\begin{equation}
		\label{autfamily}
		\begin{tikzcd}[column sep=large]
			\hat{\mathcal{X}}\times \pi^{-1}(s)\arrow[d]\arrow[r,"\ \phi_\tau\times\tau(s)\ "]&\hat{\mathcal{X}}\times \pi^{-1}(s)\arrow[d]\\
			\pi^{-1}(s)\arrow[r,"\tau(s)"]&\pi^{-1}(s)
		\end{tikzcd}
	\end{equation}
	where $\tau(s)$ means translation by $\tau(s)$ in the torus $\pi^{-1}(s)$. It prserves the tautological section, hence induces a family automorphism
	\begin{equation}
		\label{autfamily2}
		\begin{tikzcd}[column sep=large]
			\mathscr{Y}\arrow[d]\arrow[r,"\ \phi_\tau\times\tau(s)\ "]&	\mathscr{Y}\arrow[d]\\
			\pi^{-1}(s)\arrow[r,"\tau(s)"]&\pi^{-1}(s)
		\end{tikzcd}
	\end{equation}
	Especially, this shows that all the fibers above the points of the $\tau(s)$-transla\-tion orbit of $R$ are isomorphic through a map inducing the identity in cohomology. By density, we may in particular extract from this a sequence of points $(R_n)$ in $\pi^{-1}(s)$ that converges to $P$ together with a family $(\phi_n)$ of diffeomorphisms inducing the identity in cohomology with $\phi_n$ inducing a biholomorphism between the $R_n$-fiber and the $R_{n+1}$-fiber. Note that each $\phi_n$ is a translation along the fibers of $\hat{\mathcal{X}}\to\mathbb{P}^1$ by some section $\sigma_n$ in $\Sigma\setminus G$ and that $\Vert \sigma_n\Vert$ tends to infinity in $H^0(\mathbb{P}^1,W)$ as $n$ goes to infinity. Look at the corresponding sequence of graphs. Even if it converges, its limit cannot belong to the closure of the $\text{Aut}^\Z$-component of $\hat{\mathcal{X}}_P$ since this group is reduced to the identity by Theorem \ref{thmexampleAut1Z}. Hence, $\hat{\mathcal{X}}_P$ is a $\Z$-exceptional point of its Teichm\"uller stack.
\end{proof}

\begin{remark}
	\label{rkZextoex}
	As already mentioned, it follows from Theorem \ref{thmexampleAut1Z} that none of these points are exceptional. This shows in particular that the converse to Proposition \ref{propcompareZextoex} is false. But it also left wide open the existence part of Conjecture \ref{mainconj}, point II. 
\end{remark}

\begin{corollary}
	\label{corwandering}
	The $\Z$-exceptional points of Theorem {\rm \ref{thmexampleexc}} are wandering points.
\end{corollary}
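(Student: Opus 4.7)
The strategy is to show that the sequence $(\phi_n)$ built in the proof of Theorem~\ref{thmexampleexc} is itself a wandering sequence (Definition~\ref{defwanderingpoint}, adapted to the $\Z$-setting as indicated in \S\ref{subsecZexc}); the conclusion will then follow from the $\Z$-analogue of Proposition~\ref{propex}.

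First I recall the key feature of this sequence: each $\phi_n$ is translation, along the fibers of $\hat{\mathcal X}\to\mathbb P^1$, by a section $\sigma_n\in\Sigma\setminus G$ with $\Vert\sigma_n\Vert\to\infty$ in $H^0(\mathbb P^1,W)$. Since $\Sigma$ is a rank-$(4a+4)$ lattice in $H^0(\mathbb P^1,W)$ while $G$ has rank $4$, the quotient $\Sigma/G\simeq\mathbb Z^{4a}$ is discrete; any ball in $H^0(\mathbb P^1,W)$ meets only finitely many $G$-cosets of $\Sigma$, so the classes $[\sigma_n]\in\Sigma/G$ are eventually pairwise distinct.

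The main step is to show that an irreducible component $C$ of $\mathscr C$ carrying at least one graph $\Gamma_n:=\mathrm{graph}(\phi_n)$ determines a single class $[\sigma]\in\Sigma/G$. Combined with the distinctness of the $[\sigma_n]$'s this forces $C$ to contain at most finitely many $\Gamma_n$'s, so $(\phi_n)$ is wandering, and hence $\hat{\mathcal X}_P$ is a wandering point. For this rigidity I would use that $\text{Aut}^0(\hat{\mathcal X}_P)=\{Id\}$ (equivalently $h^0=0$, as noted in the proof of Theorem~\ref{thmexampleAut1Z}); by upper semicontinuity of $h^0$ on $K_0$, the vanishing extends to a neighborhood of $P$, and consequently the relative scheme of $\Z$-biholomorphisms between nearby fibers of $\mathscr K_0$ is \'etale, decomposing locally into sheets indexed by $\Sigma/G$. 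Along the regular locus of $C$ the underlying biholomorphism varies analytically, and its class in the discrete group $\Sigma/G$ is a locally constant invariant. Since the regular locus of an irreducible analytic space is Zariski open, dense and connected, this invariant is constant on all regular cycles of $C$; singular cycles of $C$ arise as limits of regular ones and inherit the same class.

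The main obstacle is precisely extending the translation-class invariant from the obvious stratum (cycles sitting over pairs $(R,R')\in\pi^{-1}(s)\times\pi^{-1}(s)$ close to $(P,P)$, where the formula $\phi_{[\sigma]}$ with $\sigma(s)\equiv R'-R\pmod{\Gamma(s)}$ is manifest) to the whole irreducible component $C$, which a priori may contain cycles coming from deformations of $\hat{\mathcal X}_P$ transverse to the torus direction. The resolution is to reinterpret $\Sigma/G$ near $(P,P)$ as the set of sheets of the \'etale $\Z$-biholomorphism scheme and to propagate the invariant by analytic continuation along $C$; the discreteness of $\text{Aut}^\Z$ over nearby fibers, inherited from $h^0\equiv 0$, is the crucial input making the propagation unambiguous.
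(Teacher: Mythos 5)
Your strategy has one repairable slip and one essential gap. The slip: the claim that $\Vert\sigma_n\Vert\to\infty$ forces the classes $[\sigma_n]\in\Sigma/G$ to be eventually pairwise distinct is a non sequitur. $G$ is an infinite rank-$4$ lattice, so every coset of $G$ in $\Sigma$ contains elements of arbitrarily large norm; a ball meeting only finitely many cosets says nothing about a sequence whose norms escape every ball. The distinctness is nevertheless true for the sequence actually constructed in Theorem \ref{thmexampleexc}, because there $\sigma_n=r_n\tau$ with pairwise distinct integers $r_n$, $[\tau]\neq 0$ in $\Sigma/G\simeq\Z^{4a}$, and $\Sigma/G$ is torsion-free; you must use this specific form rather than the norms.

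The essential gap is your main step, namely that an irreducible component $C$ of $\mathscr{C}$ containing one graph $\Gamma_n$ carries a well-defined class in $\Sigma/G$, constant along its regular locus. The translation class is only defined for cycles that are graphs of fibrewise translations between fibres of the family $\mathscr{Y}\to\pi^{-1}(s)$ (blow-ups of the torus fibration); a general regular cycle of $C$ is the graph of a biholomorphism between two arbitrary small deformations of $\hat{\mathcal{X}}_P$ inside the Kuranishi family, for which no such class is available. The vanishing $h^0\equiv 0$ does make the relative Isom space discrete over its base, but it does not make it an \'etale covering with sheets canonically indexed by $\Sigma/G$: over most pairs of nearby fibres there is no $\Z$-biholomorphism at all (Theorem \ref{thmexampleAut1Z}, point ii)), and components of this space appear and disappear — this non-properness is precisely the non-K\"ahler phenomenon the example exploits — so there is no monodromy-free labelling of ``sheets'' to continue analytically along $C$; your proposed resolution is exactly the missing argument. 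What does propagate along a connected family of graphs is the smooth isotopy class of the underlying diffeomorphism, and that is the paper's route: $\hat{\phi}_{p\tau}$ and $\hat{\phi}_{q\tau}$ for $p\neq q$ permute the exceptional divisors of the $s$-fibre of the universal covering $\hat{\mathcal{W}}_P$ differently, hence act differently on $H^2(\hat{\mathcal{W}}_P,\Z)$, hence are not isotopic, so the graphs $\Gamma_n$ must run through infinitely many components of $\mathscr{C}$. That topological invariant is the rigorous substitute for your $\Sigma/G$-continuation, and without it (or an equivalent argument) your proof does not close.
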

	
\begin{proof}
	This comes from the fact that $\phi_n$ is induced by $\hat{\phi}_{r_n\tau}$ for some section $r_n\tau$ with $r_n$ going to infinity. Now, $\hat{\phi}_{p\tau}$ and $\hat{\phi}_{q\tau}$ for $p\not = q$ induce distinct actions on the lattice of exceptional divisors of the $s$-fiber of the universal covering $\hat{\mathcal{W}}_P$ of $\hat{\mathcal{X}}_P$, hence induce distinct actions on $H^2(\hat{\mathcal{W}}_P,\Z)$. It follows that $\hat{\phi}_{p\tau}$ and $\hat{\phi}_{q\tau}$ are not isotopic for $p\not = q$ and that the graphs of $\hat{\phi}_{r_n\tau}$ run through an infinite number of connected components of the cycle space $\mathscr{C}$ yielding wanderingness.
\end{proof}

\begin{remark}
	\label{rk}
	It follows from the proof of Theorem \ref{thmexampleexc} that, given any $P\in\pi^{-1}(s)$, the set of points of $\pi^{-1}(s)$ above which the fiber of the family $\mathscr{Y}\to \pi^{-1}(s)$ is isomorphic to $\hat{\mathcal{X}}_P$ through a biholomorphism inducing the identity in cohomology is a countable dense subset of the base $\pi^{-1}(s)$. This is a weak version of the situation discussed in \cite{LMStacks}, Remark 11.8 and Problem 11.9. 
\end{remark}
It is interesting to have a closer look at the repartition of these $\Z$-exceptio\-nal points. 

\begin{corollary}
	\label{thmrepartitionZex}
	The subset
	\begin{equation}
		\label{eqZexZ}
		E_{\hat{\mathcal{X}}}:=\{P\in \hat{\mathcal{X}}\mid \hat{\mathcal{X}}_P\text{ is }\Z\text{-exceptional}\}
	\end{equation}
	satisfies the following properties
	\begin{enumerate}[\rm i)]
		\item It is dense in $\hat{\mathcal{X}}$.
		\item If $P$ belongs to $E_{\hat{\mathcal{X}}}$, then every point of the fiber $\pi^{-1}(\pi(P))$ belongs to $E_{\hat{\mathcal{X}}}$.
	\end{enumerate}
\end{corollary}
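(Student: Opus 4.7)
The plan is to extract both assertions from the proof of Theorem \ref{thmexampleexc} by observing which of its hypotheses genuinely depend on the point $P$ and which depend only on $s=\pi(P)$.

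\textbf{Point (ii).} Let $P\in E_{\hat{\mathcal{X}}}$ and set $s=\pi(P)$. Inspection of the proof of Theorem \ref{thmexampleexc} reveals that only three conditions on $P$ are actually used, and each of them depends solely on $s$: that $s\ne t_a$; that $\Gamma'_s\cap\Gamma(s)=\emptyset$, which via Theorem \ref{thmexampleAut1Z} forces $\text{Aut}^\Z(\hat{\mathcal{X}}_Q)=\{Id\}$ for every $Q\in\pi^{-1}(s)$; and the existence of some $\tau\in\Sigma\setminus G$ whose orbit $\Z\cdot\tau(s)$ is dense in the torus $\pi^{-1}(s)$. Given an arbitrary $P'\in\pi^{-1}(s)$, choose a base point $R\in\pi^{-1}(s)$ lying outside the countable set $P'+\Z\cdot\tau(s)$ and invoke Kronecker's theorem to extract indices $(k_n)\subset\Z$ with $R+k_n\tau(s)\to P'$ and with the increments $k_{n+1}-k_n$ pairwise distinct (a further thinning of any subsequence of the dense $\Z$-orbit achieves this). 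The family automorphism \eqref{autfamily2} supplies, by iteration, biholomorphisms between the fibers of $\mathscr{Y}\to\pi^{-1}(s)$ above $R+k_n\tau(s)$ and $R+k_{n+1}\tau(s)$, each realized by $\hat{\phi}_{(k_{n+1}-k_n)\tau}$, which belongs to $\DZ$. By the argument of Corollary \ref{corwandering}, for pairwise distinct integers $m$ the graphs of $\hat{\phi}_{m\tau}$ lie in pairwise distinct connected components of the cycle space $\mathscr{C}$; hence our sequence of graphs is wandering, and $\hat{\mathcal{X}}_{P'}$ is $\Z$-exceptional by the $\Z$-analogue of Proposition \ref{propex}.

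\textbf{Point (i).} Thanks to (ii), it suffices to show that the set $\Omega\subset\mathbb{P}^1$ of $s$ satisfying the three conditions above is dense, for then $E_{\hat{\mathcal{X}}}$ contains $\pi^{-1}(\Omega)$. The exclusion of the finite set $\{t_0,\dots,t_a\}$ is harmless. By Lemma \ref{lemmaexampleex2}, the $\Z$-module $\Sigma\simeq\Z^{4a+4}$ admits a basis of the form $\tau_1,\dots,\tau_{4a},e_0,\dots,e_3$ with $e_0,\dots,e_3$ the four sections of \eqref{sections}. Any $\sigma\in\Sigma\setminus G$ writes $\sum n_i\tau_i+\sum m_j e_j$ with $(n_i)\ne 0$, and $\sigma(s)\in\Gamma(s)$ forces the holomorphic section $\sum n_i\tau_i+\sum(m_j-p_j)e_j$ of $W$ to vanish at $s$ for some $(p_j)\in\Z^4$; this section is nonzero by $\Z$-independence of the basis, so has finitely many common zeros on $\mathbb{P}^1$ since $W=\mathcal{O}(a)\oplus\mathcal{O}(a)$. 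Summing over the countably many tuples yields that $\{s:\Gamma'_s\cap\Gamma(s)\ne\emptyset\}$ is countable. Fixing once and for all a single $\tau\in\Sigma\setminus G$, the real coordinates of $\tau(s)$ in the trivialization \eqref{sections} depend real-analytically on $s$, and the Kronecker condition fails only on a countable union of proper real-analytic subsets of $\mathbb{P}^1$, each cut out by a nontrivial $\Z$-affine relation between these coordinates and $1$. The Baire category theorem guarantees that the complement of this countable union is dense, and intersecting with the finite and countable complements above gives a dense subset of $\Omega$.

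The chief subtlety lies in the subsequence extraction of (ii): one must simultaneously arrange convergence $R+k_n\tau(s)\to P'$ (Kronecker density) and pairwise distinctness of the gaps $k_{n+1}-k_n$ (so that Corollary \ref{corwandering} applies to ensure wandering). Both can be met because any subsequence of a dense cyclic orbit can be thinned further to keep its increments pairwise distinct, after which the construction of Theorem \ref{thmexampleexc} transfers verbatim to any $P'\in\pi^{-1}(s)$; (i) then reduces to the countability/meagerness estimates above.
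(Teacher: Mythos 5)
Your point (i) is essentially the paper's own argument (density of the set of good parameters $s$, plus the fact that over such an $s$ the whole fiber lies in $E_{\hat{\mathcal{X}}}$), with the countability/Baire details made explicit; note only that the properness of the real-analytic sets cut out by a $\Z$-affine relation among the coordinates of $\tau(s)$ is asserted rather than proved, exactly as in the proof of Theorem \ref{thmexampleexc}. The problem is point (ii), where your argument proves a different statement than the one claimed. The set $E_{\hat{\mathcal{X}}}$ consists of \emph{all} $P$ such that $\hat{\mathcal{X}}_P$ is $\Z$-exceptional, by whatever mechanism; yet your first step takes $P\in E_{\hat{\mathcal{X}}}$, sets $s=\pi(P)$, and then uses the three conditions from the proof of Theorem \ref{thmexampleexc} ($s\neq t_a$, $\Gamma'_s\cap\Gamma(s)=\emptyset$, Kronecker density of $\Z\cdot\tau(s)$) as if they held at $s$. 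Those conditions are only known to be \emph{sufficient} for $\Z$-exceptionality; neither the paper nor your proposal shows they are necessary. What you actually establish is that $\pi^{-1}(\Omega)\subset E_{\hat{\mathcal{X}}}$ and is fiber-saturated --- which is just Theorem \ref{thmexampleexc} restated, since that theorem already applies to every point of a fiber over a generic $s$ --- and you say nothing about a hypothetical $P\in E_{\hat{\mathcal{X}}}$ lying over a non-generic $s$ (for instance over $s=t_a$, or over an $s$ with $\Gamma'_s\cap\Gamma(s)\neq\emptyset$). Since (ii) quantifies over arbitrary elements of $E_{\hat{\mathcal{X}}}$, this is a genuine gap: you are implicitly invoking the unproved converse of Theorem \ref{thmexampleexc}.

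The paper closes exactly this gap by a structural analysis of an \emph{arbitrary} witnessing sequence: working in the family $\mathscr{Z}\to\hat{\mathcal{X}}$ obtained by blowing up the diagonal section of $\hat{\mathcal{X}}\times\hat{\mathcal{X}}\to\hat{\mathcal{X}}$, any $\Z$-exceptional sequence $(Q_n,R_n,\phi_n)$ for $\hat{\mathcal{X}}_P$ has, by a direct adaptation of Theorem \ref{thmexampleAut1Z}, each $\phi_n$ induced by a translation along a section of $\Sigma$, with translation factor at $\pi(P)$ going to infinity; consequently $Q_n$ and $R_n$ lie in the fiber $\pi^{-1}(\pi(P))$, and translating the whole sequence by $P'-P$ yields the witnessing sequence $(P'-P+Q_n,\,P'-P+R_n,\,\phi_n)$ for any other point $P'$ of that fiber. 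Some such classification of the possible exceptional sequences at an arbitrary $P$ is the missing idea; without it, the reduction of (ii) to the genericity conditions of Theorem \ref{thmexampleexc} cannot work, whereas your careful thinning of the Kronecker orbit (to get pairwise distinct gaps and apply the wandering argument of Corollary \ref{corwandering}) only reconstructs the sufficiency direction that was already available.
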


\begin{proof}
	Point i) is a direct consequence of Theorem \ref{thmexampleexc}. We have shown that for a dense subset of points $s$ in $\mathbb{P}^1$, every point above $s$ is $\Z$-exceptional. Hence $E_{\hat{\mathcal{X}}}$ contains an union of $\pi$-fibers whose projection is dense in $\mathbb{P^1}$, hence is dense. To prove ii), consider, as a slight variation of the construction used in the proof of Theorem \ref{thmexampleexc}, the family $\mathscr{Z}\to \hat{\mathcal{X}}$ obtained by blowing up the trivial family $\hat{\mathcal{X}}\times\hat{\mathcal{X}}\to\hat{\mathcal{X}}$ along the section $P\in\hat{\mathcal{X}}\mapsto (P,P)\in\hat{\mathcal{X}}$. The fiber at $P\in\hat{\mathcal{X}}$ is $\hat{\mathcal{X}_P}$. 
	
	Let $P\in E_{\hat{\mathcal{X}}}$. Then, there exists an exceptional sequence $(Q_n,R_n,\phi_n)$ in $\hat{\mathcal{X}}\times\hat{\mathcal{X}}\times\DZ$ with $\phi_n$ inducing a biholomorphism between $\hat{\mathcal{X}}_{Q_n}$ and $\hat{\mathcal{X}}_{R_n}$. A direct adaptation of Theorem \ref{thmexampleAut1Z} shows that $\phi_n$ is given by a translation in the $\pi$-fibers along elements of $\Sigma$ with the factor of translation at $\pi(P)$ going to infinity. Thus, $Q_n$ et $R_n$ must belong to the same $\pi$-fiber as $P$ and, letting $P'$ belong to the same $\pi$-fiber as $P$, the sequence $(P'-P+Q_n,P'-P+R_n,\phi_n)$ is also exceptional, showing that $P'$ is also $\Z$-exceptional.
	
\end{proof}

\begin{remark}
	\label{rkdensityZ}
	We do not know if the density property of $\Z$-exceptional points in the family $\mathscr{Z}$ used in the proof  of Theorem \ref{thmrepartitionZex} is true for the Kuranishi family of one ot these points. If yes, this would give a local positive answer to part II of Conjecture \ref{mainconj} for $\Z$-exceptional points.
\end{remark}

\section{Finiteness properties of the local Teichm\"uller stack in the K\"ahler setting}
\label{secfinite}

\subsection{Consequences of Lieberman's compacity result}
\label{Lieb}
In this section, we recall and apply a basic result on cycle spaces in the K\"ahler case, which is due to Lieberman \cite{Lieberman}. We state the relative version, which is adapted to our purposes.

\begin{proposition}
\label{Liebprop}
Let $\pi_i : \mathcal X_i\to B_i$ be smooth morphisms with compact K\"ahler fibers over reduced analytic spaces $B_i$ for $i=0,1$. Let $\mathcal Z\to E$ be a continuous family of relative cycles of $\mathcal X_0\times \mathcal X_1\to B_0\times B_1$. Assume that the projection of $\mathcal{Z}$ is included in a compact of $B_0\times B_1$.  Assume moreover that all cycles of $\mathcal Z$ are smooth, i.e are graphs of a biholomorphism from some fiber $(X_0)_t$ onto some fiber $(X_1)_{t'}$. Assume finally that they are graphs of biholomorphisms that induce the identity in cohomology with coefficients in $\Z$. Then,
\begin{enumerate}[\rm i)]
\item $E$ has compact closure in the space of cycles of $\mathcal X_0\times \mathcal X_1\to B_0\times B_1$ hence only meets a finite number of irreducible components of this space.
\item Let $\mathcal C$ be such a component. Then $\mathcal C$ contains a Zariski open subset $\mathcal C_0$ all of whose members are graphs of a biholomorphism inducing the identity in cohomology with coefficients in $\Z$ between a fiber of $\mathcal X_0$ and a fiber of $\mathcal X_1$.
\end{enumerate}
\end{proposition}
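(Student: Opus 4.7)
The plan for (i) is to deduce it from Lieberman's classical compactness theorem via a uniform volume bound on the graphs in $\mathcal Z$; for (ii), I would combine the openness of the locus of smooth graphs in an irreducible component with the fact that the induced action on integral cohomology is locally constant.

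For (i), let $K \subset B_0 \times B_1$ be a compact containing the projection of $\mathcal Z$. After shrinking $B_0$ and $B_1$ to neighborhoods of $K$, openness of the K\"ahler condition in families together with a partition of unity allows one to equip $\pi_0$ and $\pi_1$ with relative $C^\infty$ K\"ahler forms $\omega_0$ and $\omega_1$; the sum $p_0^* \omega_0 + p_1^* \omega_1$ restricts to a K\"ahler form on each fiber product $(X_0)_t \times (X_1)_{t'}$. For a graph $\Gamma_\phi \in \mathcal Z$ between fibers of complex dimension $n$ the volume is
\[
\mathrm{vol}(\Gamma_\phi) = \frac{1}{n!} \int_{(X_0)_t} \bigl(\omega_{0,t} + \phi^* \omega_{1,t'}\bigr)^n,
\]
which depends only on the cohomology classes involved. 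Using local $C^\infty$-trivializations of the two families above $K$, the hypothesis that $\phi$ induces the identity in $H^*(M, \mathbb Z)$ (and hence in $H^*(M, \mathbb R)$) identifies $[\phi^* \omega_{1,t'}]$ with $[\omega_{1,t'}]$ through these trivializations, and the latter varies continuously in $(t,t') \in K$. The volume is therefore uniformly bounded on $\mathcal Z$, and the relative form of Lieberman's compactness theorem (obtained by embedding in a fixed ambient compact K\"ahler manifold, or by direct argument on the Barlet space) gives that the closure of $E$ is compact, hence meets only finitely many irreducible components of the cycle space.

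For (ii), let $\mathcal C$ be one such component. The locus $\mathcal C' \subset \mathcal C$ of smooth cycles whose two projections to $\mathcal X_0$ and $\mathcal X_1$ are biholomorphisms onto fibers is Zariski open: by properness of the universal cycle over $\mathcal C$ and upper semi-continuity of fiber dimension, both the locus of singular cycles and the locus where one projection has a positive-dimensional fiber are analytic closed in $\mathcal C$. Since $\mathcal C$ is irreducible and $\mathcal C'$ is nonempty (it contains the member of $E$ provided by hypothesis), $\mathcal C'$ is Zariski open and connected. The assignment $\Gamma_\phi \mapsto \phi^* \in \mathrm{Aut}(H^*(M, \mathbb Z))$ is continuous in the cycle parameter and takes values in a discrete set, hence is constant on $\mathcal C'$ and equals the identity by the hypothesis on $\mathcal Z$. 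I would then take $\mathcal C_0 := \mathcal C'$. The main difficulty should lie in the uniform volume bound of (i), which is where both the continuity of the K\"ahler classes across $K$ and the integrality hypothesis on $\phi^*$ are essentially used; everything else reduces to standard openness and discreteness arguments.
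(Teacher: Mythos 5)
Your proposal is correct and follows essentially the same route as the paper: part (i) via the uniform volume bound $\int(\omega_{0,t}+\phi^*\omega_{1,t'})^n=\int(\omega_{0,t}+\omega_{1,t'})^n$, which uses exactly the hypothesis that $\phi$ acts as the identity on cohomology, followed by Lieberman's compactness theorem; part (ii) via generic smoothness of the universal cycle over an irreducible component and constancy of the induced cohomological action on the connected Zariski-open locus of graphs (the paper phrases this as the graphs being smoothly isotopic, you as continuity into a discrete set — the same argument).
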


\begin{proof}
i) Let $(\omega^i_t)_{t\in B_i}$ be a continuous family of K\"ahler forms on the $\pi_i$-fibers ($i=0,1$). Let $M$ be the smooth model of $X_0$ and let $(J^i_t)_{t\in B_i}$ be a continuous family of integrable almost complex operators on $M$ such that $(X_i)_t=(M,J^i_t)$. For every $e\in E$, call $f_e: M\to M$ the biholomorphism from some fiber $(X_0)_t$ onto some fiber $(X_1)_{t'}$ corresponding to the cycle $Z_e$. We compute the volume of these cycles using the $\omega_t$. We have
$$
\text{Vol}(Z_e)=\int_M \left (\omega^0_t+f_e^*\omega^1_{t'}\right )^n=\int_M\left (\omega^0_t+\omega^1_{t'}\right )^n
$$
since $f_e$ induces the identity in cohomology hence $f_e^*\omega^1_{t'}$ and $\omega^1_{t'}$ differs from an exact form. Since the projection of $\mathcal{Z}$ is included in a compact of $B_0\times B_1$, we obtain that the volume of the $Z_e$ is uniformly bounded. It follows from \cite[Theorem 1]{Lieberman} that $E$ has compact closure in the cycle space of $\mathcal X_0\times\mathcal X_1$. Hence $E$ only meets a finite number of irreducible components of this cycle space.\vspace{3pt}\\
ii) Consider the family of cycles $\tilde{\mathcal C}\subset \mathcal X_0\times\mathcal X_1\to \mathcal C$. Since this map is proper and surjective, it is smooth on a Zariski open subset. Since some fibers are non singular, the generic fiber is non singular. The cycles above $E$ are submanifolds of some $(X_0)_t\times (X_1)_{t'}$ with projections $pr_i$ being bijective onto both factors. Hence, on a Zariski open subset of $\mathcal C$, every cycle enjoys such properties. So is the graph of a biholomorphism between a fiber of $\mathcal X_0$ and a fiber of $\mathcal X_1$. Finally all these graphs are smoothly isotopic hence induce the identity in cohomology with coefficients in $\Z$ since at least one of them has this property. 
\end{proof}

\begin{remark}
	\label{rkclassC}
	There exist relative versions of Lieberman's result in the class $(\mathscr{C})$. However, they do not apply to smooth morphisms with class $(\mathscr{C})$ fibers but to morphisms that are equivalent to K\"ahler morphisms in some sense, see see \cite[Def. 2.3]{Fujiki} or \cite[Def 4.1.9]{Barlet}. For that reason, they are not suited to our purposes and we stick to the K\"ahler setting.
\end{remark}
Setting $X_0=(M,J_0)$, considering the $\DZ$-orbit $\mathcal O$ of $J_0$ in $\mathcal I(M)$ and viewing $K_0$ as a local transverse section, we obtain a first interesting Corollary. 

\begin{corollary}
	\label{interlemma}
	If $V$ is small enough then $K_0$  intersects $\mathcal O$ only at $J_0$. 
\end{corollary}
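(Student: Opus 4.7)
Proof plan. The strategy is to argue by contradiction, combining the relative compactness provided by Proposition \ref{Liebprop} with the Kuranishi product structure \eqref{Kurmap}.

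If the conclusion fails, then shrinking $V$ along a decreasing basis of neighborhoods of $J_0$ we may pick $J_n\in K_0\cap\mathcal{O}$ with $J_n\neq J_0$ and $J_n\to J_0$, together with $\phi_n\in\DZ$ satisfying $J_0\cdot\phi_n=J_n$. Each $\phi_n$ is a biholomorphism between $X_{J_n}$ and $X_0$ inducing the identity on $H^*(M,\Z)$, and for $V$ small every fiber $X_J$ with $J\in K_0$ is K\"ahler by the openness of K\"ahlerianity. I would then apply Proposition \ref{Liebprop} to the graphs $\Gamma_{\phi_n}$ viewed as relative cycles of $X_0\times\mathscr{K}_0\to\{0\}\times K_0$: the parameter set lies in a compact of $\{0\}\times K_0$ and the volumes are uniformly bounded by the cohomological computation in the proof of Proposition \ref{Liebprop}(i). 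After extraction we obtain a convergent subsequence $\Gamma_{\phi_n}\to\gamma$ in the cycle space, and by part ii) $\gamma$ sits in an irreducible component $\mathcal{C}$ whose generic members are smooth graphs of biholomorphisms inducing the identity in cohomology.

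The core step is to identify $\gamma$ itself with $\Gamma_f$ for some $f\in\AZ$; granted this, $\phi_n\to f$ in the $C^\infty$ topology. Setting $\psi_n:=f^{-1}\circ\phi_n$, we have $\psi_n\to\mathrm{Id}$ in $\DZ$, so for $n$ large $\psi_n$ lies in the identity component $\D$ and eventually inside the neighborhood $\mathcal{D}_0$ of $\Azero$. The $\Azero$-version of \eqref{Gdecompoglobal} then writes $\psi_n=g_n\circ e(\xi_n)$ with $g_n\in\Azero$ and $\xi_n\in W$ tending to $0$. Since $f\in\text{Aut}(X_0)$ and $g_n\in\Azero$ both fix $J_0$ under the right action \eqref{actionDiff}, one computes
\[
J_n\,=\,J_0\cdot\phi_n\,=\,J_0\cdot\bigl(f\circ g_n\circ e(\xi_n)\bigr)\,=\,J_0\cdot e(\xi_n).
\]
But the Kuranishi isomorphism \eqref{Kurmap} sends both $(\xi_n,J_0)$ and $(0,J_n)$ to this same point of $V$, so injectivity forces $\xi_n=0$ and $J_n=J_0$, contradicting the choice $J_n\neq J_0$.

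The main obstacle will be the identification of $\gamma$ as a smooth graph. Proposition \ref{Liebprop}(ii) only guarantees that smooth graphs form a Zariski open subset of $\mathcal{C}$, so a priori $\gamma$ itself could be a singular degeneration acquiring extraneous lower-dimensional components, even though both the source $J_0$ and the target $J_n\to J_0$ of $\Gamma_{\phi_n}$ converge. Ruling this out is precisely where the K\"ahler hypothesis is used beyond Lieberman's volume bound: one must argue that for relative graphs of biholomorphisms inducing the identity in cohomology, the uniform volume identity in the proof of Proposition \ref{Liebprop}(i) together with the fact that the projections of $\Gamma_{\phi_n}$ onto both factors are biholomorphisms of degree one prevent the limit from picking up extra components, so that $\gamma$ is genuinely the graph of a biholomorphism $X_0\to X_0$, which by construction lies in $\AZ$.
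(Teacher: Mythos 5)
Your closing computation (the decomposition $\psi_n=g_n\circ e(\xi_n)$ via \eqref{Gdecompoglobal} and the injectivity of the Kuranishi chart \eqref{Kurmap} forcing $\xi_n=0$, $J_n=J_0$) is correct, but the step you yourself flag as the main obstacle — that the limit cycle $\gamma$ is the smooth graph of some $f\in\AZ$ — is a genuine gap, and the justification you sketch for it does not work. The uniform volume bound plus the fact that each $\Gamma_{\phi_n}$ projects with degree one onto both factors only yields convergence in the cycle space; it in no way prevents the limit from degenerating into a singular cycle acquiring extra components (this is exactly what singular members of $\mathscr{C}_0$ look like, cf. Remark \ref{rksingular}; already for a divergent sequence of automorphisms of $\mathbb{P}^n$, whose graphs all have degree one and uniformly bounded volume, the limit is a singular cycle). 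Ruling out such a degeneration when both endpoints tend to $J_0$ at a K\"ahler point is essentially the question of ($\Z$-)exceptionality, i.e. the open Conjecture \ref{mainconj}, I — see \S \ref{secintrocycles} and Corollary \ref{2ndcor} — so a proof of this Corollary cannot rest on the smoothness of the limit.

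The paper's proof takes a different route that never looks at the limit cycle. By the product structure \eqref{Kurmap}, the orbit $\mathcal{O}$ meets $K_0$ transversely at every intersection point, so $E':=K_0\cap\mathcal{O}$ is discrete, hence at most countable since $\DZ$ has countable topology. Choosing for each $J\in E'$ (intersected with a compact neighborhood of $J_0$) a graph of some $f_J\in\DZ$ and applying Proposition \ref{Liebprop} to $X_0\times\mathscr{K}_0$, these graphs meet only finitely many irreducible components $\mathcal{C}_1,\dots,\mathcal{C}_p$ of the relative cycle space, and each $\mathcal{C}_i$ contains a Zariski-open (hence connected) subset of graphs of biholomorphisms $X_0\to X_J$ inducing the identity in cohomology; the target parameter of such graphs varies continuously in the discrete set $E'$, hence is constant on $\mathcal{C}_i$. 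Thus $E'$ is finite, and shrinking $V$ leaves only $J_0$. In short, finiteness of components plus discreteness of the intersection replaces your identification of a limiting automorphism, and the Kuranishi-injectivity argument you end with, while correct, is not needed.
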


\begin{proof}
	We assume that $V$ is small enough so that $K_0$ only contains K\"ahler points, cf. footnote \ref{fnoteKahler}. We also assume that $K_0$ is reduced, replacing it with its reduction if necessary. Take $\X_1=\mathscr{K}_0$ and $\X_0=X_0$ seen as a family over the point $\{J_0\}$. Let $E'$ be the subset of $K_0$ corresponding to complex structures $J$ in the orbit $\mathcal O$. 
	Now, $\mathcal O$ intersects transversely $K_0$ at $J_0$ by \eqref{Kurmap} but also at any intersection point. Since $\DZ$ has a countable topology, this intersection contains at most a countable number of points. Since we are only interested in what happens close to $J_0$, we may replace $E'$ with its intersection with a compact neighborhood of $J_0$ in $K_0$. Then for each $J\in E'$, choose some element $f_J$ of $\DZ$ mapping $J_0$ onto it. Set $E=\{J_0\}\times E'$ and let $\mathcal Z$ be the cycles corresponding to the graphs of the $f_J$. Apply Proposition \ref{Liebprop}. We conclude that $E$ meets a finite number of irreducible components of the cycle space of $X_0\times\mathscr{K}_0$, say $\mathcal C_1$,..., $\mathcal C_p$.
	
	 Still by Proposition \ref{Liebprop}, it follows that a Zariski open subset of each $\mathcal C_i$ only contains graph of biholomorphisms between $X_0$ and some $X_J$ with $J\in E'$. Hence each of these components only contains cycles in a fixed product $X_0\times X_J$ and $E'$ is a finite subset. Reducing $V$ if necessary, we may assume that $E'$ is just $\{J_0\}$ as wanted.
\end{proof}

\begin{remark}
	\label{rkcountable}
	Recall that we work with the $\DZ$-orbit and not with the $\text{Diff}^+(M)$-orbit. Corollary \ref{interlemma} does not say that, in the K\"ahler setting, there is no infinite sequence of points in $K_0$ converging to $J_0$ and all encoding $X_0$. It just says that, if it happens, only a finite number of the involved biholomorphisms with $X_0$ induce the identity in cohomology.
\end{remark}

As for the intersection of the $\DZ$-orbit of an arbitrary $J\in K_0$ with $K_0$, we have
\begin{corollary}
	\label{interlemma2}
	Let $J\in K_0$. If $K_0$ is small enough, then $K_0$  intersects the $\DZ$-orbit of $J$ into a finite number of leaves of the foliation of $K_0$. 
\end{corollary}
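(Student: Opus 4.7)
The plan is to extend the argument of Corollary~\ref{interlemma} by keeping track of the targets of the graph cycles involved. Fix $J \in K_0$, let $\mathcal O_\Z$ denote the $\DZ$-orbit of $J$, and set $E' := \mathcal O_\Z \cap K_0$; after shrinking $V$ we may assume $K_0$ is reduced, relatively compact and contains only K\"ahler structures. For each $J' \in E'$ pick $f_{J'} \in \DZ$ with $J \cdot f_{J'} = J'$ and let $\Gamma_{J'} \subset X_J \times X_{J'}$ denote its graph, viewed as a relative cycle in $X_J \times \mathscr{K}_0 \to \{J\} \times K_0$. Applying Proposition~\ref{Liebprop} with $\X_0 = X_J$ (over the point $\{J\}$) and $\X_1 = \mathscr{K}_0 \to K_0$, the family $\{\Gamma_{J'}\}_{J' \in E'}$ meets only finitely many irreducible components $\mathcal C_1, \ldots, \mathcal C_p$ of the cycle space, and each $\mathcal C_i$ contains a Zariski open subset $\mathcal C_i^0$ whose members are graphs of biholomorphisms $X_J \to X_{J''}$ inducing the identity in cohomology with $\Z$-coefficients.

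Next, let $t_i : \mathcal C_i^0 \to K_0$ be the analytic target map sending such a graph to the base point $J''$ of its target fiber. Every $\Gamma_{J'}$ is a smooth graph, so belongs to some $\mathcal C_i^0$, whence $E' \subset \bigcup_i t_i(\mathcal C_i^0)$. The key step is to show that each $t_i(\mathcal C_i^0)$ lies in a single leaf of the foliation of $K_0$ recalled in \S\ref{secjumping}. Indeed $\mathcal C_i^0$ is connected, being Zariski open in an irreducible analytic space. Given two of its cycles corresponding to biholomorphisms $\phi_a : X_J \to X_{J'_a}$ and $\phi_b : X_J \to X_{J'_b}$, a continuous path in $\mathcal C_i^0$ joining them produces a continuous family $\phi_s : X_J \to X_{J'_s}$ of biholomorphisms. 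Viewing $\phi_s \circ \phi_a^{-1}$ as a diffeomorphism of $M$ via a smooth trivialization of $\mathscr{K}_0$ along the arc $s \mapsto J'_s$, the resulting family is continuous in $s$, starts at the identity, and therefore lies in $\D$. Hence $J'_a$ and $J'_b$ are $\D$-equivalent, so lie on the same leaf of the foliation.

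Combining these two steps, $E'$ is contained in the union of at most $p$ leaves of the foliation of $K_0$, which is the desired statement. The compactness setup and the restriction of $E'$ to a compact neighborhood of $J$ needed to apply Proposition~\ref{Liebprop} are handled exactly as in Corollary~\ref{interlemma}. The main delicate point is the passage from connectedness of $\mathcal C_i^0$ in the cycle space to containment in a single foliation leaf: one must turn an analytic arc of cycles into a smooth isotopy of diffeomorphisms of $M$ that stays in $\D$. This is essentially the smooth trivializability of the Kuranishi family along an arc in $K_0$, hence poses no fundamental obstruction.
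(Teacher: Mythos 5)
Your proof is correct and follows essentially the same route as the paper: apply Proposition \ref{Liebprop} with $\X_0=X_J$ and $\X_1=\mathscr{K}_0\to K_0$ to get finitely many irreducible components of the relative cycle space, then note that the Zariski-open set of regular cycles of each component projects into a single leaf. The only difference is that you spell out, via the connectedness-plus-isotopy argument, the final step that the paper's proof compresses into ``by definition (see Subsection \ref{Douady})''.
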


\begin{proof}
	This is completely analogous to the proof of Corollary \ref{interlemma}. Take $\X_1=\mathscr{K}_0$ and $\X_0=X_J$ and apply Proposition \ref{Liebprop}. This proves that the set of graphs of biholomorphisms between $X_J$ and another fiber of $\mathscr{K}_0$ is the union of the Zariski open subsets of regular cycles of a finite number of irreducible components of the cycle space of relative cycles of $X_J\times\mathscr{K}_0$. By definition (see Subsection \ref{Douady}), they project exactly onto a finite number of leaves of the foliation of $K_0$. 
\end{proof}

\begin{remark}
	Reducing $K_0$ {\slshape as a neighborhood of $J$}, we may assume that this intersection consists of a single leaf. However, since some leaves may accumulate onto $J_0$, we may be end with a neighborhood that does no more contain $J_0$. Since we do not want to lose our base point, we cannot replace finite number by single in the statement of Corollary \ref{interlemma2}. 
\end{remark}

We now analyse further the structure of $\mathscr{C}$ and $\mathscr{C}_0$ as defined in \S \ref{secintrocycles} and compare them with $\mathcal T^\Z_V$ from the one hand and with $\AZ$ from the other hand. As before, we assume that $K_0$ is reduced, replacing it with its reduction if necessary. 

Let us begin with $\mathscr{C}_0$. By Proposition \ref{Liebprop} applied to $\mathcal X_0=\mathcal X_1=X_0$, it has only a finite number of irreducible components and it is compact. Moreover, if a component contains a non singular cycle, it must be the graph of an automorphism and a Zariski open subset of it contains such graphs, hence it contains a connected component of $\AZ$ as Zariski open set. So the picture to have in mind is the following.
\begin{enumerate}
	\item [i)] $\AZ$ consists of a finite number of connected components. It always includes $\Azero$ and the components of $\Aun$. All the connected components are connected components of $\A$ and are all isomorphic to $\Azero$.
	\item [ii)] Either these connected component are all compact and each of them forms an irreducible component of $\mathscr{C}_0$; or they admit an analytic compactification in $\mathscr{C}_0$ by adding an analytic space of strictly lower dimension of singular cycles, each compactified connected component of $\AZ$ becoming a compact irreducible component of $\mathscr{C}_0$.
	\item[iii)] There may exist in $\mathscr{C}_0$ a finite number of additional irreducible components which contain only singular cycles.
\end{enumerate} 
Notice that two such irreducible components may intersect. For example, two distinct connected components of $\AZ$ may intersect once compactified in $\mathscr{C}_0$.

As for $\mathscr{C}$, applying Proposition \ref{Liebprop} to $\mathcal X_0=\mathcal X_1=\mathscr{K}_0$ yields that it has only a finite number of irreducible components and that its restriction above any compact subset of $K_0\times K_0$ is compact. Moreover, if a component contains a non singular cycle, it must be the graph of a biholomorphism between two fibers of the Kuranishi family and a Zariski open subset of it contains such graphs, hence it contains an irreducible component of (the reduction of) $\mathcal{T}^\Z_V$ as Zariski open set. And if an irreducible component contains a graph of an automorphism of $\AZ$, then it must contain an irreducible component of the closure of $\AZ$ in $\mathscr{C}_0$. So the picture to have in mind is the following.
\begin{enumerate}
	\item [i)] The reduction of $\mathcal{T}^\Z_V$ consists of a finite number of irreducible components. Each of them either contains a connected component of $\AZ$ or does not contain any automorphism of $X_0$. 
	\item [ii)] Each of these irreducible components injects as a Zariski open subset of an irreducible component of $\mathscr{C}$.
	\item[iii)] There may exist in $\mathscr{C}$ a finite number of additional irreducible components which contain only singular cycles.
\end{enumerate} 
Notice that two such irreducible components may intersect, that is, two distinct connected components of $\mathcal{T}^\Z_V$ may intersect once compactified in $\mathscr{C}$.

Consider now a sequence \eqref{phin}. We may state our second Corollary.

\begin{corollary}
	\label{2ndcor}
	Assume $X_0$ is K\"ahler. Then,
	\begin{enumerate}[\rm i)]
		\item Up to passing to a subsequence, we may assume that the graphs of the $\phi_n$ belong to a fixed irreducible component $\mathcal C$ of $\mathscr{C}$ and converges to a cycle $\gamma$. 
		\item The cycle $\gamma$ belongs to $\mathcal C$.
		\item The cycle $\gamma$ and thus the component $\mathcal C$ are exceptional if and only if $\gamma$ does not belong to the closure of an $A^\Z$-component of $\mathscr{C}_0$.
		\item Let $\mathcal C_0$ be the connected component of the intersection $\mathcal C\cap \mathscr{C}_0$ containing $\gamma$. The cycle $\gamma$ and thus the component $\mathcal C$ are exceptional if and only if every irreducible component of $\mathcal C_0$ consist only of singular cycles.
	\end{enumerate}
\end{corollary}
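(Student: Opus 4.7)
The plan is to apply Proposition \ref{Liebprop} throughout. For (i), take $\mathcal X_0 = \mathcal X_1 = \mathscr{K}^{red}_0$ and let $\mathcal Z$ be the countable family of graphs of the $\phi_n$. Since $x_n, y_n \to 0$, the projection of $\mathcal Z$ to $K_0 \times K_0$ is contained in a compact neighborhood of $(0,0)$, and each $\phi_n$ induces the identity in cohomology (this is the $\Z$-hypothesis built into the setup of \S\ref{secintrocycles}). Proposition \ref{Liebprop}(i) then ensures that the family has compact closure in the cycle space and meets only finitely many irreducible components of $\mathscr C$; a diagonal extraction produces a subsequence lying in a fixed irreducible component $\mathcal C$ and converging to some cycle $\gamma$. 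Point (ii) is immediate since irreducible components of the analytic space $\mathscr C$ are closed subsets, so the limit $\gamma$ of $\gamma_n \in \mathcal C$ sits in $\mathcal C$.

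For (iii) and (iv), the key input is Proposition \ref{Liebprop}(ii): since $\mathcal C$ contains the regular cycles $\gamma_n$, it contains a Zariski open subset of graphs of biholomorphisms between fibers of $\mathscr K_0$ inducing the identity in cohomology. This Zariski open is precisely the unique irreducible component of (the reduction of) $\mathcal T^{\Z}_V$ containing the $\phi_n$. By the structural dichotomy recalled just before the Corollary, this component either (a) contains a whole connected component $A$ of $\AZ$, in which case $\mathcal C$ is the corresponding $A^\Z$-component of $\mathscr C$ and is not exceptional, or (b) contains no automorphism of $X_0$ at all, in which case $\mathcal C$ is exceptional. In case (b), $\mathcal C \cap \mathscr C_0$ contains no regular cycle, since any regular cycle in $\mathcal C \cap \mathscr C_0$ would be an automorphism of $X_0$ lying in $\mathcal C$; hence every irreducible component of the connected piece $\mathcal C_0$ is purely singular. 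Conversely, if some irreducible component of $\mathcal C_0$ contains a regular cycle, Proposition \ref{Liebprop}(ii) applied inside $\mathscr C_0$ forces that irreducible component to contain a Zariski open of graphs of automorphisms in some $A \subset \AZ$, so $A \subset \mathcal C$ and we are in case (a). This proves (iv).

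The equivalence (iii) follows from (iv) via the observation that each $A^\Z$-component of $\mathscr C_0$ is an irreducible component of $\mathscr C_0$ containing a Zariski open of regular cycles, whose closure in $\mathscr C$ is an $A^\Z$-component $\tilde{\mathcal C}$ of $\mathscr C$. If $\gamma$ lies in such a closure, then $\gamma \in \tilde{\mathcal C} \cap \mathcal C$, and one shows $\mathcal C = \tilde{\mathcal C}$, which makes $\mathcal C$ non-exceptional. The main obstacle is precisely this last step: ruling out the configuration $\mathcal C \ne \tilde{\mathcal C}$ with $\gamma$ a singular intersection point. I would handle it by noting that the $\gamma_n \in \mathcal C$ are regular cycles, hence smooth points of $\mathscr C$, converging to $\gamma$ from inside $\mathcal C$; combined with the explicit local description of $\tilde{\mathcal C}$ near $\gamma$ as the deformation space of extensions of automorphisms in $A$ (via the section $\sigma_f$ of \eqref{sigmaf}), this forces the irreducible component hosting the $\gamma_n$ to coincide with $\tilde{\mathcal C}$, yielding $\mathcal C = \tilde{\mathcal C}$.
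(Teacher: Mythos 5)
Your handling of i) and ii) is correct and essentially the paper's own argument (finiteness of the components of $\mathscr{C}$ and properness of the projection to $K_0\times K_0$ give a convergent subsequence inside a fixed component, which is closed), and the ``exceptional $\Rightarrow$ completely singular'' directions you extract from the dichotomy are fine. The genuine gap is exactly the step you flag as the main obstacle: ruling out the configuration where $\gamma$ lies simultaneously on the component $\mathcal{C}$ carrying the graphs of the $\phi_n$ and on the closure $\tilde{\mathcal{C}}$ of an $A^\Z$-component, with $\mathcal{C}\neq\tilde{\mathcal{C}}$ meeting at the singular cycle $\gamma$. Your proposed resolution does not close it. First, a regular cycle is not in general a smooth point of $\mathscr{C}$: the regular locus of $\mathcal{C}$ is a copy of an irreducible component of $\mathcal{T}^\Z_V$, whose deformations are obstructed (by $H^1$ of the tangent sheaf of the fibers), so it can perfectly well be singular. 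Second, the section $\sigma_f$ of \eqref{sigmaf} only describes $\tilde{\mathcal{C}}$ near the graphs of automorphisms, i.e.\ near regular cycles; it provides no local model of $\tilde{\mathcal{C}}$ near the singular boundary cycle $\gamma$, which is precisely where a second component could cross it. And even granting both claims, the fact that the $\gamma_n$ are smooth points of $\mathcal{C}$ converging to $\gamma$ is entirely compatible with $\mathcal{C}\neq\tilde{\mathcal{C}}$, so nothing in your argument forces the two components to coincide. The same issue reappears in your converse direction of iv), where from a single regular cycle $c$ in $\mathcal{C}\cap\mathscr{C}_0$ you conclude $A\subset\mathcal{C}$ (note also that Proposition \ref{Liebprop} ii) applies to irreducible components of the cycle space, not to irreducible components of $\mathcal{C}_0$, which are merely irreducible closed subsets of $\mathscr{C}_0$).

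The paper closes this gap by leaving the cycle space and arguing on the diffeomorphisms: if $\gamma$ is also a limit of graphs of elements of $\AZ$, then, since the graphs of the $\phi_n$ converge to the same cycle, the distance from $\phi_n$ to $\AZ$ tends to zero; hence for $n$ large $\phi_n$ lies in the neighborhood of $\AZ$ given by the $\Z$-version of \eqref{Gdecompoglobal}, so $(x_n,\phi_n)$ is admissible and belongs to $\mathcal{A}_\Z$. This forces the component of $\mathcal{T}^\Z_V$ containing the $\phi_n$ to contain the extensions of a whole connected component of $\AZ$, i.e.\ $\mathcal{C}$ is an $A^\Z$-component, hence not exceptional; iv) is then deduced from iii) (you argue in the opposite order, which is harmless). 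This admissibility argument, or some substitute for it, is the essential missing ingredient in your proof of iii).
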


\begin{proof}
	Since there are only finitely many components in $\mathscr C$, we may assume, up to passing to a subsequence, that the graphs of the $\phi_n$ belong to a fixed irreducible component $\mathcal C$. The component $\mathcal C$ restricted to any compact neighborhood of $(J_0,J_0)$ in $K_0\times K_0$ is compact, hence the sequence converges, up to passing to another subsequence, and the limit cycle $\gamma$ belongs to $\mathcal C$. This proves i) and ii). If $\gamma$ is also obtained as a limit of graphs of elements of $\AZ$, then the distance between $\phi_n$ and $\AZ$ tends to zero when $n$ goes to infinity. Hence, for $n$ big enough, $(x_n,\phi_n)$ is $(V,\mathscr{D}_1)$-admissible and so is analytically isotopic to an element of $\AZ$. Therefore $\mathcal C$ contains a Zariski open subset formed by the graphs of a connected component of $\AZ$ and their extensions, so is an $A^\Z$-component. Conversely, if $\gamma$ does not belong to the closure of an $A^\Z$-component, the Zariski open subset of regular cycles of $\mathcal C$ does not intersect $\AZ$. This proves iii). Then iii) implies that $\mathcal C_0$ contains only singular cycles, since an irreducible component of $\mathscr{C}_0$ which is not completely singular is the closure of an $A^\Z$-component. Conversely, if $\mathcal C_0$ is completely singular, then $\gamma$ is not in the closure of an $A^\Z$-component, otherwise, arguing as above, the corresponding  $A^\Z$-component would belong to $\mathcal C_0$. Hence, by iii), $\mathcal C$ is exceptional.
\end{proof}

Let $\mathscr{C}'$ be the union of irreducible components of $\mathscr{C}$ containing a sequence \eqref{phin}.

\begin{corollary}
	\label{3rdcor}
	Assume $X_0$ is K\"ahler. Then,
	\begin{enumerate}[\rm i)]
		\item The number of irreducible components of the reduction of $\mathcal{T}^\Z_V$ is finite.
		\item If $V$ is a sufficiently small neighborhood of $J_0$ in $\I$, then there exists a natural bijection between the set of irreducible components of the reduction of $\mathcal{T}^\Z_V$ and that of $\mathscr{C}'$. In particular, every component of $\mathscr{C}'$ is either an $A^\Z$-component or an exceptional one.
		\item If $V$ is a sufficiently small neighborhood of $J_0$ in $\I$, then the intersection of an irreducible component of $\mathscr{C}'$ with $\mathscr{C}_0$ is connected. 
		\item If $V$ is a sufficiently small neighborhood of $J_0$ in $\I$, and $V'\subset V$ contains also $J_0$, then the natural inclusion of $\mathcal T_{V'}$ in $\mathcal{T}^\Z_V$ is a bijection between the corresponding sets of irreducible components.
	\end{enumerate}
\end{corollary}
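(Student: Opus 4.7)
The plan is to use the graph embedding $(J,f) \mapsto \mathrm{graph}(f)$ of $\mathcal T^\Z_V$ into the cycle space $\mathscr{C}$ as the bridge, together with Lieberman's relative compactness result, Proposition \ref{Liebprop}.

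For parts (i) and (ii), after shrinking $V$ so that $K_0$ is relatively compact in a slightly larger Kuranishi chart, Proposition \ref{Liebprop} applied with $\mathcal X_0 = \mathcal X_1 = \mathscr K_0$ shows that $\mathscr{C}$ has only finitely many irreducible components. The graph map embeds the reduction of $\mathcal T^\Z_V$ as a Zariski open subset of the regular components of $\mathscr{C}$, and the analysis recalled just before Corollary \ref{2ndcor} shows that distinct irreducible components of $\mathcal T^\Z_V$ inject into distinct components of $\mathscr{C}$; this gives (i). For (ii), each irreducible component of the reduction of $\mathcal T^\Z_V$ sits in a unique component of $\mathscr{C}$, and contains terms of a sequence \eqref{phin} accumulating at $(J_0,J_0)$, so this component belongs to $\mathscr{C}'$. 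Conversely, every component of $\mathscr{C}'$ contains such a sequence by definition, whose terms live in $\mathcal T^\Z_V$ and thus identify a component of $\mathcal T^\Z_V$ contained in it. The dichotomy ``$A^\Z$-component or exceptional'' follows from Corollary \ref{2ndcor} iii).

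For (iii), I would argue by stabilization of the component count. Set $n(V)$ equal to the number of irreducible components of the reduction of $\mathcal T^\Z_V$. For $V'' \subset V$ a smaller neighborhood of $J_0$, each component of $\mathcal T^\Z_V$ may split into several components upon restriction, so $n(V) \le n(V'')$; but $n(V)$ is uniformly bounded by (i). Hence $n(V)$ stabilizes once $V$ is small enough; fix such a $V$. Suppose, for contradiction, that some irreducible component $\mathcal C$ of $\mathscr{C}'$ has $\mathcal C \cap \mathscr{C}_0$ disconnected, with two connected components $C_1$ and $C_2$. Choose disjoint open neighborhoods $U_1, U_2$ of $C_1, C_2$ in $\mathcal C$ whose union contains all of $\mathcal C \cap \mathscr{C}_0$. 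Since $(s,t) : \mathscr{C} \to K_0 \times K_0$ is proper, the image of $\mathcal C \setminus (U_1 \cup U_2)$ is closed in $K_0 \times K_0$ and avoids $(J_0,J_0)$, so we may find $V'' \subset V$ with $\mathcal C \cap ((V''\times V'')) \subset U_1 \cup U_2$ and still meeting both, the regular locus being Zariski dense in $\mathcal C$. This restriction thus splits into at least two irreducible components over $V''$, so $n(V'') > n(V)$, contradicting stabilization.

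For (iv), the same graph embedding applies to $\mathcal T_{V'}$, which sits again as a Zariski open subset of the regular components of $\mathscr{C}$. Every sequence \eqref{phin} is by definition a sequence of elements of $\D$ whose associated $x_n, y_n$ tend to $J_0$, so its late terms lie in $\mathcal T_{V'}$; hence each component of $\mathscr{C}'$ is hit by some component of $\mathcal T_{V'}$, giving surjectivity of the induced map on components (after identification via (ii)). Injectivity is automatic: distinct components of $\mathcal T_{V'}$ inject into distinct irreducible components of $\mathscr{C}$, which correspond via (ii) to distinct components of $\mathcal T^\Z_V$. The main obstacle is (iii): K\"ahlerianity is essential for applying Proposition \ref{Liebprop} and hence for bounding $n(V)$, without which the stabilization argument collapses, and one must carefully interleave the two shrinkings of $V$ with the properness of the cycle-space projection.
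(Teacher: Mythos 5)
Your overall strategy --- embed $\mathcal T^\Z_V$ into the cycle space via graphs, invoke Proposition \ref{Liebprop} for finiteness, then shrink $V$ --- is exactly the paper's, and your parts (i) and (ii) essentially reproduce its argument (with the caveat that the claim that every irreducible component of $\mathcal T^\Z_V$ contains a sequence \eqref{phin} once $V$ is small is precisely what ``sufficiently small'' has to deliver: the paper deduces it from finiteness of the components, whereas you assert it).

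The genuine gap is in (iii), and it propagates to (iv). Your stabilization of $n(V)$ needs both monotonicity under shrinking and a bound uniform in $V$, and neither is available as stated. Monotonicity ``$n(V)\le n(V'')$'' can fail because components may also \emph{disappear} upon restriction (it only becomes true once (ii) is known for both neighborhoods, which you do not invoke); and, more seriously, ``uniformly bounded by (i)'' is not what (i) provides: (i) is finiteness of the count for each fixed $V$, while the ambient space $\mathscr{C}$ itself changes as $V$ shrinks and components can proliferate by splitting (this is exactly the phenomenon of Remark \ref{rkconnexiteAcomp}). A bound uniform in $V$ is the nontrivial point, so stabilization is not established and the contradiction at the end of your (iii) does not go through. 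The paper sidesteps this by working with the $V$-independent compact space $\mathscr{C}_0$: each component of $\mathscr{C}'$ meets $\mathscr{C}_0$, whose components are compact and finite in number, so $\mathcal C\cap\mathscr{C}_0$ has finitely many connected components, and a single shrinking of $V$ --- by precisely the disjoint-neighborhoods-plus-properness mechanism you use in the second half of (iii) --- splits $\mathcal C$ into components each meeting $\mathscr{C}_0$ in one connected component; no stabilization of a count is needed, and your own neighborhood argument, run against the finitely many connected components of $\mathcal C\cap\mathscr{C}_0$ instead of against $n(V)$, would complete the proof. In (iv), injectivity is likewise not ``automatic'': distinct components of $\mathcal T_{V'}$ sit Zariski-densely in components of the cycle space over $V'$, and a single irreducible component of $\mathscr{C}$ over $V$ may restrict to several of these, so two components of $\mathcal T_{V'}$ could a priori land in the same component of $\mathcal T^\Z_V$; excluding this is again the no-further-splitting property that ``$V$ sufficiently small'' (via (ii), (iii) and Corollary \ref{2ndcor}) is meant to guarantee.
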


\begin{proof}
	Assume $\mathcal{T}^\Z_V$ is reduced. Since every irreducible component of $\mathcal{T}^\Z_V$ injects in an irreducible component of $\mathscr C$ and $\mathscr C$ has only a finite number of components by K\"ahlerianity, this proves i). By finiteness, for a sufficiently small neighborhood $V$ of $J_0$, the point $J_0$ is adherent to the $s$-image and the $t$-image of every such component. Hence they contain a sequence \eqref{phin}. So the irreducible components of  $\mathcal{T}^\Z_V$ are in fact in 1:1 correspondence with the irreducible components of $\mathscr{C}'$, proving ii). By compacity of the components of $\mathscr{C}_0$ and finiteness of their number, the intersection of an irreducible component $\mathcal{C}$ with $\mathscr{C}_0$ has at most a finite number of connected components. Taking $V$ smaller if needed, the irreducible component $\mathcal C$ disconnects as a finite union of irreducible components of $\mathscr{C}'$ each of them intersecting $\mathscr{C}_0$ in a single connected component. Finally iv) follows from ii) and from the finiteness of components. When restriction to a smaller $V'\subset V$, the number of components of both $\mathcal T_{V'}$ and $\mathscr{C}'$ decrease. By finiteness, for a small enough $V$, this number stays constant when restricting to smaller $V'\subset V$ and only counts the components that contain a sequence $\eqref{phin}$.
\end{proof}
As a consequence of Corollary \ref{3rdcor}, we do not need to consider the full target germification in the K\"ahler case. It is enough to look at $\mathscr{T}(M,V)$ for a fixed small enough $V$ since restricting to smaller neighborhoods of $0$ will not change the number of components of $\mathcal{T}_V$.

Moreover, we have

\begin{corollary}
	\label{4thcor}
	Assume $X_0$ is K\"ahler. Then,
	\begin{enumerate}[\rm i)]
		\item There is no wandering sequence \eqref{phin} with each $\phi_n$ belonging to a different component of $\mathcal{T}_V$.
		\item There is no vanishing cycle.
	\end{enumerate}
\end{corollary}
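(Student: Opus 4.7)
The plan is to read off both statements as direct consequences of the finiteness results already extracted from Lieberman's theorem, the main point being that in the Kähler case there are only finitely many irreducible components to disperse the graphs $\gamma_n=\text{graph}(\phi_n)$ among, and that any sequence of such graphs with base points converging to $(J_0,J_0)$ must have a convergent subsequence.

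For (i), I would simply invoke Corollary \ref{3rdcor}(i), which tells us that (the reduction of) $\mathcal{T}^\Z_V$, and consequently $\mathcal{T}_V$ via the identification in Corollary \ref{3rdcor}(iv), has only finitely many irreducible components. A sequence $(\phi_n)$ with each $\phi_n$ in a distinct component of $\mathcal{T}_V$ would force infinitely many such components, an immediate contradiction.

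For (ii), I would argue by contradiction. Suppose $(\phi_n)$ is a vanishing sequence in the sense of Definition \ref{defvanishsequence}: the graphs $\gamma_n$ lie in a single irreducible component $\mathcal C$ of $\mathscr C$, the base points $x_n,y_n$ tend to $J_0$, and $(\gamma_n)$ does not converge to a cycle in $\mathscr C_0$. Apply Proposition \ref{Liebprop} with $\mathcal X_0=\mathcal X_1=\mathscr K_0$: eventually $(x_n,y_n)$ lies in any prescribed compact neighborhood of $(J_0,J_0)$ in $K_0\times K_0$, and the volume of each $\gamma_n$ is uniformly bounded because $\phi_n$ induces the identity on $H^*(M,\Z)$. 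By Lieberman's compactness, a subsequence of $(\gamma_n)$ converges to a limit cycle $\gamma$ sitting inside $X_0\times X_0$. Since the degree of each projection is preserved in the limit, $\gamma$ has degree one projections on both factors, so $\gamma$ either lies in a completely singular component or is itself the graph of a biholomorphism $X_0\to X_0$ which, being a $C^\infty$-limit of diffeomorphisms inducing the identity in $\Z$-cohomology, also induces the identity in $\Z$-cohomology, placing it in a regular component. In either case $\gamma\in\mathscr C_0$, contradicting the definition of a vanishing sequence.

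The only delicate point I anticipate is the compact-base hypothesis in Proposition \ref{Liebprop}: one has to be careful that ``base points tending to $J_0$'' really places them inside a compact of $K_0\times K_0$, but this is automatic as soon as we fix any compact neighborhood of $(J_0,J_0)$ and discard the finitely many initial terms of the sequence falling outside it. Everything else is a book-keeping check that the uniform volume bound in the proof of Corollary \ref{interlemma} transfers verbatim to the relative situation here, which is precisely how Proposition \ref{Liebprop} was stated.
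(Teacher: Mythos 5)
Your proposal is correct and follows essentially the same route as the paper, whose proof is just a one-line appeal to Corollary \ref{2ndcor} i): both parts reduce to the Lieberman-type finiteness of components and compactness over compacts of $K_0\times K_0$ coming from Proposition \ref{Liebprop}. You merely unwind these ingredients explicitly (finiteness of components of $\mathcal{T}_V$ via Corollary \ref{3rdcor} for i), and extraction of a convergent subsequence of graphs with limit in $\mathscr{C}_0$ for ii)), which matches the paper's argument in substance.
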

\noindent and
\begin{corollary}
	\label{5thcor}
	Assume $X_0$ is K\"ahler. Then, the following statements are equivalent
	\begin{enumerate}[\rm i)]
			\item $X_0$ is exceptional.
			\item There exists an exceptional component.
			\item There exists an exceptional cycle.
	\end{enumerate}
\end{corollary}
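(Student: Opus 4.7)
The plan is to close the triangle $(i)\Rightarrow(iii)\Rightarrow(ii)\Rightarrow(i)$, with each implication handled by a single earlier result applied in the K\"ahler setting.

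For $(i)\Rightarrow(iii)$, I will invoke Proposition \ref{propex}, which characterizes exceptionality of $X_0$ via the existence of at least one of three witnesses: an exceptional cycle, a vanishing sequence, or a wandering sequence in $\mathcal T_V$. Corollary \ref{4thcor}, which rests on Lieberman's compactness (Proposition \ref{Liebprop}), forbids the second and third alternatives when $X_0$ is K\"ahler. Hence an exceptional cycle must exist.

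The implication $(iii)\Rightarrow(ii)$ is essentially tautological from Definition \ref{defexcycle}: an exceptional cycle is, by construction, the limit of graphs of a sequence \eqref{phin} that lies in an exceptional component of $\mathcal T_V$, so such a component exists as a byproduct of the definition. The implication $(ii)\Rightarrow(i)$ is Lemma \ref{lemmaex} and does not use the K\"ahler hypothesis: a sequence \eqref{phin} sitting in an exceptional component cannot, by item (i) of Definition \ref{defex}, be $(V_n,\mathcal D_1)$-admissible for large $n$, so it witnesses a morphism of $\mathscr T(M,V)$ that fails to lift to $\mathscr A_1$, i.e.\ exceptionality of $X_0$.

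All of the real content lies in $(i)\Rightarrow(iii)$, and I do not anticipate a genuine obstacle there: the heavy lifting has already been done in Corollary \ref{4thcor}. The only bookkeeping point to verify is the coherence between ``exceptional component of $\mathcal T_V$'' (Definition \ref{defex}) and the associated irreducible component of $\mathscr C$ that appears in the cycle-space reformulation; this coherence is ensured by the natural bijection of Corollary \ref{3rdcor}(ii) together with the characterization of exceptionality of a limit cycle in Corollary \ref{2ndcor}(iii)--(iv), which together guarantee that an exceptional $\mathcal C\subset\mathcal T_V$ has a limit cycle $\gamma\in\mathscr C_0$ lying outside the closure of any $A^\Z$-component, i.e.\ an exceptional cycle in the sense of Definition \ref{defexcycle}.
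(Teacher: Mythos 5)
Your proposal is correct and follows essentially the same route as the paper, whose (very compressed) proof likewise deduces the corollary from Proposition \ref{propex} once Corollary \ref{4thcor} has eliminated vanishing and wandering sequences in the K\"ahler case, with the implications (iii)$\Rightarrow$(ii) and (ii)$\Rightarrow$(i) being Definition \ref{defexcycle} and Lemma \ref{lemmaex} respectively. Your closing remark on matching exceptional components of $\mathcal T_V$ with components of $\mathscr C$ via Corollaries \ref{3rdcor}(ii) and \ref{2ndcor}(iii)--(iv) is a reasonable bookkeeping addition that the paper leaves implicit.
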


\begin{proof}[Proof of Corollaries \ref{4thcor} and \ref{5thcor}] This is essentially a reformulation of what preceeds.
Corollary \ref{4thcor} is a direct consequences of point i) of Corollary \ref{2ndcor}. Corollary \ref{5thcor} follows then from Proposition \ref{propex}. 
\end{proof}
In the sequel, we assume that $V$ is small enough so that 
\begin{enumerate}[i)]
	\item It only contains Kähler structures.
	\item Items ii), iii) and iv) of Corollary \ref{3rdcor} are valid.
\end{enumerate}
%
%
%
%

\begin{remark}
	\label{rkcompex}
	Let $\mathcal C_0$ be an exceptional component of $\mathscr{C}_0$. Then, although every cycle of $\mathcal C_0$ is singular, not every cycle of $\mathcal C_0$ is exceptional. First of all, we must remove the possibly non-empty intersection of $\mathcal C_0$ with the closure of the components of $\AZ$. But in the remaining Zariski open set $\mathcal U$ of $\mathcal C_0$, the exceptional cycles are those lying in the intersection with the finite union of irreducible components of $\mathscr{C}'$. This intersection forms an analytic subspace of $\mathcal U$ that can be strict.
\end{remark}

\begin{remark}
	\label{rksingular}
	A singular cycle of $\mathscr{C}_0$ is either the graph of a bimeromorphic mapping of $X_0$ or the sum of two cycles $\gamma_1+\gamma_2$, such that $\gamma_i$ maps bimeromorphically onto $X_0$ through the $i$-th projection of $X_0\times X_0$ to $X_0$. This may include the case $X_0\times \{*\}+\{*\}\times X_0$.
\end{remark}

\subsection{The morphism $\mathscr{A}_1$ into $\mathscr{T}(M,V)$ in the K\"ahler setting}
\label{secfinitemorphism}

The following result is a refinement of Theorem \ref{finitethm} in the K\"ahler case.
\begin{theorem}
	\label{thmfinite2}
	Assume that $X_0$ is K\"ahler. Then, the natural inclusion of $\mathscr{A}_0$ into $\mathscr{T}(M,V)$, resp. of $\mathscr{A}_1$ into $\mathscr{T}(M,V)$, as well as that of $\mathscr{A}_0$, resp. $\mathscr{A}_\Z$ into $\mathscr{T}^\Z(M,V)$, are finite and \'etale morphisms of analytic stacks.
\end{theorem}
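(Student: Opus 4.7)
The plan is to build on the étaleness already established by Theorem \ref{finitethm} and Corollary \ref{corZfinite} and to use Lieberman's compactness (Proposition \ref{Liebprop}) to bound uniformly the fibers of the four inclusions. Recall from the proof of Theorem \ref{finitethm} that, for any $u:B\to\mathscr{T}(M,V)$ corresponding to a family $\mathcal X\to B$, the fiber product $B\times_u\mathscr{A}_0$ identifies with $B\times E_0$ where $E_0\subset \sharp S_0$, with $S_0$ the source-fiber of $\mathcal T_V$ over some distinguished point $u_i(b_0)\in K_0$; similarly $B\times_u\mathscr{A}_1$ identifies with $B\times E_1$, where $E_1$ is a subset of the quotient $\sharp_1 S_0$ of $\sharp S_0$ by the $(s,1)$-homotopy relation, and analogous descriptions hold for $\mathscr{A}_\Z\hookrightarrow\mathscr{T}^\Z(M,V)$ with $\mathcal T_V$ replaced by $\mathcal T_V^\Z$. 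Hence the finiteness statement reduces to proving that, in the K\"ahler setting, $\sharp S_0$ is finite for both $\mathcal T_V$ and $\mathcal T_V^\Z$.

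To prove this, I would apply Proposition \ref{Liebprop} with $\mathcal X_0=X_{u_i(b_0)}$ considered as a family over a point, with $\mathcal X_1=\mathscr{K}_0$, and with $E$ parametrizing the cycles obtained as graphs of the biholomorphisms $f:X_{u_i(b_0)}\to X_{u_i(b_0)\cdot f}$ encoded by elements $(u_i(b_0),f)\in S_0$. After shrinking $V$, every fiber of $\mathscr{K}_0$ is K\"ahler by footnote \ref{fnoteKahler}, and one may replace $K_0$ by a relatively compact open subset of a larger Kuranishi representative so that the cycles project into a compact subset of $\{u_i(b_0)\}\times K_0$. Since $\D\subset\DZ$, every diffeomorphism $f$ entering the definition of $\mathcal T_V$ or of $\mathcal T_V^\Z$ acts trivially on $H^*(M,\Z)$, and all hypotheses of Proposition \ref{Liebprop} are verified.

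Point i) of Proposition \ref{Liebprop} then gives that $E$ has compact closure in the relative cycle space and thus meets only finitely many irreducible components $\mathcal C_1,\ldots,\mathcal C_r$; point ii) provides in each $\mathcal C_j$ a Zariski open subset $\mathcal C_j^{\mathrm{reg}}$ of smooth cycles, which recovers the portion of $S_0$ landing in $\mathcal C_j$ through the graph map. Since each $\mathcal C_j$ is irreducible, so is $\mathcal C_j^{\mathrm{reg}}$, which is therefore connected. Consequently $S_0$ has at most $r$ connected components, so $\sharp S_0$ is finite. The same argument with $\D$ replaced by $\DZ$ handles the $\Z$-variant, so that $\mathscr{A}_1\hookrightarrow\mathscr{T}(M,V)$ and $\mathscr{A}_\Z\hookrightarrow\mathscr{T}^\Z(M,V)$ are finite étale; the two inclusions with source $\mathscr{A}_0$ follow either from the same direct computation, or by composing with the étale morphism $\mathscr{T}(M,V)\hookrightarrow\mathscr{T}^\Z(M,V)$ of Corollary \ref{corZfinite2}, which the same cycle space argument upgrades to finite étale.

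The main delicate point is the setup of the relative cycle space to which Proposition \ref{Liebprop} applies: one must arrange $V$ and $K_0$ simultaneously so that all fibers of $\mathscr{K}_0$ are K\"ahler, that the cycle space of $X_{u_i(b_0)}\times\mathscr{K}_0$ is well-defined, and that the projection compactness hypothesis holds; everything else is either a verification of hypotheses or a formal reading of the fiber product computation of Theorem \ref{finitethm}, the K\"ahler hypothesis intervening only through Lieberman.
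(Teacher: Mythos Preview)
Your proposal is correct and takes essentially the same approach as the paper: reduce to the \'etaleness of Theorem \ref{finitethm} and Corollary \ref{corZfinite}, then use Lieberman's compactness (Proposition \ref{Liebprop}) to bound the number of connected components of the relevant source-fibers. The paper's proof is a one-liner invoking the finiteness of connected components of $\mathcal{T}^\Z_V$ established in the preparatory Corollaries \ref{2ndcor}--\ref{3rdcor}, whereas you apply Proposition \ref{Liebprop} directly to $X_{u_i(b_0)}\times\mathscr{K}_0$ at the chosen basepoint; both amount to the same cycle-space compactness argument, and your version is simply a more explicit unpacking of it.
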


\begin{proof}
	Just combine Theorem \ref{finitethm} and Corollary \ref{2ndcor}, noting that the cardinal of the fiber at $X_0$ of these \'etale morphisms is less than the number of connected components of $\mathcal{T}^\Z_V$. 
\end{proof}

	Let us compute the fibers of the inclusion of $\mathscr{A}_0$ into $\mathscr{T}(M,V)$, resp. of $\mathscr{A}_1$ into $\mathscr{T}(M,V)$ for $X_0$ K\"ahler. Observe that this means computing $g(\mathcal X)$, resp. $g_1(\mathcal X)$, for $\mathcal X$ a family $X_J\to\{J\}$ for $J\in K_0$. For $J=J_0$, it follows from Corollary \ref{interlemma} that there is no point $J$ in $K_0$ such that $X_J$ is $\D$-biholomorphic to $X_0$. Hence, we just have to consider isomorphisms of the family $\mathcal X$, that is automorphisms of $X_0$. We deduce from that the equalities
	\begin{equation}
		\label{calculgptbase}
		g(\mathcal X)=\text{Card }\sharp\Aun=\text{Card } (\Aun/\Azero)
	\end{equation} 
	and
	\begin{equation}
		\label{calculg1ptbase}
		g_1(\mathcal X)=\text{Card } (\Aun/\Aun)=1
	\end{equation} 
	Both computations \eqref{calculgptbase} and \eqref{calculg1ptbase} remain the same for $\mathcal X\to B$ a $\mathscr{A}_1$-family\footnote{ that is a family isomorphic to a family obtained by gluing local pull-backs of the Kuranishi family $\mathscr{K}_0\to K_0$ through a cocycle of morphisms belonging to $\mathcal A_1$.} with connected base and at least one fiber biholomorphic to $X_0$. Indeed, let $b\in B$ such that the $b$-fiber is isomorphic to $X_0$. Then, the family $\mathcal X\to B$ is locally isomorphic at $b$ to $u^*\mathscr{K}_0$ for $u$ a holomorphic map defined on a neighborhood of $b\in B$ with values in $K_0$. Every isomorphism of the family induces an automorphism of the $b$-fiber, that is of $X_0$. Using this point $b$ as $b_0$ in \eqref{RepFP} and \eqref{RepFP1}, and noting that, given any $J\in K_0$, there exists a morphism starting from $J$ in any connected component of $\mathcal A_1$, we are done. 
	
We go back to the case of $\mathcal X$ being a family $X_J\to\{J\}$ for $J\in K_0$. This time, we assume that $J$ is not $J_0$. The intersection of the $\D$-orbit of $J$ with $K_0$ may be positive dimensional, but it consists of a finite number of leaves of the foliation of $K_0$ by Corollary \ref{interlemma2}. Each leaf corresponds to a connected subset of $\mathcal T_V$. Now, since the maps $\text{Hol}_f$ defined in \eqref{Autaction} preserves the foliation of $K_0$, there is an action of $\sharp \Aun$ on the finite set of leaves corresponding to $J$. Then $g(\mathcal X)$ is given by the number of $\sharp\Azero$-orbits of leaves and $g_1(\mathcal X)$ by the number of $\sharp\Aun$-orbits of leaves (recall Remark \ref{rkleavesconn}). 

Let us say that a point $J\in K_0$ is {\slshape generic} if the finite set of leaves corresponding to $J$ has the same cardinal than $\sharp\mathcal T_V$. Equivalently, it is generic if every connected component of $\mathcal T_V $ contains a morphism with source $J$. Then $g(\mathcal X)$ is given by the number of $\sharp\Azero$-orbits in $\sharp\mathcal T_V$ and $g_1(\mathcal X)$ by the number of $\sharp\Aun$-orbits in $\sharp\mathcal T_V$, that is are maximal. This remains true for a family over a connected base with all fibers generic.

We recollect the previous computations in the following proposition. Of course similar calculations hold for $\mathscr{A}_\Z$-families.

\begin{proposition}
	\label{propfiber}
	Assume $X_0$ is K\"ahler. Let $\mathcal X\to B$ be a $\mathscr{A}_1$-family over a connected base. Then,
	\begin{enumerate}[\rm i)]
		\item Assume that at least one fiber of $\mathcal X$ is biholomorphic to $X_0$. Then \eqref{calculgptbase} and \eqref{calculg1ptbase} hold.
		\item Assume that all the fibers are generic. Then $g(\mathcal X)$ is given by the number of $\sharp\Azero$-orbits in $\sharp\mathcal T_V$ and $g_1(\mathcal X)$ by the number of $\sharp\Aun$-orbits in $\sharp\mathcal T_V$, that is are maximal.
	\end{enumerate}
\end{proposition}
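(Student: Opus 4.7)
The plan is to derive Proposition \ref{propfiber} directly from Theorem \ref{finitethm}'s explicit description of the fiber products via \eqref{RepFP} and \eqref{RepFP1}, combined with the K\"ahler finiteness inputs supplied by Corollaries \ref{interlemma}, \ref{interlemma2} and \ref{3rdcor}. I would treat first the model case in which $\mathcal X$ is a single-fiber family, then lift the computation to the family case by exploiting connectedness of $B$ and the cocycle compatibilities \eqref{compatibility1}-\eqref{compatibility2}.

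For part (i), assume first $\mathcal X = X_0\to\{pt\}$. By Corollary \ref{interlemma} (applied through the inclusion $\D\subset\DZ$), no point of $K_0$ distinct from $J_0$ encodes $X_0$ up to $\D$, hence the $s$-fiber of $\mathcal T_V$ at $J_0$ is exactly $\{J_0\}\times\Aun$, whose set $\sharp S_0$ of connected components is $\Aun/\Azero$. Every automorphism of $X_0$ provides a valid isomorphism $\alpha$, so the attainable set $E_0$ of \eqref{RepFP} equals all of $\sharp S_0$, giving $g(\mathcal X)=|\Aun/\Azero|$. For $g_1$, the $(s,1)$-homotopy relation identifies $(J_0,f)$ with $(J_0,g)$ whenever $f^{-1}\circ g\in\mathcal A_1$, and since $\Aun\subset\mathcal A_1$ this collapses the entire $s$-fiber to a single class, yielding $g_1(\mathcal X)=1$. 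For a connected family $\mathcal X\to B$ with some fiber $X_{b_0}$ $\D$-biholomorphic to $X_0$, I would take $b_0$ as the reference point in \eqref{RepFP}-\eqref{RepFP1}; the local trivialization $\mathcal X\simeq u^*\mathscr{K}_0$ near $b_0$ with $u(b_0)=J_0$ identifies the local datum $F_i(b_0)$ with an element of the $s$-fiber at $J_0$, so the same counts $|\Aun/\Azero|$ and $1$ go through without change.

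For part (ii), I would invoke Corollary \ref{interlemma2}: for any $J\in K_0$ the $\D$-orbit of $J$ meets $K_0$ in a finite union of leaves of the Kuranishi foliation, and each leaf sits inside the $t$-image of the $s$-fiber at $J$. Genericity of $J$ means that every connected component of $\mathcal T_V$ (of which there are finitely many by Corollary \ref{3rdcor}) contains a morphism with source $J$, so $\sharp S_0$ is in natural bijection with $\sharp\mathcal T_V$ and, choosing $\alpha$ freely, $E_0=\sharp S_0$. By \eqref{Autaction} and Remark \ref{rkleavesconn} the maps $\text{Hol}_f$ descend to a well-defined action of $\sharp\Aun=\Aun/\Azero$ on $\sharp\mathcal T_V$; the $\mathscr A_0$-equivalence on $E_0$ coincides with the $\sharp\Azero$-orbit relation (trivial by Remark \ref{rkleavesconn}) while the $(s,1)$-homotopy of Lemma \ref{lemma1main} translates into the $\sharp\Aun$-orbit relation. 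The counts are therefore the numbers of $\sharp\Azero$-, resp.\ $\sharp\Aun$-, orbits in $\sharp\mathcal T_V$, and they are maximal because any non-generic $J$ can only reduce the set of connected components being partitioned. Propagation to a connected family with all fibers generic follows from the same connectedness argument used in the proof of Theorem \ref{finitethm}. The main technical subtlety I anticipate is the clean identification of the $\sharp\Aun$-action on $\sharp\mathcal T_V$ with the $(s,1)$-homotopy classes of \eqref{RepFP1}, which requires carefully tracking how the $\text{Hol}_f$ maps interact with leaves when $\Aun/\Azero$ is non-trivial and may permute components of $\mathcal T_V$.
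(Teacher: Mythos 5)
Your part (i) is essentially the paper's own computation: Corollary \ref{interlemma} reduces the $s$-fiber at $J_0$ to $\{J_0\}\times\Aun$, so $\sharp S_0=\Aun/\Azero$ and the $(s,1)$-homotopy relation of Lemma \ref{lemma1main} collapses it to one class, and one then uses the distinguished fiber as the base point $b_0$ in \eqref{RepFP}--\eqref{RepFP1}. The only thing you gloss over ("the counts go through without change") is the attainability over the whole base: to see that every class of $\sharp S_0$ is realized by a \emph{global} isomorphism of families, the paper invokes the fact that from any $J\in K_0$ there is a morphism starting at $J$ in every connected component of $\mathcal A_1$; the extensions $\sigma_f$ of \eqref{sigmaf} are only defined near $J_0$, so this point is not automatic, though it is minor.

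The genuine problem is in part (ii). Your parenthetical reads Remark \ref{rkleavesconn} backwards: that remark says that only elements of $\Azero$ \emph{close to the identity} fix each leaf, while a general element of $\Azero$ may send a leaf to a different leaf; so the $\sharp\Azero$-relation on leaves (equivalently on components of $\mathcal T_V$) is in general \emph{not} trivial, and this is exactly why the Proposition counts $\sharp\Azero$- and $\sharp\Aun$-orbits rather than components. Consequently your intermediate claim that, for generic $J$, the set $\sharp S_0$ of connected components of the $s$-fiber is in natural bijection with $\sharp\mathcal T_V$ is unjustified: by Lemma \ref{lemma1main} an $s$-fiber component is precisely an $\mathcal A_0$-equivalence class, and $\mathcal A_0$ contains the sections $\sigma_f$ (restricted to $\Omega_f$) of non-small $f\in\Azero$, which can join elements of the $s$-fiber whose targets lie in \emph{different} leaves, hence in different components of $\mathcal T_V$. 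The correct picture, which is the paper's, is that the leaf set (identified with $\sharp\mathcal T_V$ by genericity) surjects onto $\sharp S_0$ with fibers the $\sharp\Azero$-orbits, so $g(\mathcal X)=|E_0|=|\sharp S_0|$ is the number of $\sharp\Azero$-orbits; there is no further "$\mathscr A_0$-equivalence on $E_0$" to impose, since by the proof of Theorem \ref{finitethm} the count $g(\mathcal X)$ is already the number of attained $s$-fiber components, the modding out by \eqref{morphisms} having been absorbed there. Your final formula coincides with \eqref{calculgptbase}-type counts only because your two misstatements offset each other under the (incorrect) assumption that the $\sharp\Azero$-action is trivial; as written, the derivation of part (ii), and in particular of the $g_1$ count via the $\sharp\Aun$-action, does not hold up when $\Azero$ or $\Aun/\Azero$ genuinely permutes leaves.
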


%

\subsection{Universality of the Kuranishi stacks revisited}
\label{secuniversal}
By definition, the Teichm\"u\-ller stack $\mathscr{T}(M)$ is universal for reduced $M$-deformations, that is any such family is obtained through an analytic mapping from the base of the family to $\mathscr{T}(M)$ and this mapping is unique up to unique isomorphism. The same is true for $\mathscr{T}(M,V)$ and $V$-families, for $\mathscr{T}^\Z(M)$ and $\Z$-reduced $M$-deformations, for $\mathscr{T}^\Z(M,V)$ and $\Z$-reduced $V$-families. This is one of the main interests in using the stack formalism.

By germifying as in Section \ref{universal}, we may consider the germ of $\mathscr{T}$, resp. of $\TZ$, at a point $J\in\mathcal I(M)$. Then $(\mathscr{T}(M),J)$, resp. $(\mathscr{T}^\Z(M),J)$, is universal for germs of reduced, resp. $\Z$-reduced, $M$-deformations of $X_J$ (compare with Corollary \ref{thmuniversal1}).

Theorem \ref{thmfinite2} 
allows us to go beyond statements on germs and to prove similar results for the Kuranishi stack of a K\"ahler, non-exceptional point. In return, this gives a geometric interpretation of these results.

\begin{proposition}
	\label{propunivK}
	Assume $X_0$ K\"ahler. Then, the Kuranishi stack $\mathscr{A}_1$ is universal for reduced, $V$-families if and only if the base point $X_0$ is not exceptional.
	
	
	The previous statement still holds true replacing $\mathscr{A}_1$ with $\mathscr{A}_\Z$, reduced with $\Z$-reduced, $\Aun$ with $\AZ$.
\end{proposition}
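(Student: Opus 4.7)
The proof is essentially a 2-categorical Yoneda-style argument, with the K\"ahler hypothesis entering through Theorem \ref{thmfinite2}, which ensures that the natural inclusion $\iota: \mathscr{A}_1 \hookrightarrow \mathscr{T}(M,V)$ is finite \'etale. The key observation is that both stacks carry tautological universal reduced $V$-families: $\mathscr{T}(M,V)$ by construction (it is defined as the stack classifying such families), and $\mathscr{A}_1$ via the Kuranishi family $\mathscr{K}_0\to K_0$ glued along $\mathcal{A}_1$-cocycles.

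The ``if'' direction is almost formal. By Definition \ref{defexcpoint}, $X_0$ being non-exceptional means there exists $V$ such that $\iota$ is an isomorphism of analytic stacks. Since $\mathscr{T}(M,V)$ is by definition universal for reduced $V$-families, the isomorphism $\iota$ transports this universal property to $\mathscr{A}_1$.

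For the converse, assume $\mathscr{A}_1$ is universal for reduced $V$-families. The tautological universal $\mathscr{A}_1$-family is in particular a reduced $V$-family, hence by universality of $\mathscr{T}(M,V)$ it is classified by a morphism $\Psi:\mathscr{A}_1\to\mathscr{T}(M,V)$; by construction $\Psi$ is 2-isomorphic to $\iota$, since $\iota$ sends an $\mathcal{A}_1$-cocycle family to the same family regarded as an object of $\mathscr{T}(M,V)$. Conversely, the $\mathscr{T}(M,V)$-universal family is a reduced $V$-family, hence by the assumed universality of $\mathscr{A}_1$ it is classified by a morphism $\Phi:\mathscr{T}(M,V)\to\mathscr{A}_1$. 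The compositions $\iota\circ\Phi$ and $\Phi\circ\iota$ classify the $\mathscr{T}(M,V)$- and $\mathscr{A}_1$-universal families respectively, so by the uniqueness clauses of the two universal properties they are 2-isomorphic to the identities. Therefore $\iota$ is an equivalence, and $X_0$ is not exceptional.

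The $\mathscr{A}_\Z$-version is proven in exactly the same way, replacing $\mathscr{A}_1$ with $\mathscr{A}_\Z$, $\mathscr{T}(M,V)$ with $\mathscr{T}^\Z(M,V)$, $\Aun$ with $\AZ$, and exceptional with $\Z$-exceptional throughout; Theorem \ref{thmfinite2} covers this case as well. The main conceptual subtlety, which the K\"ahler hypothesis makes transparent via the finite \'etale nature of $\iota$, is checking that the classifying morphism produced by the universal property of $\mathscr{T}(M,V)$ from the tautological $\mathscr{A}_1$-family genuinely coincides (up to 2-isomorphism) with the natural inclusion $\iota$ singled out in Definition \ref{defexcpoint}; once this identification is made, the argument reduces to two applications of universality.
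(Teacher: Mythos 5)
Your proposal is correct in substance, and its ``if'' direction is exactly the paper's: non-exceptionality gives (for suitable $V$) an isomorphism $\iota:\mathscr{A}_1\to\mathscr{T}(M,V)$, through which the tautological universality of $\mathscr{T}(M,V)$ is transported. Where you genuinely diverge is the converse. The paper argues contrapositively and concretely: at an exceptional point the non-admissible sequences produce several points in some fibers of the \'etale morphism $\mathscr{A}_1\to\mathscr{T}(M,V)$ over point-families $X_J\to\{J\}$ (this is read off from the fiber description in the proof of Theorem \ref{finitethm} and the counts of Proposition \ref{propfiber}), so the uniqueness clause of universality fails for those families. You instead argue directly that universality of $\mathscr{A}_1$ forces $\iota$ to be an equivalence by a Yoneda-style double-classification. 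Both routes are valid; the paper's buys an explicit witness of the failure of uniqueness (and ties it to the invariant $g_1$), while yours makes clear that, once the universal property is unwound, ``$\mathscr{A}_1$ universal for reduced $V$-families'' is essentially synonymous with ``$\iota$ is an isomorphism''.

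Two points deserve more care in your write-up. First, the universal property as formulated in \S\ref{secuniversal} concerns families over bases in $\mathfrak{S}$, so you cannot literally feed it a ``tautological universal family over the stack'' $\mathscr{A}_1$ or $\mathscr{T}(M,V)$ to produce $\Psi$ and $\Phi$; either argue fiberwise --- universality of $\mathscr{A}_1$ says precisely that $\iota_B:\mathscr{A}_1(B)\to\mathscr{T}(M,V)(B)$ is an equivalence of groupoids for every $B\in\mathfrak{S}$, which already makes $\iota$ an equivalence of stacks over $\mathfrak{S}$ --- or descend the classifying map along an atlas and use the uniqueness clause to verify the cocycle condition. Second, the K\"ahler hypothesis does not enter your argument through the finiteness of Theorem \ref{thmfinite2}: your formal argument never uses finiteness, and the identification of $\Psi$ with $\iota$ is tautological. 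Its real role (and the reason the paper states the proposition only in the K\"ahler case) is to pin down a single neighborhood $V$: by Corollaries \ref{3rdcor} and \ref{5thcor} the components of $\mathcal{T}_V$ stabilize under shrinking, so ``$X_0$ not exceptional'' becomes equivalent to ``$\iota$ is an isomorphism for the fixed $V$ of the statement'' rather than for some unspecified smaller neighborhood.
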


 Now, Proposition \ref{propunivK} tells us that an exceptional point is a point whose Kuranishi stack lacks of universality. Firstly, this lack of universality is a lack of completeness. It is possible that some reduced, $V$-families do not belong to $\mathscr{A}_1$.  Secondly, it is a lack of unicity. Indeed, given $\mathcal{X}\to B$ a family belonging to $\mathscr{A}_1$, the number $g_1(\mathcal X)$ gives exactly the number of different ways for obtaining $\mathcal X$ from $\mathscr{A}_1$. It is interesting however to observe from Proposition \ref{propfiber} that $g_1(\mathcal X)$ is equal to one for all $\mathscr{A}_1$-families with a fiber biholomorphic to $X_0$. Hence we automatically have universality for this restricted type of families.
 
%
 
 \begin{proof}
 	Since the Teichm\"uller stack $\mathscr{T}(M,V)$ enjoys both universal properties of Proposition \ref{propunivK}, so does $\mathscr{A}_1$ when $X_0$ is K\"ahler  and not exceptional.
 	
 	If $X_0$ is exceptional, then some fibers of the morphism $\mathscr{A}_1\to\mathscr{T}(M,V)$ contain several points. In other words, the uniqueness property of universality is not true for some families $X_J\to \{J\}$.

 	The proof in the $\mathscr{A}_\Z$ case follows exactly the same line of arguments.
 \end{proof}

 	We go back to statements about germs. Given $J\in K_0$, the germ of Kuranishi stack $(\mathscr{A}_1,J)$ is universal for germs of reduced, $M$-deformations of $X_J$ if and only if the isotropy group at $J$ is $\text{\rm Aut}^1(X_J)$. Let us make precise that, in this statement, $(\mathscr{A}_1,J)$ is the Kuranishi stack of $X_0$ but germified at $X_J$, for $J\in K_0$. It is not the Kuranishi stack of $X_J$.
 	
 	This comes from the fact that, since $\mathscr{K}_0\to K_0$ is complete at any point $J\in K_0$, the only requirement to have universality of $(\mathscr{A}_1,J)$ is to have all the morphisms of germs of reduced $M$-deformations of $X_J$, that is that the isotropy group at $J$ of $\mathscr{A}_1$ is $\text{Aut}^1(X_J)$ (compare with the proof of Corollary \ref{thmuniversal1}).
 	
 	Let us now deal with the $\mathscr{A}_0$-case. Here there is no condition for the universality property at each point. To be more precise,
 	assume $X_0$ K\"ahler. If $V$ is small enough, then, for all $J\in K_0$, the germ of Kuranishi stack $(\mathscr{A}_0,J)$ is universal for germs of $0$-reduced, $M$-deformations of $X_J$.
 	
 	To prove this, we argue as above, and we obtain that, given $J\in K_0$, the germ of Kuranishi stack $(\mathscr{A}_0,J)$ is universal for germs of $0$-reduced, $M$-deformations of $X_J$ if and only if the isotropy group at $J$ is $\text{Aut}^0(X_J)$. But Lemma \ref{lemmaautcontained} shows that this is always the case.
 		

\subsection{The Teichm\"uller stack as an orbifold}

As second application of Theorem \ref{thmfinite2}, we deal with the orbifold case.

\begin{theorem}
	\label{mainorbifoldthm}
	Assume $X_0$ K\"ahler. Then, the following statements are equivalent
	\begin{enumerate}[\rm i)]
		\item The exists some open set $V$ of $\I$ such that $\mathscr{T}(M,V)$ is an orbifold.
		\item The exists some open set $V$ of $\I$ such that $\mathscr{A}_1$ is an orbifold and $X_0$ is not exceptional.
		\item $\Aun$ is finite and $X_0$ is not exceptional.
		\item $\Azero$ is trivial and $X_0$ is not exceptional.
	\end{enumerate}
\end{theorem}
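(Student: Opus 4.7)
The plan is to show the chain of equivalences $\mathrm{(iv)}\Leftrightarrow\mathrm{(iii)}\Leftrightarrow\mathrm{(ii)}\Leftrightarrow\mathrm{(i)}$, with the nontrivial content concentrated in the last step. The first three equivalences are essentially bookkeeping: the equivalence $\mathrm{(ii)}\Leftrightarrow\mathrm{(iii)}$ is a direct application of Corollary \ref{Kur1stackorbifold}, since the condition on $X_0$ being non-exceptional appears identically in both statements. For $\mathrm{(iii)}\Leftrightarrow\mathrm{(iv)}$, I would use Kählerianity crucially: $\Azero$ is a connected complex Lie group contained in $\Aun$, so if $\Aun$ is finite then $\Azero$ is trivial, and conversely, by Lieberman's theorem (cited in \S\ref{Lieb}, see also \cite{Lieberman}), $\A/\Azero$ is finite when $X_0$ is K\"ahler, hence $\Azero$ trivial forces $\A$ (and thus $\Aun$) to be finite.

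The remaining step is $\mathrm{(i)}\Leftrightarrow\mathrm{(ii)}$. The direction $\mathrm{(ii)}\Rightarrow\mathrm{(i)}$ is immediate: if $X_0$ is not exceptional, choose $V$ with $\mathscr{A}_1\to\mathscr{T}(M,V)$ an isomorphism, shrinking further if necessary so that $\mathscr{A}_1$ is an orbifold; then $\mathscr{T}(M,V)$ inherits the orbifold structure. The interesting direction is $\mathrm{(i)}\Rightarrow\mathrm{(ii)}$. Assuming $\mathscr{T}(M,V)$ is an orbifold, its isotropy at $X_0$ is $\Aun$ (by \eqref{Aut1def} and the discussion in \S\ref{stack}), so $\Aun$ must be finite; Corollary \ref{Kur1stackorbifold} then yields that $\mathscr{A}_1$ is an orbifold after possibly shrinking $V$. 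It remains to verify that $X_0$ is not exceptional, i.e.\ that the natural morphism $\mathscr{A}_1\to\mathscr{T}(M,V)$ is an isomorphism for a small enough $V$.

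The key input here is Theorem \ref{thmfinite2}: in the K\"ahler setting this morphism is finite and \'etale. To upgrade this to an isomorphism near $X_0$, I would examine its fiber over $X_0$. By the explicit computation in Proposition \ref{propfiber}~i), taking $\mathcal X$ to be the one-fiber family $X_0\to\{pt\}$, we have $g_1(\mathcal X)=1$, so the fiber of $\mathscr{A}_1\to\mathscr{T}(M,V)$ over the point $X_0$ consists of a single object with full isotropy $\Aun$. A finite \'etale representable morphism between analytic orbifolds whose fiber over a point is a single isotropic copy of the base point is a local isomorphism at that point; hence, after shrinking $V$ to a saturated neighborhood of $X_0$, the morphism becomes an isomorphism on all of $V$, and $X_0$ is not exceptional. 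This gives $\mathrm{(i)}\Rightarrow\mathrm{(ii)}$ and closes the loop.

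The main obstacle, and the step requiring the most care, is the passage from ``fiber of cardinality one over $X_0$'' to ``isomorphism on a neighborhood'' within the stacky category: one must ensure that shrinking $V$ simultaneously shrinks $K_0$ and the admissibility conditions on $\mathcal A_1$ in a compatible way, so that no new connected components of $\mathcal{T}_V$ survive apart from the $A^1$-components touching $J_0$. Corollary \ref{3rdcor}~iv) is precisely what makes this legitimate: in the K\"ahler case, for $V$ small enough the number of irreducible components of $\mathcal{T}_V$ stabilizes and is in bijection with those of $\mathscr{C}'$, so shrinking cannot introduce or hide components, and the local isomorphism at $X_0$ does propagate to a global isomorphism on a suitably chosen $V$.
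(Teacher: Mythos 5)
Your treatment of (ii)$\Leftrightarrow$(iii) (via Corollary \ref{Kur1stackorbifold}), of (iii)$\Leftrightarrow$(iv) (finite index of $\Azero$ in $\Aun$ in the K\"ahler case), and of (ii)$\Rightarrow$(i) matches the paper's argument. The gap is in your direction (i)$\Rightarrow$(ii), at the step ``fiber of cardinality one over $X_0$ plus finite \'etale $\Rightarrow$ local isomorphism at $X_0$, hence $X_0$ not exceptional.'' Note that the two inputs you use there --- Theorem \ref{thmfinite2} and Proposition \ref{propfiber}~i) with $g_1(X_0\to\{pt\})=1$ --- hold for \emph{every} K\"ahler point, with no orbifold hypothesis (Proposition \ref{propfiber}~i) only uses Corollary \ref{interlemma}). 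So if your inference were valid it would show that no K\"ahler point is exceptional, i.e.\ it would prove part I of Conjecture \ref{mainconj}, which the paper explicitly leaves open (Theorem \ref{2ndmainthm} being the best available result). The inference fails because, in this stacky setting, ``finite \'etale'' does not give a locally constant fiber cardinality: the fiber $B\times_u\mathscr{A}_1$ depends on the family $u$, and while $g_1=1$ for any family having a fiber biholomorphic to $X_0$, Proposition \ref{propfiber}~ii) shows $g_1$ is the (possibly larger) number of $\sharp\Aun$-orbits in $\sharp\mathcal{T}_V$ for families with generic fibers. Exceptionality is detected precisely by morphisms between \emph{nearby} fibers (sequences \eqref{phin} lying in exceptional components of $\mathcal{T}_V$) which are invisible in the fiber over the point $X_0$ itself; a hypothetical exceptional K\"ahler point would still have one-point fiber over $X_0$ and a finite \'etale comparison morphism, yet no neighborhood on which it is an isomorphism. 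Corollary \ref{3rdcor}~iv) does not repair this: it only says the set of components of $\mathcal{T}_V$ stabilizes under shrinking $V$; it does not make exceptional components disappear.

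The structural symptom is that your non-exceptionality argument never actually uses hypothesis (i). The paper's proof does: assuming $\mathscr{T}(M,V)$ is an orbifold, a finite group acts on $K_0$ fixing $J_0$ and $\mathcal{T}_V$ encodes exactly the orbits of this action; hence the leaves of the foliation of $K_0$ are $0$-dimensional, so by \cite{ENS} the function \eqref{h0} is constant, and a counting argument (Proposition \ref{propfiber} combined with Remark \ref{rkleavesconn}) shows the acting group has the same cardinality as $\sharp\mathcal{T}_V$. If $X_0$ were exceptional, the exceptional components would contribute to $\sharp\mathcal{T}_V$ without yielding automorphisms at $0$, so $\sharp\mathcal{T}_V$ could not coincide with the stabilizer of $X_0$ --- contradiction. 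That is the missing ingredient you need in place of the degree-one argument.
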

\noindent Remark \ref{precisestatement} applies also here for the choice of $V$. We also have

\begin{corollary}
	\label{mainorbifoldthmZcase}
	Assume $X_0$ K\"ahler. Then, the following statements are equivalent
	\begin{enumerate}[\rm i)]
		\item The exists some open set $V$ of $\I$ such that $\mathscr{T}^\Z(M,V)$ is an orbifold.
		\item The exists some open set $V$ of $\I$ such that $\mathscr{A}_\Z$ is an orbifold and $X_0$ is not $\Z$-exceptional.
		\item $\AZ$ is finite and $X_0$ is not $\Z$-exceptional.
		\item $\Azero$ is trivial and $X_0$ is not $\Z$-exceptional.
	\end{enumerate}
\end{corollary}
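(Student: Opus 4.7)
The plan is to mirror, \emph{mutatis mutandis}, the proof of Theorem \ref{mainorbifoldthm}, replacing every occurrence of $\Aun$, $\mathscr{A}_1$, $\mathscr{T}(M,V)$, and ``exceptional'' by their $\Z$-analogues, and invoking the $\Z$-versions of the tools that were used there. All of these $\Z$-tools are available: Theorem \ref{thmfinite2} gives that $\mathscr{A}_\Z \to \mathscr{T}^\Z(M,V)$ is a finite \'etale morphism in the K\"ahler setting, Corollary \ref{KurZstackorbifold} characterises when $\mathscr{A}_\Z$ is an orbifold, and Definition \ref{defexcpoint} fixes the meaning of $\Z$-exceptional.

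First I would treat the equivalence (i) $\Leftrightarrow$ (ii). The implication (ii) $\Rightarrow$ (i) is immediate from the definition of $\Z$-exceptional: if $X_0$ is not $\Z$-exceptional then, for some small enough $V$, the natural inclusion $\mathscr{A}_\Z \hookrightarrow \mathscr{T}^\Z(M,V)$ is an isomorphism of stacks, transporting the orbifold structure. For (i) $\Rightarrow$ (ii), assume $\mathscr{T}^\Z(M,V)$ is an orbifold. By Theorem \ref{thmfinite2} the inclusion $\mathscr{A}_\Z \hookrightarrow \mathscr{T}^\Z(M,V)$ is representable, finite and \'etale; a finite \'etale cover of an orbifold is an orbifold, so $\mathscr{A}_\Z$ is also an orbifold. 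Moreover the isotropy group of $X_0$ in $\mathscr{T}^\Z(M,V)$ is $\text{Aut}^\Z(X_0)$, which must be finite (it coincides with that of $\mathscr{A}_\Z$ at $X_0$); and shrinking $V$ we may invoke Corollary \ref{3rdcor} to reduce $\mathcal{T}^\Z_V$ to a finite number of irreducible components, each of which is either an $A^\Z$-component or an exceptional one. The orbifold hypothesis on $\mathscr{T}^\Z(M,V)$ forces the fibre of $\mathscr{A}_\Z \to \mathscr{T}^\Z(M,V)$ at $X_0$ to reduce to a single point (otherwise the isotropy group at $X_0$ in $\mathscr{A}_\Z$ would be a strict subgroup of the isotropy group in $\mathscr{T}^\Z(M,V)$, but the counting argument of Proposition \ref{propfiber} forces these cardinalities to coincide only in the absence of exceptional components). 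Hence no exceptional $A^\Z$-free component survives, so $X_0$ is not $\Z$-exceptional and the inclusion is indeed an isomorphism on some neighbourhood, yielding (ii).

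Next (ii) $\Leftrightarrow$ (iii) is a direct transcription of Corollary \ref{KurZstackorbifold}: $\mathscr{A}_\Z$ restricted to a sufficiently small neighbourhood is an orbifold precisely when $\text{Aut}^\Z(X_0)$ is finite. Finally, for (iii) $\Leftrightarrow$ (iv), the crucial input is K\"ahlerianity. The inclusion $\Azero \subset \AZ$ shows that (iv) $\Rightarrow$ (iii) implies nothing cheaply; but in the K\"ahler setting Lieberman's compactness result (Proposition \ref{Liebprop} applied to $\mathcal{X}_0=\mathcal{X}_1=X_0$, as exploited in \S \ref{Lieb}) yields that $\AZ$ has only finitely many connected components. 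Consequently $\AZ$ is finite if and only if its identity component $\Azero$ is trivial, closing the loop. Combined with ``$X_0$ not $\Z$-exceptional'' unchanged on both sides, this gives (iii) $\Leftrightarrow$ (iv).

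The main obstacle is the subtle direction (i) $\Rightarrow$ ``not $\Z$-exceptional'' inside (ii): a finite \'etale cover of an orbifold is automatically an orbifold, but one has to rule out the possibility that the orbifold structure of $\mathscr{T}^\Z(M,V)$ hides extra sheets coming from exceptional components in $\mathcal{T}^\Z_V$. The resolution is the precise fibre count of Theorem \ref{thmfinite2}/Proposition \ref{propfiber} together with the compactness / finiteness results of \S \ref{Lieb} (Corollaries \ref{2ndcor}, \ref{3rdcor}, \ref{4thcor}, \ref{5thcor}) which show that in the K\"ahler setting exceptional points are precisely the obstruction to $\mathscr{A}_\Z \to \mathscr{T}^\Z(M,V)$ being an isomorphism on a small neighbourhood.
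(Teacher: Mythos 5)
Your overall plan (transcribe the proof of Theorem \ref{mainorbifoldthm} with $\Z$-decorations, using Theorem \ref{thmfinite2}, Corollary \ref{KurZstackorbifold} and the finiteness results of \S\ref{Lieb}) is the paper's plan, and your treatment of (ii)$\Rightarrow$(i), (ii)$\Leftrightarrow$(iii) and (iii)$\Leftrightarrow$(iv) is correct: the last equivalence rests, as in the paper, on the fact that $\Azero$ has finite index in $\AZ$ in the K\"ahler case, so that $\AZ$ finite is equivalent to $\Azero$ trivial.

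The gap is in the only delicate step, namely (i)$\Rightarrow$``$X_0$ is not $\Z$-exceptional''. Your mechanism is to detect exceptionality at the point $X_0$ itself, via the fibre of $\mathscr{A}_\Z\to\mathscr{T}^\Z(M,V)$ at $X_0$ and a comparison of isotropy groups there. But exceptionality is invisible at $X_0$: the isotropy group of $X_0$ is $\AZ$ in \emph{both} stacks (every automorphism of the central fibre is $(V,\mathcal D_\Z)$-admissible), and in the K\"ahler case the fibre of the \'etale morphism at the family $X_0\to pt$ is always a single point, by the $\Z$-analogue of \eqref{calculg1ptbase} together with Corollary \ref{interlemma} --- whether or not $X_0$ is $\Z$-exceptional. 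By Definition \ref{defex}, an exceptional component does not meet $\AZ$ after restriction to small $V_n$; it only produces extra morphisms between points of $K_0$ \emph{near} $J_0$, so no contradiction can be extracted from the fibre or the isotropy at $X_0$ alone. The paper's argument instead exploits the orbifold presentation globally on $K_0$: if $\mathscr{T}^\Z(M,V)$ is an orbifold, a finite group acts on $K_0$ fixing $J_0$ and $\mathcal T^\Z_V$ encodes exactly its orbits; hence the leaves of the foliation of $K_0$ are $0$-dimensional and, by \cite[Thm.~2]{ENS}, the function \eqref{h0} is constant. Then Proposition \ref{propfiber} (the computation at \emph{generic} points) combined with Remark \ref{rkleavesconn} shows that the order of this finite group equals the number of connected components $\sharp\mathcal T^\Z_V$. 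Since the group fixes $J_0$, it must coincide with the stabilizer $\AZ$ of the base structure; but an exceptional component contributes to $\sharp\mathcal T^\Z_V$ without contributing any automorphism at $J_0$, so its existence would force the group to be strictly larger than the stabilizer --- contradiction. You would need this (or an equivalent) argument at nearby generic points; the fibre count at $X_0$ that you invoke cannot do the job.
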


\begin{proof}
	We only prove Theorem \ref{mainorbifoldthm}. Assume ii). 
	Then, $\mathscr{T}(M,V)$ is isomorphic to $\mathscr{A}_1$, so is an orbifold and i) is proved. Assume i). Then a finite group acts on $K_0$ stabilizing $J_0$ and $\mathcal T_V$ encodes the orbits of this action. As a consequence, the leaves of the foliation of $K_0$ are $0$-dimensional, hence, by Theorem 2 of \cite{ENS}, the dimension of the automorphism group of the fibers of the Kuranishi family is constant. Combining Proposition \ref{propfiber} and Remark \ref{rkleavesconn}, this implies that this group has the same cardinal as $\sharp\mathcal T_V$. Now, if $X_0$ is exceptional, the finite group $\sharp\mathcal T_V$ is not the stabilizer of the base structure $X_0$, since the exceptional components do not yield automorphisms at $0$. So $X_0$ is not exceptional, and ii) follows.
	This proves that i) and ii) are equivalent. Then, ii) and iii) are equivalent by Corollary \ref{thmuniversal1}. Finally, iii) and iv) are equivalent because of the fact that $\Azero$ has finite index in $\Aun$ in the K\"ahler case.
\end{proof}

\section{Distribution of exceptional points}
\label{secdistri}
Our next task is to understand how the exceptional points are distributed, that is in which sense they are rare. Let $V_K$ be the open set of K\"ahler points of $\I$ (recall footnote \ref{fnoteKahler}). We shall prove

\begin{theorem}
	\label{2ndmainthm}
	The closure of exceptional points of the Teichm\"uller stack $\mathscr{T}(M,V_K)$ of K\"ahler structures form a strict analytic substack $\mathscr{E}(M)$ of $\mathscr{T}(M, V_K)$. 
	
	The same is true for $\Z$-exceptional points that form  a strict analytic substack $\mathscr{E}^\Z(M)$ of $\mathscr{T}^\Z(M, V_K)$. 
\end{theorem}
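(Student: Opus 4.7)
The plan is to work chart-by-chart in the Kuranishi atlas of $\mathscr{T}(M,V_K)$ and realise the exceptional locus locally as a finite union of images of cycle-space components. Fix a K\"ahler base point $X_0=(M,J_0)\in V_K$ with Kuranishi family $\mathscr{K}_0\to K_0$; by Corollary \ref{3rdcor} we may take the neighborhood $V$ small enough that the list of irreducible components of $\mathcal{T}_V$ stabilises. Form the relative Barlet cycle space $\pi\colon\mathcal{D}\to K_0\times K_0$ of $\mathscr{K}_0\times\mathscr{K}_0\to K_0\times K_0$, and restrict it above a compact neighborhood of $(J_0,J_0)$. Applying Proposition \ref{Liebprop} to $\mathcal{X}_0=\mathcal{X}_1=\mathscr{K}_0$, the restricted cycle space has only finitely many irreducible components $\mathcal{C}_1,\dots,\mathcal{C}_N$, each projection $\pi_j:=\pi|_{\mathcal{C}_j}$ is proper, and by the discussion of \S\ref{Lieb} each $\mathcal{C}_j$ is either an $A^1$-component, an exceptional component, or a completely singular one.

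By Corollary \ref{5thcor}, a point $J\in K_0$ is exceptional if and only if there exists an exceptional component $\mathcal{C}_j$ with $(J,J)\in\pi_j(\mathcal{C}_j)$. Setting $F_j:=\pi_j(\mathcal{C}_j)\cap\Delta\subset\Delta\simeq K_0$, a closed analytic subset of $K_0$ by Remmert's proper mapping theorem, the local exceptional locus is the finite union
\[
E_{K_0}:=\bigcup_{\mathcal{C}_j\text{ exceptional}}F_j,
\]
hence closed analytic. Since exceptionality is an intrinsic property of the complex manifold $X_J$, independent of the choice of Kuranishi chart, the loci $E_{K_0}$ are stable under the groupoid equivalences in the atlas of $\mathscr{T}(M,V_K)$ and therefore glue to a closed analytic substack $\mathscr{E}(M)\hookrightarrow\mathscr{T}(M,V_K)$.

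The crux is strictness: each $F_j$ with $\mathcal{C}_j$ exceptional is a proper analytic subset of $K_0$. I bound the dimension of $\pi_j(\mathcal{C}_j)$ via the first projection $p_1\colon K_0\times K_0\to K_0$. By Corollary \ref{interlemma2}, for each $J\in K_0$ the fiber $(p_1\circ\pi_j)^{-1}(J)$ consists of cycles living above pairs $(J,J')$ with $J'$ in the intersection of the $\D$-orbit of $J$ with $K_0$, which is a finite union of leaves of the stratified foliated structure of $K_0$ from \S\ref{secjumping}; at non-jumping $J$, which form a Zariski-open dense subset by upper semicontinuity of $h^0$, these leaves are zero-dimensional by Lemma \ref{lemmaHolWavrik}. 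A generic fiber of $p_1|_{\pi_j(\mathcal{C}_j)}$ is therefore zero-dimensional, yielding $\dim\pi_j(\mathcal{C}_j)\leq\dim K_0$. Now $\mathcal{C}_j$ being exceptional forces its generic cycle to be the graph of a biholomorphism between two \emph{distinct} fibers, so $\pi_j(\mathcal{C}_j)\not\subseteq\Delta$; if it contained $\Delta$ nevertheless, the dimension bound together with irreducibility of $\pi_j(\mathcal{C}_j)$ would force $\pi_j(\mathcal{C}_j)=\Delta$, turning the generic cycle of $\mathcal{C}_j$ into the graph of an automorphism of a fiber and identifying $\mathcal{C}_j$ as an $A^1$-component --- contradicting exceptionality. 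Hence $F_j\subsetneq\Delta$ and $E_{K_0}$ has positive codimension in $K_0$.

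The $\Z$-version of the statement follows verbatim after replacing $\D$, $\Aun$, $\mathcal{A}_1$, $\mathcal{T}_V$ and $A^1$-components by $\DZ$, $\AZ$, $\mathcal{A}_\Z$, $\mathcal{T}^\Z_V$ and $A^\Z$-components, respectively; crucially Proposition \ref{Liebprop} is already stated for biholomorphisms inducing the identity in $\Z$-cohomology, so no modification is needed. The main obstacle is the dimension control in the third paragraph: analyticity of $E_{K_0}$ is an essentially formal consequence of Lieberman plus Remmert, but the bound $\dim\pi_j(\mathcal{C}_j)\leq\dim K_0$ requires careful handling of the foliated structure of $K_0$ near the jumping locus, where leaves may become positive-dimensional. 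The K\"ahler hypothesis enters decisively through Corollary \ref{interlemma2}, which in turn rests on Lieberman's compactness; without K\"ahlerianity, identification orbits may trace out infinite-dimensional patterns as witnessed by Theorem \ref{thmexampleexc}, and the whole strategy collapses, precisely in line with Conjecture \ref{mainconj}(II).
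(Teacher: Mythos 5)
The central gap is in your identification of the exceptional locus near $X_0$. Corollary \ref{5thcor} characterizes exceptionality of the \emph{base point} $X_0$ only; it does not say that a nearby $J\in K_0$ is exceptional if and only if $(J,J)\in\pi_j(\mathcal{C}_j)$ for some component $\mathcal{C}_j$ that is exceptional at $X_0$. The dichotomy ``$A^1$-component versus exceptional component'' is relative to the base point and does not propagate along $K_0$: an exceptional component at $X_0$ may meet the fiber over $(J,J)$ only in regular cycles (graphs of elements of $\text{Aut}^1(X_J)$, which witness nothing exceptional at $J$), while an $A^1$-component at $X_0$ may become exceptional at $J$ --- this is exactly the content of Remark \ref{exexceptional}. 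The correct local condition (Corollary \ref{2ndcor}, iv), transported to the base point $J$) is that some component of $\mathscr{C}'$ has a non-empty fiber over $(J,J)$ containing a \emph{connected component consisting only of singular cycles}. This is not a ``membership in a proper image'' condition, so Remmert alone does not give analyticity; it is a priori only constructible (one must control how the number of connected components of the fibers of $p$ varies), and this is why only the \emph{closure} of the exceptional set is asserted to be analytic --- your $E_{K_0}$ would make the exceptional set itself closed analytic, which is one of the questions the paper explicitly leaves open. In general your $E_{K_0}$ neither contains nor is contained in the true exceptional locus, so both the analyticity and the gluing steps are built on the wrong set.

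The strictness step also has a gap at its final contradiction. Even granting the dimension bound (which itself needs care: points of $\pi_j(\mathcal{C}_j)$ over a given $J$ may come from \emph{singular} cycles, whose second projection need not lie in the $\D$-orbit of $J$, so one must argue on the Zariski-dense regular locus before invoking Corollary \ref{interlemma2}), the implication ``$\pi_j(\mathcal{C}_j)=\Delta$ makes the generic cycle the graph of an automorphism of a fiber, hence $\mathcal{C}_j$ is an $A^1$-component'' does not follow: an $A^1$-component must contain a connected component of $\Aun$, i.e.\ automorphisms of the \emph{central} fiber, and graphs of automorphisms of nearby fibers $X_J$ may degenerate to singular cycles over $(J_0,J_0)$; excluding this degeneration is precisely the hard point. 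Likewise your claim that exceptionality forces the generic cycle to be a graph between two \emph{distinct} fibers is unsupported, since a sequence \eqref{phin} may have $x_n=y_n$. The paper closes this step by a global argument: if the closure of the exceptional set filled a component of $K_0$, then $p(\mathcal{C})$ would strictly contain the diagonal, producing positive-dimensional families of isomorphic fibers, hence a jump of $h^0$, at \emph{every} point of $K_0$ (completeness of $\mathscr{K}_0$ at every point is used to move the argument from $J_0$ to arbitrary $J$), contradicting Zariski upper semicontinuity of $h^0$. Some input of this kind is still needed to make your strictness argument work.
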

\noindent and its immediate Corollary
\begin{corollary}
	\label{Cor2ndmain}
	Normal K\"ahler points fill a Zariski open substack of the Teichm\"uller stack $\mathscr{T}(M,V_K)$ of K\"ahler structures.
	
	$\Z$-normal K\"ahler points fill a Zariski open substack of the $\Z$-Teichm\"uller stack $\mathscr{T}^\Z(M,V_K)$ of K\"ahler structures.
\end{corollary}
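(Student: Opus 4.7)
The plan is to reduce to a local statement in a Kuranishi atlas and then realize the exceptional locus as a locally finite union of analytic projections coming from the relative Barlet space of the Kuranishi family, exploiting Lieberman's compactness (Proposition~\ref{Liebprop}) together with Corollary~\ref{5thcor} in the Kähler setting.

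First I would fix a Kähler point $X_0\in V_K$ with Kuranishi space $K_0$ and family $\mathscr{K}_0\to K_0$; since $\mathscr{T}(M,V_K)$ admits an atlas made of a (countable) union of such Kuranishi spaces, it suffices to show that $\overline{E}\subset K_0$ is a strict analytic subspace, where $E:=\{J\in K_0:X_J \text{ is exceptional}\}$. Next, I would introduce the relative Barlet space $\mathcal{B}\to K_0\times K_0$ parameterizing the completely singular and generically regular components of cycles in the fibers of $\mathscr{K}_0\times\mathscr{K}_0\to K_0\times K_0$, as in \S\ref{secintrocycles}. By the relative form of Proposition~\ref{Liebprop}, justified by the K\"ahler hypothesis on $V_K$, for every compact $K\subset K_0\times K_0$ only finitely many irreducible components of $\mathcal{B}$ meet the preimage of $K$, and on each such component $\mathcal{D}$ the map $s\times t:\mathcal{D}\to K_0\times K_0$ is proper over compacts, hence has closed analytic image by Remmert's proper mapping theorem.

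Each such component $\mathcal{D}$ is classified as $A^1$-type or exceptional according to Definitions~\ref{defA1type}--\ref{defex}, and by Corollary~\ref{5thcor} a Kähler point $X_J$ is exceptional if and only if there exists an exceptional component $\mathcal{D}$ whose fiber over $(J,J)\in\Delta$ is non-empty. For each exceptional $\mathcal{D}$ I set
\[
F_\mathcal{D}\ :=\ \pi_\Delta\bigl((s\times t)(\mathcal{D})\cap\Delta\bigr),
\]
where $\pi_\Delta:\Delta\xrightarrow{\sim} K_0$ is the diagonal projection. Properness of $s\times t$ on $\mathcal{D}$ combined with Remmert's theorem shows $F_\mathcal{D}$ is a closed analytic subspace of $K_0$, and local finiteness of the collection of exceptional components gives
\[
E\ =\ \bigcup_{\mathcal{D}\text{ exceptional}} F_\mathcal{D},
\]
locally a finite union of closed analytic subspaces, whence $\overline{E}$ is an analytic subspace of $K_0$. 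The $\Z$-case is handled by replacing $\Aun$ by $\AZ$ and $\D$ by $\DZ$ throughout, using Proposition~\ref{propcompareZextoex} and the $\Z$-variants of the cited results.

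The main obstacle is to prove strictness, i.e.\ $F_\mathcal{D}\subsetneq K_0$ for every exceptional $\mathcal{D}$. The delicate case is when $(s\times t)(\mathcal{D})$ contains the whole diagonal $\Delta$ — a priori an exceptional component could sit entirely over $\Delta$, parameterizing a family of automorphisms of nearby fibers lying outside $\Aun$. I would exclude this by invoking Lieberman's control of automorphism groups in the K\"ahler regime: $\text{Aut}(X_J)/\text{Aut}^0(X_J)$ is finite and varies semi-continuously in $J$, so a component of graphs of automorphisms that lies everywhere outside $\Aun(X_J)$ would, after passing to a dense open subset of $K_0$, meet $\Aun$ on a Zariski-dense subset, contradicting condition~(i) of Definition~\ref{defex}. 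Once strictness is established in each Kuranishi chart, compatibility under the étale atlas of $\mathscr{T}(M,V_K)$ (resp.\ $\mathscr{T}^\Z(M,V_K)$) lets the $F_\mathcal{D}$ glue into the desired strict analytic substack $\mathscr{E}(M)$ (resp.\ $\mathscr{E}^\Z(M)$), whence Corollary~\ref{Cor2ndmain} follows by taking complements.
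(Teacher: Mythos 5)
Your overall framework (reduce to a Kuranishi chart, use the relative Barlet space and Lieberman compactness, realize the exceptional locus via the projection to $K_0\times K_0$) is the same as the paper's proof of Theorem \ref{2ndmainthm}, from which the Corollary is then immediate by taking complements; but two steps of your argument have genuine gaps. First, the identification $E=\bigcup_{\mathcal D}F_{\mathcal D}$ with $F_{\mathcal D}=\pi_\Delta\bigl((s\times t)(\mathcal D)\cap\Delta\bigr)$ is incorrect. Exceptionality of a nearby point $X_J$ is \emph{not} equivalent to non-emptiness of the fiber over $(J,J)$ of a component that is exceptional at $X_0$: by Corollary \ref{2ndcor} (applied at $J$), what is required is that some \emph{connected component} of $p^{-1}(J,J)\cap\mathcal C$ consist entirely of singular cycles. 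A component which is exceptional at $X_0$ may very well meet the fiber over $(J,J)$ in regular cycles; these are then automorphisms of $X_J$ (a component of $\text{Aut}^1(X_J)$ not induced by $\Aun$, cf. Remark \ref{exexceptional}) and do not make $X_J$ exceptional. So your union overshoots $E$, and proving that this larger set is analytic does not prove that $\overline{E}$ is. This is precisely why the paper does not use Remmert images $p(\mathcal D)\cap\Delta$ but the differences $\Delta\cap\bigl(p(A_i)\setminus p(A_i\setminus S)\bigr)$, together with a further decomposition into pieces $A_{ij}$ over which the fibers of $p$ remain connected, to encode the condition ``some connected component of the fiber lies in $S$''; the resulting set is constructible and its closure is analytic.

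Second, your strictness argument addresses the wrong dangerous configuration. The case to exclude is not a component of automorphism graphs sitting over the diagonal outside $\Aun$; it is an exceptional component $\mathcal C$ whose generic members are graphs of biholomorphisms between \emph{distinct} fibers, while over every diagonal point $(J,J)$ the fiber retains a completely singular connected component. Your appeal to finiteness and ``semi-continuity'' of $\text{Aut}(X_J)/\text{Aut}^0(X_J)$ is neither established in the paper nor sufficient to rule this out. The paper's argument is different: if $E^c$ filled a component of $K_0$, then $p(\mathcal C)$ would strictly contain the diagonal and have dimension $>\dim K_0$, so through a generic $J$ one finds positive-dimensional families of points $\D$-biholomorphic to $X_J$, i.e. a non-trivial foliated structure of $K_0$ in the sense of \cite{ENS}; by completeness of $\mathscr{K}_0$ at every point this forces the $h^0$-function to jump at \emph{every} point of $K_0$, contradicting its Zariski upper semicontinuity. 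You would need to supply an argument of this kind (or an equivalent one) for strictness; as written, the step ``would meet $\Aun$ on a Zariski-dense subset'' does not follow. Finally, note that passing from Theorem \ref{2ndmainthm} to the Corollary by ``taking complements'' also tacitly uses condition i) of Definition \ref{defnormal} (handled via the jumping locus), exactly as in the paper, so that part is fine once the theorem is correctly established.
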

Theorem \ref{2ndmainthm} and Corollary \ref{Cor2ndmain} form the best results we are able to prove in relation with main Conjecture \ref{mainconj}, point I. 

Note that the closure of exceptional points, resp. the set of normal points, form a strict analytic subspace, resp. a Zariski open set of any atlas of $\mathscr{T}(M,V_K)$. Since any Kuranishi space $K_0$ of a point in $V_K$ is a local atlas of $\mathscr{T}(M,V_K)$, this means that the closure of exceptional points, resp. the set of normal points, form a strict analytic subspace, resp. a Zariski open set of any $K_0$. 
\begin{remark}
	\label{Isubspace}
	The closure of exceptional points, resp. $\Z$-exceptional points, also form a strict analytic subspace of $V_K$.
\end{remark}

\begin{remark}
	\label{topgenericity}
	In particular, the set of normal points is dense in the K\"ahler Teichm\"uller space $V_K/\D$ and contains an open set. However, due to the non-Hausdorff topology this space may have, this may be a misleading statement. For example, if $M$ is $\mathbb S^2\times\mathbb S^2$, then the (K\"ahler) Teichm\"uller space of $M$, as a set, is $\mathbb Z$, a point $a\in\mathbb Z$ encoding the Hirzebruch surface $\mathbb F_{2a}$\footnote{ The surfaces $\mathbb F_{2a}$ and $\mathbb F_{-2a}$ are isomorphic, but not through a biholomorphism isotopic to the identity.}. Now, the topology to put on $\mathbb Z$ has for (non trivial) open sets $\{0\}$, $\{0,1\}$, $\{-1,0\}$ and so on, cf. \cite{LMStacks}, Examples 5.14 and 12.6. Hence $0$ is an open and dense subset of the Teichm\"uller space.
\end{remark}

\begin{proof}
We only prove the result for exceptional points. The proof can be easily modified to treat the case of $\Z$-exceptional points.

The atlas \eqref{atlasneigh} being an atlas of a neighborhood of $X_0$ in the Teichm\"uller stack, it contains all the information we need to decide which points close to $X_0$ are exceptional.  

Indeed, pick a point $X_J$ in $K_0$. Assume it is exceptional. By our assumptions on $V$\footnote{ see the end of Section \ref{Lieb}.}, every irreducible component of $\mathscr{C}$ contains a cycle of $X_0\times X_0$, hence the set $\mathscr{C}_J$ of cycles of $X_J\times X_J$ which are limits of cycles of $\mathscr{C}$ is included in $\mathscr{C}'$, the subset of components containing a sequence \eqref{phin}, since the other components are formed by $(V,\mathscr{D}_1)$-admissible morphisms and their degenerations. As a consequence, $J$ is exceptional if and only if there exists a component of $\mathscr{C}'$ whose subset of cycles above $J$ is non-empty and contains a connected component with only singular cycles by Corollary \ref{2ndcor}. Let $S$ be the analytic set of singular cycles of $\mathscr{C}'$. Let $p$ denote the projection of $\mathscr{C}'$ to $K_0\times K_0$. This is a proper map, since we are in the K\"ahler setting. We thus have that the set of exceptional points is equal to
\begin{equation}
	\label{EC}
		E=\bigcup_{\mathcal{C}\in\text{Irr }\mathscr{C}'}\{J\in K_0\mid p^{-1}(J,J)\cap\mathcal{C}\not =\emptyset
		\text{ and }(p^{-1}(J,J)\cap\mathcal{C})_0\subset S\}
\end{equation}
where $\text{Irr }$ denotes the set of irreducible components and $_0$ means that some connected component of $p^{-1}(J,J)\cap\mathcal{C}$ is included in $S$. We notice that, by our assumptions on $V$, the intersection $p^{-1}(J_0,J_0)\cap\mathcal{C}$ is connected. However, we cannot ensure that this is still true for any point $J$. Let $E^c$ be the closure of \eqref{EC}.

We claim that $E^c$ is an analytic subspace of $K_0$. To see that, we first embed $E$ in $K_0\times K_0$ through the diagonal embedding of $K_0$. We still call $E$ the image. Let $\mathcal C$ be a component of $\mathscr{C}'$ and let $E_{\mathcal C}$ denote the $\mathcal C$-component of \eqref{EC}. Let $(J_1,J_1)$ be a point of $K_0\times K_0$. Let $W$ be an open neighborhood of $(J_1,J_1)$ in $K_0\times K_0$. If $p^{-1}(J_1,J_1)\cap\mathcal{C}$ is empty, then $p^{-1}(W)\cap\mathcal{C}$ is empty for $W$ small enough. So let us assume that $p^{-1}(J_1,J_1)\cap\mathcal{C}$ is not empty. We decompose $p^{-1}(J_1,J_1)\cap\mathcal{C}$ into connected components $(p^{-1}(J_1,J_1)\cap\mathcal{C})_i$ for $i$ between $0$ and $k$. Let $A_i$ be open neighborhoods of $(p^{-1}(J_1,J_1)\cap\mathcal{C})_i$ in $\mathcal C$ such that the union of all $A_i$ is exactly $p^{-1}(W)\cap\mathcal{C}$. We assume that $(J_1,J_1)$ is generic in the sense that the intersection $A_i\cap p^{-1}(J,J)$ is still connected for $(J,J)\in W$ if non empty. Notice that the restriction of $p$ to some $A_i$ is still a proper map. We may decompose $E_{\mathcal C}\cap W$ as follows.
\begin{equation}
	\label{constructibleset}
	\begin{aligned}
		E_{\mathcal C}\cap W&=
		\bigcup_{0\leq i\leq k}\{
		(J,J)\in W\mid p^{-1}(J,J)\cap A_i\not =\emptyset\\
		&\qquad\qquad\qquad\text{ and }p^{-1}(J,J)\cap A_i\subset S
		\}\\
		&=\bigcup_{0\leq i\leq k} \Delta\cap (p(A_i)\setminus p(A_i\setminus S))
	\end{aligned}
\end{equation}
where $\Delta$ is the diagonal of $K_0\times K_0$. 
%
 Hence \eqref{constructibleset} is a constructible set so its closure in $W$ is an analytic set of $\Delta\cap W\cap\mathcal{C}$. But its closure is just $E^c_{\mathcal C}\cap W$. So we obtain a chart of analytic subspace for $E^c$ at every generic point. Assume now that $(J_1,J_1)$ is not generic. Then we may perform exactly the same construction but the resulting constructible set \eqref{constructibleset} may forget some exceptional points. Now, let $a$ be a positive integer. The set of points $(J,J)$ of $W$ where $p^{-1}(J,J)\cap A_i$ has exactly $a$ connected components is constructible, see \cite{Stacksproject}, Lemma 37.28.6 in an algebraic context. So is its pull-back by $p$ in $A_i$. Looking at its connected components, we may decompose $A_i$ into a finite set of constructible sets $A_{ij}$ on which $p$ has connected fibers. Then we obtain the correct decomposition
\begin{equation}
	\label{constructibleset2}
		E_{\mathcal C}\cap W=\bigcup_{i,j} \Delta\cap (p(A_{ij})\setminus p(A_{ij}\setminus S))
\end{equation}
making of $E\cap W$ a constructible set and of $E^c\cap W$ an analytic one.\\
We claim that $E^c$ is a {\slshape strict} analytic subspace of $K_0$\footnote{ As above in Section \ref{Lieb}, we replace $K_0$ with its reduction if necessary, so strict means that $E^c$ is not a whole irreducible component of $K_0$.}. Assume the contrary. Then there exists an irreducible component $\mathcal C$ of $\mathscr{C'}$ such that $E^c_{\mathcal C}$ is a union of irreducible components of $\Delta$. For simplicity, let us assume that $K_0$ and thus $\Delta$ are irreducible. By Corollary \ref{2ndcor}, for each $J\in K_0$, there exists a connected component of $\mathcal C\cap p^{-1}(J,J)$ that contains only singular cycles.
Now $\mathcal C$ contains a Zariski open subset of graphs of biholomorphisms between fibers of the Kuranishi family. Hence, $p(\mathcal C)$ is an analytic set of $K_0\times K_0$ strictly containing $\Delta$ and $p(\mathcal C)$ has dimension strictly greater than $n$, the dimension of $K_0$. It follows that, at a generic point $J$ of $K_0$, the intersection of $p(\mathcal C)$ with $\{J\}\times K_0$ is positive-dimensional. We may thus find a Zariski open subset of $K_0$, say $U$, such that, for any $J\in U$, the intersection $\mathcal C\cap p^{-1}(\{J\}\times K_0)$ contains a Zariski open subset of graphs of biholomorphisms. In other words, for those $J$ in $U$, there exists a path of biholomorphisms between $X_J$ and some $X_{J'_t}$ with $J'_t$ distinct from $J$. Hence $K_0$ has a non trivial foliated structure in the sense of \cite{ENS}. But this implies that the dimension of $\Azero$ jumps at $0$, that is is not constant in a neighborhood of $0$ in $K_0$. Since $\mathscr{K}_0\to K_0$ is complete at every point $J$ of $K_0$, denoting its Kuranishi space $K_J$, then the closure of the set of exceptional points in $K_J$ is also the full $K_J$. Hence the same argument tells that the dimension of the automorphism group also jumps at $J$ in $K_0$. But it cannot jump at every point of $K_0$. Contradiction. The set $E^c$ is a strict analytic subspace of $K_0$.

So we may define a strict analytic substack of $\mathscr T(M,V)$ as the stackification of the full subgroupoid of $\mathcal T_V\rightrightarrows K_0$ above $E^c\subset K_0$. Since the notion of exceptional point is an intrinsic notion, this substack is just a neighborhood of $X_0$ of an analytic substack of $\mathscr{T}(M,V_K)$. 
\end{proof}

\begin{remark}
	\label{exexceptional}
	If the intersection of an exceptional component of $X_0$ and $\mathscr{C}_J$ is non-empty but contains regular cycles, then the corresponding morphisms 
	 form a component of $\text{Aut}^1(X_J)$ which is not induced by $\Aun$.
		Inversely, the intersection of an $A^1$-component at $X_0$ with $\mathscr{C}_J$ may be exceptional at $J$. Finally an $A^1$-component at $X_0$ may not intersect $\mathscr{C}_J$ since it only contains morphisms that send $J$ to points that are distinct from $J$ and not adherent to it. 
\end{remark}

\section{Variations on exceptionality}
\label{secvarexc}

In this Section, we explore variants of exceptional points. To avoid redundancies, we only deal with exceptionality but this material can be immediatly adapted to $\Z$-exceptionality.

\subsection{Relative exceptionality}
\label{secrelative}
The notion of exceptional points and cycles introduced in Definitions \ref{defexcpoint} and \ref{defexcycle} is an intrinsic notion, in the sense that it depends only on the complex manifold $X_0$. In this Section, we elaborate on a relative version, which depends on a family with fiber $X_0$.

\begin{definition}
	\label{defrelex}
	Let $\mathcal X\to B$ be a reduced $M$-deformation over $B$ a reduced analytic space with fiber $X_0$ above $0\in K_0$. Then $X_0$ is {\slshape $\mathcal X$-exceptional} if there exists a singular cycle $\gamma$ of the cycle space $\mathscr{C}_0$ such that
	\begin{enumerate}[i)]
		\item The cycle $\gamma$ does not belong to the closure of a connected component of $\Aun$.
		\item The cycle $\gamma$ belongs to an irreducible component of $\mathscr{C}_{\mathcal X}$ whose generic member is non singular.
	\end{enumerate}
	The cycle $\gamma$ is called {\slshape $\mathcal X$-exceptional}.
\end{definition}
Here $\mathscr{C}_{\mathcal X}$ denotes the the union of completely singular and regular components\footnote{ in the sense of \S \ref{secintrocycles}.} of the Barlet space of relative cycles of $\mathcal X\times\mathcal X$.

A direct rephrasing of Corollary \ref{2ndcor} shows that a K\"ahler manifold $X_0$ is exceptional if and only if it is $\mathscr{K}_0^{red}$-exceptional. However, in the non-K\"ahler case, the notion of exceptional point is more general. Notice also that $X_0$ is $\mathcal X$-exceptional if and only if there exists a sequence \eqref{phin} with $(x_n)$ and $(y_n)$ sequences of $B$ converging to $0$ and the graphs of $\phi_n$ belonging to $\mathscr{C}_{\mathcal X}$ such that the limit cycle $\gamma$ is singular and does not belong to the closure of a connected component of $\Aun$.

Now, the notion of $\mathcal X$-exceptionality depends strongly on $\mathcal X$. When $\mathcal X$ is not the reduction of the Kuranishi family, it may have nothing to do with exceptionality. A simple example is given by a trivial family $X_0\times B\to B$. Then $X_0$ is never $X_0\times B$-exceptional. 

Consider a cartesian diagram
\begin{equation}
	\label{CD}
	\begin{tikzcd}
		\mathcal{X}' \arrow[r] \arrow[d] \arrow[dr, phantom, "{\scriptscriptstyle \square}", very near start]
		& \mathcal {X} \arrow[d] \\
		B' \arrow[r, "f"]
		&B
	\end{tikzcd}
\end{equation}
Then
\begin{lemma}
	\label{lemmaexCD}
	Let $b'\in B'$ and set $b=f(b')$. Assume that $X'_{b'}$, that is the fiber of $\mathcal{X}'\to B'$ over $b'$, is $\mathcal{X}'$-exceptional. Then $X_b$, the fiber of $\mathcal X\to B$ over $b$, is $\mathcal X$-exceptional.
\end{lemma}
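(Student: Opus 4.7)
The plan is to transport the exceptional cycle witnessing $\mathcal{X}'$-exceptionality at $b'$ across the cartesian diagram \eqref{CD} to get a cycle witnessing $\mathcal{X}$-exceptionality at $b$. First, since \eqref{CD} is cartesian, the canonical morphism $\mathcal{X}' \to \mathcal{X}$ restricts to an isomorphism $X'_{b'} \simeq X_b$ on fibers; more generally, for every $t' \in B'$ it induces an isomorphism $X'_{t'} \simeq X_{f(t')}$. Consequently, sending a (relative) cycle $Z' \subset X'_{t'} \times X'_{s'}$ to its image $Z \subset X_{f(t')} \times X_{f(s')}$ under this canonical identification defines a natural holomorphic map $\Phi : \mathscr{C}_{\mathcal{X}'} \to \mathscr{C}_{\mathcal{X}}$ of cycle spaces. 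The map $\Phi$ manifestly preserves the property of being singular, regular, or a graph of a biholomorphism between fibers; in particular it sends completely singular components into completely singular components, and regular ones into regular ones.

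Let $\gamma'$ be a $\mathcal{X}'$-exceptional cycle at $b'$, and let $\mathcal{C}'$ be an irreducible component of $\mathscr{C}_{\mathcal{X}'}$ containing $\gamma'$ whose generic member is non-singular (condition ii of Definition \ref{defrelex}). Set $\gamma := \Phi(\gamma')$, which is a cycle of $X_b \times X_b$ via the identification $X'_{b'} \simeq X_b$, and pick any irreducible component $\mathcal{C}$ of $\mathscr{C}_{\mathcal{X}}$ containing $\Phi(\mathcal{C}')$. The previous paragraph shows that $\mathcal{C}$ contains the image of some non-singular generic cycle of $\mathcal{C}'$; hence $\mathcal{C}$ is not completely singular, and by the dichotomy in the definition of $\mathscr{C}_{\mathcal{X}}$ its generic member must be non-singular. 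Thus $\gamma$ satisfies condition ii for $\mathcal{X}$.

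Finally, condition i is immediate: under the identification $X'_{b'} \simeq X_b$, automorphisms of $X'_{b'}$ correspond precisely to automorphisms of $X_b$, so $\mathrm{Aut}^1(X'_{b'})$ and $\mathrm{Aut}^1(X_b)$ define the same subset of non-singular cycles inside $X_b \times X_b \simeq X'_{b'} \times X'_{b'}$, and the same holds for their closures. Since $\gamma'$ avoids the closure of any connected component of $\mathrm{Aut}^1(X'_{b'})$, so does $\gamma$ in $\mathrm{Aut}^1(X_b)$. Combining this with the previous paragraph shows that $\gamma$ is a $\mathcal{X}$-exceptional cycle, proving that $X_b$ is $\mathcal{X}$-exceptional.

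The only mildly delicate point is the construction of $\Phi$ and verifying it is a well-defined holomorphic map of Barlet spaces; this is essentially the functoriality of the relative Barlet space under base change, so no real obstacle arises.
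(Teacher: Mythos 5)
Your argument is correct and is essentially the paper's own proof: your map $\Phi$ is exactly the direct image $f_*$ of cycles induced by the cartesian square (which identifies $X'_{t'}$ with $X_{f(t')}$ fiberwise), and the paper likewise concludes by observing that conditions i) and ii) of Definition \ref{defrelex} are preserved under this direct image. You merely spell out in more detail the verification that the component of $\mathscr{C}_{\mathcal X}$ receiving $\Phi(\mathcal C')$ is regular with non-singular generic member, which the paper leaves implicit.
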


\begin{proof}
	Just consider a $\mathcal{X}'$-exceptional cycle $\gamma$ above $b'$ and its direct image $f_*\gamma$ above $b$ and observe that points i) and ii) of Definition \ref{defrelex} are preserved through direct images.
\end{proof}

As an immediate consequence of Lemma \ref{lemmaexCD}, if $\mathcal X\to B$ is complete at $0$ and $X_0$ K\"ahler, then $X_0$ is $\mathcal X$-exceptional if and only $X_0$ is exceptional. To prove the statement, just apply Lemma \ref{lemmaexCD} to the following two cartesian diagrams of germs of families
\begin{equation}
	\label{CDdouble}
	\begin{tikzcd}
		(\mathcal{X}, X_0) \arrow[r] \arrow[d] \arrow[dr, phantom, "{\scriptscriptstyle \square}", very near start]
		& (\mathscr{K}_0, X_0) \arrow[d] \\
		(B,0) \arrow[r]
		&(K_0, 0)
	\end{tikzcd}
\qquad\text{ and }\qquad
	\begin{tikzcd}
		 (\mathscr{K}_0,X_0) \arrow[r] \arrow[d] \arrow[dr, phantom, "{\scriptscriptstyle \square}", very near start]
		& (\mathcal{X}, X_0)\arrow[d] \\
		(K_0,0) \arrow[r]
		&(B,0)
	\end{tikzcd}
\end{equation}
The left diagram, resp. right diagram, is given by completeness of the Kuranishi family, resp. of $\mathcal X\to B$. More generally, $\mathcal{X}$-exceptionality implies exceptionality since the left diagram in \eqref{CDdouble} is always verified.

As in the case of exceptional points, we have
\begin{proposition}
	\label{proprel}
	Let $\mathcal X\to B$ be a reduced $M$-deformation with K\"ahler fibers and reduced base $B$. Then, the closure of $\mathcal X$-exceptional points is a strict analytic subspace of $B$.
\end{proposition}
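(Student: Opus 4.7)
The plan is to adapt the proof of Theorem \ref{2ndmainthm} to the relative setting, working with the Barlet space of relative cycles of $\mathcal X \times \mathcal X \to B \times B$ in place of the absolute cycle space of $\mathscr{K}_0 \times \mathscr{K}_0$. Let $p : \mathscr{C}_{\mathcal X} \to B \times B$ denote the projection and let $S \subset \mathscr{C}_{\mathcal X}$ be the analytic subset of singular cycles. Since every fiber of $\mathcal X \to B$ is K\"ahler, Lieberman's compacity (Proposition \ref{Liebprop}) ensures that $p$ is proper above any compact of $B \times B$, so $\mathscr{C}_{\mathcal X}$ has locally finitely many irreducible components. Write $\mathscr{C}'_{\mathcal X}$ for the union of components whose generic member is non-singular and which contain a singular cycle above some diagonal point in a connected component lying entirely in $S$; these are precisely the components that can give rise to $\mathcal X$-exceptional cycles.

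First I would establish analyticity of the closure by describing the $\mathcal X$-exceptional locus as a locally finite union, indexed by $\mathcal C \in \text{Irr}(\mathscr{C}'_{\mathcal X})$, of sets
\[
E_{\mathcal C} = \{\, b \in B \mid p^{-1}(b,b) \cap \mathcal C \neq \emptyset \text{ and a connected component of } p^{-1}(b,b) \cap \mathcal C \text{ lies in } S \,\}.
\]
The constructibility argument of the proof of Theorem \ref{2ndmainthm}, which decomposes $\mathcal C$ above a small open set $W \subset B \times B$ into pieces $A_{ij}$ on which $p$ has connected fibers (using that the number of connected components of fibers is a constructible function, as in \cite{Stacksproject}, Lemma 37.28.6), carries over without change and shows each $E_{\mathcal C}$ to be locally constructible; its closure $E^c_{\mathcal C}$ is therefore locally analytic, and so is $E^c_{\mathcal X} := \overline{E_{\mathcal X}}$.

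The hard part is showing $E^c_{\mathcal X}$ is strict. Suppose by contradiction that $E^c_{\mathcal X}$ contains an irreducible component $B_0$ of the reduction of $B$. By local finiteness some $\mathcal C \in \text{Irr}(\mathscr{C}'_{\mathcal X})$ satisfies $E^c_{\mathcal C} \supset \Delta_{B_0}$. Since the generic cycle of $\mathcal C$ is non-singular, namely the graph of a biholomorphism between fibers of $\mathcal X$, $p(\mathcal C) \subset B \times B$ strictly contains $\Delta_{B_0}$ and has dimension strictly greater than $\dim B_0$. Hence at a generic $b \in B_0$ the intersection $p(\mathcal C) \cap (\{b\} \times B_0)$ is positive-dimensional and provides a holomorphic path of biholomorphisms $\phi_t : X_b \to X_{b'_t}$ with $b'_t \in B_0$ not identically equal to $b$.

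Finally, using the semi-universality of the Kuranishi family $\mathscr{K}_b \to K_b$ of $X_b$, take the classifying map $f : (B_0, b) \to (K_b, 0)$, characterised locally by $\mathcal X \cong f^*\mathscr{K}_b$. If $t \mapsto f(b'_t)$ is non-constant, its image is a positive-dimensional subset of $K_b$ whose points encode pairwise biholomorphic structures via biholomorphisms smoothly isotopic to the identity, that is, a non-trivial leaf through $0$ of the foliated structure of $K_b$ in the sense of \cite{ENS}; by Theorem~2 of \cite{ENS} the function $h^0$ drops strictly along the leaf, so $h^0(X_{b'_t}) < h^0(X_b)$ for some $b'_t$ arbitrarily close to $b$ and $h^0|_{B_0}$ jumps at $b$. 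Otherwise $f(b'_t) \equiv 0$ near $b$, and composing each $\phi_t$ with the induced identification $X_{b'_t} \cong X_b$ provides a path of elements of $\text{Aut}^1(X_b)$; the limit singular cycle $\gamma$ would then lie in the closure of a connected component of $\text{Aut}^1(X_b)$, contradicting Definition \ref{defrelex}(i). Hence the first alternative holds on a Zariski open subset of $B_0$, forcing the jumping locus of $h^0|_{B_0}$ to be Zariski dense in $B_0$. But the generic value of the integer-valued upper semi-continuous function $h^0|_{B_0}$ is attained on a Zariski open subset on which $h^0$ is locally constant, so the jumping locus is a strict analytic subset of $B_0$, a contradiction.
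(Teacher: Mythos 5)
Your reduction of analyticity to the constructibility argument of Theorem \ref{2ndmainthm}, and the first steps of the strictness argument (an irreducible component $\mathcal C$ of the relative cycle space which is exceptional along a whole component $B_0$, whose projection then strictly contains the diagonal and meets $\{b\}\times B_0$ in a positive-dimensional set of graphs of biholomorphisms), follow the paper's proof. The two cases of your dichotomy, however, are both handled incorrectly, and this is where the real content of the proposition lies. In your second case ($f(b'_t)\equiv 0$) the biholomorphisms $\phi_t$ are just generic smooth cycles of the regular component $\mathcal C$: all one knows about them is that they induce the identity on $H^*(\cdot,\Z)$, so after composing with the family identifications (which are isotopic to the identity) you only obtain a path in $\text{Aut}^\Z(X_b)$, not in $\text{Aut}^1(X_b)$; moreover the limit of the graphs of this particular path is merely some cycle of $p^{-1}(b,b)\cap\mathcal C$, possibly a smooth one or one lying in a connected component different from the completely singular component containing the exceptional cycle of Definition \ref{defrelex}. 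Hence no contradiction with Definition \ref{defrelex}(i) is obtained. The paper avoids both problems by composing the actual exceptional sequence \eqref{phin} (elements of $\D$) with a Fischer--Grauert trivialization $(f_t)$ along the path with $f_1=\mathrm{Id}$: this keeps the automorphisms in $\Aun(X_b)$ and produces graphs converging to the \emph{same} limit cycle as the graphs of the $\phi_n$, i.e.\ to the exceptional cycle itself.

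In your first case the deduction is also wrong: $h^0$ is constant along a leaf of the foliation of \cite{ENS}, since all fibers over a leaf are biholomorphic, so the claim that ``$h^0$ drops strictly along the leaf, hence $h^0(X_{b'_t})<h^0(X_b)$'' cannot hold -- indeed $X_{b'_t}\cong X_b$ forces $h^0(X_{b'_t})=h^0(X_b)$. A correct analysis of this case shows it is simply impossible in the K\"ahler setting: the $\phi_t$ induce the identity in $\Z$-cohomology, so the points $f(b'_t)$ lie in the $\DZ$-orbit of the centre of $K_b$, which by Corollary \ref{interlemma} meets a small $K_b$ only at the centre (equivalently, positive-dimensional leaves only occur where $h^0$ is strictly below its value at the centre, whereas your points have maximal $h^0$). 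Consequently your conclusion that ``$h^0|_{B_0}$ jumps at $b$'' in case one does not follow from what you wrote, and the final density-plus-semicontinuity contradiction has nothing to rest on; the argument that must actually carry the proof is the one for the constant case, which is precisely where your proposal is gapped. Note that the paper's dichotomy is organized differently (whether the generic fiber along the path is $\D$-biholomorphic to the central one or not), with the non-biholomorphic alternative producing a jumping family and hence a jump of $h^0$ at that point, and the biholomorphic alternative excluded by the Fischer--Grauert argument above.
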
 

\begin{proof}
	A straightforward adaptation of the proof of Theorem \ref{2ndmainthm} shows that the closure of $\mathcal X$-exceptional points is an analytic subspace of $B$. We have to prove it is strict. Assume the contrary. Then at least a complete irreducible component of $B$ contains an open and dense subset of $\mathcal X$-exceptional points. For simplicity, assume $B$ irreducible. 
	Let $\mathcal C$ be an $\mathcal X$-exceptional component at some point $b\in B$ and let $p$ the natural projection map from $\mathcal C$ to $B\times B$. Since there are only a finite number of components in $\mathscr{C}_{\mathcal X}$, we may assume that $\mathcal C$ is also exceptional for all the points in a neighborhood of $b$. The component $\mathcal C$ intersects $p^{-1}(x,x)$ in a completely singular component $\mathcal C_x$ for all $x$ in this neighborhood. Arguing as in the proof of Theorem \ref{2ndmainthm}, this implies that $p(\mathcal C)$ is an analytic subspace of $B\times B$ strictly containing the diagonal. Hence, at every point $x$ of $B$ it intersects $\{x\}\times B$ in a positive-dimensional subspace. And the cycles above $x$ of this intersection include graphs of biholomorphisms above a Zariski open subset. That means that one can find a path $(b_t)$ in $B$ ending at $x$ such that all $X_{b_t}$ are biholomorphic to $X_b$ for $t\not =1$. Either $X_{b}$ is isomorphic to $X_x$ or not. In the first case, we may find by Fischer-Grauert a continuous family of biholomorphisms $(f_t)_{t\in [0,1]}$ with $f_t$ sending $X_{b_t}$ isomorphically to $X_x$ and $f_1$ equal to the identity. Given any sequence \eqref{phin} between fibers of $\mathcal{X}$, we may thus compose the mappings $\phi_n$ with suitable $f_t$ to obtain a sequence of automorphisms of $X_x$ whose graphs converge to the same limit cycle as the graphs of the $\phi_n$. Contradiction with the fact that there exists a $\mathcal{X}$-exceptional cycle. Hence $X_{b}$ is not biholomorphic to $X_x$ through a biholomorphism $C^\infty$-isotopic to the identity. But that means that the restriction of $\mathcal{X}$ to the path $b$ is a jumping family with central fiber $X_x$ and generic one $X_b$. This forces the $h^0$ function to jump at $x$. Since it is the case for every $x\in B$, we see that $h^0$ jumps at every point of $B$, contradicting its property of upper semi-continuity.
%
%
\end{proof}

Such a result is of course completely false for exceptional points. Letting $f:B\to K_0$ lands in the subspace of exceptional points, then every point of $f^*\mathscr{K}_0\to B$ is exceptional.  

\subsection{Exceptional pairs}
\label{secpairs}
Let $X_0$ and $X'_0$ be two compact complex manifolds diffeomorphic to $M$. Let $K_0$, resp. $K'_0$, the Kuranishi space of $X_0$, resp. $X'_0$. As a second variation on the theme of exceptionality, we define

\begin{definition}
	\label{defexpair}
	 We say that $\{X_0,X'_0\}$ is an {\slshape exceptional pair} of $\mathscr{T}(M)$ if
	 \begin{enumerate}[\rm i)]
	 	\item $X_0$ and $X'_0$ are not biholomorphic
	 	\item There exists a sequence $\eqref{phin}$ with $(x_n)$, resp. $(y_n)$, sequence of $K_0$, resp. $K'_0$, converging to $X_0$, resp. $X'_0$, such that the graphs of $\phi_n$ converge to a singular cycle $\gamma$ in the cycle space of $X_0\times X'_0$.
	 \end{enumerate}  
	The cycle $\gamma$ is called {\slshape exceptional}.
\end{definition}
Let us make a few comments on Definition \ref{defexpair}. Firstly, since $X_0$ and $X'_0$ are not biholomorphic, there is no need to add that $\gamma$ is not the limit of a sequence of biholomorphisms between $X_0$ and $X'_0$. Secondly, as the notion of exceptional points, Definition \ref{defexpair} does not depend on $\{X_0,X'_0\}$ up to biholomorphisms smoothly isotopic to the identity of the pair; so the notion of exceptional pairs is intrinsic and this justifies that we speak of exceptional pairs of $\mathscr{T}(M)$. Thirdly, as in the case of relative exceptionality, we do not include in the notion of exceptional pairs the analogues of wandering and vanishing sequences. This is mainly because we focus on the K\"ahler case where such phenomena do not appear.

\begin{example}
	\label{exjumping}
	Let $\mathcal X\to B$ be a {\slshape jumping family}, that is there is a point $0\in B$ such that
	\begin{enumerate}[i)]
		\item For all $b\in B$ different from $0$, the fiber $X_b$ is biholomorphic to a fixed complex manifold, say $X_1$.
		\item The $0$-fiber $X_0$ is not biholomorphic to $X_1$.
	\end{enumerate}
	Then the pair $\{X_0,X_1\}$ is an exceptional pair. Just take as $(x_n)$ the image through a map $f :B\to K_0$ such that $f^*\mathscr{K}_0$ is locally isomorphic to $\mathcal X$ of a sequence of points in $B\setminus\{0\}$ converging to $0$; and for $(y_n)$ the constant sequence $X_0$.
\end{example}

Start with $(X_0,X'_0)$ and their Kuranishi spaces $(K_0,K'_0)$. As usual, we assume both Kuranishi spaces reduced, replacing them with their reduction if necessary. We consider the space of relative cycles of $\mathscr{K}_0\times\mathscr{K}'_0\to K_0\times K'_0$. Let $\mathscr{C}^{pair}$ be the union of the irreducible components of this relative cycle space that contains at least the graph of a biholomorphism between some fiber of $\mathscr{K}_0$ and some fiber of $\mathscr{K}'_0$ which is smoothly isotopic to the identity. Let $S$ denote the subset of singular cycles of $\mathscr{C}^{pair}$ and let $p$ denote the natural projection of $\mathscr{C}^{pair}$ to $K_0\times K'_0$.

The set of exceptional couples in $K_0\times K'_0$ is thus equal to
\begin{equation}
	\label{pair}
	\mathscr{P}:=\{(J,J')\in K_0\times K'_0\mid p^{-1}(J,J')\not=\emptyset\text{ and }p^{-1}(J,J')\subset S\}
\end{equation}

Analogously to Theorem \ref{2ndmainthm}, we have
\begin{proposition}
	\label{propexpair}
	The closure of the subset of exceptional couples is a strict analytic substack of $\mathscr{T}(M,V_K)\times \mathscr{T}(M,V_K)$.
\end{proposition}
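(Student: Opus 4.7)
The plan is to follow closely the strategy of Theorem \ref{2ndmainthm}, replacing $K_0$ by $K_0\times K'_0$, the diagonal of $K_0\times K_0$ by the whole product, and the projection $p\colon \mathscr C'\to K_0\times K_0$ by the projection $p\colon\mathscr{C}^{pair}\to K_0\times K'_0$. Since both $X_0$ and $X'_0$ are K\"ahler, Proposition \ref{Liebprop} applied to $\mathscr{K}_0\times\mathscr{K}'_0\to K_0\times K'_0$ guarantees that $\mathscr{C}^{pair}$ has only finitely many irreducible components $\mathcal C$ and that $p$ is proper. Denoting by $S$ the analytic subset of singular cycles, Definition \ref{defexpair} translates into
$$
\mathscr{P}=\Bigl(\bigcup_{\mathcal C\in\text{Irr }\mathscr{C}^{pair}}p(\mathcal C)\Bigr)\setminus\bigcup_{\mathcal C\in\text{Irr }\mathscr{C}^{pair}}p(\mathcal C\setminus S).
$$
Mimicking the local analysis of \eqref{constructibleset}--\eqref{constructibleset2}, one decomposes every fibre $p^{-1}(J_1,J'_1)\cap\mathcal C$ into its connected components equipped with open neighbourhoods $A_i$, and obtains that each $p(\mathcal C\setminus S)$ is constructible; thus $\mathscr P$ is constructible and $\overline{\mathscr P}$ is an analytic subspace of $K_0\times K'_0$.

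Strictness turns out to be easier than in Theorem \ref{2ndmainthm}: by its very definition, every irreducible component $\mathcal C$ of $\mathscr{C}^{pair}$ contains at least one regular cycle (a graph of a biholomorphism between fibres of $\mathscr K_0$ and $\mathscr K'_0$ smoothly isotopic to the identity), so $\mathcal C\setminus S$ is a non-empty Zariski open subset of $\mathcal C$, hence dense. Suppose, for contradiction, that $\overline{\mathscr P}$ contains an irreducible component $Z$ of $K_0\times K'_0$. Since the analytic set $\bigcup_{\mathcal C}p(\mathcal C)$ contains $\mathscr P$, it contains $Z$; by finiteness, some $p(\mathcal C)$ must contain $Z$. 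A standard generic-fibre-dimension argument (comparing $p|_{\mathcal C}$ with $p|_{\mathcal C\cap S}$, using irreducibility of $\mathcal C$ and properness of $p$) then shows that $p(\mathcal C\setminus S)$ is dense in $p(\mathcal C)$, and in particular dense in $Z$. But every point of $p(\mathcal C\setminus S)$ is \emph{not} exceptional since its fibre contains a regular cycle; this contradicts the density of $\mathscr P\cap Z$ in $Z$.

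Finally I would globalize the statement: exceptionality of a pair is intrinsic (Definition \ref{defexpair} is invariant under replacing $X_0$, $X'_0$ by biholomorphisms smoothly isotopic to the identity), so the strict analytic subspace $\overline{\mathscr P}\subset K_0\times K'_0$ is the local chart at $(X_0,X'_0)$ of a strict analytic substack of $\mathscr{T}(M,V_K)\times\mathscr{T}(M,V_K)$, obtained by stackifying the restriction of the product atlas groupoid over $\overline{\mathscr P}$. I expect the main technical obstacle to be the constructibility step in the analyticity part, namely keeping careful track of the decomposition into connected components of $p^{-1}(J_1,J'_1)\cap\mathcal C$ when two components coalesce at $(J_0,J'_0)$; this is handled exactly as in the non-generic case at the end of the proof of Theorem \ref{2ndmainthm}, via \cite{Stacksproject}, Lemma 37.28.6. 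The strictness step, by contrast, is genuinely shorter than for Theorem \ref{2ndmainthm}, because the foliation / upper semicontinuity of $h^0$ argument is not needed here.
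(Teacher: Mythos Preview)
Your proposal is essentially correct and follows the same route as the paper, including your observation that strictness is genuinely easier here than in Theorem~\ref{2ndmainthm}. One point deserves comment, though: you overcomplicate the analyticity step. Your own displayed formula for $\mathscr{P}$ is already
\[
\mathscr{P}=p(\mathscr{C}^{pair})\setminus p(\mathscr{C}^{pair}\setminus S),
\]
which is the paper's \eqref{pair} verbatim; since $p$ is proper, this is immediately (closed analytic) $\setminus$ (constructible) $=$ constructible, and you are done. There is no need to ``mimic the local analysis of \eqref{constructibleset}--\eqref{constructibleset2}'' or to decompose fibres into connected components: that machinery was required in Theorem~\ref{2ndmainthm} only because the exceptionality condition there reads ``\emph{some} connected component of $p^{-1}(J,J)\cap\mathcal C$ lies in $S$'' (to separate exceptional components from $A^1$-components sharing the same fibre). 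In the pair setting, $X_0$ and $X'_0$ are not biholomorphic, so there is no $A^1$-type component to exclude and the condition collapses to ``the \emph{whole} fibre lies in $S$''. So the technical obstacle you anticipated simply does not arise; the paper's proof is two lines for exactly this reason.

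Your strictness argument is the same as the paper's, spelled out more carefully: the paper compresses it to ``a full component of $\mathscr{C}^{pair}$ would consist of singular cycles, contradicting its definition'', which unwinds to your observation that $p(\mathcal C\setminus S)$ is dense in $p(\mathcal C)\supset Z$ (image of a dense set under a continuous map) and disjoint from $\mathscr{P}$.
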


\begin{proof}
We deduce from \eqref{pair} that $\mathscr{P}$ is a constructible set in $K_0\times K'_0$, hence its closure is an analytic set. Assume now that the closure contains a full component of $K_0\times K'_0$. Then a full component of $\mathscr{C}^{pair}$ consists of singular cycles since a dense subset of $K_0\times K'_0$ consists of non-isomorphic couples. This is in contradiction with its definition.
\end{proof}

However, notice that the projection onto $K_0$, resp. $K'_0$, may contain a full component of $K_0$, resp. $K'_0$. 

\begin{example}
	\label{exF2}
	Let $X_0$ be the product of projective lines $\mathbb P^1\times \mathbb P^1$ and let $X'_0$ be the second Hirzebruch surface. Then $K_0$ is a point since $\mathbb P^1\times \mathbb P^1$ is rigid; and $K'_0$ is (the germ of) a unit disk with $0$ encoding $\mathbb F_2$ and $t\not =0$ encoding $\mathbb P^1\times \mathbb P^1$ (this is a jumping family in the sense of Example \ref{exjumping}).
	
	The set of exceptional pairs is the pair formed by the unique point of $K_0$ and $0$. This is a strict analytic subset of the disk but not of the point.
\end{example}

\subsection{Non-Hausdorff points}
\label{secNH}
As a consequence of what preceeds, we obtain that the set of non-Hausdorff K\"ahler points is included in a strict analytic substack. 

To be more precise, first recall that the topological Teichm\"uller space - that is the quotient $\mathcal I(M)/\D$ endowed with the quotient topology - is also given as the geometric quotient of the Teichm\"uller stack, that is, given an atlas $\mathcal T_0\to \mathscr{T}(M)$ and given the associated groupoid $\mathcal T_1\rightrightarrows \mathcal T_0$, the quotient space of $\mathcal T_0$ by the equivalence relation generated by the morphisms encoded in $\mathcal T_1$. By extension we define

\begin{definition}
	\label{defNH}
	We say that $X_0$ and $X_1$ are {\slshape non-Hausdorff points in $\mathscr{T}(M)$}, or form a {\slshape non-Hausdorff pair of $\mathscr{T}(M)$} if they correspond to non-separated points $[J_0]$ and $[J_1]$ of $\mathcal I(M)/\D$.
\end{definition}

Using the local atlases \eqref{atlasneigh}, if a pair $\{[J_0],[J_1]\}$ of $\mathcal I(M)/\D$ is a pair of non-separated points of $\mathcal I(M)/\D$ then, setting $X_0:=(M,J_0)$, $X_1:=(M,J_1)$, and letting $K_0$, resp. $K_1$, be the Kuranishi space of $X_0$, resp. $K_1$, we may find a sequence \eqref{phin} with $(x_n)$ in $K_0$ converging to $X_0$ and $(y_n)$ in $K_1$ converging to $X_1$. In other words, two non-separated points of $\mathcal I(M)/\D$ define an exceptional pair of $\mathscr{T}(M)$.

As an immediate consequence to Proposition \ref{propexpair}, we thus have
\begin{corollary}
	\label{corNH}
	The set of couples of non-Hausdorff K\"ahler points in $\mathscr{T}(M)$ is contained in the set of exceptional couples of $\mathscr{T}(M, V_K)$, hence contained in a strict analytic substack of $\mathscr{T}(M, V_K)\times \mathscr{T}(M, V_K)$.
\end{corollary}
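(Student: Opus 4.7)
My plan is to show the first inclusion explicitly; the second inclusion will then follow immediately from Proposition \ref{propexpair}.

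So let $\{[J_0],[J_1]\}$ be a non-Hausdorff pair of Kähler points in $\mathscr{T}(M)$, and set $X_i := (M,J_i)$ for $i=0,1$, with Kuranishi spaces $K_0$ and $K_1$. By the definition of non-Hausdorff pair and the description of the topological Teichmüller space as the geometric quotient of $\mathcal{I}(M)$ by $\D$, non-separatedness produces sequences $(x_n) \subset K_0$ and $(y_n) \subset K_1$ with $x_n \to J_0$, $y_n \to J_1$, together with $\phi_n \in \D$ satisfying $x_n \cdot \phi_n = y_n$, exactly as in \eqref{phin}. Moreover, since $[J_0]$ and $[J_1]$ are distinct points of $\mathcal{I}(M)/\D$, the manifolds $X_0$ and $X_1$ cannot be biholomorphic through a diffeomorphism in $\D$; in particular they are distinct in $\mathscr{T}(M)$, which secures point i) of Definition \ref{defexpair}.

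To check point ii), I view the graphs $\Gamma_{\phi_n} \subset X_{x_n} \times X_{y_n}$ as elements of the relative cycle space of $\mathscr{K}_0 \times \mathscr{K}_1$ over $K_0 \times K_1$. Since $X_0$ and $X_1$ are Kähler and $(x_n,y_n) \to (J_0,J_1)$ stays in a compact set, Proposition \ref{Liebprop} applies and, up to extracting a subsequence, the sequence $(\Gamma_{\phi_n})$ converges in $\mathscr{C}^{pair}$ to a limit cycle $\gamma$ lying over $(J_0,J_1)$. The crux is then to show $\gamma$ must be singular. Were $\gamma$ smooth, it would be the graph of a biholomorphism $\phi : X_0 \to X_1$; and since each $\phi_n$ belongs to $\D$, i.e. is smoothly isotopic to the identity of $M$, its smooth limit would still lie in $\D$. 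This would force $[J_0] = [J_1]$ in $\mathcal{I}(M)/\D$, contradicting the hypothesis. Therefore $\gamma$ is a singular cycle, verifying point ii) of Definition \ref{defexpair}, and $\{X_0,X_1\}$ is an exceptional pair.

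This establishes the first inclusion claimed in the statement. The second then follows at once from Proposition \ref{propexpair}: since the Kähler hypothesis holds on both factors, the closure of exceptional couples, and hence a fortiori the set of non-Hausdorff Kähler couples, is contained in a strict analytic substack of $\mathscr{T}(M,V_K) \times \mathscr{T}(M,V_K)$. The only slightly delicate point is the smooth-limit argument above; every other step is a direct translation between the groupoid description of the Teichmüller space and the language of cycles, together with invocations of results already stated in the paper.
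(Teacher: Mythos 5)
Your proposal is correct and follows essentially the same route as the paper: produce a sequence \eqref{phin} from the non-separated pair, conclude it is an exceptional pair in the sense of Definition \ref{defexpair}, and then invoke Proposition \ref{propexpair}. You merely make explicit two steps the paper leaves implicit — the use of Proposition \ref{Liebprop} to extract a convergent subsequence of graphs in the K\"ahler setting, and the observation that a smooth limit cycle would be the graph of a biholomorphism lying in $\D$ (as a $C^\infty$-limit of elements of $\D$), contradicting $[J_0]\neq[J_1]$ — which is a welcome clarification rather than a different argument.
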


As an example, $\mathbb P^1\times\mathbb P ^1$ and $\mathbb F_2$ are non-Hausdorff points of $\mathscr{T}(\mathbb S^2\times\mathbb S^2)$, cf. Example \ref{exF2}, as well as the the pair $\{X_0,X_1\}$ in a jumping family, cf. Example \ref{exjumping}.

\section{Pathologies of the Teichm\"uller stack}
\label{secpathologies}

As explained in the Introduction \S \ref{intro}, making use of a Teichm\"uller stack rather than a Teichm\"uller space allows to pass easily from the point of view of moduli space to the point of view of families. They are two faces of the same mathematical object.

As a consequence, we want to explore in this Section the pathologies of the Teichm\"uller stack as pathologies of families and to relate them to pathologies of space. We notice that we already investigated a topological pathology in Section \ref{secNH}. But here we would like to search for analytic pathologies. As in \S \ref{secvarexc}, to avoid redundancies, we only deal with exceptionality but all can be immediatly adapted to $\Z$-exceptionality.

\subsection{Analytically non-separated, ambiguous and undistinguishable points}
\label{secambig}
Let us begin with some definitions.

\begin{definition}
	\label{defanaNH}
	Two distinct points $X_0$ and $X_1$ of the Teichm\"uller stack $\mathscr{T}(M)$ are {\slshape analytically non-separated} if there exist two reduced $M$-defor\-mations $\mathcal X_i\to B$ of $X_i$ (for $i=0,1$) above a reduced positive dimensional base $B$ such that $\mathcal X_0$ and $\mathcal X_1$ are isomorphic above $B\setminus\{0\}$.
\end{definition}

Taking sequences in $B\setminus\{0\}$ that converge to $0$, we immediatly obtain that $\{X_0,X_1\}$ is an exceptional pair; and a non-Hausdorff pair. However Definition \ref{defanaNH} is stronger. Observe that 
\begin{enumerate}[i)]
	\item Any couple $(X_0,X_1)$ formed by the base point, resp. the generic point, of a jumping family (cf. Examples \ref{exF2} and \ref{exjumping}) are analytically non-separated.
	\item We can always assume that $B$ is a disk.
\end{enumerate}

We pass now to the notions of analytic ambiguity and undistinguishability. In the following definitions, given $X_0$ and $X_1$, we set $X_i=(M,J_i)$ and let $V_i$ be an open neighborhood of $J_i$ in $\mathscr{I}(M)$. Finally, $V_i^*$ denotes $V_i$ minus the $\D$-orbit of $J_i$.

\begin{definition}
	\label{defambig}
	Two distinct points $X_0$ and $X_1$ of the Teichm\"uller stack $\mathscr{T}(M)$ are {\slshape formally ambiguous} if, for any choice of a small enough neighborhood $V_0$, resp. $V_1$, there exists a neighborhood $V_1$, resp. $V_0$, such that $\mathscr{T}(M, V_0^*)$ is equivalent to $\mathscr{T}(M, V_1^*)$, that is there exists a fully faithful and essentially surjective morphism from $\mathscr{T}(M, V_0^*)$ to $\mathscr{T}(M, V_1^*)$.\medskip\\
	 They are {\slshape analytically ambiguous} if 
	 \begin{enumerate}[\rm i)]
	 	\item they are formally ambiguous and the corresponding equivalence between $\mathscr{T}(M, V_0^*)$ and $\mathscr{T}(M, V_1^*)$ sends a reduced $V_0^*$-family to an {\slshape isomorphic} reduced $V_1^*$-family.
	 	\item \label{itemambig} There exists non-isomorphic non-isotrivial reduced $V_i$-deformations $\mathcal{X}_i\to B$ (for $i=0,1$) over a positive-dimensional connected base $B$ whose restrictions over $B^*$\footnote{\label{footnoteBstar} We assume that $B^*$ for $\mathcal{X}_0$ and $B^*$ for $\mathcal{X}_1$ are equal.} are images through the equivalence of point i), hence isomorphic.
	 \end{enumerate}
\end{definition}

\begin{remark}
	\label{rkambiguousrigid}
	Because of hypothesis \ref{itemambig}), $X_0$ and $X_1$ are not rigid. By a {\slshape rigid} manifold, we mean a compact complex manifold such that any sufficiently small deformation of it is $\D$-biholomorphic\footnote{ that is biholomorphic through a biholomorphism smoothly isotopic to the identity.} to it.
\end{remark}

In other words, $X_0$ and $X_1$ are formally ambiguous iff they have arbitrary small isomorphic punctured neighborhoods in $\mathscr{T}(M)$. To understand the difference with analytically ambiguous, notice that two compact surfaces of genus $g>1$ are always formally ambiguous, but never analytically ambiguous. Indeed, the Teichm\"uller stack of compact surfaces of fixed genus $g>1$ is a bounded domain in $\mathbb C^{3g-3}$; in particular, it is a manifold, every automorphism group $\Aun$ is the identity and two distinct points are not biholomorphic through a biholomorphism smoothly isotopic to the identity. Thus, a punctured neighborhood of some $X$ in this Teichm\"uller stack is isomorphic to a punctured neighborhood of $0$ in $\mathbb C^{3g-3}$ proving the first point. However, given $X_0$ and $X_1$ distinct, and disjoint neighborhoods $V_0$ and $V_1$ with $\mathscr{T}(M, V_0^*)$ equivalent to $\mathscr{T}(M, V_1^*)$ as a category, every complex structure encoded in $V_0^*$ is distinct from any complex structure encoded in $V_1^*$, hence the previous morphism cannot send a family to an isomorphic family.

Definition \ref{defambig} is coined to ensure the following lemma.

\begin{lemma}
	\label{lemmaambiganans}
	Ambiguous K\"ahler points are analytically non-separated.
\end{lemma}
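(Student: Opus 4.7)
The plan is to combine Definition \ref{defambig}(ii) with the K\"ahler compactness of relative cycle spaces (Proposition \ref{Liebprop}) to extract a singular cycle above $(0,0)\in K_0\times K_1$, and then to read off from it the two reduced $M$-deformations with central fibers $X_0$ and $X_1$ required by Definition \ref{defanaNH}.

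First I would invoke Definition \ref{defambig}(ii) to obtain non-isomorphic, non-isotrivial reduced $V_i$-deformations $\mathcal{X}_i\to B$ agreeing on $B^*$. By restricting to a generic holomorphic disk through $0\in B$ these properties are preserved, so I may assume $B=\mathbb{D}$. Completeness of the Kuranishi families together with Riemann's extension theorem provides classifying maps $u_i:\mathbb{D}\to K_i$ with $u_i(0)=y_i\in K_i$ close to $0$, and the fixed isomorphism on $\mathbb{D}^*$ produces a holomorphic family of graph cycles $(\Gamma_d)_{d\in\mathbb{D}^*}$ in $\mathscr{K}_0\times\mathscr{K}_1$ above $(u_0(d),u_1(d))$. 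Lieberman's compactness extends this family across $d=0$ to a limit cycle $\gamma$ above $(y_0,y_1)$, and $\gamma$ must be singular: otherwise the isomorphism on $\mathbb{D}^*$ would extend to $\mathbb{D}$, contradicting $\mathcal{X}_0\not\cong \mathcal{X}_1$.

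Next I would push the base point of such a cycle to $(0,0)$. Formal ambiguity (Definition \ref{defambig}(i)) holds for arbitrarily small $V_i$, so for a nested sequence $V_i^{(n)}\downarrow \{J_i\}$ the equivalence matches each $\D$-orbit in $V_0^{(n)*}$ with one in $V_1^{(n)*}$; picking representatives yields graph cycles $\gamma^{(n)}$ above $(a_n,b_n)\to (0,0)$ in $K_0\times K_1$. The uniform volume bound provided by Lieberman in the K\"ahler case implies that only finitely many irreducible components of the relative cycle space of $\mathscr{K}_0\times\mathscr{K}_1$ receive the $\gamma^{(n)}$, so by pigeonhole some component $\mathcal{C}$ contains a subsequence, and properness of the projection $p:\mathcal{C}\to K_0\times K_1$ then produces a limit cycle $\gamma^*\in\mathcal{C}$ above $(0,0)$. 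This $\gamma^*$ is again singular, for otherwise $X_0$ and $X_1$ would be $\D$-biholomorphic. The main obstacle is exactly this step---isolating a single component carrying a limit above $(0,0)$---and it relies crucially on K\"ahlerianity through both Lieberman's volume bounds and properness of cycle projections, ingredients that genuinely fail in the non-K\"ahler setting.

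Finally I would choose a holomorphic disk $\sigma:\mathbb{D}\to\mathcal{C}$ with $\sigma(0)=\gamma^*$ and $\sigma|_{\mathbb{D}^*}$ landing in the Zariski open subset of regular (graph) cycles of $\mathcal{C}$; this subset is non-empty by point ii) of Proposition \ref{Liebprop}, since $\mathcal{C}$ contains the $\gamma^{(n)}$. Setting $\tilde u_i := \mathrm{pr}_i\circ p\circ \sigma:\mathbb{D}\to K_i$ so that $\tilde u_i(0)=0$, I would define $\mathcal{Y}_i := \tilde u_i^*\mathscr{K}_i\to\mathbb{D}$. Then $\mathcal{Y}_i|_0 = X_i$, and the holomorphic family of $\D$-biholomorphisms encoded by $\sigma|_{\mathbb{D}^*}$ yields an isomorphism of reduced $M$-deformations between $\mathcal{Y}_0|_{\mathbb{D}^*}$ and $\mathcal{Y}_1|_{\mathbb{D}^*}$. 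Since $X_0\ne X_1$, the families $\mathcal{Y}_0$ and $\mathcal{Y}_1$ are globally non-isomorphic, producing the data of Definition \ref{defanaNH} and proving that $X_0$ and $X_1$ are analytically non-separated.
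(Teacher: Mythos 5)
There is a genuine gap at the step where you ``push the base point to $(0,0)$''. Formal ambiguity (Definition \ref{defambig}, point i)) says: for any small enough $V_0$ there exists \emph{some} neighborhood $V_1$ of $J_1$ with $\mathscr{T}(M,V_0^*)$ equivalent to $\mathscr{T}(M,V_1^*)$, and symmetrically; it does \emph{not} provide equivalences for a pair of nested sequences $V_0^{(n)}\downarrow\{J_0\}$ and $V_1^{(n)}\downarrow\{J_1\}$ shrinking simultaneously. So when you pick $a_n\in V_0^{(n)*}$ and match it with a $\D$-biholomorphic structure on the other side, the matched point lies only in some uncontrolled neighborhood of $J_1$; you have no reason that $b_n\to 0$ in $K_1$. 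Consequently the limit cycle produced by Lieberman compactness need not lie over $(0,0)$, and the families $\mathcal{Y}_i$ you build in the last step need not have central fibers $X_0$ and $X_1$ — which is exactly what Definition \ref{defanaNH} requires. Since this is the only place where the two distinguished fibers enter your construction, the argument as written does not prove the lemma.

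The missing input is supplied by point ii) of Definition \ref{defambig}, which you invoke but do not exploit: the locus $B\setminus B^*$ is non-empty (otherwise the two families would be isomorphic over $B^*=B$, contradicting their non-isomorphy), and over it $\mathcal{X}_0$, resp. $\mathcal{X}_1$, has fiber $X_0$, resp. $X_1$, while over $B^*$ the two families are already isomorphic by hypothesis. This is how the paper proceeds: restrict both families to a disc through a point of $B\setminus B^*$, use K\"ahlerianity via Corollary \ref{interlemma} to ensure that (after shrinking) the puncture is the only point of the disc encoding $X_0$ or $X_1$, so the punctured disc lies in $B^*$; the given isomorphism over $B^*$ then restricts to the punctured disc and cannot extend across $0$ since the central fibers are not $\D$-biholomorphic, which is precisely Definition \ref{defanaNH}. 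In particular, once your step 2 is repaired this way, the whole cycle-space machinery (Proposition \ref{Liebprop}, pigeonhole on components, curve selection through $\gamma^*$) becomes an unnecessary detour: it re-manufactures, with heavier tools, the punctured-disc isomorphism that the definition of analytic ambiguity already hands you. If you do want to keep the cycle-space route, you must first extract the bi-convergent sequences $a_n\to 0\in K_0$, $b_n\to 0\in K_1$ from the families of point ii) (classifying maps at a point of $B\setminus B^*$), not from formal ambiguity alone.
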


\begin{proof}
	Consider the non-isomorphic non-isotrivial reduced $V_i$-deformations $\mathcal{X}_i\to B$ given by Definition \ref{defambig}. Note that $B^*$ is not empty otherwise the deformations $\mathcal{X}_i$ would be isotrivial. We can thus restrict both families to non-trivial ones over a disk. We denote them by $\mathcal{X}_i\to\mathbb{D}$ for simplicity. Since $X_i$ is K\"ahler, we may assume that $0$ is the only point of $\mathbb D$ encoding $X_i$ up to biholomorphisms smoothly isotopic to the identity, cf. the proof of Corollary \ref{interlemma}. So, in the associated $V_i^*$-family $\mathcal X_i^*\to \mathbb D^*$, the base is in fact $\mathbb D\setminus\{0\}$. By Definition \ref{defambig}, to $\mathcal X\to\mathbb D$ we may associate a deformation of $X_1$ over the disk such that the two families are isomorphic outside $0$. This isomorphism doesnot extend over $\mathbb{D}$ since the central fibers are not biholomorphic. This is exactly saying that $X_0$ and $X_1$ are analytically non-separated.
\end{proof}

\begin{remark}
	\label{rkambigns}
	Notice however that the line of arguments of the proof of Lemma \ref{lemmaambiganans} does not work if we only assume that $X_0$ and $X_1$ fulfill point i) of Definition \ref{defambig} and are non-rigid instead of being analytically ambiguous. Indeed, under this milder assumption, we may find a non-trivial deformation of $X_0$ over the disk and associate to its restriction over $\mathbb{D}^*=\mathbb{D}\setminus\{0\}$ a $V_1^*$-family over the punctured disk. But there is no reason it extends to a deformation of $X_1$ over the disk. 
\end{remark}

 Analogously to Definition \ref{defambig}, we have

\begin{definition}
	\label{defundistinguish}
	Two distinct points $X_0$ and $X_1$ of the Teichm\"uller stack $\mathscr{T}(M)$ are {\slshape formally undistinguishable} if, for any choice of a small enough neighborhood $V_0$, resp. $V_1$, there exists a neighborhood $V_1$, resp. $V_0$, such that $\mathscr{T}(M, V_0)$ is equivalent to $\mathscr{T}(M, V_1)$. We ask these equivalences to send $X_0$ to $X_1$.\medskip\\
	They are {\slshape analytically undistinguishable} if 
	\begin{enumerate}[i)]
		\item they are non rigid\footnote{ in the sense of Remark \ref{rkambiguousrigid}.}, 
		\item formally undistinguishable and the corresponding equivalence between $\mathscr{T}(M, V_0)$ and $\mathscr{T}(M, V_1)$ sends a reduced $V_0^*$-deformation of $X_0$ to an {\slshape isomorphic} reduced $V_1^*$-defor\-mation of $X_1$.
	\end{enumerate}
\end{definition}
That is, $X_0$ and $X_1$ have arbitray small isomorphic neighborhoods in $\mathscr{T}(M)$ and there is no way to distinguish them by looking locally at the Teichm\"uller stack if they satisfy Definition \ref{defundistinguish}. Any two compact surfaces of genus $g>1$ are indeed formally undistinguishable. We give now examples of analytically undistinguishable points.
\begin{example}
	\label{exampleF2undistinguish}
	A first example of analytically undistinguishable points is given by $\mathbb F_{2a}$ and $\mathbb F_{-2a}$ in the Teichm\"uller stack of $\mathbb S^2\times\mathbb S^2$. Indeed the flip $(x,y)\mapsto (y,x)$ of $\mathbb S^2\times\mathbb S^2$ defines an automorphism of $\mathbb P^1\times \mathbb P^1$ that exchanges $\mathbb F_{2a}$ and $\mathbb F_{-2a}$. 
\end{example}

\begin{example}
	\label{exampleK3undistinguish}
	Inseparable points in the Teichm\"uller space of K3 surfaces are indeed isomorphic with same periods, see Proposition 2.1 of Chapter 7 in \cite{Huybrechts}. So they are analytically undistinguishable.
\end{example}

Formally unidstinguishable points are of course formally ambiguous (compare Definitions \ref{defambig} and \ref{defundistinguish}). The relationship between analytically non-separated, ambiguous and undistinguishable is more subtle and goes as follows.

\begin{lemma}
	\label{lemmaambigvsundis}
	Let $X_0$ and $X_1$ be two points of $\T$. 
	\begin{enumerate}[\rm i)]
		\item Assume that $X_0$ and $X_1$ are analytically undistinguishable points.  Then they are analytically ambiguous.
		\item Assume that $X_0$ and $X_1$ are K\"ahler analytically undistinguishable points. Then they are analytically non-separated.
	\end{enumerate}
\end{lemma}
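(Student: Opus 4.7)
\medskip

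\noindent\textbf{Proof proposal.} The plan is to obtain (ii) as a direct corollary of (i) combined with Lemma \ref{lemmaambiganans} (ambiguous K\"ahler points are analytically non-separated), so all the work will go into (i). For (i), I would unpack the definition of analytic undistinguishability and then manufacture the two families required by Definition \ref{defambig} from the equivalence and the non-rigidity hypothesis.

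First I would fix neighborhoods $V_0\ni J_0$ and $V_1\ni J_1$ together with an equivalence $E:\mathscr{T}(M,V_0)\to \mathscr{T}(M,V_1)$ sending $X_0$ to $X_1$ and sending any $V_0$-family to an isomorphic $V_1$-family, as provided by Definition \ref{defundistinguish}. Removing the $\D$-orbits of $J_0$ and $J_1$ from $V_0$ and $V_1$ respectively, $E$ descends to an equivalence $\mathscr{T}(M,V_0^*)\to\mathscr{T}(M,V_1^*)$ enjoying the same isomorphism-preserving property; this already gives formal ambiguity and point (i) of Definition \ref{defambig} of analytic ambiguity. So the content of (i) is precisely the production of the families demanded by point (ii) of Definition \ref{defambig}.

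Next I would use the non-rigidity hypothesis. Since $X_0$ is non-rigid, its Kuranishi space $K_0$ contains structures arbitrarily close to $J_0$ that are not $\D$-biholomorphic to $X_0$. I would choose a small one-dimensional disk $B\hookrightarrow K_0$ passing through $0$ such that $B^{*}:=B\setminus\{0\}$ avoids the $\D$-orbit of $J_0$; by Kuranishi's theorem the intersection of a single $\D$-orbit with a transverse disk is an analytic subset, so after shrinking $B$ we may arrange $B^{*}\cap (\D\cdot J_0)=\emptyset$ and also assume $B$ sits inside the preimage of $V_0$. Pulling back the Kuranishi family then yields a non-isotrivial reduced $V_0$-deformation $\mathcal X_0\to B$ with central fiber $X_0$ and all remaining fibers in $V_0^{*}$. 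Setting $\mathcal X_1:=E(\mathcal X_0)\to B$, this is a reduced $V_1$-deformation whose central fiber is $E(X_0)=X_1$ and whose restriction over $B^{*}$ is isomorphic, through the restricted equivalence, to $\mathcal X_0|_{B^{*}}$. The two families are non-isotrivial (because $\mathcal X_0$ is) and non-isomorphic over $B$ (because $X_0$ and $X_1$ are distinct points of $\mathscr{T}(M)$). This is exactly Definition \ref{defambig}(ii), so $X_0$ and $X_1$ are analytically ambiguous.

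For (ii) I would simply observe that if $X_0$ and $X_1$ are moreover K\"ahler then (i) provides analytic ambiguity in the K\"ahler setting, and Lemma \ref{lemmaambiganans} immediately yields analytic non-separation. The main obstacle I anticipate is the geometric step of choosing $B$ so that $B^{*}$ avoids the $\D$-orbit of $J_0$: in the K\"ahler case Corollary \ref{interlemma} handles this at once, but in the full generality of (i) one has to argue using only the local transverse structure provided by Kuranishi's theorem and the non-rigidity assumption; everything else is essentially a functorial translation of the undistinguishability data.
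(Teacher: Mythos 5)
Your proposal is correct and follows essentially the same route as the paper: non-rigidity of $X_0$ yields a non-isotrivial reduced deformation $\mathcal X_0\to B$, its image under the equivalence of Definition \ref{defundistinguish} gives $\mathcal X_1\to B$, the two are non-isomorphic because the central fibers $X_0$ and $X_1$ are, and part ii) is then immediate from part i) together with Lemma \ref{lemmaambiganans}. The only divergence is your extra step of carving $B$ out of $K_0$ so that $B^{*}$ avoids the $\D$-orbit of $J_0$ (whose justification via ``the orbit meets a transverse disk in an analytic subset'' is shaky outside the K\"ahler case); the paper does not need this, as it simply takes any deformation with two non-isomorphic fibers and reads $B^{*}$ off from the families themselves, so the obstacle you flag does not actually arise.
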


\begin{proof}
	We first prove i). Assume that $X_0$ and $X_1$ are analytically undistinguishable points. Then $X_0$ is in particular non rigid, hence there exists a deformation $\mathcal{X}_0\to B$ of $X_0$ with two non-isomorphic fibers. Its image $\mathcal{X}_1\to B$ through the equivalence of Definition \ref{defundistinguish} is thus non-isotrivial. Since $X_0$ and $X_1$ are not isomorphic, the families $\mathcal{X}_0$ and $\mathcal{X}_1$ satisfy point ii) of Definition \ref{defambig} and we are done. 
	
	Now ii) follows immediatly from i) and from Lemma \ref{lemmaambiganans}.
\end{proof}

In view of Lemma \ref{lemmaambigvsundis}, it is natural to ask for examples of analytically ambiguous but distinguishable points. Here is a potential one.

\begin{example}
	\label{exHK}
	Consider the case of Hyperk\"ahler manifolds. It is quite different from that of K3 surfaces, treated in Example \ref{exampleK3undistinguish}. Inseparable points of the Teichm\"uller space of Hyperk\"ahler structures on a fixed differentiable type are not always isomorphic, cf. \cite{Debarre} and \cite{Namikawa}. Hence they are not analytically undistinguishable. However, the global Torelli Theorem of \cite{VerbitskyTorelli} implies that they are analytically ambiguous in case they are isolated. We do not know however if such examples exist.
\end{example}


Undistinguishable points enjoy several interesting properties. 
 We have
 \begin{lemma}
 	\label{lemmaundis0}
 	If $X_0$ and $X_1$ are formally undistinguishable, then their Kuranishi spaces $K_0$ and $K_1$ are isomorphic as germs of analytic spaces.
 \end{lemma}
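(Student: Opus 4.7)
The strategy is a standard $2$-Yoneda argument combined with the semi-universal property of the Kuranishi family. First, I would unfold Definition~\ref{defundistinguish}: choose $V_0$ and $V_1$ small enough so that there is an equivalence of stacks $\Phi:\mathscr{T}(M,V_0)\to\mathscr{T}(M,V_1)$, with quasi-inverse $\Psi$, and such that $\Phi(X_0)\simeq X_1$ in $\mathscr{T}(M,V_1)(\mathrm{pt})$. Taking $V_0$ and $V_1$ even smaller if necessary, we can view the Kuranishi families $\mathscr{K}_0\to K_0$ and $\mathscr{K}_1\to K_1$ as objects of $\mathscr{T}(M,V_0)$ and $\mathscr{T}(M,V_1)$ respectively. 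As a $1$-morphism of stacks over $\mathfrak{S}$, $\Phi$ commutes (up to coherent $2$-isomorphism) with arbitrary pull-backs; in particular, restricting $\Phi(\mathscr{K}_0)\to K_0$ to the point $0\in K_0$ yields a fiber canonically isomorphic to $\Phi(X_0)\simeq X_1$.

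Next, I would invoke the semi-universality of the Kuranishi family (Subsection~\ref{Kurfamily}). Since $\Phi(\mathscr{K}_0)\to K_0$ is a reduced $M$-deformation with central fiber $\simeq X_1$, there exists a pointed holomorphic germ $f:(K_0,0)\to(K_1,0)$ such that $f^*\mathscr{K}_1\simeq \Phi(\mathscr{K}_0)$ near $0$. Symmetrically, applying $\Psi$ to $\mathscr{K}_1\to K_1$ and using semi-universality of $\mathscr{K}_0$ produces a pointed germ $g:(K_1,0)\to(K_0,0)$ with $g^*\mathscr{K}_0\simeq\Psi(\mathscr{K}_1)$.

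The key computation is then:
\begin{equation*}
(g\circ f)^*\mathscr{K}_0 \;=\; f^*(g^*\mathscr{K}_0) \;\simeq\; f^*\Psi(\mathscr{K}_1) \;\simeq\; \Psi(f^*\mathscr{K}_1) \;\simeq\; \Psi(\Phi(\mathscr{K}_0))\;\simeq\;\mathscr{K}_0,
\end{equation*}
where the next-to-last isomorphism uses the compatibility of $\Psi$ with pull-backs, and the last one uses that $\Psi\circ\Phi\simeq\mathrm{Id}$. The identity map $(K_0,0)\to(K_0,0)$ also pulls $\mathscr{K}_0$ back to itself, so the uniqueness of the differential in the semi-universal property of $\mathscr{K}_0\to K_0$ forces $d(g\circ f)_0 = d(\mathrm{Id})_0 = \mathrm{Id}_{T_0K_0}$.

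Finally, a morphism of germs of analytic spaces $(K_0,0)\to(K_0,0)$ whose differential at $0$ is an isomorphism is itself an isomorphism. Indeed, letting $(R,\mathfrak{m})=\mathcal{O}_{K_0,0}$, the induced homomorphism $(g\circ f)^{\sharp}:R\to R$ is an isomorphism modulo $\mathfrak{m}^2$, hence surjective by Nakayama's lemma; and a surjective endomorphism of a Noetherian local ring is automatically injective. Thus $g\circ f$ is a germ isomorphism, and symmetrically so is $f\circ g$; therefore $f$ realizes the desired isomorphism $(K_0,0)\simeq(K_1,0)$. The only mildly delicate point of the argument is the bookkeeping of pull-backs through $\Phi$ and $\Psi$; everything else is routine deformation theory.
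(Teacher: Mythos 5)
Your argument is correct in substance but takes a somewhat different route from the paper's. The paper does not compose classifying maps: it uses essential surjectivity of the equivalence $F$ to see that $F(\mathscr{K}_0)\to K_0$ is complete for deformations of $X_1$, and (choosing a preimage of $\mathscr{K}_1$ and using full faithfulness) that the transported family over $K_1$ is complete for $X_0$; comparing Zariski tangent dimensions in the two directions gives equality, so the transported family over $(K_1,0)$ is semi-universal for $X_0$, and uniqueness of the semi-universal germ yields $(K_0,0)\simeq(K_1,0)$. Your version replaces this by the classical ``both composites of classifying maps are germ automorphisms'' argument. Both rest on the same ingredients (transport of completeness through the equivalence plus semi-universality of the Kuranishi germs); yours produces the isomorphism explicitly as a classifying map $f$, while the paper's sidesteps any discussion of differentials of composites.

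One step, however, is stated too strongly: from $(g\circ f)^*\mathscr{K}_0\simeq\mathscr{K}_0$ you conclude $d(g\circ f)_0=\mathrm{Id}$ by the uniqueness of the differential in semi-universality. That uniqueness applies to \emph{marked} deformations, and none of the isomorphisms in your chain (the $2$-isomorphisms expressing compatibility of $\Phi$, $\Psi$ with pull-backs, the chosen isomorphisms $f^*\mathscr{K}_1\simeq\Phi(\mathscr{K}_0)$ and $g^*\mathscr{K}_0\simeq\Psi(\mathscr{K}_1)$, and $\Psi\circ\Phi\simeq\mathrm{Id}$) is required to restrict to the identity on the central fiber; Definition \ref{defundistinguish} only asks that $X_0$ be sent to $X_1$ up to isomorphism. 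So the composite isomorphism restricts on the central fiber to some $\varphi\in\Aun$, and what semi-universality actually gives is that $d(g\circ f)_0$ is the induced action of $\varphi$ on $H^1(X_0,\Theta_0)$ under the canonical identification $T_0K_0\simeq H^1(X_0,\Theta_0)$. This is harmless: that action is invertible, so $d(g\circ f)_0$ is invertible, and your Nakayama/finite-determinacy argument (which only needs invertibility, not equality with the identity) still shows that $g\circ f$, and symmetrically $f\circ g$, are germ automorphisms, hence that $f$ is the desired isomorphism. With this correction the proof is complete.
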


\begin{proof}
	Consider the equivalence $F$ between $\mathscr{T}(M, V_0)$ and $\mathscr{T}(M, V_1)$. It sends every cartesian diagram
	\begin{equation}
		\label{CDKur}
		\begin{tikzcd}
			(\mathcal{X}, X_0) \arrow[r] \arrow[d] \arrow[dr, phantom, "{\scriptscriptstyle \square}", very near start]
			& (\mathscr{K}_0, X_0) \arrow[d] \\
			(B,0) \arrow[r]
			&(K_0, 0)
		\end{tikzcd}
	\end{equation}
	to the cartesian diagram
	\begin{equation}
		\label{CDFDKur}
		\begin{tikzcd}
			(F(\mathcal{X}), X_1) \arrow[r] \arrow[d] \arrow[dr, phantom, "{\scriptscriptstyle \square}", very near start]
			& (F(\mathscr{K}_0), X_1) \arrow[d] \\
			(B,0) \arrow[r]
			&(K_0, 0)
		\end{tikzcd}
	\end{equation}
	Since $F$ is essentially surjective, every family of $\mathscr{T}(M, V_1)$ is isomorphic to a family in the image of $F$ and \eqref{CDFDKur} expresses that the image of the Kuranishi family $\mathscr{K}_0\to K_0$ of $X_0$ is complete for $X_1$. This implies that the dimension of the Zariski tangent space of $K_0$ at $0$ is greater than the dimension of that of $K_1$.
	
	Similarly, still because $F$ is essentially surjective, there exists a reduced deformation $\mathcal X_1$ of $X_1$ over $K_1$ such that $F(\mathcal X_1)$ is isomorphic to $\mathscr{K}_1$ hence semi-universal for $K_1$. Given any reduced deformation $\mathcal X\to B$ of $X_0$ then semi-universality yields
	\begin{equation}
		\label{CDFDKur2}
		\begin{tikzcd}
			(F(\mathcal{X}), X_1) \arrow[r] \arrow[d] \arrow[dr, phantom, "{\scriptscriptstyle \square}", very near start]
			& (F(\mathcal{X}_1), X_1) \arrow[d] \\
			(B,0) \arrow[r]
			&(K_1, 0)
		\end{tikzcd}
	\end{equation}
	It is the image through $F$ of a family morphism $\mathcal X\to\mathcal X_1$ over $B\to K_1$ because $F$ is fully faithful. But this implies that $\mathcal X_1\to K_1$ is complete for $X_0$. In particular, the dimension of the Zariski tangent space of $K_1$ at $0$ is greater than the dimension of that of $K_0$. So they are equal and $\mathcal X_1\to K_1$ is indeed semi-universal for $X_0$. As a consequence $K_0$ and $K_1$ are isomorphic as germs of analytic spaces at the base point.
\end{proof}
Let us deal now with rigid manifolds, still in the sense of Remark \ref{rkambiguousrigid}. Notice that the Kuranishi space of a rigid manifold is a point, but not always a reduced one, cf. \cite{Bauer}.
\begin{lemma}
	\label{lemmarigid}
	Let $X_0$ and $X_1$ be rigid complex manifolds.
	Then,
	\begin{enumerate}[\rm i)]
		\item $X_0$ and $X_1$ are formally ambiguous.
		\item $X_0$ and $X_1$ are formally undistinguishable if and only if they have isomorphic $\text{\rm Aut}^1$ group and isomorphic Kuranishi spaces.
		\item $X_0$ and $X_1$ are analytically separated.
	\end{enumerate}
\end{lemma}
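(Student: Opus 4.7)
The plan is to treat the three items in order, exploiting throughout the fact that rigidity forces the local picture of $\mathscr{T}(M)$ at $X_i$ to be controlled entirely by the pair $(K_i, \text{\rm Aut}^1(X_i))$. For (i) the argument is direct: rigidity of $X_i$ (Remark \ref{rkambiguousrigid}) asserts that every sufficiently small deformation of $X_i$ is $\D$-biholomorphic to $X_i$, so for $V_i$ small enough $V_i$ is set-theoretically contained in the $\D$-orbit of $J_i$. Hence $V_i^{*} = V_i\setminus \D\cdot J_i = \emptyset$, both $\mathscr{T}(M,V_0^{*})$ and $\mathscr{T}(M,V_1^{*})$ are the empty stack, and they are trivially equivalent; this gives formal ambiguity.

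For (ii), I first observe that for rigid $X_i$ the local atlas $\mathcal T_{V_i}\rightrightarrows K_i$ of \S\ref{atlas} has $K_i$ supported at a single point $J_i$ (though possibly non-reduced), and the only morphisms $(J,f)$ encoded in $\mathcal T_{V_i}$ are those with $f\in\text{\rm Aut}(X_i)\cap \D = \text{\rm Aut}^1(X_i)$, so the atlas reduces to the action groupoid $K_i\rtimes\text{\rm Aut}^1(X_i)\rightrightarrows K_i$. The ``only if'' direction then follows from Lemma \ref{lemmaundis0}, which provides the isomorphism of Kuranishi germs, combined with the fact that an equivalence $F:\mathscr{T}(M,V_0)\to\mathscr{T}(M,V_1)$ with $F(X_0)=X_1$ automatically induces an isomorphism of isotropy groups $\text{\rm Aut}^1(X_0)\to\text{\rm Aut}^1(X_1)$. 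For the ``if'' direction, I would transport the groupoid $K_0\rtimes\text{\rm Aut}^1(X_0)\rightrightarrows K_0$ onto $K_1\rtimes\text{\rm Aut}^1(X_1)\rightrightarrows K_1$ via the given isomorphisms and then stackify to obtain the required equivalence, which by construction sends $X_0$ to $X_1$.

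For (iii), I argue by contradiction. Assume $\mathcal X_i\to B$ ($i=0,1$) are families as in Definition \ref{defanaNH} with $B$ a small connected reduced positive-dimensional neighborhood of $0$; shrinking $B$ if necessary, rigidity of $X_0$ forces every fiber of $\mathcal X_0$ to be $\D$-biholomorphic to $X_0$, and likewise every fiber of $\mathcal X_1$ is $\D$-biholomorphic to $X_1$. Picking any $b\in B\setminus\{0\}$, the given isomorphism of $\mathcal X_0$ and $\mathcal X_1$ over $B\setminus\{0\}$ restricts to a $\D$-bundle isomorphism on the $b$-fiber, so $X_0$ and $X_1$ are $\D$-biholomorphic, contradicting the assumption that they are distinct points of $\mathscr{T}(M)$.

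The main obstacle I foresee lies in the ``if'' direction of (ii): one must guarantee that the two abstract isomorphisms of $\text{\rm Aut}^1$ groups and of Kuranishi germs can be chosen compatibly so as to intertwine the natural $\text{\rm Aut}^1$-actions on the Kuranishi germs. For rigid $X_i$ the action on the reduction is trivial, so the compatibility condition is purely infinitesimal; I expect it can be established by running the proof of Lemma \ref{lemmaundis0} in reverse and using semi-universality to make the transport canonical, but this is the one step that requires genuine care rather than routine verification.
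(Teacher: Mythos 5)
Your argument follows the paper's proof essentially verbatim: (i) via emptiness of the punctured neighborhoods $V_i^*$ at rigid points, (ii) via Lemma \ref{lemmaundis0} plus the matching of isotropy groups for the ``only if'' part and the identification of the local Teichm\"uller stack at a rigid point with the quotient of its (possibly non-reduced, point-supported) Kuranishi space by $\text{\rm Aut}^1$ for the ``if'' part, and (iii) by observing that all nearby fibers are $\D$-biholomorphic to the central ones, so an isomorphism away from $0$ would force $X_0$ and $X_1$ to coincide in $\T$. The equivariance compatibility you flag in the ``if'' direction of (ii) is in fact glossed over in the paper's own proof as well, which simply asserts that having the same $\text{\rm Aut}^1$ groups and the same Kuranishi spaces yields isomorphic neighborhoods.
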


\begin{proof}
	If $X_0$ and $X_1$ are rigid, given any reduced $M$-deformations $\mathcal X_i\to B$ of $X_i$ (for $i=0,1$), there exists an open neighborhood $V$ of $0$ in $B$ above which the restriction of $\mathcal X_i$ is isomorphic to $X_i\times V$. Thus both families cannot be isomorphic above $B\setminus\{0\}$ and $X_0$ and $X_1$ are analytically separated. This proves iii).
	
	Two rigid manifolds are formally ambiguous, since the corresponding neighborhoods $V_i^*$ are empty. This proves i). 
	
	To satisfy \ref{defundistinguish}, the existence of an isomorphism between small neighborhoods of $X_0$ and $X_1$ is needed. It must send $X_0$ to $X_1$ thus they must have same isotropy groups, that is same $\text{Aut}^1$ groups. Moreover, Lemma \ref{lemmaundis0}, they have isomorphic Kuranishi spaces.
	
	Conversely, the Teichm\"uller stack is locally isomorphic at a rigid point to the stack quotient of a $n$-uple point (its Kuranishi space) by its $\text{Aut}^1$ group. Hence, if $X_0$ and $X_1$ have same $\text{Aut}^1$ groups and same Kuranishi spaces, they have isomorphic neighborhoods. This proves ii).
\end{proof}
%
%

We now show that all the previous analytic pathologies on families concern only a strict analytic substack of $\mathscr{T}(M,V_K)$.

\begin{proposition}
	\label{propambig}
	Let $X_0$ and $X_1$ be distinct K\"ahler points of the Teichm\"uller stack. Assume that
	\begin{enumerate}
		\item[\rm i)] $X_0$ and $X_1$ are analytically undistinguishable,
		\item[\rm or ii)] $X_0$ and $X_1$ are analytically non-separated.
	\end{enumerate}
	Then $X_0$ and $X_1$ are non-Hausdorff points and form an exceptional pair.
	
	In particular, the set of analytically undistinguishable,
	or non-separated K\"ahler pairs is included in a strict analytic substack of $\mathscr{T}(M,V_K)\times \mathscr{T}(M,V_K)$.
\end{proposition}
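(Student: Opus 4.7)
The plan is to reduce statement (i) to statement (ii) via point ii) of Lemma \ref{lemmaambigvsundis}, which says that K\"ahler analytically undistinguishable points are analytically non-separated. Assume therefore hypothesis (ii). By Definition \ref{defanaNH} there exist reduced $M$-deformations $\mathcal X_i\to B$ of $X_i$ ($i=0,1$) over a reduced positive-dimensional base $B$ that are isomorphic as reduced deformations above $B^\ast:=B\setminus\{0\}$. Pick a sequence $(b_n)\subset B^\ast$ with $b_n\to 0$. After shrinking $B$, the semi-universality of the Kuranishi family provides holomorphic maps $f_i:(B,0)\to (K_i,J_i)$ with $\mathcal X_i\simeq f_i^\ast\mathscr K_i$ near $0$; set $x_n:=f_0(b_n)\to J_0$ and $y_n:=f_1(b_n)\to J_1$. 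The $\D$-isomorphism of $\mathcal X_0,\mathcal X_1$ over $B^\ast$ restricted to $b_n$ yields biholomorphisms $\phi_n:X_{x_n}\to X_{y_n}$ in $\D$, so $(x_n,y_n,\phi_n)$ is a sequence of type \eqref{phin}.

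Since $\phi_n\in\D$, one has $[x_n]=[y_n]$ in $\mathcal I/\D$, and this common class converges at once to $[J_0]$ and $[J_1]$, which are therefore non-separated; so $X_0$ and $X_1$ are non-Hausdorff. For the exceptional pair property, let $\Gamma_n\subset \mathscr K_0\times \mathscr K_1$ be the graph of $\phi_n$. Each $\Gamma_n$ lies in a regular component of $\mathscr{C}^{pair}$ (see \S \ref{secpairs}) since it is the graph of a $\D$-biholomorphism between fibers of $\mathscr K_0$ and $\mathscr K_1$. By K\"ahlerness, Proposition \ref{Liebprop} applied to $\mathscr K_0\times \mathscr K_1\to K_0\times K_1$ implies that, after extracting a subsequence, $(\Gamma_n)$ converges in $\mathscr{C}^{pair}$ to a cycle $\gamma$ lying above $(J_0,J_1)$. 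I claim $\gamma$ must be singular: if not, it would be the graph of a biholomorphism $f:X_0\to X_1$; cycle-space convergence of graphs of diffeomorphisms to a non-singular limit upgrades to $C^\infty$-convergence $\phi_n\to f$ on $M$, and since $\D$ is a connected component of $\text{Diff}^+(M)$ (hence closed), we would get $f\in\D$, contradicting $[J_0]\neq[J_1]$ in $\mathcal I/\D$. The same argument rules out \emph{any} non-singular cycle above $(J_0,J_1)$ in $\mathscr{C}^{pair}$, so $p^{-1}(J_0,J_1)\subset S$, that is $(J_0,J_1)\in\mathscr P$ and $\{X_0,X_1\}$ is an exceptional pair.

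The ``in particular'' clause is then immediate from Proposition \ref{propexpair} (or, equivalently for the non-Hausdorff part, from Corollary \ref{corNH}).

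The main technical obstacle is the exclusion of a non-singular limit cycle above $(J_0,J_1)$: everything else is bookkeeping on the extraction of a suitable sequence. This step combines the fact that $\D$ is closed in $\text{Diff}^+(M)$ as a connected component with the standard upgrade from convergence of graphs in the Barlet cycle space to $C^\infty$-convergence of the underlying diffeomorphisms when the limit cycle itself is the graph of a diffeomorphism.
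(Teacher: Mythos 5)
Your proof is correct and follows essentially the same route as the paper: reduce case i) to case ii) via Lemma \ref{lemmaambigvsundis}, observe that analytic non-separation produces a sequence \eqref{phin} exhibiting $\{X_0,X_1\}$ as a non-Hausdorff and exceptional pair, and conclude with Proposition \ref{propexpair} (and Corollary \ref{corNH}). The only difference is that you make explicit, via Proposition \ref{Liebprop} and the closedness of $\D$ in $\text{Diff}^+(M)$, why the graphs of the $\phi_n$ subconverge to a necessarily singular cycle over $(J_0,J_1)$ — a point the paper treats as immediate in the remark following Definition \ref{defanaNH} — which is a welcome clarification rather than a departure.
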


\begin{proof}
	We already observed after Definition \ref{defanaNH} that analytically non-sepa\-rated points are non-Hausdorff. Thus this also true for analytically undistinguishable points thanks to Lemma \ref{lemmaambigvsundis}. 
	
	Finally, making use of Proposition \ref{propexpair} proves the last sentence.
\end{proof}
 The case of ambiguous points is unclear due to Remark \ref{rkambigns}.\\
 We sum up some of the properties in the following table.
{\setlength{\arrayrulewidth}{0.5mm}
	\renewcommand{\arraystretch}{1.5} 

\begin{table}[H]
	\label{table1}
	\begin{tabular}{|c|m{2.7cm}|m{2.7cm}|m{2.7cm}|}
		\hline
		\bfseries {Analytically} &\centering{\bfseries{Teichm\"uller}}  &\centering{\bfseries{Families}} & \bfseries{Examples} \\
		\hline
		non-separated & isomorphic punctured slices & existence of i\-so\-mor\-phic local punctured de\-for\-mations & jumping families\\
		\hline
		ambiguous & isomorphic punctured neighborhoods  & duality between local punctured deformations & see Example \ref{exHK}\\
		\hline
		undistinguishable & isomorphic neighborhoods & duality between local de\-for\-ma\-tions & Hirzebruch surfaces $\mathbb F_a$ and $\mathbb F_{-a}$\\
		\hline
	\end{tabular}
\vskip.3cm
\caption{Pathologies for $X_0$ and $X_1$ distinct and K\"ahler } 
\end{table}

\subsection{The local isomorphism property}
\label{seclocaliso}
Following \cite{ENS}, we say that a compact complex manifold $X_0$ has the local isomorphism property if any two pointwise isomorphic deformations of $X_0$ are locally isomorphic at the base point $0$.

In \cite{ENS}, we claim that $X_0$ has the local isomorphism property for deformations over a reduced base if and only if the function
\eqref{h0}
is constant on the fibers $X_t$ of the Kuranishi family $\mathscr{K}_0\to K_0$.

This is however not true and counterexamples exist on K3 surfaces as explained in \cite{Kirsch}. This comes from the fact that the number of chambers in the positive cone of a K3 surface may jump above at the base point of a deformation, see the main Lemma of \cite{BP} or \cite{Beauville}. Start with a K3 surface $X_0$ whose positive cone is subdivided into several chambers. Choose a $(-2)$-curve $C$ on $X_0$ such that the Hodge isometry of $H^2(X_0,\C)$ given by reflection around the hyperplane normal to the class of $C$, say $\sigma$, is non-effective, that is does not respect the K\"ahler chamber. Consider a deformation $\mathcal X\to B$ of $X_0$ whose generic fiber does not admit such a subdivision of its positive cone. Apply the Hodge isometry $\sigma$ fiberwise to the punctured family, giving another family isomorphic to the first one above the punctured base $B^*$. Then it can be checked that this new family extends as a deformation $\mathcal X'\to B$ of $X_0$. Now $\mathcal X$ and $\mathcal X'$ are not locally isomorphic at $0$, since a local isomorphism would induce an automorphism on the central fiber $X_0$ acting on $H^2(X_0,\C)$ as the non-effective Hodge isometry $\sigma$ we start with. 

Notice that in such counterexamples, we may assume the families to be reduced families (the construction process is local) but the two families are not pointwise isomorphic as reduced families. Indeed the central fiber of $\mathcal X$ and $\mathcal X'$ are common, hence identified through the identity, whereas the other fibers are identified through a biholomorphism acting as the reflection $\sigma$ on the local system of second cohomology groups. They are inseparable points of the Teichm\"uller space, cf. Example \ref{exampleK3undistinguish}.

\begin{definition}
	\label{defdiff0lip}
	We say that a compact complex manifold $X_0$ has the {\slshape local $\D$-isomorphism property} if any two pointwise $\D$-isomorphic reduced deformations of $X_0$ are locally $\D$-isomorphic at the base point $0$.
\end{definition}

We do not know of an example of a compact complex manifold $X_0$ not having the $\D$-local isomorphism property and such that \eqref{h0} is constant. However, we shall prove

\begin{theorem}
	\label{thmlip}
	Let $X_0$ be K\"ahler. Assume that $X_0$ has not the $\D$-local isomorphism property over reduced bases. Then,
	either the $h^0$-function \eqref{h0} is not locally constant, or $X_0$ is an exceptional point.
\end{theorem}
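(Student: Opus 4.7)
The plan is to argue by contrapositive: assume the $h^0$ function \eqref{h0} is locally constant at $J_0$ and that $X_0$ is not exceptional, and deduce that $X_0$ enjoys the $\D$-local isomorphism property.

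Given two reduced $M$-deformations $\mathcal{X}\to B$ and $\mathcal{X}'\to B$ of $X_0$ with $B$ reduced and pointwise $\D$-isomorphic fibers, shrink $B$ so that, by Kuranishi's theorem, $\mathcal{X}\cong f^*\mathscr{K}_0$ and $\mathcal{X}'\cong(f')^*\mathscr{K}_0$ for some holomorphic maps $f,f':(B,0)\to(K_0,0)$. Since $X_0$ is K\"ahler, Corollary \ref{3rdcor} combined with Corollary \ref{5thcor} gives, for $V$ small enough, that the reduction of $\mathcal{T}_V$ has only finitely many irreducible components, all of them $A^1$-components; in particular, at the level of groupoids, $\mathcal{A}_1=\mathcal{T}_V$. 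By Lieberman's theorem, the group $\Aun/\Azero$ is finite, so let $g_1,\dots,g_k\in\Aun$ represent its classes, and consider the analytic maps $\text{Hol}_{g_i}:U_{g_i}\subset K_0\to K_0$ of \eqref{Autaction} together with the analytic sections $\sigma_{g_i}:U_{g_i}\to\mathcal{A}_1$ from \eqref{sigmaf}.

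The pointwise $\D$-isomorphism hypothesis implies that, for every $b\in B$, the pair $(f(b),f'(b))$ lies in the $(s,t)$-image of $\mathcal{A}_1=\mathcal{T}_V$, hence in the finite union $\bigcup_i \text{Graph}(\text{Hol}_{g_i})$, an analytic subset of $K_0\times K_0$. Pulling back via $(f,f'):B\to K_0\times K_0$ yields an analytic decomposition $B=\bigcup_i B_i$ with $B_i:=\{b\in B:f'(b)=\text{Hol}_{g_i}(f(b))\}$. The $\D$-isomorphism $\phi_0$ of the central fibers singles out one connected component $A$ of $\Aun$; since all irreducible components of $(B,0)$ contain $0$ and the possible targets $\text{Hol}_{g_i}(0)=0$ agree while the distinct graphs are separated analytically near $(0,0)$, a continuity/irreducibility argument shows that, after shrinking $B$, a single index $i$ works, giving an equality $f'=\text{Hol}_{g_A}\circ f$ of analytic maps $B\to K_0$. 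Composing with $\sigma_{g_A}$ then produces an analytic map $\sigma_{g_A}\circ f:B\to\mathcal{A}_1$ whose source is $f$ and target is $f'$; by Yoneda and the identification $\mathscr{A}_1\cong\mathscr{T}(M,V)$ valid since $X_0$ is not exceptional, this is precisely a local $\D$-isomorphism $\mathcal{X}\to\mathcal{X}'$ over $(B,0)$.

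The main obstacle is the reduction to a single $g_A$ working globally over a neighborhood of $0$, together with checking that the section $\sigma_{g_A}$ really lands in $\mathcal{A}_1$ rather than merely in $\mathcal{A}$. This is where all three hypotheses combine essentially: K\"ahlerianity yields, via Proposition \ref{Liebprop} and the resulting finiteness of irreducible components of the relative cycle space, that only finitely many $g_i$'s need be considered; non-exceptionality rules out further components of $\mathcal{T}_V$ coming from singular cycles not attached to any true $\Aun$-element (Corollary \ref{5thcor}); and local constancy of $h^0$ ensures, via Lemma \ref{lemmaHolWavrik}, that $\text{Hol}_g$ is the identity for $g\in\Azero$, so that each connected component of $\Aun$ gives rise to a single well-defined analytic graph in $K_0\times K_0$ rather than to a positive-dimensional family (which would correspond precisely to the jumping phenomenon of \S \ref{secjumping}). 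Without any one of these, the last step collapses, in agreement with the K3 counterexample of \S \ref{seclocaliso}, which defeats the ordinary local isomorphism property but respects the $\D$-version since the Hodge reflection $\sigma$ used there does not belong to $\D$.
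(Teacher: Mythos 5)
Your contrapositive strategy is essentially the paper's contradiction argument run backwards, but its central step is asserted rather than proved. The claim that, near $(J_0,J_0)$, the $(s\times t)$-image of $\mathcal{T}_V$ (equivalently of $\mathcal{A}_1$) is contained in the finite union $\bigcup_i\mathrm{Graph}(\text{Hol}_{g_i})$ does not follow from Corollaries \ref{3rdcor} and \ref{5thcor}: knowing that every irreducible component of $\mathcal{T}_V$ is an $A^1$-component only says that each component contains the extensions of one connected component of $\Aun$; it does not say that the target of an arbitrary morphism $(J,\phi)$ in such a component equals $\text{Hol}_{g_i}(J)$ for one of your finitely many representatives. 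Establishing this is exactly the convergence-and-stabilization argument that constitutes the paper's proof: pick a sequence of base points tending to $0$, use Lieberman compactness to extract a limit cycle of the graphs, use non-exceptionality to see that this limit is the graph of some $H\in\Aun$, extend every element of $\Aun$ over a uniform neighborhood of $0$ in $K_0$ (Lemma \ref{lemmadefdomain}, which is where local constancy of $h^0$ enters), and invoke \cite[Thm 1]{Kur3} to conclude that $H_n^{-1}\circ\phi_n$ fixes the source for $n$ large. Your proposal cites these ingredients as motivation but never carries out this step, which is the heart of the matter; without it the decomposition $B=\bigcup_i B_i$ is unjustified.

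Two further points fail as written. First, the distinct graphs $\mathrm{Graph}(\text{Hol}_{g_i})$ are \emph{not} ``separated analytically near $(0,0)$'': every $\text{Hol}_{g_i}$ fixes $0$, so all the graphs pass through $(0,0)$ and may intersect along larger sets. For $B$ irreducible one can still conclude that some $B_i=B$, because a finite union of proper analytic subsets cannot exhaust an irreducible space, but that is a different argument from the one you give. Second, for a reducible reduced base the working index $i$ may differ on different irreducible components through $0$, and the isomorphisms $\sigma_{g_i}\circ f$ obtained on the various components need not glue to an isomorphism of the two families over a neighborhood of $0$ in $B$; your proof is silent on this. The paper sidesteps the issue entirely by quoting \cite[p.~513--514]{ENS}: failure of the $\D$-local isomorphism property over reduced bases implies failure over disks, so it suffices to argue over an irreducible one-dimensional base. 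In a contrapositive formulation you must either reprove that reduction or handle the gluing over reducible bases. (A minor additional wrinkle: Lemma \ref{lemmaHolWavrik} assumes $K_0$ reduced; since your bases are reduced you can factor $f,f'$ through the reduction of $K_0$, but this should be said explicitly.)
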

\noindent and its immediate corollary
\begin{corollary}
	\label{corlip}
	The subset of points of $\mathscr{T}(M,V_K)$ not having the $\D$-local isomorphism property over reduced bases is included in a strict analytic substack of $\mathscr{T}(M,V_K)$.
\end{corollary}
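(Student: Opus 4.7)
The plan is to derive Corollary \ref{corlip} as an immediate consequence of Theorem \ref{thmlip} combined with the two geographical results already established in the paper. By Theorem \ref{thmlip}, every Kähler point $X_0$ that fails the $\D$-local isomorphism property over reduced bases falls into at least one of the following two classes: either $X_0$ is a jumping point, i.e.\ the function \eqref{h0} is not locally constant at $X_0$, or $X_0$ is an exceptional point of $\mathscr{T}(M,V_K)$. Thus the failure locus is contained in the union $\mathscr{J}(M)\cup\mathscr{E}(M)$, where $\mathscr{J}(M)$ denotes the substack of jumping points and $\mathscr{E}(M)$ the closure of exceptional points given by Theorem \ref{2ndmainthm}.

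First I would verify that the jumping locus $\mathscr{J}(M)$ is a strict analytic substack of $\mathscr{T}(M,V_K)$. This is essentially classical: on any Kuranishi atlas $K_0\subset V_K$, Grauert's upper semicontinuity for the Zariski topology (cited in footnote \ref{ftgrauert}) ensures that the loci $\{J\in K_0\mid h^0(J)\geq a\}$ are analytic subspaces, and on each irreducible component of $K_0^{\text{red}}$ the function $h^0$ attains its minimum on a nonempty Zariski open set. Hence the union of the strict loci where $h^0$ jumps defines a positive-codimensional analytic subspace of $K_0^{\text{red}}$, and this local description glues into a strict analytic substack $\mathscr{J}(M)$ in the sense of Remark \ref{rkconcomp}.

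Next, Theorem \ref{2ndmainthm} already provides the strict analytic substack $\mathscr{E}(M)\subset\mathscr{T}(M,V_K)$ containing the closure of the set of exceptional K\"ahler points. Both $\mathscr{J}(M)$ and $\mathscr{E}(M)$ are thus strict analytic substacks in the sense of Remark \ref{rkconcomp}, each given locally on any Kuranishi atlas by a positive-codimensional analytic subspace of the reduction.

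Finally I would observe that a finite union of strict analytic substacks of $\mathscr{T}(M,V_K)$ is again a strict analytic substack: pulling back to any Kuranishi atlas $K_0$, the corresponding subspaces are analytic and each of positive codimension in every irreducible component of $K_0^{\text{red}}$, so their union inherits both properties. This produces the desired strict analytic substack $\mathscr{J}(M)\cup\mathscr{E}(M)$ of $\mathscr{T}(M,V_K)$ containing all points where the $\D$-local isomorphism property over reduced bases fails, proving the corollary. There is no real obstacle beyond checking this routine preservation of strictness under finite unions; all the substantive geometry has already been done in Theorems \ref{thmlip} and \ref{2ndmainthm}.
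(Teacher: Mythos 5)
Your proposal is correct and follows essentially the same route as the paper: the paper's proof is simply "apply Theorem \ref{thmlip}, then use Zariski upper semicontinuity of the $h^0$-function on $K_0$" (with Theorem \ref{2ndmainthm} handling the exceptional locus), which is exactly your decomposition into $\mathscr{J}(M)\cup\mathscr{E}(M)$. You merely spell out the routine verifications (analyticity of the jumping locus and stability of strictness under finite unions) that the paper leaves implicit.
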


Of course, assuming part I of Conjecture \ref{mainconj}, then Theorem \ref{thmlip} implies that a K\"ahler point with locally constant $h^{0}$-function has the $\D$-local isomorphism property over reduced bases.

\begin{proof}[Proof of Corollary \ref{corlip}]
	Apply Theorem \ref{thmlip}. The conclusion follows then 
	from the fact that the $h^0$-function is upper semi-continuous for the Zariski topology on $K_0$.	
\end{proof}

\begin{proof}[Proof of Theorem \ref{thmlip}]
	Let $X_0$ be K\"ahler  and not having the $\D$-local isomorphism property over reduced bases. By \cite[p.513--514]{ENS}, $X_0$ has not the $\D$-local isomorphism property over disks. So let $\mathcal X\to\mathbb D$ and $\mathcal X'\to\mathbb D$ be two pointwise $\D$-isomorphic but not locally $\D$-isomorphic reduced deformations of $X_0$. Let $f:\mathbb D\to K_0$, resp. $g:\mathbb D\to K_0$ be holomorphic mappings such that $f^*\mathscr{K}_0$ is isomorphic to $\mathcal X$, resp. $g^*\mathscr{K}_0$ is isomorphic to $\mathcal X'$. As usual, $\mathscr{K}_0\to K_0$ is the Kuranishi family of $X_0$ and the existence of $f$ and $g$ comes from its semi-universality property, cf. Section \ref{Kurfamily}. Fix a sequence $(t_n)$ of $\mathbb D^*$ converging to zero. Set 
	\begin{equation}
		\label{xnyn}
		\forall n\in\N,\qquad x_n=f(t_n)\qquad\text{ and }\qquad y_n=g(t_n)
	\end{equation}
	Thus $(x_n)$ and $(y_n)$ are sequences of points in $K_0$ converging to $0$. Since $\mathcal X$ and $\mathcal X'$ are pointwise $\D$-isomorphic, we may choose a sequence $(\phi_n)$ in $\D$ satisfying \eqref{phin}. Up to passing to a subsequence, we may assume that the graphs of the $\phi_n$ all belong to the same irreducible component of $\mathscr{C}$ and converge in this cycle space to a $\gamma_0$. 
	
	Assume now that \eqref{h0} is constant and that $X_0$ is not exceptional. Then $\gamma_0$ is singular, otherwise its extension would induce a local isomorphism between the two families $\mathcal X$ and $\mathcal X'$. Indeed, assume that $\gamma_0$ is the graph of an automorphism $H$ of $X_0$, and still denote by $H$ its extension as an isomorphism of the Kuranishi family:
	\begin{equation}
		\label{CDiso}
		\begin{tikzcd}
			 (\mathscr{K}_0,X_0) \arrow[r,"H"] \arrow[d] \arrow[dr, phantom, "{\scriptscriptstyle \square}", very near start]
			& (\mathscr{K}_0, X_0)\arrow[d] \\
			(K_0,0) \arrow[r, "h"]
			&(K_0,0)
		\end{tikzcd}
	\end{equation}
		 then $H^{-1}\circ \phi_n$ tends to the identity uniformly in every chart and in every $L^2_k$-norm. In particular, since \eqref{h0} is constant, this implies that $H^{-1}\circ \phi_n$ fixes $x_n$ for $n$ big enough, see \cite{Kur3}. Hence $h$ induces a local isomorphism of $K_0$ making the following diagram commutative
	\begin{equation}
		\label{CDiso2}
		\begin{tikzcd}
			(\mathbb D,0) \arrow[r,"f"] \arrow[dr, "g"'] 
			& (K_0, 0)\arrow[d,"h"] \\
			&(K_0,0)
		\end{tikzcd}
	\end{equation}
	because it satisfies this diagram on the convergent sequence $(t_n)$ of the disk. Finally \eqref{CDiso} lifts as a commutative diagram of local isomorphisms of the corresponding families as stated. So $\gamma_0$ is singular.
	
	Since $X_0$ is not exceptional, the singular cycle $\gamma_0$ belongs to an $\Aun$-component of $\mathscr{C}_0$, hence is the limit of graphs of automorphisms $(H_n)$ of $X_0$. We need the following lemma.
	
	\begin{lemma}
		\label{lemmadefdomain}
		Assume \eqref{h0} is constant and $X_0$ is not exceptional. Then, there exists an open neighborhood $U$ of $0$ in $K_0$ such that any element of $\Aun$ admits an extension as an isomorphism of the Kuranishi family above $U$.
	\end{lemma}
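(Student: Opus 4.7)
The plan is as follows. First I would invoke the K\"ahler hypothesis on $X_0$ (which is in force throughout the proof of Theorem \ref{thmlip}) together with Lieberman's compacity applied to $X_0\times X_0$: by Proposition \ref{Liebprop}, the cycle space $\mathscr{C}_0$ has only finitely many irreducible components, and each connected component of $\Aun$ sits inside a distinct one of them. Hence $\Aun$ has only finitely many connected components. Pick representatives $f_1,\hdots,f_k$, one in each connected component of $\Aun$.

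Next, for every $i$, the section $\sigma_{f_i}:U_{f_i}\to\mathcal{A}_1$ defined by \eqref{sigmaf}--\eqref{chichecks} provides a local extension of $f_i$ as an isomorphism between fibers of the Kuranishi family: it sends $J\in U_{f_i}$ to a biholomorphism $X_J\to X_{\text{Hol}_{f_i}(J)}$ whose restriction at $J=0$ is $f_i$. Packaged together, this is an isomorphism of the restriction of $\mathscr{K}_0$ over $U_{f_i}$ to its restriction over $\text{Hol}_{f_i}(U_{f_i})$. I would then set
\begin{equation*}
U:=\bigcap_{i=1}^{k}U_{f_i},
\end{equation*}
which is an open neighborhood of $0$ in $K_0$ since each $U_{f_i}$ is.

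Finally, any $f\in\Aun$ can be written $f=g\circ f_i$ with $g\in\Azero$ and $i$ the index of the connected component containing $f$. The hypothesis that $h^0$ is constant at $X_0$ lets me apply Lemma \ref{lemmah0constant}: $g$ extends as an automorphism $G$ of the entire Kuranishi family descending as the identity on $K_0$ (on $K_0^{red}$ in general, and then as constructed in the last part of the proof of Lemma \ref{lemmah0constant} on $K_0$ itself). Composing $G$ with $\sigma_{f_i}|_U$ then yields an extension of $f$ as an isomorphism of the Kuranishi family above $U$, restricting to $g\circ f_i=f$ on the central fiber. The same $U$ works for every $f\in\Aun$, proving the lemma.

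The main potential obstacle is verifying that the composition $G\circ\sigma_{f_i}$ is genuinely a family isomorphism in the possibly non-reduced case: one needs $G$ to descend as the identity not merely on $K_0^{red}$ but along all of $K_0$, so that its target matches the target of $\sigma_{f_i}$ with no nilpotent mismatch. This is exactly what the second half of the proof of Lemma \ref{lemmah0constant} achieves, so no new ingredient is required; the rest of the argument is a straightforward book-keeping on finitely many coset representatives.
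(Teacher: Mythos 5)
Your proof is correct and follows essentially the same route as the paper: finiteness of the set of connected components of $\Aun$ (from K\"ahlerianity, via Lieberman), a choice of representatives extended over a common neighborhood $U$ of $0$ obtained by shrinking, and the decomposition of an arbitrary element of $\Aun$ as a representative composed with an element of $\Azero$, the latter extended over the whole Kuranishi family by Lemma \ref{lemmah0constant}. Your closing worry about the non-reduced case is harmless but unnecessary: composing with the global automorphism $G$ of the family works whether or not it descends as the identity on $K_0$, exactly as in the paper.
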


	\begin{proof}[Proof of Lemma \ref{lemmadefdomain}]
We already proved this result for $\Azero$ in Lemma \ref{lemmah0constant}. Since $X_0$ is K\"ahler, $\Aun$ has a finite number of connected components, say $p$. Fix one element $g_i$ in each component. Reducing $U$ if necessary, we may assume that the $g_i$ admit a holomorphic extension above $U$. Now any element of $\Aun$ is a composition $g_i\circ f$ with $f\in\Azero$, hence admits an extension over $U$. 
	\end{proof}
	Then, reducing $K_0$ if necessary, we have a commutative diagram \eqref{CDiso} for every $(H_n,h_n)$, and all of them are defined above the whole $K_0$. Arguing as above, we see that, for $n$ big enough, the sequence $(h_n)$ stabilizes to some mapping $h$ which satisfies \eqref{CDiso2}, as well as the sequence $(H_n)$. Hence $\gamma_0$ is the graph of $H_n$ for $n$ big enough so is not singular. Contradiction that proves the Theorem.
\end{proof}

\subsection{Double and split points}
\label{secubiquit}
In Section \ref{secambig}, we deal with distinct points of the Teichm\"uller stack
admitting isomorphic neighborhoods (or punctured neighborhoods or slices in the milder versions). Here distinct points cannot be recognized by looking at their neighborhoods.

In this section, we deal with a phenomenon which is in a sense opposite: a single point of the Teichm\"uller stack admitting non-isomorphic neighborhoods or slices. Here a single point does not determine its neighborhood.

Analogously to Definition \ref{defanaNH}, we set

\begin{definition}
	\label{deflip}
	A point $X_0$ of the Teichm\"uller stack $\mathscr{T}(M)$ is a {\slshape double point} if there exist two reduced $M$-defor\-mations $\mathcal X_i\to B$ of $X_0$ (for $i=0,1$) above a reduced positive dimensional base $B$ such that $\mathcal X_0$ and $\mathcal X_1$ are isomorphic above $B\setminus\{0\}$ but non-isomorphic over $B$.
\end{definition}

We note that a double point $X_0$ has not the local $\D$-isomorphism property. Analogously to Definitions \ref{defambig}, we set

\begin{definition}
	\label{defsplit}
	A point $X_0$ of the Teichm\"uller stack $\mathscr{T}(M)$ is {\slshape split} if, for any choice of a small enough neighborhood $V_0$ of $X_0$, there exists a neighborhood $V_1$ of $X_0$ and an equivalence between $\mathscr{T}(M, V_0^*)$ and $\mathscr{T}(M, V_1^*)$ such that 
	\begin{enumerate}[i)]
		\item It sends every reduced $V_0^*$-deformation of $X_0$ to an {\slshape isomorphic} reduced $V_1^*$-defor\-mation of $X_0$.
		\item There exists non-isomorphic non-isotrivial reduced $V_i$-deformations $\mathcal{X}_i\to B$ (for $i=0,1$) over a positive-dimensional connected base $B$ whose restrictions over $B^*$ (cf. footnote \ref{footnoteBstar}) are images through the equivalence of point i), hence isomorphic.
	\end{enumerate}
\end{definition}


We note that Definition \ref{defsplit} is similar to the notion of {\it analytic} ambiguity. Formally split is obviously automatically satisfied so we drop the adjective analytic in Definition \ref{defsplit}.

Also, note that there is no analogue to undistinguishable points. Indeed, the natural definition would be that of a point $X_0$ such that for any choice of a small enough neighborhood $V_0$ of $X_0$, there exists a neighborhood $V_1$ of $X_0$ and an equivalence between $\mathscr{T}(M, V_0)$ and $\mathscr{T}(M, V_1)$ such that 
\begin{enumerate}[i)]
			\item It sends every reduced $V_0^*$-deformation of $X_0$ to an {\slshape isomorphic} reduced $V_1^*$-defor\-mation of $X_0$.
			\item There exists a $V_0$-deformation which is {\slshape not isomorphic} to its image.
	\end{enumerate}
Now, arguing as in Lemma \ref{lemmaundis0}, we would have the equivalence sending the Kuranishi family to itself. Hence it would send every reduced family to an isomorphic one, contradicting item ii).

\begin{example}
	\label{exF2split}
	We claim that the Hirzebruch surface $\mathbb F_2$ is split. Recall that its Kuranishi space is a jumping family with generic point $\mathbb P^1\times \mathbb P^1$. Hence, for $V_0$ small enough, the objects of $\mathscr{T}(M,V_0)$ are reduced families with every fiber isomorphic either to $\mathbb F_2$ or to $\mathbb P^1\times \mathbb P^1$; and the objects of $\mathscr{T}(M,V^*_0)$ are reduced families with every fiber isomorphic to $\mathbb P^1\times \mathbb P^1$, that is locally trivial $\mathbb P^1\times \mathbb P^1$-bundles by Fischer-Grauert Theorem. 
	
	Consider the stack morphism induced by the map $z\mapsto z^2$ of the unit disk. It sends the pull-back of the Kuranishi family by some mapping $f:B\to\mathbb D$ to the pull-back of the Kuranishi family by $z\in B\mapsto (f(z)^2)\in\mathbb D$.
	It is an equivalence on $V_0^*$, that is on locally trivial $\mathbb P^1\times \mathbb P^1$-bundles so $\mathbb F_2$ is split. Nevertheless, it is not essentially surjective on $V_0$ since it maps reduced deformations of $\mathbb F_2$ to reduced deformations of $\mathbb F_2$ {\itshape with Kodaira-Spencer map zero at the base point}. Thus, no family in the image is isomorphic to the Kuranishi family.
\end{example}

Once again, all these pathologies only occur on a strict analytic substack of $\mathscr{T}(M,V_K)$.
\begin{theorem}
	\label{thmpathosubstack}
	Let $X_0$ be a point of $\mathscr{T}(M,V_K)$. Then, 
	\begin{enumerate}[\rm i)]
	\item We have
	\begin{equation}
		\label{suiteimpli}
		\begin{aligned}
		X_0&\text{ split }\Longrightarrow X_0\text{ double }\\
		&\Longrightarrow X_0\text{ is exceptional or \eqref{h0} is not constant.}
		\end{aligned}
	\end{equation}
	\item In particular, the subset of split, resp. double points of $\mathscr{T}(M,V_K)$ is included in a strict analytic substack of $\mathscr{T}(M,V_K)$.
	\end{enumerate}
\end{theorem}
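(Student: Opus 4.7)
My plan is to reduce both implications to Theorem~\ref{thmlip} and Theorem~\ref{2ndmainthm}, with the core work being a gluing argument in the K\"ahler non-exceptional $h^0$-constant case.

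First, I would establish ``split $\Rightarrow$ double'' directly from the definitions. If $X_0$ is split, point~ii) of Definition~\ref{defsplit} furnishes non-isomorphic non-isotrivial reduced deformations $\mathcal{X}_i\to B$ ($i=0,1$) of $X_0$ over a connected positive-dimensional base whose restrictions to $B^*$ are images of each other under an equivalence that sends any family to an \emph{isomorphic} one. Therefore $\mathcal{X}_0|_{B^*}\cong\mathcal{X}_1|_{B^*}$ while $\mathcal{X}_0$ and $\mathcal{X}_1$ are not isomorphic over $B$, witnessing doubleness in the sense of Definition~\ref{deflip}.

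For the second implication I would argue by contraposition: assume $X_0$ is K\"ahler, not exceptional, and that $h^0$ is locally constant at $X_0$, and show that $X_0$ is not double. The contrapositive of Theorem~\ref{thmlip} gives the $\D$-local isomorphism property over reduced bases. Let $\mathcal{X}_0,\mathcal{X}_1\to B$ be reduced $M$-deformations of $X_0$ isomorphic over $B^*$ via some $\phi$. Both central fibers being $X_0$, the families are pointwise $\D$-isomorphic, so there exists $\psi:\mathcal{X}_0|_U\to\mathcal{X}_1|_U$ over some neighborhood $U$ of $0$; it remains to glue $\phi$ and $\psi$. I would consider $\alpha:=\phi\circ\psi^{-1}$ on $U^*:=U\cap B^*$, a family automorphism of $\mathcal{X}_1|_{U^*}$, which through the classifying map $g:B\to K_0$ of $\mathcal{X}_1$ is encoded as an analytic map $U^*\to\mathcal{T}_V$. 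The crux, exactly as in the proof of Theorem~\ref{thmlip}, is that non-exceptionality together with constancy of $h^0$ forces the limit in the cycle space of the fiberwise graphs of $\alpha$ as $b\to 0$ to be the graph of a genuine automorphism of $X_0$ in a single connected component $A$ of $\Aun$. Thus $\alpha$ takes values in the corresponding $A^1$-component of $\mathcal{T}_V$, and by Lemma~\ref{lemmadefdomain} it extends analytically across $0$ to a family automorphism $\tilde\alpha$ of $\mathcal{X}_1|_U$. Setting $\tilde\psi:=\tilde\alpha\circ\psi$, the modified local isomorphism $\tilde\psi$ agrees with $\phi$ on $U^*$, and the two glue into a global isomorphism over $B$, contradicting doubleness.

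Part~ii) would then follow from part~i) together with two strictness inputs. By Theorem~\ref{2ndmainthm}, the closure of exceptional points is a strict analytic substack of $\mathscr{T}(M,V_K)$; the jumping locus is a strict analytic substack by the Grauert upper semi-continuity recalled in~\S\ref{secjumping}, strict because on each irreducible component $h^0$ attains its minimum on a Zariski-open set. Their union is again strict, since at a generic point of any irreducible component of $\mathscr{T}(M,V_K)$ neither pathology occurs; by part~i), every double---and a~fortiori every split---point lies in this union, hence so does its closure. The main obstacle I anticipate is the gluing step in the contrapositive argument: non-exceptionality is essential there, since without it the ``difference'' $\alpha$ between $\phi$ and $\psi$ on $U^*$ could degenerate fiberwise onto an exceptional singular cycle, which would obstruct the analytic extension of $\alpha$ across the central fiber and break the patching.
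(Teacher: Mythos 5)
Your proposal is correct and rests on the same two pillars as the paper's own proof: Theorem \ref{thmlip} (via the local $\D$-isomorphism property) for the implication ``double $\Rightarrow$ exceptional or $h^0$ non-constant'', and Theorem \ref{2ndmainthm} together with Zariski upper semicontinuity of $h^0$ for the strictness claim in part ii); the implication ``split $\Rightarrow$ double'' is, as in the paper, a direct reading of Definitions \ref{deflip} and \ref{defsplit}. Where you diverge is in the execution of the middle implication. The paper simply invokes the remark made after Definition \ref{deflip} that a double point fails the local $\D$-isomorphism property and then quotes Theorem \ref{thmlip} and Corollary \ref{corlip}; that remark is immediate if ``non-isomorphic over $B$'' is read as a germ condition at $0$. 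You instead argue by contraposition and add a genuine extra step: from the local isomorphism $\psi$ near $0$ furnished by the local $\D$-isomorphism property, you extend the discrepancy automorphism $\alpha=\phi\circ\psi^{-1}$ across the puncture and glue it with $\phi$, thereby contradicting non-isomorphy over \emph{all} of $B$. This buys a proof valid under the literal (non-germ) reading of Definition \ref{deflip}, at the cost of re-running part of the proof of Theorem \ref{thmlip}. One caveat on that step: convergence of the fiberwise graphs of $\alpha$ is a priori only subsequential, and Lemma \ref{lemmadefdomain} extends individual elements of $\Aun$ over a uniform neighborhood, not an arbitrary holomorphic section of fiberwise automorphisms over $U^*$; to conclude one must, exactly as in the proof of Theorem \ref{thmlip}, compose $\alpha_b$ with the extension of the limit automorphism $H$, use constancy of $h^0$ (via \cite{Kur3} and the local triviality of the relative $\text{Aut}^0$, cf.\ Lemma \ref{lemmah0constant}) to recognize the result as a section of relative $\text{Aut}^0$ tending to the identity, and extend it across $0$ by removable-singularity arguments. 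Since you explicitly appeal to that machinery, I regard this as a compression rather than a gap; your treatment of part ii) (union of the exceptional closure and the jumping locus, strict by irreducibility) matches Corollary \ref{corlip}.
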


\begin{proof}
	The first implication are obvious from the definitions. Then we already noticed that a double point has not the local $\D$-isomorphism property. Hence we may apply Theorem \ref{thmlip} and Corollary \ref{corlip}.
\end{proof}

 We sum up some of the properties in the following table, to be compared with table \ref{table1}.
{\setlength{\arrayrulewidth}{0.5mm}
	\renewcommand{\arraystretch}{1.5} 
	
	\begin{table}[H]
		\label{table2}
		\begin{tabular}{|m{2.7cm}|m{2.7cm}|m{2.7cm}|m{2.7cm}|}
			\hline
			\bfseries {} &\centering{\bfseries{Teichm\"uller}}  &\centering{\bfseries{Families}} & \bfseries{Examples} \\
			\hline
			\centering{double} & isomorphic punctured slices & existence of i\-so\-mor\-phic local punctured de\-for\-mations & Base point of a jumping family\\
			\hline
			\centering{split} & isomorphic punctured neighborhoods  & duality between local punctured deformations & Hirzebruch surface $\mathbb F_2$\\
			\hline
		\end{tabular}
		\vskip.3cm
		\caption[withinsection]{Pathologies for $X_0$ K\"ahler } 
	\end{table}
 
 \section{Cartography of the Teichm\"uller stack of K\"ahler structures}
 \label{seccarto}
 In this final section, we gather all the previous results to describe the geography of the Teichm\"uller stack, that is to give a geometric picture of the different strata of points (normal, exceptional, ...) in the Teichm\"uller stack. We first deal with the set of K\"ahler points for which this cartography is much more precise. Then we deal with the general case. At the end, we discuss the notion of holonomy points and their repartition and add a few remarks on the geography of bad points in $\T$ vs. in GIT quotients.
 
\subsection{K\"ahler points}
\label{secKahlercartography}

\begin{theorem}[Structure Theorem for K\"ahler points]
	\label{thmstructureKahler}
	Let $M$ be a connected, compact, oriented $C^\infty$ manifold admitting complex K\"ahler structures. Then the Teichm\"uller stack of K\"ahler points $\mathscr{T}(M,V_K)$ admits the following structure:
	\begin{enumerate}[\rm i)]
		\item Jumping points form a strict analytic substack $\mathscr{J}$. At a generic jumping point $X_0$, we have that $\mathscr{T}(M,V_K)$ is locally isomorphic to the Kuranishi stack, hence locally homeomorphic to the quotient of the Kuranishi space by an equivalence relation induced by $\Aun$. It is however far from being a manifold or an orbifold: the equivalence classes are analytic submanifolds of $K_0$ but that of $X_0$ is the base point, whereas others are positive dimensional, hence it is not locally Hausdorff.
		\item If non-empty - see Conjecture {\rm \ref{mainconj}, I} -, the closure of exceptional points form another strict analytic substack $\mathscr{E}$. At an exceptional point, we have that the Kuranishi stack only admits a finite \'etale projection onto $\mathscr{T}(M,V_K)$. At a generic exceptional point, $\mathscr{T}(M,V_K)$ is locally homeomorphic to a finite quotient of its Kuranishi space, but is not an orbifold.
		\item These two analytic substacks may intersect. Generic points refered to in points {\rm i)} and {\rm ii)} are points which are not in the intersection $\mathscr{J}\cap\mathscr{E}$. Points in the intersection combine the two pathologies: $\mathscr{T}(M,V_K)$ is neither locally Hausdorff nor isomorphic to the Kuranishi stack.
		\item The complementary Zariski open substack $\mathscr{O}$ contains the open substack of normal K\"ahler points $\mathscr{T}(M,\mathcal N_K)$. At a normal point, we have that $\mathscr{T}(M,\mathcal N_K)$ is locally isomorphic to the Kuranishi stack and locally homeomorphic to an orbifold. Moreover, there exists a global analytic morphism from $\mathscr{T}(M,\mathcal N_K)$ to a finite \'etale analytic stack with same topological quotient.
		\item The left points of $\mathscr{O}$ are points whose Kuranishi space is not reduced and admitting an automorphism of $\Azero$ with no extension as a local isomorphism of the Kuranishi family inducing the identity on the base. 
	\end{enumerate}
	The previous cartography also holds for the $\Z$-Teichm\"uller stack of K\"ahler points $\mathscr{T}^\Z(M,V_K)$ with the obvious changes in the statements.
\end{theorem}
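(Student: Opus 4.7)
The proof is essentially a synthesis, since the five claims are each essentially established in the preceding sections; the work consists in gluing them together and clarifying the characterization in point v).

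First I would handle i). The fact that jumping points form a strict analytic substack follows from the Grauert upper semicontinuity of $h^0$ (footnote \ref{ftgrauert}), applied on every Kuranishi chart of an atlas of $\T$ and then glued. For the local structure at a generic jumping point $X_0$, I would pick $V$ small enough so that $X_0$ avoids $\overline{\mathscr{E}}$ (Theorem \ref{2ndmainthm}); then by Definition \ref{defexcpoint} the inclusion $\mathscr{A}_1\to\mathscr{T}(M,V)$ is an isomorphism, and the topological quotient is the leaf space of the foliated structure of $K_0$ constructed in \cite[\S 3]{ENS} (recalled in \S \ref{secjumping}). Since the base point lies on a $0$-dimensional leaf while nearby leaves have positive dimension, the quotient is not locally Hausdorff at $X_0$.

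For ii) and iii) I would invoke directly Theorem \ref{2ndmainthm} and Theorem \ref{thmfinite2}: the latter ensures that at a K\"ahler point the natural inclusion $\mathscr{A}_1\to\mathscr{T}(M,V)$ is finite étale. At a generic exceptional point, which I can take to be away from $\mathscr{J}$ by substracting the strict analytic substack $\mathscr{J}\cap\mathscr{E}$, the $h^0$-function is locally constant and the étale morphism $\mathscr{A}_1\to\mathscr{T}(M,V)$ has finite fiber of size strictly greater than one (by Definition \ref{defexcpoint} combined with Proposition \ref{propfiber}), so locally the Teichm\"uller stack is a finite quotient of its Kuranishi space but the number of non-equivalent reductions prevents it from being an orbifold. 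Point iii) is then just the conjunction of i) and ii).

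For iv), I would combine Proposition \ref{propnormalopen} (normal K\"ahler points form an open substack of $\T$), Definition \ref{defnormal}.i) together with the observation that a normal point is non-exceptional so that $\mathscr{A}_1=\mathscr{T}(M,V)$ locally, and Theorem \ref{thmnormal}.v) (in the K\"ahler case $\text{Map}^1(X_0)$ is finite by \cite{Fujiki}, hence the normal Teichm\"uller stack is locally an orbifold). The global morphism from $\mathscr{T}(M,\mathcal N_K)$ to the étale stack $\mathscr{NT}(M)$ and the equality of underlying topological quotients are given by Theorem \ref{thmnormal}.ii). The complementary Zariski open substack $\mathscr{O}$ is by definition $\T\setminus(\mathscr{J}\cup\mathscr{E})$; that normal K\"ahler points are contained in it is exactly Definition \ref{defnormal}.

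Finally, for v) I would characterize points in $\mathscr{O}\setminus\mathcal N$. Such a point $X_0$ is neither jumping nor in $\overline{\mathscr{E}}$, so condition ii) of Definition \ref{defnormal} holds; hence failure of normality comes from condition i), i.e.\ some automorphism of $\Azero$ does not extend as an automorphism of the whole Kuranishi family descending to $\mathrm{Id}_{K_0}$. By Lemma \ref{lemmah0constant}, the local constancy of $h^0$ (which does hold since $X_0\notin\mathscr{J}$) already ensures extension above $K_0^{red}$; hence the obstruction lives in the non-reduced structure of $K_0$, precisely as stated. The last sentence about $\TZ$ follows verbatim from the parallel statements: Corollary \ref{corZnormal}, Corollary \ref{Cor2ndmain} and Corollary \ref{KurZstackorbifold} replace Theorem \ref{thmnormal}, Theorem \ref{2ndmainthm} and Corollary \ref{Kur1stackorbifold}. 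The only genuinely forward-looking ingredient is the precise description in v), and this is the step where I would be most careful, since the distinction between extension as a local isomorphism of the family versus descent to the identity on the non-reduced base requires unpacking Lemma \ref{lemmah0constant} in the non-reduced setting.
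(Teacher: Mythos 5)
Your synthesis follows the paper's own route almost verbatim: points ii) and iii) from Theorem \ref{2ndmainthm} and Theorem \ref{thmfinite2} (the paper also cites Corollary \ref{5thcor}, which your use of Definition \ref{defexcpoint} effectively replaces), point iv) from Proposition \ref{propnormalopen} and Theorem \ref{thmnormal} with Fujiki's finiteness, and point v) by unpacking Definition \ref{defnormal} together with Lemma \ref{lemmah0constant} and Lemma \ref{lemmaHolWavrik} in the non-reduced case --- this last analysis is correct and is exactly the ``gap'' the paper alludes to. The treatment of the $\Z$-case is also as in the paper.

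There is, however, one genuine omission in point i). You justify the claim that the equivalence class of $X_0$ in $K_0$ is the base point solely by the fact that $J_0$ lies on a $0$-dimensional leaf of the foliation of \cite{ENS}. But the equivalence relation induced on $K_0$ by $\mathcal{T}_V$ (equivalently, by $\Aun$ at a non-exceptional point) is in general coarser than the partition into leaves: a priori other points of $K_0$, lying on different leaves and possibly accumulating onto $J_0$, could encode $X_0$ up to a diffeomorphism isotopic to the identity --- this is precisely what is allowed in the non-K\"ahler statement of Theorem \ref{thmstructuregeneral}, point i) (``a sequence of points accumulating onto it''). The K\"ahler-specific fact that the class of $X_0$ is exactly $\{J_0\}$ requires Corollary \ref{interlemma} (and, for the structure of the other classes as finite unions of leaves, Corollary \ref{interlemma2}), i.e.\ Lieberman's compactness result Proposition \ref{Liebprop}; this is the ingredient the paper explicitly invokes for point i) and which your argument does not supply. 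Adding that citation closes the gap; the rest of the local description at a generic jumping point (isomorphism with the Kuranishi stack away from $\overline{\mathscr{E}}$, non-local-Hausdorffness from the positive-dimensional nearby classes) is fine as you wrote it.
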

Some additional remarks complete the statement of Theorem \ref{thmstructureKahler}. They also hold in the case of the $\Z$-Teichm\"uller stack of K\"ahler points $\mathscr{T}^\Z(M,V_K)$ with the obvious changes in the statements.
\begin{enumerate}[\rm a)]
	\item Recall that the analytic substacks of jumping points, resp. closure of exceptional points, are given locally by analytic subsets of Kuranishi spaces that glue when identifying two distinct Kuranishi spaces via a local isomorphism. Hence these abstract statements on stacks can be seen locally as classical statements about the geometry of Kuranishi spaces. However, this point of view, though more concrete and geometric, gets rid of the global behaviour, which is the crux of \cite{LMStacks} and of this paper.
	\item The construction and the nature of a local moduli space in the classical sense (i.e. a set with some structure such that every isomorphism class of complex structures on $M$ close to $X_0$ is encoded in a unique point of it) can be deduced from Theorem \ref{thmstructureKahler}. It is always a quotient of the Kuranishi space. If $X_0$ is normal K\"ahler, it is a finite quotient fixing the base point, hence a complex orbifold; indeed it is the quotient of $K_0$ by $\text{Map}^1(X_0)$. If $X_0$ is exceptional K\"ahler and not jumping, or in the closure of exceptional points but not jumping it is also a finite quotient of $K_0$ but not given by a proper group action so it is not a complex orbifold. If it is jumping non exceptional point, it is given as the leaf space of the foliation of $K_0$ described in \cite{ENS}, see point \ref{itemfoliation}). Finally if it is jumping and in the closure of exceptional points, it is a finite quotient of the leaf space of this foliation.
	\item Points where $\mathscr{T}(M,V_K)$ is locally isomorphic to the Kuranishi stack are points where the action of $\D$ onto $V_K$ is proper. The neighborhood of such a point only depends on two data, both encoding in $X_0$: the Kuranishi space $K_0$ and the extension of $\Aun$ to the Kuranishi family. 
	\item Recall that, by Theorem \ref{thmpathosubstack}, split and double points are jumping or exceptional points. Hence, the worst pathologies of the Teichm\"uller stack occur in these two substacks. 
	\item\label{itemfoliation} The foliation of $K_0$ of \cite{ENS} is given as follows. First decompose $K_0$ into strata 
	\begin{equation*}
		(K_0)_a:=\{J\in K_0\mid h^0(J)\leq a\}
	\end{equation*}
	Each non-empty $(K_0)_a$ is a Zariski open set of $K_0$ and the differences $(K_0)_{a+1}\setminus (K_0)_a$ are analytic subsets of $K_0$. Then each stratum admits a holomorphic regular foliation\footnote{\ The leaves are complex manifolds but the transversals may be singular, see the discussion in \cite{ENS}.} whose dimension is given on $(K_0)_a$ by the difference between $a$ and the minimal value of $h^0$ on $K_0$.
\end{enumerate}

\begin{proof}
	Theorem \ref{thmstructureKahler} gathers results proved in the previous sections. Point i) is a mixing of the well known upper semi continuity property of $h^0$ for the Zariski topology, of the foliated structure of the Kuranishi space proved in \cite{ENS} and of Corollary \ref{interlemma}. Point ii) is a rephrasing of Corollary \ref{5thcor} and Theorem \ref{thmfinite2} and point iii) follows from the two previous ones. Point iv) is an adaptation of Theorem \ref{thmnormal}, taking into the finiteness properties in the K\"ahler setting. Point v) is a rephrasing of the gap between normal and non-exceptional and non-jumping points when $K_0$ is not reduced.
\end{proof}

\begin{example}
	\label{exampleHirz}
	Take $M=\mathbb S^2\times\mathbb S^2$ corresponding to the complex structures of Hirzebruch surfaces $\mathbb{F}_{2a}$ for $a\in\mathbb Z$, cf. Remark \ref{topgenericity}. We note that $\mathscr{T}(M)$ is not an analytic stack but an inductive limit of analytic stacks, see \cite{ENS}. Now, all points are K\"ahler. There is no exceptional points, but a normal point, $\mathbb F_0$, that is $\mathbb P^1\times\mathbb P^1$, and all others points are jumping points. The fact that normal points fill a Zariski open substack can be seen by looking at the Kuranishi space of $\mathbb{F}_{2a}$ which always contains a Zariski open subset of points corresponding to $\mathbb P^1\times\mathbb P^1$.
\end{example}

\subsection{The general case}
\label{seccartographygeneral}

\begin{theorem}[Structure Theorem -- the general case]
	\label{thmstructuregeneral}
	Let $M$ be a connected, compact, oriented $C^\infty$ manifold admitting complex structures. Then the Teichm\"uller stack $\mathscr{T}(M)$ admits the following structure:
	\begin{enumerate}[\rm i)]
		\item Jumping points form a strict analytic substack $\mathscr{J}$. At a generic jumping point $X_0$, we have that $\mathscr{T}(M)$ is locally isomorphic to the Kuranishi stack, hence locally homeomorphic to the quotient of the Kuranishi space by an equivalence relation induced by $\Aun$. It is however far from being a manifold or an orbifold: the equivalence classes are analytic submanifolds of $K_0$ but that of $X_0$ is a the base point or at worst a sequence of points accumulating onto it, whereas others are positive dimensional, hence it is not locally Hausdorff.
		\item  The closure of exceptional points form a substack $\mathscr{E}$ of special importance. At an exceptional point, we have that the Kuranishi stack only admits an \'etale projection onto $\mathscr{T}(M,V_K)$. At a generic exceptional point, $\mathscr{T}(M,V_K)$ is locally homeomorphic to an at most discrete quotient of its Kuranishi space, but is not a group quotient. Exceptional points divides into points with an exceptional cycle, vanishing points and wandering points\footnote{ see however part III of Conjecture \ref{mainconj}}. 
		\item These two substacks may intersect. Generic points refered to in points {\rm i)} and {\rm ii)} are points which are not in the intersection $\mathscr{J}\cap\mathscr{E}$. Points in the intersection combine the two pathologies: $\mathscr{T}(M,V_K)$ is neither locally Hausdorff nor isomorphic to the Kuranishi stack.
		\item The complementary open substack $\mathscr{O}$ contains the open substack of normal K\"ahler points $\mathscr{T}(M,\mathcal N_K)$. At a normal point, we have that $\mathscr{T}(M)$ is locally isomorphic to the Kuranishi stack and locally homeomorphic to the quotient of $K_0$ by the action of the discrete group $\text{Map}^1(X_0)$ that fixes the base point. Moreover, there exists a global analytic morphism from $\mathscr{T}(M,\mathcal N_K)$ to a \'etale analytic stack with same topological quotient.
		\item The left points of $\mathscr{O}$ are points whose Kuranishi space is not reduced and admitting an automorphism of $\Azero$ with no extension as a local isomorphism of the Kuranishi family inducing the identity on the base. 
	\end{enumerate}
	The previous cartography also holds for the $\Z$-Teichm\"uller stack  $\TZ$ with the obvious changes in the statements.
\end{theorem}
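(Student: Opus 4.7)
The plan is to assemble Theorem \ref{thmstructuregeneral} as a compilation of the general-case results established in the previous sections, mirroring the proof of Theorem \ref{thmstructureKahler} but with each K\"ahler-specific refinement replaced by its weaker non-K\"ahler counterpart. First I would dispatch point (i): the fact that the jumping locus $\mathscr{J}$ is a strict analytic substack follows from the Zariski upper semi-continuity of $h^0$ (Grauert, footnote \ref{ftgrauert}) applied locally on Kuranishi atlases, together with the global gluing from \cite{LMStacks}. For a generic jumping point (not in $\mathscr{E}$), non-exceptionality forces the \'etale morphism $\mathscr{A}_1\to\mathscr{T}(M,V)$ of Theorem \ref{finitethm} to be an isomorphism on a neighborhood, so that the local topology of $\T$ is that of $K_0$ modulo the equivalence generated by $\mathcal{A}_1$; the foliated description from \cite[\S 3]{ENS} together with Lemma \ref{lemmaHolWavrik} and Remark \ref{rkleavesconn} then gives the picture of positive-dimensional leaves versus a $0$-dimensional leaf at the base point. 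The key weakening with respect to the K\"ahler case is that Corollary \ref{interlemma} is no longer available, so the leaf through $X_0$ may be not just the singleton $\{J_0\}$ but a sequence of points in $K_0$ accumulating onto $J_0$, as the wording of the theorem already allows.

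Next I would handle point (ii). Definition \ref{defexcpoint} directly provides that $\mathscr{E}$ is the closure of the non-isomorphism locus of the \'etale morphism $\mathscr{A}_1\to\mathscr{T}(M,V)$ of Theorem \ref{finitethm}; no claim of analyticity is made here, in contrast with Theorem \ref{2ndmainthm}, which crucially used compactness of cycle spaces. At a generic point of $\mathscr{E}$ (outside $\mathscr{J}$), the \'etale morphism has discrete fibers, giving the "at most discrete quotient" local picture; but without Lieberman's theorem we cannot upgrade "\'etale" to "finite \'etale" and so cannot describe the local model as a finite-group quotient. The classification into exceptional-cycle, vanishing and wandering points comes verbatim from Proposition \ref{propex}, with the caveat (footnote in the statement) that Conjecture \ref{mainconj}.III remains open. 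Point (iii) is then purely formal: it records that $\mathscr{J}$ and $\mathscr{E}$ are genuinely independent obstructions, and the description at a point of $\mathscr{J}\cap\mathscr{E}$ is obtained by combining the local descriptions from (i) and (ii).

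For point (iv), I would invoke Proposition \ref{propnormalopen} to see that the normal locus $\mathcal{N}$ is open, and then Theorem \ref{thmnormal} (especially (i), (ii), (iii)) to extract the stated local structure: local isomorphism with the Kuranishi stack $\mathscr{A}_1$ (since a normal point is by definition non-exceptional), local quotient presentation by the discrete isotropy group $\text{Map}^1(X_0)$, and the global analytic morphism to the \'etale normal Teichm\"uller stack $\mathscr{NT}(M)$ with the same topological quotient. The difference with the K\"ahler case is that $\text{Map}^1(X_0)$ need not be finite, so that the local quotient is only an \'etale (discrete) quotient and not necessarily an orbifold chart; accordingly Theorem \ref{thmnormal}(iv)-(v) is not applied here. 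Point (v) is then just the bookkeeping of Definition \ref{defnormal}: what is missing between $\mathscr{O}\setminus\mathscr{T}(M,\mathcal{N})$ and the jumping/exceptional loci corresponds exactly to Kuranishi spaces that are non-reduced with an element of $\Azero$ failing to extend as an automorphism of the Kuranishi family descending as the identity on $K_0$, as made explicit after Lemma \ref{lemmah0constant}.

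Finally, the $\Z$-version is obtained by the by-now-standard translation: replace $\D$, $\Aun$, $\mathscr{A}_1$, $\text{Map}^1$ by $\DZ$, $\AZ$, $\mathscr{A}_\Z$, $\text{Map}^\Z$; replace Theorem \ref{finitethm} by Corollary \ref{corZfinite}; replace Theorem \ref{thmnormal} by Corollary \ref{corZnormal}; Proposition \ref{propnormalopen} and Definition \ref{defnormal} adapt verbatim as noted at the end of \S \ref{localTeich2}. The main obstacle I anticipate is not in any single step, but in writing point (i) cleanly: the statement that the leaf of $J_0$ may be a sequence of points accumulating onto $J_0$ requires explaining why, in the absence of Lieberman, the intersection of the $\D$-orbit with $K_0$ can be a countable accumulating set but nothing worse, which amounts to combining Kuranishi's product decomposition \eqref{Kurmap} (guaranteeing transversality of any intersection) with the second countability of $\D$. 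Everything else in the proof is a direct quotation of earlier numbered results.
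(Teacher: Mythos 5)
Your proposal is correct and follows essentially the same route as the paper, whose proof is exactly such a compilation: upper semicontinuity of $h^0$ plus the foliated structure of \cite{ENS} for (i), Proposition \ref{propex} and Theorem \ref{finitethm} for (ii), (iii) formally from (i)–(ii), Theorem \ref{thmnormal} for (iv), and the non-reduced gap of Definition \ref{defnormal} for (v), with the $\Z$-case by the standard substitutions. Your added explanation of the "accumulating sequence" in (i) via the transversality from \eqref{Kurmap} and second countability of $\D$ is precisely the argument underlying the paper's treatment (cf. the proof of Corollary \ref{interlemma}), so nothing essential is missing.
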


We state Theorem \ref{thmstructuregeneral} in parallel to Theorem \ref{thmstructureKahler}. It is important to notice that points i) and iii) are identical in both Theorems whereas the statements in the other two points are much weaker in the general case. Indeed,

\begin{enumerate}[\rm a)]
	\item In point ii), the closure of exceptional points is just a substack, a priori not an analytic substack. In other words, it is given locally by  (closed) subsets of Kuranishi spaces that glue when identifying two distinct Kuranishi spaces via a local isomorphism, but not by analytic subsets. Conjecture \ref{mainconj}, II asserts that nothing more precise can be said. Also exceptional points are a priori of three types, following Proposition \ref{propex}, compare with Corollary \ref{5thcor}. Part III of Conjecture \ref{mainconj} asserts that every exceptional point is wandering but at this stage we cannot prove anything precise about the geography of each type of exceptional point.
	\item Similarly, in point iv), the Zariski open substack of Theorem \ref{thmstructureKahler} is simply an open substack in Theorem \ref{thmstructuregeneral}. And the finite quotient is replaced with a discrete one.
	\item Here again, the construction and the nature of a local moduli space in the classical sense can be deduced from Theorem \ref{thmstructuregeneral} and this gives a good idea of what we lost in the statement of the general case. The local moduli space is always a quotient of the Kuranishi space. If $X_0$ is normal, it is an at most discrete quotient by $\text{Map}^1(X_0)$ fixing the base point. So it is not always a complex orbifold. If $X_0$ is exceptional and not jumping, or in the closure of exceptional points but not jumping it is also a discrete quotient of $K_0$ but not given by a proper group action. If it is jumping non exceptional point, it is given as the leaf space of the foliation of $K_0$ described in \cite{ENS}. Finally if it is jumping and in the closure of exceptional points, it is a discrete quotient of the leaf space of this foliation.
	\item As in Theorem \ref{thmstructureKahler}, points where $\mathscr{T}(M)$ is locally isomorphic to the Kuranishi stack are points where the action of $\D$ onto $V_K$ is proper. The neighborhood of such a point only depends on two data, both encoding in $X_0$: the Kuranishi space $K_0$ and the extension of $\Aun$ to the Kuranishi family. 
	\item Recall that, by Theorem \ref{thmpathosubstack}, split and double points are jumping or exceptional points. Hence, the worst pathologies of the Teichm\"uller stack occur in these two substacks. 
\end{enumerate}

\begin{proof}
	This is completely similar to the proof of Theorem \ref{thmstructureKahler}. Point i) is a mixing of the well known upper semi continuity property of $h^0$ for the Zariski topology, of the foliated structure of the Kuranishi space proved in \cite{ENS} and of Corollary \ref{interlemma2}. Point ii) is a rephrasing of Proposition \ref{propex} and Theorem \ref{finitethm} and point iii) follows from the two previous ones. Point iv) is essentially Theorem \ref{thmnormal} and point v) just makes explicit the gap between normal and non-exceptional and non-jumping points when $K_0$ is not reduced.
\end{proof}

\begin{example}
	\label{exHopf3}
	We go back to Hopf surfaces. We already saw in Example \ref{exHopf2} that the function $h^0$  varies from $2$ to $4$ and that (a connected component of) the normal Teichm\"uller stack identifies with the bounded domain $\mathbb D^*\times\mathbb D$ of $\mathbb C^2$. For the Teichm\"uller stack, we make use of the results of C. Fromenteau \cite{clement}. 
	
	Let $\mathbb M$ be the product $\text{GL}_2^c(\C)\times\C$, where the superscript $c$ stands for contracting, that is $\text{GL}_2^c(\C)$ only contains invertible matrices with eigenvalues of modulus strictly less than one. Denoting by $\lambda_1$ and $\lambda_2$ the eigenvalues of a matrix, we define, for each $n\in\mathbb N^*$, the $n$-resonances detection function
	\begin{equation}
		\label{Rn}
		(M,t)\in\mathbb{M}\longmapsto R_n(M):= (\lambda_1^n-\lambda_2)(\lambda_2^n-\lambda_1)\in\C
	\end{equation}
	Observe that $R_n$ is holomorphic as a symmetric function of eigenvalues. The zero set of $R_n$ is exactly the analytic subset of matrices with a resonance of order $n$.
	
	Let $S_n$ be the open subset of $\mathbb{M}$ consisting of couples $(M,t)$ with $M$ non-resonant or resonant of order $n$. By definition, the union of all $S_n$ is $\mathbb{M}$. Notice that
	\begin{equation}
		\label{SHopf}
		S:=S_i\cap S_j\qquad i\not = j
	\end{equation}
	is independent of the choice of $i\not =j$ and corresponds to matrices with no resonances. On $S_n$, consider the $\mathbb Z$-action on $\mathbb{M}\times\C^2\setminus\{0\}$ generated by
	\begin{equation}
		\label{ZactionHopf}
		\left ((M,t),\begin{pmatrix}
			z_1\\
			z_2
		\end{pmatrix}\right )\longmapsto \left ((M,t),M\begin{pmatrix}
		z_1\\
		z_2
		\end{pmatrix}
		+t\begin{pmatrix}
			z_2^n\\
			z_1^n
		\end{pmatrix}
		\right )
	\end{equation}
	This defines a reduced family $\mathcal{X}_i$ of Hopf surfaces above $S_i$.
	When $M$ has no resonance of order $n$, the contracting biholomorphism of \eqref{ZactionHopf} is equivalent to a linear diagonal one, and the corresponding fiber is biholomorphic to a linear diagonal Hopf surface, regardless of the value of $t$. However, if $R_n(M)$ is zero, and $t$ is not zero, then the contracting biholomorphism of \eqref{ZactionHopf} cannot be linearized but is equivalent to \eqref{normalform}. In particular, we can glue $\mathcal X_i$ and $\mathcal X_j$ above $S$ for every $i\not =j$, thus producing a family of Hopf surfaces over $\mathbb{M}$. This family is complete at every point, making of $\mathbb{M}$ a connected atlas of a connected component of $\mathscr{T}(\mathbb S^3\times\mathbb S^1)$.
	   
	 The associated groupoid structure is described as follows. Let $G$ be the Lie group biholomorphic to $\text{GL}_2(\C)\times\C$ as a complex manifold but with the following product rule
	\begin{equation}
		\label{product}
		(A,t)\ast (B,s)=(AB, t+s\det A)
	\end{equation}
	Then one may define 
	\begin{enumerate}[a)]
		\item a holomorphic action $\cdot$ of $G$ onto $\mathbb{M}$.
		\item a holomorphic injection $\imath$ of $\mathbb{M}$ into $G$
	\end{enumerate}
	such that  the Lie groupoid $(G\times \mathbb{M})/\mathbb{Z}\rightrightarrows \mathbb{M}$ is the desired groupoid. Here the $\mathbb{Z}$-action is defined as
	\begin{equation}
		\label{Zaction}
		(p,g,m)\in\mathbb{Z}\times G\times \mathbb{M}\longmapsto (\imath (m)^pg, m)
	\end{equation} 
	and the source and target maps are the projections of the maps\footnote{\label{Zgerbe}\ The action groupoid $G\times \mathbb{M} \rightrightarrows \mathbb{M}$ with source and target maps defined in \eqref{onceagain} is an atlas for the stack of reduced $\mathbb S^3\times \mathbb S^1$-deformations admitting a covering $\C^2\setminus \{(0,0)\}$-deformation plus a choice of a base point in the covering family. Together with $\imath$, this forms a gerbe with band $\mathbb{Z}$.} 
	\begin{equation}
		\label{onceagain}
		(g,m)\in G\times M\longmapsto m\qquad\text{ and }\qquad (g,m)\mapsto m\cdot g
	\end{equation}
	\begin{remark}
		\label{rkHopf}
		In \cite{Dabro}, a connected family of Hopf surfaces containing a copy of every Hopf surface and complete at each point, that is a connected atlas of a connected component of $\mathscr{T}(\mathbb S^1\times\mathbb S^3)$, is also constructed. However, the associated groupoid has a more complicated structure than \eqref{Zaction}. The simple form of \eqref{Zaction} is used in a crucial way in \cite{clement} to compute some de Rham cohomology groups of $\mathscr{T}(\mathbb S^1\times\mathbb S^3)$, showing in particular the existence of a very particular class in dimension $2$ that plays the role of the Euler class of the $\mathbb Z$-gerbe of footnote \ref{Zgerbe}.
	\end{remark}
	We may now describe the geography of $\mathscr{T}(\mathbb S^1\times\mathbb S^3)$ by looking at the geometry of $\mathbb{M}=\text{GL}_2^c(\C)\times\C$. Recall that there is no exceptional point.
	\begin{enumerate}[a)]
		\item \label{itemh0=4} The function $h^0$ equals $4$ on the analytic subset $\{R_1=0\}\cap \{t=0\}$ of couples $(\lambda Id,0)$.
		\item \label{itemh0=3} It equals $3$ on the countable union of analytic subsets $\{R_p=0\}\cap \{t=0\}$ of couples $(M,0)$ with eigenvalues $(\lambda^p,\lambda)$ for $p>1$.
		\item It is equal to $2$ at every other point so the set of jumping points is described in \ref{itemh0=4}) and \ref{itemh0=3}).
		\item \label{itemsplit} Every point with $h^0$ equal to $3$ or $4$ is a double point but is not split.
		\item The subset of normal points is equal to
		\begin{equation}
			\mathbb{M}\setminus\cup_{p\geq 1}(\{R_p=0\}\cap \{t=0\})
		\end{equation}
		Since $\text{Map}^1$ is equal to the identity for all normal points, they are all manifold points.
		\item The holomorphic map
		\begin{equation}
			(M,t)\in\mathbb{M}\longmapsto f(M):=(\det M,(\text{Tr }M)/2)\in\mathbb D^*\times\mathbb D
		\end{equation}
		descends as the mapping from the open substack of normal points to the normal Teichm\"uller stack, which is thus identified with $\mathbb D^*\times\mathbb D$.
	\end{enumerate}
	Every statement in the list above is clear except for item \ref{itemsplit}). It can be proven as follows.
	
	\begin{proof}[Proof of item \ref{itemsplit})]
	For $p\geq 1$, the family $\mathcal{X}_{\lambda,p}\to\mathbb{D}$ given by the quotient of $\C^2\setminus\{0\}\times\mathbb{D}$ by the group generated by 
	\begin{equation}
		\label{eqHopfnonsplit1}
		(z,w,t)\longmapsto (\lambda^p z+tw^p,\lambda w,t)
	\end{equation}
	is a jumping family with base point $\left (\begin{smallmatrix}
		\lambda^p &0\\
		0 &\lambda
	\end{smallmatrix}\right )$
	\footnote{ We identify a contracting matrix $A$ and the Hopf surface given as the quotient of $\C^2\setminus\{0\}$ by the group generated by $A$.}
	and jumping point $\left (\begin{smallmatrix}
		\lambda^p &1\\
		0 &\lambda
	\end{smallmatrix}\right )$ showing that any point with $h^0$ equal to $3$ or $4$ is double. Assume now that $\lambda Id$ is a split point. We recall that $K_0$ is a neighborhood $U$ of of $\lambda Id$ in $\text{GL}_2^c(\C)$ and $K_0^*$ is the corresponding punctured neighborhood $U^*$. The Kuranishi family $\mathscr{K}_0\to K_0$ is obtained as the quotient of $\mathbb{C}^2\setminus\{0\}\times U$ by the group $\Z$ acting on the fiber $\mathbb{C}^2\setminus\{0\}\times \{A\}$ as the group generated by $A$ and $\mathscr{K}_0^*$ is obtained by remowing the fiber at $\lambda Id$. Since we assume that it is a split point, there exists a stack isomorphism  of $\mathscr{T}(M,V^*)$ that sends $\mathscr{K}_0^*\to K_0^*$ isomorphically to itself. Thus there exists an isomorphism $\Phi$ satisfying
	\begin{equation*}
		\label{cdHopfnonsplit}
		\begin{tikzcd}
			U^* \arrow[r,"\Phi"]\arrow[d,"f"'] &U^*\arrow[d,"f"]\\
			\mathbb{D}^*\times\mathbb{D}\arrow[r,"Id"]&\mathbb{D}^*\times\mathbb{D}
		\end{tikzcd}
	\end{equation*}
	that lifts to an isomorphism $\Psi$ of the family $\mathscr{K}_0^*$.
	Recall that two Hopf surfaces $A$ and $B$ are isomorphic if and only if the matrices $A$ and $B$ are conjugated. From this observation, we deduce the existence of a holomorphic mapping
	\begin{equation}
		\label{eqHopfnonsplitP}
		A\in U^*\longrightarrow P_A\in\text{GL}_2(\C)
	\end{equation}
	such that
	\begin{equation}
		\label{eqHopfnonsplitPhietPsi}
		\Phi(A)=P_A A P_A^{-1}\qquad\text{ and }\qquad \Psi([z,w],A)=([P_A(z,w)],\Phi(A))
	\end{equation}
	By Hartogs Theorem, $\Phi$ and $P_A$ extend holomorphically to $\lambda Id$, hence $\Psi$ also so the stack isomorphism of $\mathscr{T}(M,V^*)$ extends to a stack isomorphism of $\mathscr{T}(M,V)$ sending any family to an isomorphic one. This contradicts Definition \ref{defsplit}, proving that $\lambda Id$ is non split.
	
	The case of $\left (\begin{smallmatrix}
		\lambda^p &0\\
		0 &\lambda
	\end{smallmatrix}\right )$ is similar with $K_0^*$ being equal to a small punctured neighborhood of $(\lambda^p,\lambda, 0)$ in $\C^3$. Indeed, we deduce from \cite[Thm 2]{Wehler} the following facts
	\begin{enumerate}[i)]
		\item $(\alpha,\beta,t)$ and $(\alpha',\beta',t')$ encode isomorphic surfaces if and only if $\alpha=\alpha'$, $\beta=\beta'$ and both $t$ and $t'$ are either zero or non-zero. An isomorphism of Kuranishi families thus induces an isomorphism
		\begin{equation}
			\label{eqHopfnonsplit2}
			(\alpha,\beta,t)\in K_0^*\longmapsto (\alpha,\beta,tf(\alpha,\beta,t))\in K_0^*
		\end{equation}
		with $f:K_0^*\to\C$ holomorphic and non-vanishing for $t\not =0$.
		\item The lift of \eqref{eqHopfnonsplit2} to the universal covering $\C^2\setminus\{0\}\times K_0^*$ of the Kuranishi family is induced by a holomorphic mapping
		\begin{equation}
			\label{eqHopfnonsplit3}
			F\ :\ K_0^*\longrightarrow \{(z,w)\in\C^2\mapsto (az+bw^p,cw)\mid ac\not =0\}
		\end{equation}
			\end{enumerate}
		Since $K_0^*$ is a punctured nieghborhood of a point in $\C^3$, it follows from Hartogs Theorem that both $f$ and $F$ extend holomorphically to $K_0$ yielding an isomorphism between the full Kuranishi families.
	\end{proof}
\end{example}

\subsection{Holonomy points}
\label{secholonomy}
It is interesting to revisit Theorems \ref{thmstructureKahler} and \ref{thmstructuregeneral} through the concept of holonomy points. 
\begin{definition}
	\label{defholonomypoint}
	A point $X_0$ of $\mathscr{T}(M)$ is called a {\slshape holonomy point} if 
	\begin{enumerate}[\rm i)]
		\item It is not exceptional.
		\item There exists $f\in\Aun$ which does not admit an extension as an isomorphism of the germ of Kuranishi family at $X_0$ inducing the identity on the base.
	\end{enumerate}
	Replacing exceptional with $\Z$-exceptional and $\Aun$ with $\AZ$ gives rise to a {\slshape $\Z$-holonomy point}.
\end{definition}
Closely related is the notion of holonomy group. To define it, note the following lemma.
\begin{lemma}
	\label{lemmahol}
	Let $E^1(X_0)$ be the subset of $\Aun$ consisting of elements $f\in\Aun$ that admits an extension as an isomorphism of the germ of Kuranishi family at $X_0$ inducing the identity on the base. 
	
	Then $E^1(X_0)$ is a normal Lie subgroup of $\Aun$.
	
	Replacing $\Aun$ with $\AZ$, one obtains a normal Lie subgroup $E^\Z(X_0)$ of $\AZ$.
\end{lemma}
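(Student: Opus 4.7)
The plan is to carry out four steps in order: recast $E^1(X_0)$ intrinsically, establish the subgroup property, verify normality (which will be the main difficulty), and promote it to a Lie subgroup. First, I would identify $E^1(X_0)$ with the set of $f \in \Aun$ admitting an extension $F$ to a germ of family automorphism of $(\mathscr{K}_0, X_0) \to (K_0, 0)$ covering $\mathrm{Id}_{K_0}$; equivalently, a germ of section of both $s$ and $t$ in the groupoid $\mathcal{A}_1 \rightrightarrows K_0$ taking the value $(0,f)$ at $0$. Let $\widetilde E(X_0)$ denote the Lie group of all such extensions and $N(X_0)$ the closed normal subgroup of those restricting to $\mathrm{Id}$ on $X_0$; restriction to the central fiber then yields an exact sequence $1 \to N(X_0) \to \widetilde E(X_0) \to E^1(X_0) \to 1$.

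The subgroup property will be immediate from this characterization: if $F, G$ extend $f, g$ over $\mathrm{Id}_{K_0}$, then $F \circ G$ extends $fg$ and $F^{-1}$ extends $f^{-1}$, both still covering $\mathrm{Id}_{K_0}$, so $E^1(X_0)$ is a subgroup of $\Aun$.

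Normality is the heart of the proof. Given $h \in \Aun$ and $f \in E^1(X_0)$ with extension $F$, I would use semi-universality of the Kuranishi family applied with the re-marking $h : X_0 \to X_0$ of the central fiber to produce a morphism $\tilde h : (\mathscr{K}_0, X_0) \to (\mathscr{K}_0, X_0)$ covering some germ $\mathrm{Hol}_h : (K_0,0) \to (K_0,0)$ with $\tilde h|_{X_0} = h$; a symmetric construction yields $\widetilde{h^{-1}}$ covering $\mathrm{Hol}_{h^{-1}}$. The hard part will be to show that $\mathrm{Hol}_h$ is a germ biholomorphism, since $\mathrm{Hol}$ is not a group homomorphism in general (see the remark after \eqref{Autaction}). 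My plan is to derive this invertibility from the universal property of the germified Kuranishi stack $(\mathscr{A}_1, 0)$ (Corollary \ref{thmuniversal1}): the composite $\widetilde{h^{-1}} \circ \tilde h$ is a morphism in $(\mathscr{A}_1, 0)$ extending $\mathrm{Id}_{X_0}$, and universality will force its base map $\mathrm{Hol}_{h^{-1}} \circ \mathrm{Hol}_h$ to coincide with the base map of the trivial extension, namely $\mathrm{Id}_{K_0}$, any remaining ambiguity in the fiber direction being absorbed in $N(X_0)$. Once this invertibility is in hand, the fiberwise conjugate $\tilde h \circ F \circ \tilde h^{-1}$ will be a family automorphism of $\mathscr{K}_0$ covering $\mathrm{Id}_{K_0}$ whose restriction to $X_0$ is $hfh^{-1}$, yielding $hfh^{-1} \in E^1(X_0)$.

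For the Lie subgroup structure, I would show that $E^1(X_0)$ is closed in the finite-dimensional complex Lie group $\Aun$: a convergent sequence $f_n \to f$ in $E^1(X_0)$ with extensions $F_n$ over $\mathrm{Id}_{K_0}$ will admit an extension of $f$ obtained as a subsequential limit in the space of holomorphic sections of the relative automorphism sheaf $\mathrm{Isom}_{K_0}(\mathscr{K}_0, \mathscr{K}_0)$, using properness of the fibers and a Montel-type argument in families. Cartan's closed subgroup theorem will then promote $E^1(X_0)$ to a Lie subgroup. The entire argument transcribes verbatim with $\AZ$, $\mathscr{A}_\Z$, Corollary \ref{thmuniversalZ}, and $E^\Z(X_0)$ replacing $\Aun$, $\mathscr{A}_1$, Corollary \ref{thmuniversal1}, and $E^1(X_0)$, since $\DZ$ is closed under composition and inversion and $(\mathscr{A}_\Z, 0)$ is universal for $\Z$-reduced germs. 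The conceptual crux throughout is the invertibility of $\mathrm{Hol}_h$ on the germ $(K_0,0)$, which is exactly where universality of the Kuranishi stack is indispensable.
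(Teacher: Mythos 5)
Your overall strategy is the paper's: extensions covering $\mathrm{Id}_{K_0}$ compose and invert (subgroup), and normality comes from conjugating such an extension $F$ of $f$ by an invertible extension $\tilde h$ of an arbitrary $h\in\Aun$, so that on the base one conjugates $\mathrm{Id}_{K_0}$ by the base map of $\tilde h$ and gets $\mathrm{Id}_{K_0}$ back; the $\Z$-case is verbatim. The paper's proof is exactly this one-liner, taking for granted that every $h\in\Aun$ admits an extension which is an \emph{isomorphism} of the germ of the Kuranishi family (it is furnished by $\sigma_h$ and $\text{\rm Hol}_h$ of \eqref{sigmaf}--\eqref{tsigma}).

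The gap is in the step you yourself call the crux. You claim that, because $\widetilde{h^{-1}}\circ\tilde h$ restricts to $\mathrm{Id}_{X_0}$ on the central fiber, Corollary \ref{thmuniversal1} forces its base map $\text{\rm Hol}_{h^{-1}}\circ\text{\rm Hol}_h$ to equal $\mathrm{Id}_{K_0}$. This is false in general. Universality of $(\mathscr{A}_1,0)$ classifies germs of reduced $M$-deformations and their morphisms by maps into the groupoid $(\mathcal A_1,(s\times t)^{-1}(0))\rightrightarrows (K_0,0)$; it does not say that a morphism of the germ of the Kuranishi family is determined, even on the base, by its restriction to the central fiber. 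The paper states the opposite explicitly right after Theorem \ref{thmuniversal}: the failure of universality of the Kuranishi \emph{space} comes precisely from automorphisms of the Kuranishi family fixing the central fiber but not all the fibers, i.e.\ at a jumping point there exist germ automorphisms of $\mathscr{K}_0\to K_0$ restricting to $\mathrm{Id}_{X_0}$ whose base map is not the identity (compare Lemma \ref{lemmaHolWavrik}, which needs $h^0$ constant, and the foliated structure of $K_0$). So your universality argument cannot deliver the invertibility of $\text{\rm Hol}_h$. What does deliver it is semi-universality, item ii) of \S\ref{Kurfamily}: the differential at $0$ of the base map of any extension of $h$ is the natural (invertible) action of $h$ on $H^1(X_0,\Theta_0)$, and a germ endomorphism of $(K_0,0)$ with invertible differential on the Zariski tangent space is a germ automorphism (minimal embedding of $(K_0,0)$ in $(\C^N,0)$, extend to a local biholomorphism of the ambient germ, and use Noetherianity to see the defining ideal is preserved). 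With that step repaired, your conjugation argument coincides with the paper's proof; your Montel-type closedness argument is acceptable in spirit, but you should justify a common neighborhood of $0$ in $K_0$ on which all the extensions $F_n$ are defined (or argue via sections of the relative automorphism space \eqref{Namba}) before extracting a limit.
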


\begin{proof}
	The composition of two extensions inducing the identity on the base, and the inverse of such an extension, still induce the identity on the base. Closedness is immediate. Let $f\in E^1(X_0)$ and $g\in\Aun$. Let $F$ be an extension of $f$ as an isomorphism of the germ of Kuranishi family at $X_0$ inducing the identity on the base. Let $G$ be an extension of $g$ as an isomorphism of the germ of Kuranishi family at $X_0$. Then $G$ induces some map $h$ on the germ of Kuranishi space at $0$ and $h$ has no reason to be the identify. Now, $G\circ F\circ G^{-1}$ is an extension of $g\circ f\circ g^{-1}$ that induces $h\circ Id\circ h^{-1}$, that is the identity, on the base.
\end{proof}
We then define.
\begin{definition}
	\label{defholgroup}
	Let $X_0$ be a point of the Teichm\"uller stack. The {\slshape holonomy group} $\text{Hol}(X_0)$ of $X_0$ is defined as the quotient group $\Aun/E^1(X_0)$. The {\slshape $\Z$-holonomy group} $\text{Hol}^\Z(X_0)$ of $X_0$ is defined as the quotient group $\AZ/E^\Z(X_0)$. 
\end{definition}
Thus, a ($\Z$)-holonomy point is a point
\begin{enumerate}[i)]
	\item that is not  ($\Z$)-exceptional,
	\item and has a non-trivial  ($\Z$)-holonomy group.
\end{enumerate}
To understand why we exclude  ($\Z$)-exceptional points, recall that they correspond to points where the $\D$-orbits, resp. the $\DZ$-orbits in $K_0$ are not controlled by $\Aun$, resp. $\AZ$. More precisely,

\begin{lemma}
	\label{lemmaexandhol}
	A point $X_0$ of $\T$ is not exceptional if and only if it satisfies the following property:\\
	For $V$ small enough, two distinct points $J_1$ and $J_2$ of $K_0$ belong to the same $\D$-orbit if and only if there exists a local isomorphism of the Kuranishi family $\mathscr{K}_0\to K_0$ sending $J_1$ to $J_2$ and acting as an automorphism of the central fiber $X_0$.
	
	The same characterization holds for $X_0$ being not $\Z$-exceptional if we replace $\D$ with $\DZ$.
\end{lemma}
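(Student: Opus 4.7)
\textbf{Proof plan for Lemma \ref{lemmaexandhol}.} The $\Leftarrow$ part of the internal equivalence is automatic and does not depend on non-exceptionality. Indeed, if $\Phi$ is a local biholomorphism of $\mathscr{K}_0\to K_0$ near $X_0$ whose restriction to the central fiber is some $f\in\Aun$, and if the base map of $\Phi$ sends $J_1$ to $J_2$, then $\Phi$ induces a diffeomorphism $F: M\to M$ (smoothly varying from $f\in\Aun\subset \D$ along a path in the base) such that $J_1\cdot F=J_2$. Hence the whole statement reduces to proving that non-exceptionality is equivalent to the $\Rightarrow$ direction: every $\D$-identification between points of $K_0$ arises from such a $\Phi$.

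For the forward implication, assume $X_0$ is not exceptional. By Proposition \ref{propex}, shrinking $V$ if needed, $\mathcal{T}_V$ contains no exceptional component, no vanishing sequence, and no wandering sequence. Combining Definitions \ref{defA1type} and \ref{defex} with Lemma \ref{lemmaunique}, this forces every irreducible component of $\mathcal{T}_V$ to be an $A^1$-component. Let $J_1\neq J_2$ in $K_0$ be in the same $\D$-orbit, choose $\phi\in\D$ with $J_1\cdot\phi=J_2$, and take $V$ small enough that $(J_1,\phi)\in\mathcal{T}_V$. This morphism sits in some $A^1$-component associated to a connected component $A$ of $\Aun$. By Lemma \ref{lemmaA1notex}, for $V$ small we may assume $(J_1,\phi)$ is $(V,\mathcal{D}_1)$-admissible; decomposing $\phi=F_0\circ\cdots\circ F_p$ with each $F_i\in\mathcal{D}_1$ and using \eqref{Gdecompoglobal} gives $F_i=f_i\circ e(\xi_i)$ with $f_i\in\Aun$. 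Each extension $\sigma_{f_i}$ constructed in \eqref{sigmaf} provides, for $J$ in a neighborhood of $J_0$, a biholomorphism $X_J\to X_{\text{Hol}_{f_i}(J)}$ that assembles into a local isomorphism of the Kuranishi family near $X_0$ inducing $f_i$ on $X_0$. Composing these (using the groupoid identity $\text{Hol}_{f_{i+1}}\circ\text{Hol}_{f_i}=\text{Hol}_{f_{i+1}\circ f_i}$ up to the natural modifications encoded in $\mathcal{A}_0$) yields a local isomorphism $\Phi$ of $\mathscr{K}_0\to K_0$ whose restriction to $X_0$ is $f:=f_0\circ\cdots\circ f_p\in\Aun$ and whose base map sends $J_1$ to $J_1\cdot\phi=J_2$, as required.

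For the reverse implication, argue by contrapositive: suppose $X_0$ is exceptional. Then for every small $V$, Proposition \ref{propex} furnishes a sequence \eqref{phin} with $(x_n,\phi_n)\in\mathcal{T}_V$, where the graphs either belong to an exceptional component, form a vanishing sequence, or form a wandering sequence. In each case $(x_n,\phi_n)$ lies in no $A^1$-component and in particular is not $(V,\mathcal{D}_1)$-admissible for $n$ large. Yet $x_n$ and $y_n=x_n\cdot\phi_n$ are distinct points of $K_0$ in the same $\D$-orbit. If the characterization held, some $f_n\in\Aun$ would provide a local iso $\Phi_n$ of $\mathscr{K}_0$ sending $x_n$ to $y_n$ and acting as $f_n$ on $X_0$; this would exhibit $(x_n,\phi_n)$ as $\mathcal{A}_0$-equivalent to $\sigma_{f_n}(x_n)$, placing it in the $A^1$-component of $f_n$, a contradiction.

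The $\Z$-case is proved identically after the systematic substitutions $\D\rightsquigarrow\DZ$, $\Aun\rightsquigarrow\AZ$, $\mathcal{A}_1\rightsquigarrow\mathcal{A}_\Z$, $\mathcal{T}_V\rightsquigarrow\mathcal{T}_V^\Z$, using \S\ref{subsecZkur} and \S\ref{subsecZexc}. The main obstacle is the explicit identification in the forward direction of an admissible morphism with a composition of extensions $\sigma_{f_i}$ that glue into a genuine local isomorphism of $\mathscr{K}_0$ defined simultaneously near the entire central fiber: one must verify that the composition, modified by the $\mathcal{A}_0$-sections guaranteed by Lemma \ref{lemmaautcontained}, indeed yields a biholomorphism covering the base map $J\mapsto \text{Hol}_{f_0\circ\cdots\circ f_p}(J)$ and not merely a family of fiberwise isomorphisms. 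This is where the connectivity of $A^1$-components and their description near $A$ as the image of the $\sigma_f$'s plays the decisive role.
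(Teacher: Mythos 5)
Your overall strategy is close to the paper's (admissibility, the decomposition $F_i=f_i\circ e(\xi_i)$ via \eqref{Gdecompoglobal}, and the extensions $\sigma_{f_i}$ of \eqref{sigmaf} assembling into a local isomorphism of $\mathscr{K}_0\to K_0$), but two specific steps do not hold as written. In the forward direction you deduce from Proposition \ref{propex} that ``every irreducible component of $\mathcal{T}_V$ is an $A^1$-component''. This does not follow: the trichotomy of Proposition \ref{propex} only concerns components containing a sequence \eqref{phin}, i.e.\ whose $(s\times t)$-image accumulates at $(J_0,J_0)$; in the general (non-K\"ahler) setting $\mathcal{T}_V$ may have components, possibly infinitely many, that neither meet $\Aun$ nor contain such a sequence, and your morphism $(J_1,\phi)$ could a priori lie in one of these, so Lemma \ref{lemmaA1notex} is not available for it. The paper does not take this detour: by Definition \ref{defexcpoint}, non-exceptionality means that for some $V$ the inclusion $\mathscr{A}_1\to\mathscr{T}(M,V)$ is an isomorphism, and fullness immediately gives that \emph{every} morphism of $\mathcal{T}_V$ is $(V,\mathcal{D}_1)$-admissible; once you have admissibility, your construction of the local isomorphism inducing $f_0\circ\cdots\circ f_p\in\Aun$ on $X_0$ is exactly the paper's (tersely stated) argument.

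In the contrapositive direction there is a second gap: you claim that a local isomorphism sending $x_n$ to $y_n$ and inducing $f_n\in\Aun$ on $X_0$ would ``exhibit $(x_n,\phi_n)$ as $\mathcal{A}_0$-equivalent to $\sigma_{f_n}(x_n)$''. The two morphisms share source and target, but they differ by an automorphism of the nearby fiber, an element of $\text{Aut}^1(X_{y_n})$ which need not belong to $\text{Aut}^0(X_{y_n})$; Lemma \ref{lemmaautcontained} only covers $\text{Aut}^0(X_{y_n})$, and the paper stresses right after its proof that the statement fails for $\text{Aut}^1$. So the contradiction you invoke is not established: the existence of \emph{some} admissible morphism between $x_n$ and $y_n$ does not by itself place the given $\phi_n$ in an $A^1$-component. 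The paper's proof of this implication proceeds instead by associating to any sequence \eqref{phin} the morphisms induced by the local isomorphisms furnished by the hypothesis and concluding via Lemma \ref{lemmaA1notex}; it never asserts the $\mathcal{A}_0$-equivalence on which your contradiction rests, and your argument needs either that adjustment or a separate justification of why the discrepancy automorphisms can be absorbed.
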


\begin{proof}
	Assume $K_0$ satisfies the property of Lemma \ref{lemmaexandhol}. Then, to any sequence \eqref{phin}, corresponds a sequence of local isomorphisms of the Kuranishi family fixing the central fibers, hence a sequence of automorphisms of $X_0$. By Lemma \ref{lemmaA1notex}, the sequence \eqref{phin} is a sequence of morphisms of the Kuranishi stack and $X_0$ is not exceptional.
	
	Conversely, if $X_0$ is not exceptional, then, for $V$ small enough, every morphism between two points $J_1$ and $J_2$ of $K_0$ is $(V,\mathcal{D}_1)$-admissible, hence is the evaluation at $J_1$ of a local isomorphism of the Kuranishi family $\mathscr{K}_0\to K_0$ sending $J_1$ to $J_2$ and acting as an automorphism of the central fiber $X_0$.
\end{proof}

Thus, with Definition \ref{defholonomypoint}, holonomy points are points such that non-trivial repetitions in the Kuranishi space\footnote{ that is pairs of distinct points in the Kuranishi space that encode the same complex structure up to $C^\infty$-isotopy.} exist but can be determined by computing the extensions of the elements of $\Aun$ as isomorphisms of the germ of Kuranishi family. Hence they are induced by the central fiber and not by the geometry of the $\D$-orbits in $\I$.

\begin{remark}
	\label{rkholpoints}
	Examples $\mathcal{X}_{a,b}$ of \cite{Aut1} have holonomy group $\mathbb{Z}_a$, since an arbitrary small deformation of them obtained by moving generically the $a$-th roots of unity has no $\Aun$-mapping class group. In the same way, Examples $\hat{\mathcal{X}}_P$ (with $P$ in the $t_a$-fiber) have $\Z$-holonomy group $\mathbb{Z}^{4a}$ by Theorem \ref{thmexampleAut1Z}. However, we do not know if these examples are ($\Z)$-holonomy points. To answer this question, we should decide whether they are exceptional or not. This supposes to know all their small deformations. Notice also that, if $\hat{\mathcal{X}}_P$ is not exceptional, this would be an example of a non-exceptional point in the closure of the set of exceptional points by Theorem \ref{thmexampleexc} and Corollary \ref{thmrepartitionZex}.
\end{remark}

Then, we may characterize generic jumping points by their holonomy group.

\begin{proposition}
	\label{propholjumping}
	Let $X_0$ be a point of $\T$. Then,
	$X_0$ is a non-exceptional jumping point if and only if it is a holonomy point with non-discrete holonomy group $\text{\rm Hol}(X_0)$.
\end{proposition}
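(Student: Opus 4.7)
The plan is to identify $\text{Hol}(X_0) = \Aun/E^1(X_0)$ as a Lie group, so that non-discreteness is equivalent to $\dim E^1(X_0) < \dim \Aun = h^0(J_0)$, and then relate the Lie algebra of $E^1(X_0)$ to the extendability of holomorphic vector fields on $X_0$ along the Kuranishi family.

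First I would show that the Lie algebra of $E^1(X_0)$, viewed as a subspace of $H^0(X_0,\Theta_0)$ (the Lie algebra of $\Aun$), coincides with the subspace $L$ of those $v_0 \in H^0(X_0,\Theta_0)$ which extend to a germ of holomorphic section of the relative tangent bundle $\Theta_{\mathscr{K}_0/K_0}$ at $X_0$. Given such $\tilde v$, integrating it fiberwise in local product charts of $\mathscr{K}_0 \to K_0$ provides a $1$-parameter family of germs of automorphisms of the Kuranishi family inducing the identity on $K_0$ and specializing to $\exp(tv_0)$ on $X_0$, so $\exp(tv_0) \in E^1(X_0)$. Conversely differentiating any $1$-parameter subgroup of $E^1(X_0)$ and restricting to $X_0$ yields a vector field in $L$.

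For the forward direction, assume $X_0$ is a non-exceptional jumping point. By upper semi-continuity of $h^0$ (see \cite{Grauert} and footnote \ref{ftgrauert}) and its non-local constancy, the generic value $a$ of $h^0$ on a small neighborhood of $J_0$ in $K_0$ satisfies $a < h^0(J_0)$. If a basis $v_0^1, \hdots, v_0^{h^0(J_0)}$ of $H^0(X_0,\Theta_0)$ all lay in $L$, their extensions $\tilde v^i$ would restrict to $X_t$ as vector fields linearly independent at $t = 0$, hence for $t$ in an open neighborhood of $J_0$ by openness of non-vanishing of a determinant; this would force $h^0(X_t) \geq h^0(J_0)$ generically, a contradiction. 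So $\dim L \leq a < h^0(J_0)$, whence $\text{Hol}(X_0)$ has positive-dimensional identity component and $X_0$ is a holonomy point.

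Conversely, assume $X_0$ is a holonomy point with non-discrete $\text{Hol}(X_0)$. Then $\dim E^1(X_0) < h^0(J_0)$, so there exists $v_0 \in H^0(X_0,\Theta_0) \setminus L$. The proof of Lemma \ref{lemmah0constant} shows that local constancy of $h^0$ around $J_0$ would make $\bigcup_t H^0(X_t, \Theta_t) \to K_0$ a locally trivial fiber bundle and every $v_0$ extendable; the existence of $v_0 \notin L$ therefore forbids local constancy of $h^0$, making $X_0$ a jumping point. Non-exceptionality is built into Definition \ref{defholonomypoint}.

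The main obstacle is the Lie algebra identification in the first paragraph, specifically the passage from a germ of section of $\Theta_{\mathscr{K}_0/K_0}$ to a genuine automorphism of the germ of Kuranishi family inducing the identity on the base when $K_0$ is non-reduced. This is handled by exponentiating in the local product charts $K_0 \times \mathbb{C}^n$ of $\mathscr{K}_0 \to K_0$ provided by smoothness of $\pi$, exactly as in the last step of the proof of Lemma \ref{lemmah0constant}.
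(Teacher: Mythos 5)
Your overall strategy is the same as the paper's: for the forward direction, bound $\dim E^1(X_0)$ by the generic value of $h^0$ near $J_0$ (the paper's one-line ``jumping points have an automorphism group whose dimension is strictly greater than $E^1(X_0)$''), and for the converse, turn non-extendability of some direction of $H^0(X_0,\Theta_0)$ into jumping of $h^0$; the only real difference of route is that the paper's converse produces a positive-dimensional leaf of the foliation of $K_0$ from the exponential flow of a nontrivial class in the Lie algebra of $\text{Hol}(X_0)$, whereas you argue by contraposition through Lemma \ref{lemmah0constant}. The concrete gap is in your identification $\text{Lie}(E^1(X_0))=L$, and precisely in the inclusion $\text{Lie}(E^1(X_0))\subset L$, which is the inclusion your forward direction rests on. You justify it by ``differentiating a $1$-parameter subgroup of $E^1(X_0)$'', but membership of $f_t=\exp(tv_0)$ in $E^1(X_0)$ only provides, for each $t$ separately, \emph{some} extension over the identity of the base; these extensions are neither unique nor known to depend holomorphically (or even continuously) on $t$, so there is no family of family-automorphisms to differentiate and no vertical field falls out of the $1$-parameter subgroup itself. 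The statement is salvageable, but it needs an actual argument: for instance, fix one small $t_0\neq 0$ and one extension $F$ of $f_{t_0}$ over the identity; after shrinking the germ, $F$ restricts on every nearby fiber to an automorphism close to the identity, and its fiberwise logarithm is a holomorphic section of $\Theta_{\mathscr{K}_0/K_0}$ extending $t_0v_0$, whence $v_0\in L$. Alternatively you can bypass $L$ entirely in the forward direction, as the paper implicitly does: restrict extensions of exponentials of a basis of $\text{Lie}(E^1(X_0))$ to nearby fibers and use semicontinuity of linear independence to get $h^0(X_t)\geq\dim E^1(X_0)$ for $t$ close to $0$, contradicting the drop of $h^0$ at a jumping point.

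A second, milder caveat concerns the converse when $K_0$ is non-reduced. Lemma \ref{lemmah0constant} and the discussion following it yield, under local constancy of $h^0$, extensions inducing the identity only on the reduction $K_0^{red}$; local constancy of $h^0$ on closed points does not by itself give sections of $\Theta_{\mathscr{K}_0/K_0}$ over the non-reduced germ, and this is exactly the distinction the paper isolates in Definition \ref{defnormal} and in point v) of Theorems \ref{thmstructuregeneral} and \ref{thmstructureKahler}. So your step ``local constancy of $h^0$ makes every $v_0$ extendable'', with $L$ defined over the possibly non-reduced $K_0$, is not covered by the lemma you cite. To be fair, the paper's own two-line converse glosses over the same reduced/non-reduced point, so you are not worse off than the source here; but since you explicitly rest that step on Lemma \ref{lemmah0constant}, you should either restrict it to reduced $K_0$ or state that the extendability and the identity on the base are being taken at the level of reductions.
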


\begin{proof}
	Jumping points have an automorphism group whose dimension is strictly greater than $E^1(X_0)$, hence their holonomy group is non-trivial and positive-dimensional.
	
	Conversely, if $\text{Hol}(X_0)$ is not discrete, it is a positive dimensional Lie group, hence taking a non-trivial element in its Lie algebra, the exponential flow of this element induces a $1$-dimensional submanifold of $K_0$ all of whose points correspond to the same complex structure up to $\D$-action. Hence the foliation of $K_0$ is not trivial and the $h^0$ function jumps.
\end{proof}

Let us analyze what happens at normal points. Let $X_0$ be a normal point. Let $\mathcal{E}^1$ be the subset of elements $f$ in $\mathcal{A}_1$ such that $s(f)=t(f)$. Since $X_0$ is normal, $\mathcal{E}^1$ contains $\mathcal{A}_0$. We may mimic Section \ref{secTSnormal} and define
an \'etale quotient groupoid $\mathcal{A}_1/\mathcal{E}^1\rightrightarrows K_0$. Its stackification over the analytic site describes classes of reduced $(M,V)$-families up to $\mathcal{E}^1$-equivalence. 

This stack can be defined over the full open set $\mathcal N$ of normal points. In this context, a reduced $(M,\mathcal N)$-family is $\mathcal{E}^1$-equivalent to a trivial family if it can be decomposed as local pull-back families glued by a cocycle in $\mathcal{E}^1$, cf. the proof of Theorem \ref{finitethm}; and an isomorphism of a reduced $(M,\mathcal N)$-family is $\mathcal{E}^1$-equivalent to the identity, if it is given by local $\mathcal{E}^1$-sections once decomposed as local pull-back families. We set
\begin{definition}
	\label{defholonomystack}
	The stack over the analytic site of $\mathcal{E}^1$-equivalence classes of reduced $(M,\mathcal N)$-families is called {\slshape the holonomy normal Teichm\"uller stack} and denoted by $\mathscr{H}\mathscr{T}(M)$.
\end{definition}
This is based on the result
\begin{lemma}
	\label{lemmaquotientgroupoidhol}
	Assume $X_0$ is normal. Then, the quotient space $\mathcal{A}_1/\mathcal{E}^1$ is an analytic space and the morphism $s$, resp. $t :\mathcal{A}_1\to K_0$, descends as an \'etale morphism from $\mathcal{A}_1/\mathcal{E}^1$ to $K_0$.
\end{lemma}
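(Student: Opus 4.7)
The plan is to prove Lemma~\ref{lemmaquotientgroupoidhol} by adapting the template of Lemma~\ref{lemmaquotientgroupoid}, exploiting a simplification specific to the larger equivalence relation induced by $\mathcal{E}^1$.

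The central step I would establish first is the following observation: two elements of $\mathcal{A}_1$ are $\mathcal{E}^1$-equivalent if and only if they share source and target. Necessity is immediate because the left action $(J, h) \cdot (J, f) = (J, h \circ f)$ of $(J, h) \in \mathcal{E}^1$ preserves the source trivially, and, since $s = t$ on $\mathcal{E}^1$, also preserves the target: $J \cdot (h\circ f) = (J\cdot h)\cdot f = J\cdot f$. Sufficiency uses the closure of $(V, \mathcal{D}_1)$-admissibility under composition and inversion recalled right after \eqref{Azero}: given $(J, f), (J, g) \in \mathcal{A}_1$ with $J\cdot f = J\cdot g$, the element $(J, g\circ f^{-1}) = m\bigl((J, g), (J\cdot g, f^{-1})\bigr)$ lies in $\mathcal{A}_1$, obviously satisfies $s = t = J$, and thus belongs to $\mathcal{E}^1$, witnessing the equivalence. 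As a consequence, $(s, t)\colon \mathcal{A}_1/\mathcal{E}^1 \hookrightarrow K_0 \times K_0$ is injective.

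Granted this, Hausdorffness of $\mathcal{A}_1/\mathcal{E}^1$ is essentially automatic: if $[J_n, f_n] \to [J, f]$ and $[J'_n, g_n] \to [J', g]$ with $[J_n, f_n] = [J'_n, g_n]$ for every $n$, the observation gives $J_n = J'_n$ and $J_n \cdot f_n = J'_n \cdot g_n$ at each stage, whence $J = J'$ and $J\cdot f = J'\cdot g$ in the limit, and a second application of the observation identifies the two limit classes. For étaleness I would exploit the factorisation $\mathcal{A}_1/\mathcal{E}^1 = (\mathcal{A}_1/\mathcal{A}_0)/(\mathcal{E}^1/\mathcal{A}_0)$ afforded by the inclusion $\mathcal{A}_0 \subset \mathcal{E}^1$: Lemma~\ref{lemmaquotientgroupoid} supplies the analytic space $\mathcal{A}_1/\mathcal{A}_0$ étale over $K_0$, with discrete fibre parameterised by $\text{Map}^1(X_J)$ (each class absorbing its $\Azero$-orbit by Lemma~\ref{lemmaautcontained}). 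Passing further to the quotient by $\mathcal{E}^1/\mathcal{A}_0$ only identifies, within each fibre, classes having the same target, so smoothness descends from $s$ on $\mathcal{A}_1$ and the fibre of $\bar s$ over $J$ remains discrete, being the orbit of $J$ under the discrete holonomy action of $\text{Map}^1(X_J)$; symmetrically for $\bar t$.

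The main subtlety I anticipate is verifying that the action of $\mathcal{E}^1/\mathcal{A}_0$ on $\mathcal{A}_1/\mathcal{A}_0$ has Hausdorff quotient, which reduces to showing that its orbits are closed. This is exactly guaranteed by the source/target characterisation above: two $\mathcal{A}_0$-classes merge in $\mathcal{A}_1/\mathcal{E}^1$ iff they share sources and targets, a closed condition inherited from the continuous maps $s, t$. After shrinking $V$ so that the $\text{Map}^1(X_J)$-orbits remain discrete in $K_0$ near $J$, as is always possible at a normal point, this closedness together with the factorisation above yields simultaneously the analytic structure of the quotient and the étaleness of both $\bar s$ and $\bar t$.
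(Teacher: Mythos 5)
Your proof is correct, but it does not follow the paper's route: the paper proves this lemma by running the proof of Lemma \ref{lemmaquotientgroupoid} verbatim with $\mathcal{E}^1$ in place of $\mathcal{A}_0$ --- local $s$-, resp. $t$-sections provide the charts because two nearby elements of $\mathcal{A}_1$ with the same source differ by an element of $\text{Aut}^0(X_J)$, hence of $\mathcal{A}_0\subset\mathcal{E}^1$ by Lemma \ref{lemmaautcontained}, and Hausdorffness is checked on convergent sequences of representatives, using Lemma \ref{lemmah0constant}. You instead isolate what is special about $\mathcal{E}^1$: since admissibility is stable under the groupoid multiplication and inversion, two elements of $\mathcal{A}_1$ lie in the same orbit of the action \eqref{actionA0A1} of $\mathcal{E}^1$ if and only if they have equal source and equal target, so that $(s,t)$ embeds $\mathcal{A}_1/\mathcal{E}^1$ into $K_0\times K_0$. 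This buys Hausdorffness for free (for $\mathcal{A}_0$ that step needs Lemma \ref{lemmah0constant}; here it is just closedness of the conditions $s=s'$, $t=t'$) and exhibits $\mathcal{A}_1/\mathcal{E}^1$ as the graph of the equivalence relation induced by $\mathcal{A}_1$ on $K_0$, a genuinely useful structural remark. For the \'etale charts you factor through Lemma \ref{lemmaquotientgroupoid} instead of redoing the local-section argument; the point that makes this work, and that you should state explicitly, is that the extra identifications occur only inside $s$-fibres while a chart of $\mathcal{A}_1/\mathcal{A}_0$ meets each $s$-fibre in at most one point (that is the \'etaleness of Lemma \ref{lemmaquotientgroupoid}), so $\mathcal{A}_1/\mathcal{A}_0\to\mathcal{A}_1/\mathcal{E}^1$ is locally injective and, with the same openness of quotient maps that the paper itself leaves implicit, a local isomorphism; no shrinking of $V$ is needed for that.

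Two side descriptions should be corrected, although neither is load-bearing. The $s$-fibre of $\mathcal{A}_1/\mathcal{A}_0$ over $J$ is not parameterised by $\text{Map}^1(X_J)$: it contains classes whose target differs from $J$, and even its isotropy part consists only of the admissible elements of $\text{Aut}^1(X_J)$ modulo $\text{Aut}^0(X_J)$. Accordingly, the fibre of $\bar s$ over $J$ is (via the injectivity of $(s,t)$) the set of targets of $\mathcal{A}_1$-morphisms issued from $J$; it is discrete because the charts descend, not because it is an orbit of $J$ under a $\text{Map}^1(X_J)$-action --- the identifications are produced by $(V,\mathcal{D}_1)$-admissible diffeomorphisms, built from a neighborhood of $\Aun$ in $\D$, not from automorphisms of $X_J$. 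Finally, Hausdorffness of a quotient is governed by closedness of the graph of the relation (together with openness of the quotient map), not by closedness of orbits; what you actually verify --- that equality of sources and targets passes to limits --- is exactly the graph condition, so the verification is the right one even if the framing sentence is not.
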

\noindent whose statement and proof are identical to those of Lemma \ref{lemmaquotientgroupoid}.
We then have

\begin{theorem}
	\label{thmnormalhol}
	The normal holonomy Teichm\"uller stack $\mathscr{HT}(M)$ satisfies the following properties
	\begin{enumerate}[\rm i)]
		\item It is an analytic \'etale stack with atlas an (at most) countable union of Kuranishi spaces.
		\item The isotropy group of $\mathscr{NT}(M)$ at $X_0$ is the discrete group $\text{\rm Hol}(X_0)$. It is finite if $X_0$ is in Fujiki class $(\mathscr{C})$.
		\item \label{itemfinite}Assume that $\text{\rm Hol}(X_0)$ is finite. Then, we may assume that it acts effectively on $K_0$ and the normal holonomy Teichm\"uller stack is locally isomorphic at $X_0$ to the effective orbifold $[K_0/\text{\rm Hol}(X_0)]$.
		\item In particular, if $X_0$ is a normal point with no holonomy, then the normal holonomy Teichm\"uller stack is locally isomorphic to $K_0$ at $X_0$.
		\item \label{itemgerbe} There is a natural \'etale morphism from $\mathscr{NT}(M)$ to $\mathscr{HT}(M)$. Moreover, these two stacks are associated to the same topological quotient space.
		\item Locally at $X_0$, the morphism from $\mathscr{NT}(M)$ to $\mathscr{HT}(M)$ makes of it a gerbe with band $E^1(X_0)/\Azero$.
	\end{enumerate}
\end{theorem}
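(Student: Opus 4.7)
The plan is to mimic step by step the proof of Theorem \ref{thmnormal}, systematically replacing the subgroupoid $\mathcal{A}_0 \rightrightarrows K_0$ by $\mathcal{E}^1 \rightrightarrows K_0$. For (i), I would take the atlas $\mathcal{T}_1 \rightrightarrows \mathcal{T}_0 = \bigsqcup K_i$ of $\mathscr{T}(M,\mathcal{N})$ built in the proof of Theorem \ref{thmnormal}, note that $\mathcal{E}^1$ acts fiberwise on $\mathcal{T}_1$ by the rule \eqref{actionA0A1} exactly as $\mathcal{A}_0$ did (indeed the $\mathcal{E}^1$-action restricts to the $\mathcal{A}_0$-action since $\mathcal{A}_0 \subset \mathcal{E}^1$ on a normal point by definition of normal and Lemma \ref{lemmaautcontained}), and then apply Lemma \ref{lemmaquotientgroupoidhol} — whose proof carries over verbatim from Lemma \ref{lemmaquotientgroupoid} — to form the \'etale quotient groupoid $\mathcal{A}_1/\mathcal{E}^1 \rightrightarrows K_0$, whose global analogue $\mathscr{HT}(M)$ has an atlas given by the same countable union of Kuranishi spaces.

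For (ii) the isotropy at $X_0$ is the quotient of the $s$-fiber of $\mathcal{A}_1$ at $X_0$ (which is $\Aun$) by the $s$-fiber of $\mathcal{E}^1$ at $X_0$ (which is by definition $E^1(X_0)$, a Lie subgroup by Lemma \ref{lemmahol}), giving exactly $\text{Hol}(X_0)$; discreteness is automatic because $\mathcal{E}^1$ contains the full connected extensions of $\Azero$, and finiteness in Fujiki class $(\mathscr{C})$ comes from the fact that $\text{Hol}(X_0)$ is a quotient of $\text{Map}^1(X_0)$, which is finite by Fujiki \cite{Fujiki}. For (iii), I would choose representatives $f_1,\ldots,f_k \in \Aun$ whose classes generate $\text{Hol}(X_0)$ and argue as in the proof of Theorem \ref{Kurstackorbifold}: after shrinking $V$ and $K_0$, the maps $\text{Hol}_{f_i}$ become honest automorphisms of $K_0$ fixing $J_0$ and satisfying the group relations of $\text{Hol}(X_0)$ (the obstruction to having such relations hold for the $f_i$ themselves lies precisely in $E^1(X_0)$ and hence disappears in the quotient). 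Effectiveness is the very definition of $E^1(X_0)$: an element of $\Aun$ acts trivially on $K_0$ through $\text{Hol}$ iff it extends as an automorphism of the Kuranishi family inducing the identity on the base. Point (iv) is the tautological case.

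For (v), the inclusion of groupoids $\mathcal{A}_0 \hookrightarrow \mathcal{E}^1$ over $K_0$ induces a surjective groupoid morphism $\mathcal{A}_1/\mathcal{A}_0 \twoheadrightarrow \mathcal{A}_1/\mathcal{E}^1$, hence a morphism $\mathscr{NT}(M) \to \mathscr{HT}(M)$; it is \'etale because both targets are \'etale stacks and the induced map of atlases is a surjective submersion of \'etale spaces over $K_0$. The underlying topological quotient coincides because the equivalence relation generated on $\mathcal{T}_0$ by either quotient groupoid is the same relation generated by $\mathcal{A}_1$ itself — the extra identifications made by the $\mathcal{E}^1$-quotient occur only in the space of arrows, not in the space of objects. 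For (vi), fiberwise at $X_0$ the induced homomorphism of isotropy groups $\text{Map}^1(X_0) = \Aun/\Azero \to \text{Hol}(X_0) = \Aun/E^1(X_0)$ is surjective with kernel $E^1(X_0)/\Azero$; combined with the fact that the map is \'etale and induces the identity on coarse moduli, this exhibits the local fibers as classifying stacks of $E^1(X_0)/\Azero$, i.e.\ a gerbe with this band.

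The main obstacle I expect is in (vi): checking that the local gerbe structure really has the claimed band requires verifying that $E^1(X_0)/\Azero$ acts as a sheaf of groups trivially on the coarse base, and that the local triviality of the gerbe follows from extending $E^1(X_0)$-automorphisms fiberwise over nearby points, which in turn requires the Namba-style smoothness of the bundle of local extensions over $K_0$. A secondary technical point is to confirm that $\mathcal{A}_1/\mathcal{E}^1$ is Hausdorff over the full set of normal points (not just at $X_0$), but this follows from the same limit argument as in Lemma \ref{lemmaquotientgroupoid} using that $E^1(\cdot)$ varies with closed graph along $\mathcal{N}$.
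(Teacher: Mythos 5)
Your proposal is correct and follows essentially the same route as the paper: point i) via Lemma \ref{lemmaquotientgroupoidhol} and the atlas of Theorem \ref{thmnormal}, point ii) from the definition of $\text{Hol}(X_0)$ plus Fujiki finiteness, point iii) by rerunning the argument of Theorems \ref{Kurstackorbifold}/\ref{thmnormal} with effectiveness coming from the definition of $E^1(X_0)$, and points v)--vi) from the forgetful functor induced by $\mathcal{A}_0\subset\mathcal{E}^1$, whose fiber at $X_0$ is $E^1(X_0)/\Azero$ acting trivially on $K_0$. The obstacles you flag at the end (band of the gerbe, Hausdorffness of $\mathcal{A}_1/\mathcal{E}^1$ over $\mathcal{N}$) are resolved exactly as you suggest, by the Namba-type extension argument and the limit argument of Lemma \ref{lemmaquotientgroupoid}, so they are not genuine gaps.
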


Of course all this applies to the $\Z$-Teichm\"uller stack and we may thus define a {\slshape $\Z$-normal holonomy Teichm\"uller stack $\mathscr{HT}^\Z(M)$} that satisfies the properties listed in Theorem \eqref{thmnormalhol} with the obvious changes.
\begin{remark}
	\label{rknormalhol}
	Theorem \ref{thmnormalhol} has of course to be compared with Theorem \ref{thmnormal}. The main difference appears in item \ref{itemfinite}): assuming finiteness of the holonomy group of $X_0$, the local form of $\mathscr{HT}(M)$ is an {\slshape effective} orbifold whereas, assuming finiteness of the $\Aun$ mapping class group, the local form of $\mathscr{NT}(M)$ is a {\slshape possibly non-effective} orbifold. The explanation is given in item \ref{itemgerbe}): the $\text{\rm Hol}(X_0)$-action is the effective action induced by the $\text{Map}^1(X_0)$-action.
	
	Hence, the normal holonomy Teichm\"uller stack is the closest to a geometric moduli space. But it forgets about part of the automorphism group $\Aun$. 
\end{remark}
\begin{proof}
	Point i) follows from Lemma \ref{lemmaquotientgroupoidhol} and the first of point ii) from the mere definition of $\text{\rm Hol}(X_0)$. If $X_0$ is in Fujiki class $(\mathscr{C})$, its automorphism group is finite, hence also its holonomy group. Point iii) can be proved along the same lines that the corresponding statement in Theorem \ref{thmnormal}. An automorphism $f$ of $\Aun$ acts as the identity on $K_0$ if and only if its extension $\text{Hol}_f$ is the identity. But this means that the class of $f$ in $\text{\rm Hol}(X_0)$ is the class of the identity, showing effectiveness and finishing the proof of point iii). If $X_0$ has no holonomy, the effective orbifold chart of point iii) is a local isomorphism with $K_0$, proving iv). Since $\Azero$ is included in $E^1(X_0)$ at a normal point, there is a forgetful functor from $\mathscr{NT}(M)$ to $\mathscr{HT}(M)$. Its fiber at $X_0$ is given by the discrete group $E^1(X_0)/\Azero$ which acts trivially on $K_0$ making of $\mathscr{NT}(M)$ to $\mathscr{HT}(M)$ a gerbe with band $E^1(X_0)/\Azero$. This proves iv) and v).
\end{proof}

\subsection{Teichm\"uller stack and GIT quotients}
\label{secGIT}
In this last subsection, we want to say a few words in the case of the Teichm\"uller stack being isomorphic to a quotient stack $[X/G]$ with $X$ affine, resp. projective, and $G$ reductive, see \cite{Theo} for an example. In such a situation, we can also form the GIT quotient $X\git G$, which has its own geography of stable points, resp. unstable, semistable and stable points. We would like to compare both type of quotients. We note that a thorough study of this question is done in \cite{AHLH} in an algebraic context. Here, we content ourselves with some naive geometric remarks. We refer to \cite{Hoskins} for basics on GIT theory.

Let us start with $X$ being affine. Then, the algebra of $G$-invariant functions on $X$ is finitely generated and the associated affine scheme $X\git G$ comes equipped with a natural map $X\to X\git G$ whose fibers are the closure of the orbits. This good quotient is geometric if and only all orbits are closed. The stable points, that is the points with closed orbits and finite stabilizers, form a Zariski open subset $X^s$ of $X$ and the restriction of $X\to X\git G$ to $X^s$ is a geometric quotient. Note however that $X^s$ may be empty. Note also that there may exist an open subset of $X$ bigger than $X^s$ such that the restriction of $X\to X\git G$ to it is a geometric quotient. 

\begin{proposition}
	\label{propaffgeoquotient}
	Let $X$ be an affine scheme and $G$ a reductive group acting rationally on it. Assume that $\T$ is isomorphic to the quotient stack $[X/G]$. Then $X\git G$ is not homeomorphic to the orbit space $X/G$, hence to the geometric quotient  of $\T$ if and only if one the following equivalent conditions are satisfied
	\begin{enumerate}[\rm i)]
		\item There exist a double point $X_0$ and a jumping family based at $X_0$.
		\item There exists an injective morphism from the quotient stack $[\C/\C^*]$ (with $\C^*$ acting multiplicatively on $\C$) to $\T$.
	\end{enumerate}
	Moreover, the subset $X^s$ is included in the subset of points $X_0$ of $\T$ with finite $\Aun$.
\end{proposition}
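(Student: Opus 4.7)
The strategy is to set up the dictionary between GIT and the Teichmüller stack under the assumed isomorphism $\mathscr{T}(M)\simeq[X/G]$, then translate the failure of $X\git G\to X/G$ to be a homeomorphism into a one-parameter degeneration of complex structures. Recall from GIT that, for a reductive $G$ acting on an affine $X$, the canonical map $X\git G\to X/G$ (with the quotient topology) is a homeomorphism if and only if every $G$-orbit in $X$ is closed. Under the identification $\mathscr{T}(M)\simeq [X/G]$, the underlying set $X/G$ of the geometric quotient of $\mathscr{T}(M)$ is the set of isomorphism classes $[X_0]$ of complex structures on $M$, and the stabilizer $G_x$ at $x\in X$ coincides with the isotropy group $\mathrm{Aut}^1(X_0)$ of the corresponding point $X_0\in\mathscr{T}(M)$.

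First I would prove (ii)$\Rightarrow$(i). An injective morphism $j:[\C/\C^*]\hookrightarrow\mathscr{T}(M)$ sends the two non-isomorphic points of $[\C/\C^*]$ (the fixed point $0$ and the open orbit $\C^*$) to two distinct points $X_0\ne X_1$ of $\mathscr{T}(M)$. Pulling back a universal family along the composition $\mathbb{D}\hookrightarrow\C\to [\C/\C^*]\xrightarrow{j}\mathscr{T}(M)$ yields a reduced $M$-deformation $\mathcal{X}\to\mathbb{D}$ with $\mathcal{X}_0\simeq X_0$ and $\mathcal{X}_t\simeq X_1$ for $t\ne 0$: this is a jumping family based at $X_0$. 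To see that $X_0$ is in addition a double point, I would consider the two morphisms $z\mapsto z$ and $z\mapsto z^2$ from $\mathbb{D}$ to $\C$, and the resulting two reduced families $\mathcal{X}^{(1)},\mathcal{X}^{(2)}\to\mathbb{D}$. They both have central fiber $X_0$, and $\C^*$-equivariance provides an isomorphism $\mathcal{X}^{(1)}|_{\mathbb{D}^*}\simeq\mathcal{X}^{(2)}|_{\mathbb{D}^*}$ (written on trivialized $\C^*$-bundles as $(z,\lambda)\mapsto (z,z\lambda)$). The obstruction to extending this isomorphism across $0$ is the pole of the gauge transformation $f(z)=1/z$, so the two families are non-isomorphic over $\mathbb{D}$; by Definition \ref{deflip} this exhibits $X_0$ as a double point.

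Next I would prove (i)$\Rightarrow$(ii) and the equivalence with the failure of $X\git G\to X/G$. Given a double point $X_0$ and a jumping family $\mathcal{X}\to\mathbb{D}$ based at it, I use triviality of $G$-bundles on $\mathbb{D}$ to represent the induced morphism $\mathbb{D}\to[X/G]$ by a holomorphic map $\varphi:\mathbb{D}\to X$ with $\varphi(0)=y\in X$ encoding $X_0$ and $\varphi(t)\in G\cdot x$ (for some $x$ encoding $X_1\ne X_0$) when $t\ne 0$. Thus $y$ lies in $\overline{G\cdot x}\setminus G\cdot x$, so $G\cdot x$ is not closed and $X\git G\to X/G$ fails to be a homeomorphism. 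Applying the Hilbert--Mumford criterion (valid since $G$ is reductive and $X$ affine), I obtain a one-parameter subgroup $\lambda:\C^*\to G$ such that $\lim_{t\to 0}\lambda(t)\cdot x=y$; the $\C^*$-equivariant map $\psi:\C\to X$ defined by $\psi(t)=\lambda(t)\cdot x$ for $t\ne 0$ and $\psi(0)=y$ descends to a morphism $[\C/\C^*]\to[X/G]\simeq\mathscr{T}(M)$. Injectivity on the two orbits is guaranteed by $X_0\ne X_1$, and faithfulness on stabilizers by replacing $\lambda$ by its reduction to $\C^*/\ker\lambda$ if necessary. Conversely, the non-closed orbit arising from (i) directly prevents $X\git G$ from being the orbit space, closing the cycle of equivalences.

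For the last assertion, I would simply combine the dictionary $G_x=\mathrm{Aut}^1(X_0)$ with the definition of stability: $x\in X^s$ means $G\cdot x$ is closed and $G_x$ is finite, hence in particular $\mathrm{Aut}^1(X_0)$ is finite. The main obstacle I anticipate is a careful formulation of ``injective morphism of stacks'' in the setting of the paper: the natural reading is that of a representable monomorphism, i.e.\ injectivity on isomorphism classes of points together with faithfulness on automorphism groups, and checking this for the morphism $[\C/\C^*]\to\mathscr{T}(M)$ produced by Hilbert--Mumford is the only subtle point, due to possible stabilizer jumps along the degeneration $\lambda(t)\cdot x\leadsto y$; everything else is formal once the GIT/stack dictionary is set up.
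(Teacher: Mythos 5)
Your proposal follows essentially the same route as the paper's proof: the closed-orbit criterion for $X\git G\to X/G$ being a homeomorphism, a one-parameter-subgroup degeneration (Hilbert--Mumford / the main theorem of affine GIT) pulled back through the atlas family $\mathcal{X}\to X$ to produce the jumping family, the double point and the morphism $[\C/\C^*]\to\T$, the converse by classifying a jumping family through a map $\mathbb{D}\to X$ landing in one orbit off $0$ and another at $0$, and the identification of stabilizers with $\Aun$ for the final inclusion $X^s\subset\{\Aun\text{ finite}\}$. Two harmless caveats, both at the same level of terseness as the paper itself: the Hilbert--Mumford limit lies in the unique closed orbit inside $\overline{G\cdot x}$ and need not equal your chosen boundary point $y$ (which suffices, since conditions i) and ii) are existential), and the pole of the single gauge $f(z)=1/z$ proves non-isomorphism of the $z$- and $z^2$-pullbacks only inside $[\C/\C^*]$, so transferring this to $\T$ uses fullness of the morphism $j$ (i.e.\ reading ``injective morphism'' as a monomorphism of stacks, which is exactly the definitional point you flag) or, alternatively, a Kodaira--Spencer comparison at the base point.
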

 
\begin{proof}
	Since $\T$ is isomorphic to $[X/G]$, then $X$ is an atlas of $\T$, hence comes equipped with a family of reduced $M$-deformations $\mathcal{X}\to X$ which is complete at any point. Now, $X\git G$ is homeomorphic to the orbit space $X/G$ if and only if all $G$-orbits are closed. If this is not the case, then the main Theorem of affine GIT asserts that there exists $x\in X$ and $\C^*$ in $G$ suth that the limit of $g\cdot x$ when $g\in\C^*$ tends to zero is a point $y$ not belonging to the $x$-orbit. In other words, this gives a holomorphic mapping from $\C$ to $X$ such that $\C^*$ lands in the $x$-orbit and $0$ in the distinct $y$-orbit. Pulling-back $\mathcal{X}\to X$ through this morphism gives a jumping family. Its base point $0$ is a double point of $\T$. At the same time, this morphism descends as an injective morphism from $[\C/\C^*]$ to $\T$. Conversely, if there exists a jumping family, we may assume that its base is the unit disk $\mathbb D$ and that it is obtained by pull-back from $\mathcal{X}\to X$ along a non-constant map $\mathbb D\to X$ with $\mathbb D^*$ landing in a single $G$-orbit and $0$ in a distinct one, proving the existence of a non-closed $G$-orbit. Applying what we just proved, this shows the existence of an injective morphism from $[\C/\C^*]$ to $\T$. If we assume the existence of such a morphism, arguing as above, we obtain a jumping family.
	
	Finally, since $\T$ is isomorphic to $[X/G]$, then the stabilizer of a point is the automorphism group $\Aun$ of the corresponding compact complex manifold.
\end{proof}

Assume from now on that $X\subset\mathbb{P}^n$ is projective and choose a linearization of the action, that is a lift of the $G$-action to $\C^{n+1}$. The points $[z]\in X$ such that $G\cdot z$ does not accumulate onto $0$ in $\C^{n+1}$ form a Zariski open subset $X^{ss}$ of $X$ called the set of semistable points. Then, the algebra of $G$-invariant functions on the affine cone over $X$ is finitely generated and the projective scheme $X\git G$ associated to its projectivization comes equipped with a natural map $X^{ss}\to X\git G$ whose fibers are the closure of the orbits. This good quotient is geometric if and only all orbits are closed in $X^{ss}$. The stable points, that is the points with closed orbits in $X^{ss}$ and finite stabilizers, form a Zariski open subset $X^s$ of $X^{ss}$ and the restriction of $X^{ss}\to X\git G$ to $X^s$ is a geometric quotient. As in the affine case, $X^s$ may be empty and there may exist an open subset of $X$ bigger than $X^s$ such that the restriction of $X^{ss}\to X\git G$ to it is a geometric quotient. 

The main difference with the affine case is the fact that unstable orbits must be thrown away before taking the GIT quotient. In particular, $X\git G$ cannot be homeomorphic to the geometric quotient of $\T$ in presence of unstable points. Now, the notion of unstable/semistable points depends on the choice of a linearization and is strongly related to the projectivity of the quotient. Indeed, considering the affine cone $\tilde X\subset \C^{n+1}$ above $X\subset\mathbb P^n$, then the GIT quotient $X\git G$ is the projectivization of the affine GIT quotient $\tilde X\git G$ minus zero. This is the projectivization of $\tilde X^{ss}\git G$ where $z$ belongs to $\tilde X^{ss}$ if $G\cdot z$ does not accumulate onto $0$ in $\C^{n+1}$. For if $z$ does not belong to $\tilde X^{ss}$, its $G$-orbit accumulates onto zero and it is sent to $0$ in $\tilde X\git G$, preventing from projectivizing the whole quotient $\tilde X\git G$.

As a consequence, unstable points are not intrinsic and have no clear geometric meaning in the Teichm\"uller stack. Hence, we modify our setting and assume from now on that $\T$ is isomorphic to the quotient stack $[X^{ss}/G]$. In this new setting, Proposition \ref{propaffgeoquotient} can be easily adapted.

\begin{proposition}
	\label{propprojgeoquotient}
	Let $X$ be a projective scheme and $G$ a reductive group acting rationally on it. Choose a linearization and assume that $\T$ is isomorphic to the quotient stack $[X^{ss}/G]$. Then $X\git G$ is not homeomorphic to the orbit space $X^{ss}/G$, hence to the geometric quotient  of $\T$ if and only if one the following equivalent conditions are satisfied
	\begin{enumerate}[\rm i)]
		\item There exist a double point $X_0$ and a jumping family based at $X_0$.
		\item There exists an injective morphism from the quotient stack $[\C/\C^*]$ (with $\C^*$ acting multiplicatively on $\C$) to $\T$.
	\end{enumerate}
	Moreover, 
	the subset $X^s$ is included in the subset of points $X_0$ of $\T$ with finite $\Aun$.
\end{proposition}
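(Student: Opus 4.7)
The plan is to adapt the proof of Proposition \ref{propaffgeoquotient} essentially word for word, using $X^{ss}$ in place of $X$ and invoking the projective GIT fact that the fibers of the good quotient map $X^{ss}\to X\git G$ are precisely the closures \emph{in $X^{ss}$} of $G$-orbits. Concretely, under the isomorphism $\T\simeq [X^{ss}/G]$, the scheme $X^{ss}$ is an atlas of $\T$ and carries a complete family of reduced $M$-deformations $\mathcal{X}\to X^{ss}$, while the orbit space $X^{ss}/G$ is exactly the geometric quotient associated to $\T$. Hence $X\git G$ fails to coincide with $X^{ss}/G$ precisely when there exists a $G$-orbit in $X^{ss}$ which is not closed in $X^{ss}$.

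First I would show the implication from non-homeomorphism to condition (i). By the Hilbert--Mumford criterion applied inside $X^{ss}$ (or the fundamental theorem of GIT for good quotients), whenever some orbit $G\cdot x\subset X^{ss}$ is not closed in $X^{ss}$ one can find a one-parameter subgroup $\lambda:\C^*\to G$ such that the limit $y:=\lim_{t\to 0}\lambda(t)\cdot x$ exists in $X^{ss}$ but does not lie in $G\cdot x$. The induced morphism $\mathbb{D}\to X^{ss}$, $t\mapsto \lambda(t)\cdot x$ (extended by $y$ at $0$), pulls back $\mathcal{X}\to X^{ss}$ to a family on $\mathbb{D}$ whose fibers over $\mathbb{D}^*$ are all isomorphic to $X_x$ while the central fiber is isomorphic to the non-isomorphic $X_y$; this is the desired jumping family, and $X_y$ is a double point of $\T$ in the sense of Definition \ref{deflip}.

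Next I would prove the converse and the equivalence with (ii) simultaneously. Given a jumping family over $\mathbb{D}$ with generic fiber $X_1$ and central fiber $X_0\not\simeq X_1$, completeness of $\mathcal{X}\to X^{ss}$ at every point allows us to realise it (up to shrinking) as the pull-back along a holomorphic map $\mathbb{D}\to X^{ss}$ sending $\mathbb{D}^*$ into a single $G$-orbit and $0$ into a different one, witnessing a non-closed orbit in $X^{ss}$. To upgrade this to condition (ii), one uses that the $\C^*$-action obtained from the Hilbert--Mumford one-parameter subgroup $\lambda$ above descends the morphism $\mathbb{D}\to X^{ss}$ to a representable injective morphism $[\mathbb{D}/\C^*]\to [X^{ss}/G]\simeq\T$; restricting to the open substack $[\C/\C^*]$ (of $[\mathbb{D}/\C^*]$) and checking injectivity on objects and morphisms gives (ii). Conversely, an injective morphism $[\C/\C^*]\to\T$ factors through the atlas as a $\C^*$-equivariant map $\C\to X^{ss}$ with the generic orbit distinct from the central one, pulling back $\mathcal{X}$ to a jumping family.

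Finally, for the last assertion, if $[x]\in X^s$ then by definition the stabilizer $G_x$ is finite; under the isomorphism $\T\simeq[X^{ss}/G]$ this stabilizer is exactly the isotropy group $\mathrm{Aut}^1(X_x)$, so $X^s$ is contained in the locus where $\mathrm{Aut}^1$ is finite. The main obstacle in the argument is the first step: extending Hilbert--Mumford to the analytic/stack setting where $G$ acts only rationally requires care in ensuring that the one-parameter degeneration stays inside $X^{ss}$, but this is guaranteed by our standing assumption that the quotient stack is $[X^{ss}/G]$ (so we never have to confront unstable points).
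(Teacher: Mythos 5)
Your argument is essentially correct, but it is not the route the paper takes: the paper's proof is a one-line reduction, applying Proposition \ref{propaffgeoquotient} to the affine cone $\tilde X\to\tilde X\git G$ restricted to $\tilde X^{ss}$, whereas you rerun the whole affine argument directly inside $X^{ss}$ (fibers of $X^{ss}\to X\git G$ are orbit closures in $X^{ss}$, a non-closed orbit yields a one-parameter degeneration, hence a jumping family and a morphism $[\C/\C^*]\to\T$, and conversely via completeness of the atlas family). The two approaches buy different things. Yours keeps everything intrinsic to the semistable locus and makes the geometry of the jumping family visible without mentioning the cone; but its one delicate point is precisely the claim that a non-closed orbit in $X^{ss}$ admits a one-parameter-subgroup limit that \emph{stays semistable}. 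Your closing justification of this -- that it is ``guaranteed by the standing assumption that the quotient stack is $[X^{ss}/G]$'' -- is misattributed: the stack isomorphism says nothing about where limits land. The correct reason is a GIT fact (Birkes/Kempf on the affine cone: for $\tilde x$ semistable one has $0\notin\overline{G\tilde x}$, so the unique closed orbit in the closure is nonzero and projects into $X^{ss}$), and this is exactly what the paper's cone reduction delivers for free, which is why its proof can be a single sentence. So: same circle of ideas, your version is self-contained in $X^{ss}$ but needs the semistable Hilbert--Mumford/Birkes statement made explicit, the paper's version outsources that entirely to the already-proved affine case. The remaining points (converse via completeness, injectivity of $[\C/\C^*]\to\T$ at the level the paper uses it, and $X^s$ contained in the locus of finite $\Aun$ via stabilizer $=$ isotropy group) match the affine proof and are fine, modulo the minor overstatement that a morphism $[\C/\C^*]\to\T$ literally factors as a $\C^*$-equivariant map $\C\to X^{ss}$; for the paper's purposes one only needs to pull the family back along $\C\to[\C/\C^*]\to\T$.
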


\begin{proof}
Apply Proposition \ref{propaffgeoquotient} to $\tilde X\to\tilde X\git G$ restricted to $\tilde X^{ss}$.
\end{proof}

\end{document}